\setlist[enumerate]{label=(\roman*),font=\normalfont}
\numberwithin{equation}{section}
\theoremstyle{plain}
\newtheorem*{theorem*}{Theorem}
\newtheorem{theorem}[equation]{Theorem}
\newtheorem{proposition}[equation]{Proposition}
\newtheorem{lemma}[equation]{Lemma}
\newtheorem{corollary}[equation]{Corollary}
\theoremstyle{definition}
\newtheorem{definition}[equation]{Definition}
\newtheorem{construction}[equation]{Construction}
\newtheorem{example}[equation]{Example}
\newtheorem{variant}[equation]{Variant}
\newtheorem{summary}[equation]{Summary}
\newtheorem{remark}[equation]{Remark}
\let\scr=\mathcal
\let\bb=\mathbb
\let\rm=\mathrm
\def\N{\bb N}
\def\Z{\bb Z}
\def\F{\bb F}
\def\A{\bb A}
\def\P{\bb P}
\def\V{\bb V}
\def\1{\mathbf 1}
\def\h{\mathrm h}
\def\t{\mathrm t}
\def\q{\mathrm q}
\def\s{\mathrm s}
\def\g{\mathrm g}
\def\c{\mathrm c}
\def\C{\mathrm C}
\def\D{\mathrm D}
\def\G{\mathbb G}
\def\ph{\mathord-}
\def\pt{{\mathpalette\pt@{.75}}}
\def\pt@#1#2{\mathord{\scalebox{#2}{$\m@th#1\bullet$}}}
\def\loc{\mathrm{loc}}
\let\from=\leftarrow
\let\into=\hookrightarrow
\let\onto=\twoheadrightarrow
\def\simto{\xrightarrow{\sim}}
\def\simfrom{\xleftarrow{\sim}}
\let\emptyset=\varnothing
\DeclareMathOperator{\Sym}{Sym}
\def\id{\mathrm{id}}
\def\th{\mathrm{th}}
\def\br{\mathrm{br}}
\def\Hom{\mathrm{Hom}}
\def\Map{\mathrm{Map}}
\def\map{\mathrm{map}}
\DeclareMathOperator{\Spec}{Spec}
\def\Th{\mathrm{Th}}
\def\fTh{\mathrm{Th}^f}
\def\Pic{\mathrm{Pic}{}}
\def\Br{\mathrm{Br}{}}
\def\K{\mathrm{K}{}}
\def\GW{\mathrm{GW}{}}
\def\L{\mathrm{L}{}}
\def\B{\mathrm{B}}
\let\sect=\S
\def\S{\mathrm{S}}
\def\E{\mathbb{E}}
\DeclareMathOperator{\fib}{fib}
\DeclareMathOperator{\cofib}{cofib}
\DeclareMathOperator{\rk}{rk}
\def\Nis{\mathrm{Nis}}
\def\Zar{\mathrm{Zar}}
\def\hyp{\mathrm{hyp}}
\def\QCoh{\mathrm{QCoh}{}}
\def\Bl{\mathrm{Bl}}
\let\cat=\mathrm
\def\MGL{\mathrm{MGL}}
\def\PMGL{\mathrm{PMGL}}
\def\KGL{\mathrm{KGL}}
\def\KO{\mathrm{KO}}
\def\KW{\mathrm{KW}}
\def\KSp{\mathrm{KSp}}
\def\MSL{\mathrm{MSL}}
\def\MML{\mathrm{MML}}
\def\MSp{\mathrm{MSp}}
\def\SL{\mathrm{SL}}
\def\ML{\mathrm{ML}}
\def\Mod{\cat{M}\mathrm{od}{}}
\def\Sch{\mathrm{Sch}{}}
\def\dSpc{\mathrm{dSpc}{}}
\def\SpSpc{\mathrm{SpSpc}{}}
\def\Sm{{\cat{S}\mathrm{m}}}
\def\Fin{\cat F\mathrm{in}}
\def\op{\mathrm{op}}
\def\Vect{\mathrm{Vect}{}}
\def\MS{\mathrm{MS}{}}
\def\An{\mathrm{An}{}}
\def\Sp{\mathrm{Sp}{}}
\def\Pr{\mathrm{Pr}{}}
\def\CAlg{\mathrm{CAlg}{}}
\def\Poly{\mathrm{Poly}{}}
\def\Ind{\mathrm{Ind}{}}
\def\sm{\mathrm{sm}}
\def\an{\mathrm{an}}
\def\Cat{\mathrm{C}\mathrm{at}{}}
\def\GL{\mathrm{GL}}
\def\fp{\mathrm{fp}}
\def\qcqs{\mathrm{qcqs}}
\def\Fun{\mathrm{Fun}}
\def\st{\mathrm{st}}
\def\p{\mathrm{p}}
\def\b{\mathrm{b}}
\def\Span{\mathrm{Span}{}}
\def\Perf{\mathrm{Perf}{}}
\def\QStk{\mathrm{QStk}{}}
\def\gp{\mathrm{gp}}
\def\sym{\mathrm{sym}}
\def\alt{\mathrm{alt}}
\def\qu{\mathrm{quad}}
\def\m{\mathrm{m}}
\def\epi{\mathrm{epi}}
\def\sbu{\mathrm{sbu}}
\def\ebu{\mathrm{ebu}}
\def\Tate{\scr T}
\newcommand{\lax}{\mathrm{lax}}
\newcommand{\idem}{\mathrm{idem}}
\let\heart=\heartsuit
\def\Hyp{\mathrm{Hyp}{}}
\def\Hlgy{\mathrm{Hlgy}{}}
\let\lim=\relax
\DeclareMathOperator*{\lim}{lim}
\DeclareMathOperator*{\colim}{colim}
\let\phi=\varphi
\let\epsilon=\varepsilon
\DeclareSymbolFontAlphabet{\mathbb}{AMSb} %to ensure that the meaning of \mathbb does not change
\DeclareSymbolFontAlphabet{\mathbbl}{bbold}
\DeclareFontFamily{T1}{cbgreek}{}
\DeclareFontShape{T1}{cbgreek}{m}{n}{<-6>  grmn0500 <6-7> grmn0600 <7-8> grmn0700 <8-9> grmn0800 <9-10> grmn0900 <10-12> grmn1000 <12-17> grmn1200 <17-> grmn1728}{}
\DeclareSymbolFont{quadratics}{T1}{cbgreek}{m}{n}
\DeclareMathSymbol{\rawqoppa}{\mathord}{quadratics}{19}
\DeclareMathSymbol{\Qoppa}{\mathord}{quadratics}{21}
\def\qoppa{%
{\mathchoice{\raisebox{-1.5pt}{$\displaystyle\rawqoppa$}}
{\raisebox{-1.5pt}{$\rawqoppa$}}
{\raisebox{-0.9pt}{$\scriptstyle\rawqoppa$}}
{\raisebox{-0.8pt}{$\scriptscriptstyle\rawqoppa$}}}}
\title{\texorpdfstring{Grothendieck--Witt}{Grothendieck–Witt} theory of derived schemes}
\date{\today}
\author{Marc Hoyois}
\address{Fakultät für Mathematik\\
Universität Regensburg\\
Universitätsstraße 31\\
93040 Regensburg\\
Germany}
\email{\href{mailto:marc.hoyois@ur.de}{marc.hoyois@ur.de}}
\urladdr{\url{https://hoyois.app.uni-regensburg.de}}
\thanks{M.H.\ was partially supported by the Collaborative Research Center SFB 1085 \emph{Higher Invariants} funded by the DFG}
\author{Markus Land}
\address{Mathematisches Institut\\
Ludwig-Maximilians-Universität München\\
Theresienstraße 39\\
80333 München\\
Germany}
\email{\href{mailto:markus.land@math.lmu.de}{markus.land@math.lmu.de}}
\urladdr{\url{https://www.markus-land.de}}
\begin{document}
	
\maketitle

\begin{abstract}
	We construct a non-$\A^1$-invariant motivic ring spectrum $\KO$ over $\Spec(\Z)$, whose associated cohomology theory on qcqs derived schemes is the Grothendieck–Witt theory of classical symmetric forms (as opposed to homotopy symmetric forms). In particular, we show that this theory satisfies Nisnevich descent, smooth blowup excision, a projective bundle formula, and is locally left Kan extended from smooth $\Z$-schemes up to Bass delooping. More generally, our construction produces $\KO$-modules representing localizing invariants of two different families of Poincaré structures on derived schemes, which we call ``classical'' and ``genuine''; the latter Poincaré structures are defined for spectral schemes with involution, but the former only for derived schemes.
	
	We then establish basic properties of these motivic spectra.
	As in $\A^1$-homotopy theory, the fracture square of $\KO$ with respect to the Hopf element recovers the fundamental cartesian square relating GW-theory, L-theory, and K-theory. A new phenomenon when $2$ is not a unit is that $\KO$ is not Bott-periodic, and the left and right Bott periodizations of $\KO$ represent the Grothendieck–Witt theories of homotopy symmetric and homotopy quadratic forms, respectively.
	We also construct the expected metalinear $\E_\infty$-orientation of $\KO$.
	Finally, we show that the $\A^1$-localization of $\KO$ recovers the motivic spectrum recently constructed by Calmès, Harpaz, and Nardin.
\end{abstract}

\tableofcontents

\section{Introduction}

The goal of this paper is to lay the foundations for the Grothendieck–Witt theory of derived schemes.
In particular, we establish the following ``motivic'' properties of Grothendieck–Witt theory: it satisfies Nisnevich descent, smooth blowup excision, and a projective bundle formula. These properties imply that Grothendieck–Witt theory is represented by a motivic spectrum in the sense of Annala, Hoyois, and Iwasa \cite{AnnalaIwasa2,AHI,atiyah}. 
Note however that it is not represented by a motivic spectrum in the sense of Morel and Voevodsky, as it is not $\A^1$-invariant even on regular schemes.

We use the framework of Poincaré categories and localizing invariants thereof developed by Calmès, Dotto, Harpaz, Hebestreit, Land, Moi, Nardin, Nikolaus, and Steimle in the series of papers \cite{hermitianI,hermitianII,hermitianIII,hermitianIV,hermitianV}.
Accordingly, the Grothendieck–Witt theory of a qcqs derived scheme $X$ depends on a choice of \emph{Poincaré structure} on its category $\Perf(X)$ of perfect complexes, which is a nondegenerate quadratic functor $\Qoppa\colon \Perf(X)^\op\to\Sp$. 
We will in fact study six families of such Poincaré structures: three $\Pic^\dagger(X)$-graded families $\Qoppa^\c_{\scr L}$, $\Qoppa^\varsigma_{\scr L}$, and $\Qoppa^\qoppa_{\scr L}$, and three $\Pic^\dagger(X)^{\B\C_2}$-graded families $\Qoppa^\g_{\scr L}$, $\Qoppa^\s_{\scr L}$, and $\Qoppa^\q_{\scr L}$. Here, $\Pic^\dagger(X)$ is the anima of shifted line bundles on $X$.

The so-called \emph{classical} Poincaré structure $\Qoppa^\c$ is the most fundamental one: it encodes the classical notion of symmetric bilinear form, 
and all the other Poincaré structures are modules over $\Qoppa^\c$. 
When $X$ is an affine derived scheme, the classical Grothendieck–Witt theory of $X$ is a spectrum whose connective cover is the group completion of the commutative monoid of nondegenerate symmetric forms over $X$, where the latter monoid is classified by the classical algebraic stack $\Vect^\sym=\coprod_{n\geq 0}\GL_n^\sym/\GL_n$. We also recover the group completions of nondegenerate alternating and quadratic forms by considering symmetric forms valued in shifts of the structure sheaf:
\begin{align*}
\Omega^\infty\GW^\c_{\scr O}(X)&=\Vect^\sym(X)^\gp,\\
 \Omega^\infty\GW^\c_{\scr O[2]}(X)&=\Vect^\alt(X)^\gp,\\
 \Omega^\infty\GW^\c_{\scr O[4]}(X)&=\Vect^\qu(X)^\gp.
\end{align*}

The Poincaré structures $\Qoppa^\s$ and $\Qoppa^\q$ encode the less classical but standard notions of homotopy symmetric and homotopy quadratic forms. The so-called \emph{genuine} Poincaré structure $\Qoppa^\g$ is obtained from $\Qoppa^\s$ by taking the connective cover of its linear part; this general construction was already considered in \cite{hermitianI} for classical rings and in \cite{CHN} for classical schemes. The Poincaré structure $\Qoppa^\c$ can be understood as an animated version of $\Qoppa^\g$, and the Poincaré structures $\Qoppa^\varsigma$ and $\Qoppa^\qoppa$ are similarly animated versions of $\Qoppa^\s$ and $\Qoppa^\q$.

If $\scr L$ is a shifted line bundle with trivial involution, we have $\Qoppa^\c_{\scr L}=\Qoppa^\g_{\scr L}$ for classical schemes, $\Qoppa^\varsigma_{\scr L}=\Qoppa^\s_{\scr L}$ for bounded derived schemes (i.e., derived schemes whose structure sheaf is truncated), and $\Qoppa^\qoppa_{\scr L}=\Qoppa^\q_{\scr L}$ for all derived schemes.
Furthermore, all six Poincaré structures coincide for derived schemes over $\Z[\tfrac 12]$.
However, the classical and genuine Poincaré structures diverge for nonclassical derived schemes where $2$ is not a unit.

For each of these six families of Poincaré structures, we will construct motivic spectra representing their Karoubi-localizing invariants. 
These are defined over arbitrary derived schemes, except for the Poin\-caré structures $\Qoppa^\g$ and $\Qoppa^\s$, for which we can only prove Nisnevich descent on bounded derived schemes.
The motivic spectra representing Grothendieck–Witt theory itself will be denoted by $\KO_{\scr L}$, $\KO^\varsigma_{\scr L}$, $\KO^\qoppa_{\scr L}$, $\KO^\g_{\scr L}$, $\KO^\s_{\scr L}$, and $\KO^\q_{\scr L}$.

In \cite[\sect 5.2]{AnnalaIwasa2}, Annala and Iwasa construct motivic spectra representing Karoubi-localizing invariants of stable categories, such as the motivic spectrum $\KGL$ representing (nonconnective) algebraic K-theory. Their construction hinges on the \emph{Bott periodicity} of these invariants. An analogous construction would be possible for Karoubi-localizing invariants of the Poincaré structures $\Qoppa^\varsigma$, $\Qoppa^\qoppa$, $\Qoppa^\s$, and $\Qoppa^\q$, which are also Bott-periodic. However, Bott periodicity does not hold for the Poincaré structures $\Qoppa^\c$ and $\Qoppa^\g$, and so our approach is necessarily different. We use instead a Poincaré analogue of Robalo's ``bridge'' from \cite{Robalo}, that is, an adjunction between motivic spectra and Karoubi-localizing invariants of Poincaré categories.

\begin{remark}[$\A^1$-invariance]
	For regular noetherian schemes over $\Z[\tfrac 12]$, Grothendieck–Witt theory was shown to be $\A^1$-invariant by Hornbostel \cite[Corollary 1.12]{Hornbostel}, by combining the $\A^1$-invariance of K-theory proved by Quillen and that of L-theory proved by Balmer (with the case of even L-groups proved previously by Karoubi). This was recently generalized by Calmès, Harpaz, and Nardin, who showed that symmetric Grothendieck–\allowbreak Witt theory $\GW^\s$ is $\A^1$-invariant for all finite-dimensional regular noetherian schemes \cite[Theorem~6.3.1]{CHN}. 
	Hornbostel and Calmès–Harpaz–Nardin also construct $\A^1$-invariant motivic spectra representing Grothendieck–\allowbreak Witt theory in their respective contexts.
	However, if $2$ is not a unit, then quadratic and genuine Grothendieck–\allowbreak Witt theory fail to be $\A^1$-invariant even on regular schemes \cite[Example~6.3.2]{CHN}. Additionally, $\A^1$-invariance fails in any case for singular and derived schemes.
\end{remark}

\begin{remark}[$\C_2$-equivariant refinements]
	One important feature of Grothendieck–Witt theory that we do not investigate in this paper is its $\C_2$-equivariant refinement, by which we mean its extension from schemes to schemes with involution. Here, a crucial difference between the classical and genuine theories appears: the genuine theory naturally admits such an extension (see Remark~\ref{rmk:genuine}), which is why we call it ``genuine'', but the classical theory does not.
\end{remark}

\begin{remark}[Spectral schemes]
	The Poincaré structures $\Qoppa^\g_{\scr L}$, $\Qoppa^\s_{\scr L}$, and $\Qoppa^\q_{\scr L}$ are defined for qcqs \emph{spectral} schemes, and we prove Nisnevich descent for Grothendieck–Witt theory in this setting. The category $\MS_S$ of motivic spectra over $S$ also makes sense if $S$ is a spectral scheme, since elementary blowup excision only involves toric varieties that are defined over the sphere. 
	We expect that the computation of the genuine Grothendieck–Witt theory of $\P^n$ and of $\Bl_{0}(\P^n)$ in Section~\ref{sec:PBF} also holds for spectral schemes, which would allow us to define the motivic spectra $\KO^\g_{\scr L}$, $\KO^\s_{\scr L}$, and $\KO^\q_{\scr L}$ over bounded spectral schemes. However, we do not pursue this generalization here.
\end{remark}

The following theorem summarizes our results on the classical Grothendieck–Witt theory of derived schemes. We refer to the main text for more detailed statements, including statements for arbitrary localizing invariants and genuine Poincaré structures.

\begin{theorem}\label{thm:intro}
	There is a motivic $\E_\infty$-ring spectrum $\KO\in \CAlg(\MS_\Z)$ with the following features.
	\begin{enumerate}
		\item \textnormal{(Representability)} For any integer $n$, the weight $n$ cohomology theory on qcqs derived algebraic spaces defined by $\KO$ is classical $\scr O[n]$-valued Grothendieck–Witt theory. More generally, for any qcqs derived algebraic space $X$ and any $\K$-theory element $\xi\in\K(X)$, we have
		\[
		\map_{\MS_X}(\1_X,\Sigma^\xi\KO)= \GW^\c_{\det^\dagger(\xi)}(X),
		\]
		where $\det^\dagger(\xi)\in\Pic^\dagger(X)$ is the shifted determinant of $\xi$.
		In particular, $\KO$, $\Sigma_{\P^1}^2\KO$, and $\Sigma_{\P^1}^4\KO$ represent the Grothendieck–Witt theory of classical symmetric, alternating, and quadratic forms, respectively.
		\item \textnormal{(Hopf fracture square)} Let $\eta\colon \Sigma_{\P^1}\1\to \Sigma\1$ be the Hopf map in $\MS_\Z$. Then, over any derived algebraic space,
		\[
		\KO[\eta^{-1}]=\KW,\quad \KO^\wedge_\eta=\KGL^{\h\C_2},\quad\KO^\wedge_\eta[\eta^{-1}]=\KGL^{\t\C_2},
		\]
		where $\KW$ and $\KGL$ are motivic spectra representing classical $\L$-theory and $\K$-theory as in \textnormal{(i)}.
		\item \textnormal{(Bott fracture square)} The fourth power of the Bott element $\beta$ in $\KGL$ has a canonical lift to $\KO$, which is not a unit. Over any derived algebraic space, $\fib(\KO\to \KO^\wedge_\beta)$ represents $\GW^\q$. Over any bounded derived algebraic space, $\KO[\beta^{-1}]$ represents $\GW^\s$ and $\KO^\wedge_\beta[\beta^{-1}]$ represents normal $\L$-theory $\L^{\mathrm{n}}=\L^\s/\L^\q$.
		\item \textnormal{(Metalinear orientation)} There is a canonical $\E_\infty$-map $\MML\to\KO$ compatible with the $\E_\infty$-map $\MGL\to\KGL$. Here, $\MML$ is the metalinear algebraic cobordism spectrum, which comes with $\E_\infty$-maps $\MSp\to \MSL\to \MML\to\MGL$. 
		\item \textnormal{($\A^1$-localization)} Over any derived algebraic space, $\L_{\A^1}\KO$ is the $\A^1$-invariant motivic spectrum constructed by Calmès, Harpaz, and Nardin in \cite{CHN}, and it represents the $\A^1$-localization of symmetric Grothendieck–Witt theory $\L_{\A^1}\GW^\s$.
		Over regular algebraic spaces of finite Krull dimension, we have $\L_{\A^1}\KO=\KO[\beta^{-1}]$.
	\end{enumerate}
\end{theorem}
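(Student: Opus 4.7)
The plan is to construct $\KO$ via the representability theorem of Annala--Hoyois--Iwasa \cite{AHI,AnnalaIwasa2,atiyah}, which identifies $\MS_S$ with suitable presheaves on $\Sm_S$ satisfying Nisnevich descent, smooth blowup excision, and a projective bundle formula. Since $\Qoppa^\c$ is not Bott-periodic, one cannot construct $\KO$ by imitating the Annala--Iwasa construction of $\KGL$ via Bott inversion. Instead, I would develop a Poincaré analogue of Robalo's bridge: an adjunction between $\MS_S$ and $\Pic^\dagger$-graded Karoubi-localizing invariants of Poincaré categories on $\Sm_S$, under which the family $\GW^\c_{\scr L}$ maps to a motivic spectrum. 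We then define $\KO$ as the image of $\GW^\c$ under the bridge, and transport its $\E_\infty$-structure from the symmetric monoidal structure on $(\Perf(X),\Qoppa^\c)$ induced by tensor product over $\scr O_X$.

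The technical heart is the verification of the three motivic axioms for $\GW^\c_{\scr L}$ on qcqs derived schemes. Nisnevich descent reduces, via the general descent criterion for Poincaré categories from \cite{hermitianIII}, to Nisnevich descent for $\Perf$ and for the linear part $\Weil^\c(\ph)=\iHom(\ph,\scr O)$, both standard. Smooth blowup excision follows from the K-theoretic version together with the compatibility of $\Qoppa^\c$ with semiorthogonal decompositions and regular closed immersions. The main obstacle is the projective bundle formula. Without Bott periodicity one must identify $\GW^\c_{\scr L}(\P^m_X)$ weight-by-weight and match it against the Beilinson basis $\scr O(0),\dots,\scr O(m)$, carefully tracking how the classical duality on $\Perf(\P^m)$ swaps $\scr O(i)$ with a twist of $\scr O(-1-i)$ via the determinant line $\det^\dagger$. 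I expect this step to be the most delicate, and the parallel computation for $\Bl_0(\P^n_X)$ will simultaneously verify the blowup excision needed for the AHI framework.

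With $\KO$ constructed, parts (i)--(v) follow by combining the Robalo bridge with known computations in hermitian K-theory. Part (i) is the content of the representability theorem, extended across $\K$-twists by the compatibility of Thom constructions on both sides of the bridge. For (ii), the three equalities are $\KO$-module statements, and after identifying motivic $\eta$ with hermitian $\eta$ they reduce to the fundamental fiber square of \cite{hermitianII} relating $\GW$, $\L$, and $\K^{\h\C_2}$. For (iii), the lift of $\beta^4$ from $\KGL$ to $\KO$ is the classical Bott element on $\P^4$, and the identification $\KO[\beta^{-1}]=\GW^\s$ on bounded schemes expresses that Bott inversion promotes the linear part of $\Qoppa^\c$ to that of $\Qoppa^\s$. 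Part (iv) is a Thom-class construction: since the classical Poincaré structure canonically orients metalinear bundles, the universal property of $\MML$ from \cite{AHI} supplies the $\E_\infty$-map. Finally, (v) is formal: $\A^1$-localization is symmetric monoidal, so $\L_{\A^1}\KO$ is an $\A^1$-invariant motivic $\E_\infty$-ring representing $\L_{\A^1}\GW^\c$; on regular noetherian schemes this agrees with $\L_{\A^1}\GW^\s$ by Hornbostel's theorem, and agreement with the Calmès--Harpaz--Nardin spectrum then follows because both satisfy the same universal property among $\A^1$-invariant motivic $\E_\infty$-rings representing symmetric Grothendieck--Witt theory, yielding $\L_{\A^1}\KO=\KO[\beta^{-1}]$ in the regular case via part (iii).
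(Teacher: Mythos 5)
Your high-level plan is the same as the paper's (a Poincaré analogue of Robalo's bridge, fed by Nisnevich descent, smooth blowup excision and the projective bundle formula for the classical Poincaré structures), but as written part (i) has a genuine gap. The bridge only produces, for each base $S$, a motivic spectrum in $\MS_S$ whose weight-$n$ spaces compute $\GW^\c_{\scr O[n]}$ on $\Sm_S$; the theorem asserts that the single spectrum $\KO\in\MS_\Z$ computes $\GW^\c$ of \emph{every} qcqs derived $X$ after pullback, i.e.\ that $f^*\KO\to\scr R^\c_X(\GW)$ is an isomorphism for $f\colon X\to\Spec(\Z)$, which is almost never smooth, so this is not ``the content of the representability theorem''. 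The paper proves this base-change stability (Theorem~\ref{thm:BC}) by a non-formal argument: the Hebestreit--Steimle group completion theorem identifies $\Omega^\infty\c\GW^{\geq m}$ with the group completion of the moduli of finite locally free Poincaré objects, one shows this moduli is left Kan extended from smooth $\Z$-algebras (Proposition~\ref{prop:Vect-LKE}, Corollary~\ref{cor:GW-LKE}), and Bass delooping reduces the statement about $\P^1$-spectra to their infinite loop spaces. Nothing in your outline plays this role. Relatedly, the linear part of $\Qoppa^\c$ is not $\iHom(\ph,\scr O)$: it is represented by $\c\Tate^{\geq 0}_{\scr O}$, an animated truncation of the Tate construction (equivalently, for $\Qoppa^\sym$, the cofiber of a norm-type map $\scr O_{\h\C_2}\to\scr O$), and the descent input you call standard --- in particular that quasi-compact open immersions induce Poincaré--Karoubi projections for the whole graded family $\c\Qoppa^{\geq m}_{\epsilon\scr L}$ needed to assemble the $\P^1$-deloopings --- rests on an étale base-change property of the Tate construction (Lemma~\ref{lem:etale-Tate}, Proposition~\ref{prop:descent-animated}) proved by an induction on Krull dimension, not on descent for $\iHom(\ph,\scr O)$.

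Part (v) is also not formal in the way you suggest. Over a general derived algebraic space $\GW^\s$ is not $\A^1$-invariant, and there is no universal property characterizing the Calmès--Harpaz--Nardin spectrum among ``$\A^1$-invariant motivic $\E_\infty$-rings representing symmetric GW-theory''; your argument only covers regular noetherian bases (where the relevant input is the CHN $\A^1$-invariance theorem for $\GW^\s$, not Hornbostel's theorem, which needs $2$ invertible). The missing ingredient is that $\GW^\c\to\GW^\varsigma\to\GW^\s$ (and the quadratic and genuine variants) become equivalences after $\A^1$-localization over \emph{any} base: by the Harpaz--Nikolaus--Shah relative L-theory formula the relevant cofibers are modules over the presheaf $\Gamma(\ph,\scr O)$, which is $\A^1$-contractible (Theorem~\ref{thm:A1-localization}); this is what identifies $\L_{\A^1}\KO$ with the CHN spectrum and with $\L_{\A^1}\GW^\s$ in general. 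Finally, two smaller points in your sketches of (ii)--(iii): the Hopf fracture square requires identifying the connecting map of the Bott--Genauer (Wood) sequence with the motivic Hopf map, a genuine computation with $\P^2$ and blowups (Lemma~\ref{lem:wood}), and the statement that $\fib(\KO\to\KO^\wedge_\beta)$ represents $\GW^\qoppa$ needs a connectivity estimate comparing $\L$ of the quadratic and truncated Poincaré structures (Proposition~\ref{prop:bott}); neither is supplied by your outline.
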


\subsection*{Outline}

In Section~\ref{sec:genuine}, we prove Nisnevich descent for the genuine Grothendieck–Witt theory of bounded spectral algebraic spaces.
In Section~\ref{sec:derived}, we introduce the classical Poincaré structures and prove Nisnevich descent for the classical Grothendieck–Witt theory of derived algebraic spaces.
In Section~\ref{sec:PBF}, we analyze the classical and genuine Poincaré categories of projective bundles, and we deduce the projective bundle formula and smooth blowup excision.
Section~\ref{sec:Brauer} introduces Poincaré–Azumaya algebras, which are twists for Grothendieck–Witt theory; this section is not essential for anything else.
In Section~\ref{sec:KO}, we construct the motivic spectrum $\KO$ and its variants.
In Section~\ref{sec:fracture}, we consider the Hopf and Bott fracture squares and prove Theorem~\ref{thm:intro}(ii,iii).
In Section~\ref{sec:BC}, we show that the motivic spectrum $\KO$ is stable under base change, which implies Theorem~\ref{thm:intro}(i) when $\xi$ is an integer.
Section~\ref{sec:Thom} establishes Thom isomorphisms for classical and genuine Grothendieck–Witt theory, which leads to the general case of Theorem~\ref{thm:intro}(i) and to the metalinear orientation of Theorem~\ref{thm:intro}(iv).
Finally, Section~\ref{sec:A1} computes the $\A^1$-localization of classical and genuine Grothendieck–Witt theory and proves Theorem~\ref{thm:intro}(v).

\subsection*{Conventions and notation}

\begin{itemize}
	\item Grothendieck–Witt theory and L-theory, denoted by $\GW$ and $\L$, always refer to the \emph{Karoubi-localizing} versions from \cite{hermitianIV}. This is in contrast to \emph{loc.\ cit.}, where they are called Karoubi–\allowbreak Grothendieck–\allowbreak Witt theory and Karoubi L-theory, and denoted by $\mathbb{GW}$ and $\mathbb{L}$. Recall that the additive versions of $\GW$ and $\L$ agree with the Karoubi-localizing versions if the underlying stable category has connective K-theory (e.g., for regular noetherian algebraic spaces),
	and that the additive and Karoubi-localizing versions of $\GW$ have the same connective cover on idempotent complete Poincaré categories.
	\item Starting in Section~\ref{sec:KO}, derived algebraic spaces are always assumed to be \emph{decent} in the sense of \cite[Tag 03I7]{stacks} (e.g., locally quasi-separated). This ensures that the Nisnevich topology is defined (as points have residue fields) and that Nisnevich sheaves on derived algebraic spaces (such as $S\mapsto \MS_S$) are right Kan extended from affines.
	\item We say ``category'' instead of ``$\infty$-category'' and ``$2$-category'' instead of ``$(\infty,2)$-category''.
	\item We denote by $\An$ the category of anima, by $\Cat$ the category of (possibly large) categories, and by $\Cat_2$ the category of $2$-categories.
	\item We denote by $\Cat^\st$ the category of (usually small) stable categories, by $\Cat^\h$ that of hermitian categories, and by $\Cat^\p$ that of Poincaré categories. The last two are defined in \cite[\sect 1.2]{hermitianI}.
	\item We denote by $\Map(X,Y)$ the anima of maps in a category and by $\map(X,Y)$ the spectrum of maps in a stable category.
	\item Animated rings are always animated \emph{commutative} rings. For an animated ring $R$, $\CAlg^\an_R$ is the category of animated $R$-algebras and $\CAlg^\sm_R$ is the category of smooth $R$-algebras.
	\item For a spectral or derived algebraic space $X$, a \emph{quasi-coherent sheaf} on $X$ is an object of the \emph{stable} category $\QCoh(X)$, and an \emph{invertible sheaf} is a $\otimes$-invertible object therein. We denote by $\Pic^\dagger(X)$ the anima of invertible sheaves on $X$. 
	The usual Picard anima $\Pic(X)$ is the subanima of invertible sheaves $\scr L$ such that both $\scr L$ and its dual $\scr L^\vee$ are connective. The \emph{degree} $\deg\scr L$ of some $\scr L\in \Pic^\dagger(X)$ is the unique locally constant integer $n$ on $X$ such that $\scr L[-n]\in \Pic(X)$.
	\item If $\scr E$ is a finite locally free sheaf on a derived algebraic space $X$, then $\V_X(\scr E)$ is the $X$-scheme classifying linear maps $\scr E\to\scr O$ and $\P_X(\scr E)$ is the $X$-scheme classifying $\pi_0$-epimorphisms $\scr E\onto\scr L$ with $\scr L\in \Pic(X)$.
\end{itemize}

\subsection*{Acknowledgments}

Wolfgang Steimle was involved in the early stages of this project and we are grateful to him for multiple useful insights.
The first author would like to thank Denis Nardin for teaching him much of the formalism of Poincaré categories and motives.

\section{Étale descent for genuine Poincaré categories}
\label{sec:genuine}

We start by recalling the definition of the genuine Poincaré structures $\Qoppa^{\geq m}$ from \cite[Example 3.2.8]{hermitianI} and \cite[\sect 3.4]{CHN}.
Let $X$ be a  qcqs spectral algebraic space, $\scr L\in \Pic^\dagger(X)^{\B\C_2}$ an invertible sheaf with involution on $X$, and $m\in\Z\cup\{\pm\infty\}$ an extended integer (which can be locally constant on $X$, but we leave this implicit).
Since $X$ is qcqs, we have $\QCoh(X)=\Ind(\Perf(X))$.
We denote by $\Tate_{\scr L}$ the quasi-coherent sheaf on $X$ such that
\[
\map(\scr P\otimes\scr P,\scr L)^{\t\C_2} = \map(\scr P,\Tate_{\scr L})
\]
for all $\scr P\in\Perf(X)$. 
Then $\Qoppa^{\geq m}_{\scr L}$ is the Poincaré structure on $\Perf(X)$ with symmetric bilinear part 
\[
\B_\scr L(\scr P,\scr Q)=\map(\scr P\otimes\scr Q,\scr L)
\]
and linear part represented by $\tau_{\geq m}\Tate_{\scr L}$.

\begin{definition}[Genuine $\scr F$-theory]
	\label{def:genuineGW}
	Let $X$ be a qcqs spectral algebraic space, let $\scr L\in \Pic^\dagger(X)^{\B\C_2}$, and let $m\in\Z\cup\{\pm\infty\}$.
	Given a functor $\scr F\colon \Cat^\p\to\scr S$, we define
	\[
	\scr F^{\geq m}_{\scr L}\colon (\SpSpc_X^\qcqs)^\op\to \scr S,\quad U\mapsto \scr F(\Perf(U),\Qoppa^{\geq m}_{\scr L}).
	\]
	Because of the suspension isomorphisms 
	\[
	\Sigma\colon (\Perf(X),\Qoppa^{\geq m}_{\scr L})\simto (\Perf(X),\Qoppa^{\geq m+1}_{-\scr L[2]}),
	\]
	each such Poincaré category with $m\in \Z$ is canonically isomorphic to one with $m=\deg\scr L$ (see Section~\ref{sec:Brauer} for a more detailed discussion of the relevant parameter space). We will write
	\begin{alignat*}{2}
		\Qoppa^\g_{\scr L} &= \Qoppa^{\geq\deg\scr L}_{\scr L}, &\qquad \scr F_{\scr L}^\g&=\scr F^{\geq \deg\scr L}_{\scr L},\\
		\Qoppa^\s_\scr L&=\Qoppa^{\geq-\infty}_\scr L,& \scr F^\s_\scr L&=\scr F^{\geq-\infty}_\scr L, \\
		\Qoppa^\q_\scr L &= \Qoppa^{\geq\infty}_\scr L, & \scr F^\q_\scr L &= \scr F^{\geq\infty}_\scr L.
	\end{alignat*}
	Then, for all $m\in \Z$, we have
	\[
	\scr F^{\geq m}_{\scr L}=\scr F^\g_{(-1)^{u}\scr L[2u]},\quad\text{where}\quad u=m-\deg\scr L.
	\] 
	We refer to $\scr F^\g_{\scr L}$ as the \emph{genuine $\scr L$-valued $\scr F$-theory} of spectral algebraic spaces.
	Note that the Poincaré structures $\Qoppa^\s_{\scr L}$ and $\Qoppa^\q_\scr L$ on $\Perf(X)$ are the symmetric and quadratic Poincaré structures associated with the symmetric bilinear functor $\B_\scr L$, respectively:
	\begin{align*}
	\Qoppa^\s_\scr L(\scr P)&=\map(\scr P^{\otimes 2},\scr L)^{\h\C_2},\\
	\Qoppa^\q_\scr L(\scr P)&=\map(\scr P^{\otimes 2},\scr L)_{\h\C_2}.
	\end{align*}
\end{definition}

\begin{remark}\label{rmk:genuine}
	More generally, if $X\in (\SpSpc^\qcqs)^{\B\C_2}$ is a qcqs spectral algebraic space with involution and $\scr L\in\Pic^\dagger(X/\C_2)$,
	we have a symmetric bilinear functor $\B_\scr L$ on $\Perf(X)$ given by
	\[
	\B_\scr L(\scr P,\scr Q) = \map(q_\otimes(\scr P,\scr Q),\nabla^*\scr L),
	\]
	where $q\colon X\sqcup X\to(X\sqcup X)/\C_2$ is the quotient by the diagonal action of $\C_2$ and $\nabla\colon (X\sqcup X)/\C_2\to X/\C_2$ is the fold map.
	We can then define the Poincaré structures $\Qoppa^{\geq m}_{\scr L}$ on $\Perf(X)$ as above.
	However, we will only consider spaces with trivial involution in what follows.
\end{remark}

A spectral algebraic space $X$ is called \emph{bounded} if its structure sheaf $\scr O_X$ is locally truncated. We denote by $\SpSpc^\b$ the category of bounded spectral algebraic spaces.
If $X$ is a bounded qcqs spectral algebraic space and $j\colon U\into X$ is a quasi-compact open immersion, then the Poincaré functor
\[
j^*\colon (\Perf(X),\Qoppa_\scr L^{\geq m})\to (\Perf(U),\Qoppa_\scr L^{\geq m})
\]
is a Poincaré–Karoubi projection for any $\scr L\in \Pic^\dagger(X)^{\B\C_2}$ and any $m\in \Z\cup\{\pm\infty\}$, by \cite[Proposition~4.2.8]{CHN}.
It follows that the functor
\[
(\SpSpc_X^{\b,\qcqs})^\op\to \Cat^\p,\quad U\mapsto (\Perf(U),\Qoppa_\scr L^{\geq m}),
\]
sends Zariski squares to Poincaré–Karoubi squares \cite[Proposition 4.3.1]{CHN}.  Note that the boundedness restriction is only used to control the linear parts, so it is not needed when $m=\infty$.
It is proved in \cite[Proposition 4.4.1]{CHN} that this functor also sends Nisnevich squares of \emph{classical} qcqs schemes to Poincaré–Karoubi squares.
We will give a new proof of this fact that works more generally for bounded qcqs spectral algebraic spaces (see Corollary~\ref{cor:nis-descent}).

\begin{lemma}[Nisnevich descent for the Tate construction]
	\label{lem:free-Nisnevich}
	Let $S$ be a qcqs spectral algebraic space and let $\scr L\in \Pic^\dagger(S)^{\B\C_2}$.
	Then the section $X\mapsto\Tate_{\scr L_X}$ of $\QCoh$ over $(\SpSpc_S^\qcqs)^\op$ satisfies Nisnevich descent.
\end{lemma}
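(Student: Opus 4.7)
The plan is to deduce the Nisnevich descent from a base change formula: for every Nisnevich morphism $f \colon Y \to X$ in $\SpSpc_S^\qcqs$, the canonical comparison
\[
f^* \Tate_{\scr L_X} \longrightarrow \Tate_{f^* \scr L_X}
\]
induced by the universal property of the right-hand side is an equivalence in $\QCoh(Y)$. Since $\QCoh$ is itself an fpqc (hence Nisnevich) sheaf of stable presentable categories on qcqs spectral algebraic spaces, this pointwise base change statement assembles into Nisnevich descent of the section. The problem further reduces, via elementary Nisnevich squares and the standard structure theorem for étale morphisms, to the two special cases of $f$ an open immersion and $f$ finite étale.

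For an open immersion $f = j \colon U \hookrightarrow X$, the functor $j_!$ on perfect complexes is symmetric monoidal (by the usual projection formula and $j^* j_! = \id$), so $(j_! \scr Q)^{\otimes 2} \simeq j_!(\scr Q^{\otimes 2})$, and the universal property of $\Tate_{\scr L_X}$ transports directly to $U$: for $\scr Q \in \Perf(U)$,
\[
\map_U(\scr Q, j^* \Tate_{\scr L_X}) \simeq \map_X(j_!\scr Q, \Tate_{\scr L_X}) \simeq \map_X(j_!(\scr Q^{\otimes 2}), \scr L_X)^{\t\C_2} \simeq \map_U(\scr Q^{\otimes 2}, j^* \scr L_X)^{\t\C_2}.
\]

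For $f \colon Y \to X$ finite étale, I would use a ``Tate projection formula'' obtained from the universal property of $\Tate$ combined with tensor-hom duality on $\Perf(X)$: for any dualizable $\scr N \in \Perf(X)$ with trivial $\C_2$-action,
\[
\scr N \otimes \Tate_{\scr L} \simeq \Tate_{\scr L \otimes \scr N^{\otimes 2}}
\]
(with $\C_2$-swap action on $\scr N^{\otimes 2}$). Taking $\scr N = f_* \scr O_Y$, the splitting $Y \times_X Y \simeq Y \sqcup W$ (diagonal plus complement) yields
\[
(f_* \scr O_Y)^{\otimes 2} \simeq f_* \scr O_Y \oplus g_* \scr O_W
\]
as $\C_2$-equivariant $\scr O_X$-algebras, with trivial action on the diagonal summand and free swap action on the other. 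The free-action summand is annihilated by $\Tate$ (the representing mapping spectrum inherits a free $\C_2$-action from $g_*\scr O_W$, whose Tate vanishes), so $\Tate_{\scr L \otimes (f_*\scr O_Y)^{\otimes 2}} \simeq \Tate_{\scr L \otimes f_*\scr O_Y}$. Combined with the usual projection formula $f_* f^* \scr F \simeq \scr F \otimes f_* \scr O_Y$ and the universal properties of $\Tate$ on both $X$ and $Y$, for $\scr P \in \Perf(X)$ one runs
\begin{align*}
\map_Y(f^* \scr P, f^* \Tate_{\scr L_X}) &\simeq \map_X(\scr P, \Tate_{\scr L_X} \otimes f_* \scr O_Y) \\
&\simeq \map_X(\scr P, \Tate_{\scr L_X \otimes f_* \scr O_Y}) \\
&\simeq \map_Y(f^* \scr P, \Tate_{f^* \scr L_X}).
\end{align*}
Since $f^* \Perf(X)$ generates $\Perf(Y)$ for $f$ finite étale (as $f^*\scr O_X = \scr O_Y$ is a generator), the equivalence holds in $\QCoh(Y)$.

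The main obstacle is the Zariski-local reduction from general étale $f$ to the finite étale case, together with careful tracking of $\C_2$-actions throughout — in particular, ensuring that the $\C_2$-structure from $\scr L$ and from the swaps on tensor powers remain compatible under the Tate projection formula and under the decomposition of $Y \times_X Y$. A secondary technical point is verifying the vanishing $\Tate_\scr M = 0$ for $\scr M \in \QCoh(X)^{\B\C_2}$ with free $\C_2$-action, which requires checking at the level of the representing functor that $\map(\scr P^{\otimes 2}, \scr L \otimes \scr M)$ carries a free $\C_2$-action.
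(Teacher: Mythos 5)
There is a genuine gap: you are proving a different (and strictly stronger) statement than the lemma asserts, and that stronger statement is false in the stated generality. The lemma says that the section $X\mapsto\Tate_{\scr L_X}$ sends Nisnevich squares to \emph{relative limit} diagrams in $\QCoh$, i.e.\ $\Tate_{\scr L_X}$ is recovered as the limit of the pushforwards of its values on the square; it does \emph{not} assert that the section is cocartesian, i.e.\ that the comparison map $f^*\Tate_{\scr L_X}\to\Tate_{f^*\scr L_X}$ is an equivalence for Nisnevich (or even open) $f$. That base change statement is exactly Proposition~\ref{prop:etale-Tate}, which requires a boundedness hypothesis, and Remark~\ref{rmk:bounded} exhibits an open immersion of affine (non-bounded) derived schemes for which it fails — so your open-immersion step cannot be correct as stated, since Lemma~\ref{lem:free-Nisnevich} carries no boundedness assumption. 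Concretely, the step also breaks internally: for an open immersion $j\colon U\hookrightarrow X$ there is no extension-by-zero functor $j_!$ on $\Perf$ or $\QCoh$ left adjoint to $j^*$ (extension by zero is not quasi-coherent, and $j^*$ does not preserve infinite products), so the adjunction chain $\map_U(\scr Q,j^*\Tate_{\scr L_X})\simeq\map_X(j_!\scr Q,\Tate_{\scr L_X})$ has no meaning. Your finite étale argument via the separability splitting of $f_*\scr O_Y\otimes f_*\scr O_Y$ is fine in spirit (the paper notes that base change does hold for finite étale maps with no boundedness), but it cannot repair the open-immersion case, and in any event base change along a general étale/Nisnevich map does not globally reduce to open immersions and finite étale maps.

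The intended argument is much softer and avoids base change altogether: since $\QCoh$ is an fpqc hypersheaf, the presheaf $U\mapsto\map(\scr P_U^{\otimes 2},\scr L_U)$ satisfies descent for every $\scr P$, so for a Nisnevich square the $\C_2$-spectrum $\map(\scr P_X^{\otimes 2},\scr L_X)$ is a \emph{finite} limit of the corresponding terms over the square; because $(\ph)^{\t\C_2}$ preserves finite limits, the same holds after applying Tate, and passing to representing objects gives $\Tate_{\scr L_X}\simeq\lim$ of the pushforwards, which is the asserted relative limit condition. No control of $f^*\Tate_{\scr L_X}$ versus $\Tate_{f^*\scr L_X}$ is needed, which is precisely why the lemma holds without boundedness while Proposition~\ref{prop:etale-Tate} does not.
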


\begin{proof}
	Recall that $\QCoh$ is an fpqc hypersheaf \cite[Remark 6.2.3.3]{SAG}, hence so is $U\mapsto \map(\scr P_U^{\otimes 2},\scr L_U)$ for any $\scr P\in \QCoh(X)$.
	Since the Tate construction preserves \emph{finite} limits of $\C_2$-spectra, the given section of $\QCoh$ sends any finite diagram that becomes a colimit diagram in the topos of fpqc hypersheaves to a relative limit diagram. This applies in particular to Nisnevich squares.
\end{proof}

The following lemma is the key to our descent results:

\begin{lemma}
	\label{lem:etale-Tate}
	Let $f\colon A\to B$ be an étale morphism of connective $\E_\infty$-rings and let $M\in \Mod_A^{\B\C_2}$ be bounded above. Then the canonical map
	\[
	B\otimes_A M^{\t\C_2} \to (B\otimes_A M)^{\t\C_2},
	\]
	where the Tate constructions are modules via the Frobenius, is an isomorphism.
\end{lemma}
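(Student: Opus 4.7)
The plan is d\'evissage in $M$, reducing to the case of a discrete $\pi_0 A$-module where the assertion becomes classical flat base change for Tate cohomology.

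First, both functors $M \mapsto B\tens_A M^{\t\C_2}$ and $M \mapsto (B\tens_A M)^{\t\C_2}$ preserve cofiber sequences in $M$, since $B \tens_A (-)$ is a left adjoint and $(-)^{\t\C_2}$ is exact. Hence the locus of bounded-above $M$ for which the comparison map is an equivalence is closed under fibers, cofibers, and retracts. I would reduce to the case where $M$ is bounded by writing $M = \colim_k \tau_{\geq -k} M$ and arguing that both functors commute with this filtered colimit. The bounded-above hypothesis is crucial here: if $N$ is bounded above by $n$, then the homotopy fixed point spectral sequence $\HH^s(\C_2; \pi_t N) \Rightarrow \pi_{t-s} N^{\h\C_2}$ has $s \geq 0$ and $t \leq n$, forcing $N^{\h\C_2}$ to be bounded above by $n$. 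The same bound holds trivially for $N_{\h\C_2} = \cofib(1 - \sigma\colon N \to N)$, and hence for $N^{\t\C_2}$. Applied to the cofibers $\tau_{<-k} M$, this shows that $(\tau_{\geq -k} M)^{\t\C_2} \to M^{\t\C_2}$ is an equivalence in every fixed homotopical degree for $k$ sufficiently large; the same holds after base change along $B\tens_A(-)$.

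Having reduced to bounded $M$, I would induct on the amplitude using the Postnikov cofiber sequences $\tau_{\geq k+1} M \to \tau_{\geq k} M \to (\pi_k M)[k]$ to reduce to $M = N[i]$ with $N$ a discrete $\pi_0 A$-module with $\C_2$-action, and then by suspension-compatibility of both constructions further reduce to $i = 0$. For such $M = N$, one has $\pi_i N^{\t\C_2} = \widehat{\HH}^{-i}(\C_2; N)$, the classical Tate cohomology of $\C_2$ with coefficients in $N$. Since $A \to B$ is étale, the map $\pi_0 A \to \pi_0 B$ is flat and $\pi_*(B \tens_A X) = \pi_0 B \tens_{\pi_0 A} \pi_* X$ for every $A$-module $X$. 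The comparison map then reduces on homotopy groups to the flat base change isomorphism
\[
\pi_0 B \tens_{\pi_0 A} \widehat{\HH}^*(\C_2; N) \simto \widehat{\HH}^*(\C_2; \pi_0 B \tens_{\pi_0 A} N),
\]
which holds classically for Tate cohomology of finite groups (computed by the standard $2$-periodic projective resolution of the trivial module over $\pi_0 A[\C_2]$).

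The main obstacle is the reduction to bounded $M$: Tate constructions do not commute with filtered colimits or with base change in general, and the essential use of the bounded-above hypothesis is precisely to ensure that $N^{\t\C_2}$ stays bounded above for bounded-above $N$, so that the Postnikov colimit stabilizes in each fixed degree on both sides. Once this reduction is granted, the inductive step and base case are both standard.
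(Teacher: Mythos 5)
The central gap is that your argument never engages with the module structures specified in the statement, and that is where all the content lies. The lemma concerns the base change map for the \emph{Frobenius} module structure: $M^{\t\C_2}$ is an $A$-module via the Tate-valued Frobenius $A\to A^{\t\C_2}$, not via the naive map $A\to A^{\h\C_2}\to A^{\t\C_2}$. After your dévissage to a discrete module $N$, the comparison map on homotopy groups is therefore not the flat base change map you display, but a Frobenius-semilinear map which, described over $\pi_0$, sends $b\otimes x$ to $b^2\cdot x$ in $\widehat{\HH}{}^*(\C_2;\pi_0B\otimes_{\pi_0A}N)$. Flatness alone does not make this an isomorphism: for $A=\F_2$, $B=\F_2[t]$ (flat but not étale) and $N=\F_2$ with trivial action, both sides are $\F_2[t]$ in every degree but the map has image $\F_2[t^2]$. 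So any proof whose base case uses only flatness of $\pi_0A\to\pi_0B$ proves a statement that is false for general flat maps; étaleness must enter essentially. In the paper it enters as follows: the map is factored through the Tate diagonal, $B\otimes_AM^{\t\C_2}\to((B\otimes_AB)\otimes_AM)^{\t\C_2}\to(B\otimes_AM)^{\t\C_2}$; the first map is the part that flatness (via Lazard's theorem) handles and is the true analogue of your base change step, while the real work is showing $(I_{B/A}\otimes_AM)^{\t\C_2}=0$ for $I_{B/A}=\fib(B\otimes_AB\to B)$, which is done by Noetherian approximation, induction on the Krull dimension of $\pi_0(A)$, and splitting $B$ over a strict henselization. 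Your proposal has no counterpart to this step; in the discrete base case the relevant classical input would be that étale maps are relatively perfect modulo $2$ (the relative Frobenius is an isomorphism), not flat base change for Tate cohomology.

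Separately, the justification of your reduction to bounded $M$ is incorrect as written. For $\C_2$ one has $N_{\h\C_2}\neq\cofib(1-\sigma)$, and neither $N_{\h\C_2}$ nor $N^{\t\C_2}$ is bounded above when $N$ is: for $N=\F_2$ (as an Eilenberg–MacLane spectrum with trivial action), $\pi_*N^{\t\C_2}$ is $\F_2$ in \emph{every} degree. Hence $(\tau_{<-k}M)^{\t\C_2}$ is not highly coconnected and your claimed degreewise stabilization of $(\tau_{\geq-k}M)^{\t\C_2}\to M^{\t\C_2}$ does not follow. The step itself is salvageable, because the Tate construction does preserve uniformly bounded above filtered colimits (this is \cite[Corollary 4.2.10]{CHN}, which the paper invokes; the proof goes through stabilization of the homotopy fixed point tower, not coconnectivity of the Tate construction), and one also needs flatness to keep $B\otimes_A\tau_{\geq-k}M$ uniformly bounded above. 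But even with this repair, the argument fails at the base case for the reason above.
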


\begin{proof}
	We will use repeatedly that the Tate construction $(\ph)^{\t\C_2}$ preserves uniformly bounded above filtered colimits of $\C_2$-spectra \cite[Corollary 4.2.10]{CHN}.
	We can decompose this map as
	\[
	B\otimes_A M^{\t\C_2} \to ((B\otimes_A B)\otimes_A M)^{\t\C_2} \to (B\otimes_A M)^{\t\C_2}.
	\]
	The first map is an instance of the Tate diagonal map
	\[
	N\otimes_A M^{\t\C_2} \to ((N\otimes_A N)\otimes_A M)^{\t\C_2}
	\]
	defined for all $A$-modules $N$. This map is clearly an isomorphism if $N=A$, and both sides preserve finite sums and filtered colimits in $N$, provided $N$ is flat (since $M$ is bounded above). By Lazard's theorem \cite[Theorem 7.2.2.15]{HA}, it is an isomorphism for all flat $A$-modules $N$, in particular for $N=B$. Thus, it remains to show that the second map is an isomorphism, i.e., that
	\[
	(I_{B/A}\otimes_AM)^{\t\C_2}=0,\quad\text{where}\quad I_{B/A}=\fib(B\otimes_A B\to B)\in \Mod_A^{\B\C_2}.
	\]
	The morphism $f$ is the base change of an étale morphism $f_0\colon A_0\to B_0$ where $A_0$ is a finitely presented $\E_\infty$-ring (by \cite[Corollary 4.4.1.3, Lemma B.1.3.3]{SAG} and \cite[Theorem 7.4.3.18(1), Lemma 7.3.5.13]{HA}).
	Replacing $f$ by $f_0$, we may thus assume that $A$ is finitely presented and in particular that $\pi_0(A)$ has finite Krull dimension.
	We then prove the Tate vanishing by induction on the Krull dimension of $\pi_0(A)$.
	If we regard $(I_{B/A}\otimes_AM)^{\t\C_2}$ as an $A$-module via the composite $A\to A^{\h\C_2} \to A^{\t\C_2}$, then the canonical map
	\[
	N\otimes_A (I_{B/A}\otimes_AM)^{\t\C_2} \to (N\otimes_A I_{B/A}\otimes_AM)^{\t\C_2}
	\]
	is an isomorphism for any flat $A$-module $N$, by Lazard's theorem. 
	Since modules satisfy étale hyper\-descent, it suffices to show that the left-hand side vanishes when $N$ is a strict henselization of $A$, so we can assume $A$ strictly henselian. 
	In this case, by \cite[Tag 04GG]{stacks} and the topological invariance of the étale site, we can write $B=B_1\times B_2$ where $\Spec(B_1)\to \Spec(A)$ is a fold map and $\Spec(B_2)\to\Spec(A)$ factors through $\Spec(A)-\{\mathfrak m\}$. 
	Consider the induced decomposition 
	\[
	I_{B/A}=I_{B_1/A}\oplus I_{B_2/A}\oplus((B_1\otimes_AB_2)\times (B_2\otimes_AB_1)).
	\]
	 The first and third summands are induced $\C_2$-modules, so that their Tate constructions after tensoring with $M$ vanish. For the second summand, choose a finite open cover of $\Spec(A)-\{\mathfrak m\}$ by affine schemes. The $\C_2$-spectrum $I_{B_2/A}\otimes_AM$ is correspondingly a finite limit of $\C_2$-spectra of the form $I_{B'/A'}\otimes_{A'}M'$ with $\dim(\pi_0(A'))<\dim(\pi_0(A))$, so its Tate construction vanishes by the induction hypothesis.
\end{proof}

\begin{proposition}\label{prop:etale-Tate}
	Let $f\colon Y\to X$ be an étale morphism of bounded qcqs spectral algebraic spaces and let $\scr L\in \Pic^\dagger(X)^{\B\C_2}$.
	Then the canonical map
	\[
	f^*(\Tate_{\scr L}) \to \Tate_{f^*(\scr L)}
	\]
	in $\QCoh(Y)$ is an isomorphism.
\end{proposition}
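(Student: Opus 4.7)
The plan is to reduce the statement to the affine setting, where it becomes an immediate consequence of Lemma~\ref{lem:etale-Tate}. Since $\QCoh$ is a Zariski sheaf, it suffices to verify the isomorphism locally on $Y$. I would pick affine opens $V=\Spec B\subseteq Y$ that map into affine opens $U=\Spec A\subseteq X$, and invoke Lemma~\ref{lem:free-Nisnevich} (together with naturality for $\Tate_{f^*\scr L}$) to commute $\Tate$ with Zariski restriction. This reduces to the case of an étale morphism $A\to B$ of connective $\E_\infty$-rings and an invertible $A$-module with involution $L\in\Mod_A^{\B\C_2}$ corresponding to $\scr L|_U$.

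In this affine situation, I would identify $\Tate_L\in\Mod_A$ explicitly. Its defining property is $\map_A(P,\Tate_L)=\map_A(P\otimes_AP,L)^{\t\C_2}$ for $P\in\Perf_A$. Evaluated at $P=A$, the underlying spectrum of $\Tate_L$ is $L^{\t\C_2}$; moreover, the two $A$-module structures on $\map_A(A\otimes A,L)$ coming from the two copies of $A$ are interchanged by the swap, so $L^{\t\C_2}$ acquires an $A^{\t\C_2}$-module structure, and the $A$-module structure on $\Tate_L$ is obtained through the Frobenius $A\to A^{\t\C_2}$. One can confirm this identification in full generality by a direct computation with $P=A^n$: the off-diagonal summand of $A^n\otimes A^n=A^{n^2}$ is $\C_2$-induced, so its Tate construction vanishes, recovering $(L^{\t\C_2})^n$.

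With this identification, the canonical map $f^*\Tate_\scr L\to\Tate_{f^*\scr L}$ translates into the canonical map $B\otimes_AL^{\t\C_2}\to(B\otimes_AL)^{\t\C_2}$ of $B$-modules appearing in Lemma~\ref{lem:etale-Tate}. To apply that lemma I need $L$ to be bounded above, and this follows from the bounded hypothesis on $X$: the structure sheaf $\scr O_X$ is locally truncated, and $\scr L\in\Pic^\dagger(X)$ is locally isomorphic to a shift $\scr O_X[n]$, so $L$ is bounded above on any affine open. Lemma~\ref{lem:etale-Tate} then gives the required isomorphism.

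The main obstacle is the bookkeeping in the second step: identifying the restriction of $\Tate_\scr L$ to an affine open with $L^{\t\C_2}$ endowed with the Frobenius module structure, and checking that the canonical pullback map of the proposition matches the canonical comparison map of Lemma~\ref{lem:etale-Tate}. Once this translation is made, the conclusion is immediate.
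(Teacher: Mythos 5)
Your overall strategy coincides with the paper's: reduce to the affine case and conclude by Lemma~\ref{lem:etale-Tate}. The affine heart of your argument is correct and is exactly how that lemma is meant to be used: over $\Spec(A)$ the sheaf $\Tate_{L}$ has underlying spectrum $L^{\t\C_2}$ with $A$-module structure pulled back along the Frobenius $A\to A^{\t\C_2}$, the pullback comparison map of the proposition becomes the map $B\otimes_A L^{\t\C_2}\to (B\otimes_A L)^{\t\C_2}$, and boundedness of $X$ guarantees that $L$, being locally a shift of a truncated structure sheaf, is bounded above.

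The gap is in the reduction step. First, $X$ and $Y$ are spectral algebraic \emph{spaces}, not schemes: they need not contain any nonempty affine Zariski open, so ``pick affine opens $V\subseteq Y$ mapping into affine opens $U\subseteq X$'' is not available; the localization has to happen in the Nisnevich (or étale) topology, which is what Lemma~\ref{lem:free-Nisnevich} provides. Second, ``commuting $\Tate$ with Zariski restriction'' is not a consequence of Lemma~\ref{lem:free-Nisnevich} plus naturality: compatibility of the Tate section with restriction along an open immersion is itself a special case of the statement being proved (for affines it is Lemma~\ref{lem:etale-Tate} applied to a localization $A\to A[f^{-1}]$, and Remark~\ref{rmk:bounded} shows it genuinely fails without boundedness), so invoking it inside the reduction is circular as written. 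The repair is the paper's formulation that the assertion is Nisnevich-local on both $X$ and $Y$: the cartesian section $V\mapsto (\text{pullback of }\Tate_{\scr L}\text{ to }V)$ satisfies Nisnevich descent because $\QCoh$ does, the section $V\mapsto\Tate_{\scr L_V}$ satisfies it by Lemma~\ref{lem:free-Nisnevich}, and a map of sections satisfying descent is an isomorphism once it is one on the affine pieces of finite Nisnevich (scallop) decompositions of $Y$ and then of $X$ (equivalently, one checks the comparison after pullback along an affine étale cover of $Y$, using that the comparison map for a composite $V\to Y\to X$ factors through the one for $V\to Y$). After this adjustment your affine argument applies verbatim and the proof is complete.
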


\begin{proof}
	By Lemma~\ref{lem:free-Nisnevich}, the assertion is Nisnevich-local on $X$ and $Y$.
	We are thus reduced to the affine case, which follows from Lemma~\ref{lem:etale-Tate}.
\end{proof}

\begin{remark}\label{rmk:bounded}
	The boundedness assumption in Proposition~\ref{prop:etale-Tate} cannot be removed, even for an open immersion of affine \emph{derived} schemes.
	For a counterexample, choose a sequence $(M_i)_{i\geq 0}$ of connective $\Z$-modules whose colimit is not preserved by $(\ph)^{\t\C_2}$, e.g., the linearization of the skeletal filtration of $\B\C_2$. Let $M=\bigoplus_{i\geq 0}M_i$ and let $A$ be the free animated $\Z[x]$-algebra on $M$, where the action of $x$ on $M$ is given by the transition maps $M_i\to M_{i+1}$. Then Lemma~\ref{lem:etale-Tate} fails for $B= A[x^{-1}]$ and $M=A$.
	Equivalently, the Poincaré functor $(\Perf(A),\Qoppa^\s)\to (\Perf(A[x^{-1}]),\Qoppa^\s)$ is not a Poincaré–Karoubi projection.
\end{remark}

\begin{proposition}[Étale hyperdescent for genuine Poincaré categories]
	\label{prop:etale-descent}
	Let $S$ be a qcqs spectral algebraic space, let $\scr L\in \Pic^\dagger(S)^{\B\C_2}$ and let $m\in \Z\cup\{\pm\infty\}$. Then the functor
	\[
	(\SpSpc_S^{\b,\qcqs})^\op \to \Cat^\p,\quad X \mapsto (\Perf(X),\Qoppa^{\geq m}_{\scr L}),
	\]
	is an étale hypersheaf that sends open immersions to Poincaré–Karoubi projections. 
	If $m=-\infty$, it is even an fpqc hypersheaf on $\SpSpc_S^{\qcqs}$, and if $m=\infty$, it is a Nisnevich sheaf on $\SpSpc_S^{\qcqs}$ that sends open immersions to Poincaré–Karoubi projections.
\end{proposition}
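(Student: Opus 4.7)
The plan is to reduce descent for the Poincaré categories $(\Perf(X), \Qoppa^{\geq m}_\scr L)$ to descent for each piece of their defining data: the underlying stable category $\Perf(X)$, the symmetric bilinear functor $\B_\scr L$, and the linear part of $\Qoppa^{\geq m}_\scr L$, represented by the quasi-coherent sheaf $\tau_{\geq m}\Tate_\scr L$. A Poincaré structure with fixed bilinear part $\B$ is determined by its linear part $\Lambda$ together with a structure map $\Lambda\to\B(\ph,\ph)^{\t\C_2}$, and $\Qoppa(\scr P)$ is a finite limit built from $\Lambda(\scr P)$, $\B(\scr P,\scr P)^{\h\C_2}$, and $\B(\scr P,\scr P)^{\t\C_2}$; so descent of each piece assembles into descent for the Poincaré category.

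The underlying stable category and the bilinear part are handled uniformly in $m$ and without any boundedness hypothesis. The functor $\Perf$ is an fpqc hypersheaf of stable categories on $\SpSpc^{\qcqs}$, and the bilinear functor $\B_\scr L(\scr P,\scr Q) = \map(\scr P \otimes \scr Q, \scr L)$ is cartesian in $X$ by fpqc hyperdescent of $\QCoh$. Since $(\ph)^{\h\C_2}$ preserves all limits, $\B_\scr L(\scr P,\scr P)^{\h\C_2}$ is an fpqc hypersheaf as well.

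The descent behavior of the linear part splits into the three stated cases. For $m = -\infty$ the linear part collapses, giving $\Qoppa^\s_\scr L(\scr P) = \B_\scr L(\scr P,\scr P)^{\h\C_2}$, directly an fpqc hypersheaf. For $m = \infty$ the linear part $\tau_{\geq\infty}\Tate_\scr L$ vanishes, so $\Qoppa^\q_\scr L(\scr P) = \B_\scr L(\scr P,\scr P)_{\h\C_2}$; since finite limits and finite colimits in spectra coincide, $(\ph)_{\h\C_2}$ preserves finite limits, and $\Qoppa^\q_\scr L$ sends Nisnevich squares to pullbacks. On all qcqs spaces this only yields a sheaf, since $(\ph)_{\h\C_2}$ does not preserve arbitrary limits; the étale hypersheaf property on the bounded site then has to be extracted via hypercompleteness considerations. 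For finite $m$, Proposition~\ref{prop:etale-Tate} shows that $\Tate_\scr L$ is étale-cartesian on bounded spaces, and since $\tau_{\geq m}$ commutes with flat (hence étale) pullback, so is $\tau_{\geq m}\Tate_\scr L$; an étale-cartesian section of the fpqc (hence étale) hypersheaf $\QCoh$ is automatically an étale hypersheaf, giving the étale hyperdescent of $\Qoppa^{\geq m}_\scr L$ on the bounded site, and Lemma~\ref{lem:free-Nisnevich} already records the Nisnevich hyperdescent used along the way.

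The Poincaré–Karoubi projection property for open immersions is inherited directly from \cite[Proposition~4.2.8]{CHN} as recalled in the excerpt, with the boundedness hypothesis required precisely when the linear part is nontrivial. The main subtlety to handle carefully is the bookkeeping needed to pass from descent of the constituent data $(\Perf, \B_\scr L, \tau_{\geq m}\Tate_\scr L)$ to descent in $\Cat^\p$: one must verify that the $\Cat^\p$-valued limit is computed through the parametrization of Poincaré structures on a fixed underlying hermitian category by their linear part, so that the finite pullback defining $\Qoppa^{\geq m}$ commutes with the descent limits, and that the sheaf-versus-hypersheaf distinction for the $m=\infty$ case is respected in assembling the result.
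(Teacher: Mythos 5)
Your overall strategy is the paper's: reduce descent in $\Cat^\p$ to descent of the quadratic functor on the fixed underlying category (the paper records this as $\lim_n(\Perf(X_n),\Qoppa^{\geq m}_{f_n^*\scr L})=(\Perf(X),\lim_n\Qoppa^{\geq m}_{f_n^*\scr L}\circ f_n^*)$), handle the symmetric part by fpqc hyperdescent of $\QCoh$, and handle the linear part on bounded spaces via Proposition~\ref{prop:etale-Tate} together with t-exactness of étale pullback and hyperdescent of quasi-coherent sheaves; your pullback description $\Qoppa^{\geq m}_{\scr L}=\Qoppa^\s_{\scr L}\times_{\map(\ph,\Tate_{\scr L})}\map(\ph,\tau_{\geq m}\Tate_{\scr L})$ is the paper's fiber sequence $\Qoppa^{\geq m}_{\scr L}\to\Qoppa^\s_{\scr L}\to\map(\ph,\tau_{<m}\Tate_{\scr L})$ in different clothing, and the $m=-\infty$ and Nisnevich $m=\infty$ cases are treated identically. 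Note only that in the pullback formulation you also need étale hyperdescent for the base $\map(\ph,\Tate_{\scr L})$ on bounded spaces; this follows from the same cartesian-section argument you apply to $\tau_{\geq m}\Tate_{\scr L}$, so it is covered, but it should be said.

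The one genuine flaw is your treatment of the $m=\infty$ part of the étale hypersheaf statement on $\SpSpc_S^{\b,\qcqs}$, which you propose to extract ``via hypercompleteness considerations''. That justification does not work: the values are non-truncated spectra (and Poincaré categories), and the étale sites of bounded qcqs spectral algebraic spaces carry no finite cohomological dimension bound, so there is no hypercompleteness available to upgrade a sheaf to a hypersheaf. No detour is needed, however: $m=\infty$ is literally the $\tau_{\geq\infty}\Tate_{\scr L}=0$ instance of your finite-$m$ argument, i.e.\ $\Qoppa^\q_{\scr L}=\fib(\Qoppa^\s_{\scr L}\to\map(\ph,\Tate_{\scr L}))$, and $\Tate_{\scr L}$ is étale-cartesian on bounded spaces by Proposition~\ref{prop:etale-Tate}, so étale hyperdescent follows exactly as for finite $m$ (the paper phrases this as the case $\tau_{<\infty}\Tate_{\scr L}=\Tate_{\scr L}$ of its fiber-sequence argument). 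With that repair, and with the $\Cat^\p$-limit bookkeeping you flag carried out as in the paper's displayed formula, your proposal coincides with the paper's proof.
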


\begin{proof}
	If $j\colon U\into X$ is an open immersion in $\Sp\Sch_S^{\qcqs}$, 
	$j^*\colon \Perf(X)\to \Perf(U)$ is a Karoubi projection that preserves the duality $\D_{\scr L}$.
	Hence, it is a Poincaré–Karoubi projection if and only if 
	\[
	j^*(\tau_{\geq m}\Tate_{\scr L}) \simto \tau_{\geq m}\Tate_{j^*(\scr L)}
	\]
	(cf.\ the proof of \cite[Proposition 4.2.8]{CHN}), 
	which is trivial if $m=\infty$ and follows from Proposition~\ref{prop:etale-Tate} if $X$ is bounded.
	Since the functor in the statement clearly sends coproducts to products, it remains to check monogenic hyperdescent.
	Given an fpqc hypercover $f_\bullet\colon X_\bullet\to X$, we have
	\[
	\lim_n (\Perf(X_n),\Qoppa^{\geq m}_{f_n^*\scr L}) = (\Perf(X),\lim_n \Qoppa^{\geq m}_{f_n^*\scr L}\circ f_n^*).
	\]
	Assuming $X$ bounded and $f_\bullet$ an étale hypercover if $m\neq-\infty$, the claim is then that the map
	\[
	\Qoppa^{\geq m}_{\scr L}\to \lim_n \Qoppa^{\geq m}_{f_n^*\scr L}\circ f_n^*\colon \Perf(X)^\op\to\Sp
	\]
	is an isomorphism. If $m=-\infty$, that is, for $\Qoppa^\s_{\scr L}=\map((\ph)^{\otimes 2},\scr L)^{\h\C_2}$, this follows from the fact that the map $\scr L\to \lim_n f_{n*}f_n^*\scr L$ is an isomorphism, by fpqc hyperdescent for quasi-coherent sheaves.
	Otherwise, given the fiber sequence
	\[
	\Qoppa^{\geq m}_{\scr L}\to \Qoppa^\s_{\scr L} \to \map(\ph,\tau_{< m}\Tate_{\scr L}),
	\]
	we must check that the canonical map
	\[
	\tau_{<m}(\Tate_\scr L)\to \lim_n f_{n*}\tau_{<m}\Tate_{f_n^*\scr L}
	\]
	is an isomorphism. But this follows from Proposition~\ref{prop:etale-Tate}, the fact that $f_{n}^*$ is t-exact, and fpqc hyper\-descent for quasi-coherent sheaves.
	If $m=\infty$ and we consider a Nisnevich square instead of a hypercover, the result follows directly from Lemma~\ref{lem:free-Nisnevich}.
\end{proof}

\begin{corollary}[Nisnevich descent for genuine $\scr F$-theory]
	\label{cor:nis-descent}
	Let $S$ be a qcqs spectral algebraic space, let $\scr L\in \Pic^\dagger(S)^{\B\C_2}$ and let $m\in \Z\cup\{\pm\infty\}$. Then the functor
	\[
	(\SpSpc_S^{\b,\qcqs})^\op \to \Cat^\p,\quad X \mapsto (\Perf(X),\Qoppa^{\geq m}_{\scr L}),
	\]
	sends Nisnevich squares to Poincaré–Karoubi squares. Hence, for any category $\scr S$ and Karoubi-localizing invariant $\scr F\colon \Mod_{(\Perf(S),\Qoppa^\g)}(\Cat^\p)\to \scr S$, the presheaf $\scr F^{\geq m}_{\scr L}$ on $\SpSpc_S^{\b,\qcqs}$ is a Nisnevich sheaf. If $m=\infty$, these statements hold on $\SpSpc_S^{\qcqs}$.
\end{corollary}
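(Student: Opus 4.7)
The second assertion is a formal consequence of the first: a Karoubi-localizing invariant $\scr F$ by definition sends Poincaré–Karoubi squares to cartesian squares, and it preserves finite products; the functor $X\mapsto(\Perf(X),\Qoppa^{\geq m}_\scr L)$ visibly sends disjoint unions to products. So all the content lies in proving that this functor sends Nisnevich distinguished squares to Poincaré–Karoubi squares.

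My plan is as follows. Given a Nisnevich distinguished square
\[
\begin{tikzcd}
W \ar[r] \ar[d] & V \ar[d,"v"] \\
U \ar[r,hook,"j"] & X
\end{tikzcd}
\]
in $\SpSpc_S^{\b,\qcqs}$, I need to verify (a) that the image square in $\Cat^\p$ is cartesian, and (b) that the image of $j$ is a Poincaré–Karoubi projection. Item (b) is exactly the open-immersion statement of Proposition~\ref{prop:etale-descent}. For (a), I will invoke the étale hyperdescent from the same proposition: since every Nisnevich cover is an étale cover, an étale hypersheaf is \emph{a fortiori} a Nisnevich (hyper)sheaf, and therefore sends Nisnevich distinguished squares---which become pushouts after Nisnevich sheafification---to pullback squares in $\Cat^\p$. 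If a more hands-on check is preferred, one can separate the two parts of the Poincaré structure: the symmetric bilinear part $\map((\ph)^{\otimes 2},\scr L)$ satisfies fpqc descent as a quasi-coherent invariant, while the linear part $\tau_{\geq m}\Tate_\scr L$ satisfies Nisnevich descent by Lemma~\ref{lem:free-Nisnevich} and is identified with the pointwise Tate constructions $\tau_{\geq m}\Tate_{\scr L|_{(\ph)}}$ along the étale maps of the square via Proposition~\ref{prop:etale-Tate} (using that $X$, and hence $U,V,W$, are bounded). On underlying stable categories the square is cartesian by Nisnevich descent for $\Perf$, so combined with the descent for the two pieces of the quadratic functor, we obtain the desired pullback of Poincaré categories.

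For the final clause, when $m=\infty$ both the Nisnevich sheaf property on $\SpSpc_S^{\qcqs}$ and the Poincaré–Karoubi projection property for open immersions are part of Proposition~\ref{prop:etale-descent} without any boundedness hypothesis, so the exact same argument yields the extension. No real obstacle is anticipated: given Proposition~\ref{prop:etale-descent}, the corollary is essentially a formal bookkeeping combining étale hyperdescent with the defining property of a Karoubi-localizing invariant.
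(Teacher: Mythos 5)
Your proposal is correct and matches the paper's own (one-line) proof, which simply deduces the corollary from Proposition~\ref{prop:etale-descent}: the étale hypersheaf property gives cartesianness on Nisnevich squares, the open-immersion clause gives the Poincaré–Karoubi projections, and the $m=\infty$ case uses the unbounded statement of that proposition. Your filled-in details (including the alternative hands-on check via $\B_{\scr L}$, Lemma~\ref{lem:free-Nisnevich}, and Proposition~\ref{prop:etale-Tate}) are consistent with the paper's route.
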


\begin{proof}
	This follows directly from Proposition~\ref{prop:etale-descent}.
\end{proof}

\begin{remark}[Galois hyperdescent for genuine Poincaré categories]
	Lemma~\ref{lem:etale-Tate} holds with no boundedness assumption if $f\colon A\to B$ is finite étale. Indeed, the $A$-module $B$ is then a retract of a finite free $A$-module, and $\Spec(f)$ is a fold map locally in the finite étale topology. Consequently, Proposition~\ref{prop:etale-Tate} holds for any finite étale map $f\colon Y\to X$ of qcqs spectral algebraic spaces, and the functor of Proposition~\ref{prop:etale-descent} satisfies Galois hyperdescent on $\SpSpc_S^{\qcqs}$.
\end{remark}

\begin{remark}[Descent for the Tate Poincaré structure]
	\label{rmk:Tate-descent}
	Let $S$ be a qcqs spectral algebraic space, let $\sqrt\scr L\in \Pic^\dagger(S)$, and let $\scr L=(\sqrt\scr L)^{\otimes 2}\in\Pic^\dagger(S)^{\B\C_2}$.
	There is then a Poincaré structure $\Qoppa^\t_{\scr L}$ on $\Perf(S)$ with bilinear part $\B_\scr L$ and linear part represented by $\sqrt\scr L$. 
	The arguments in the proof of Proposition~\ref{prop:etale-descent} show that the functor
	\[
	(\SpSpc_S^\qcqs)^\op \to \Cat^\p,\quad X\mapsto (\Perf(X),\Qoppa^{\t}_{\scr L}),
	\]
	sends open immersions to Poincaré–Karoubi projections, is an étale hypersheaf on $\SpSpc_S^{\b,\qcqs}$, and is a Nisnevich sheaf and Galois hypersheaf on $\SpSpc_S^{\qcqs}$. Hence, as in Corollary~\ref{cor:nis-descent}, for any Karoubi-localizing invariant $\scr F\colon\Mod_{(\Perf(S),\Qoppa^\t)}(\Cat^\p)\to\scr S$, the functor $\scr F^{\t}_{\scr L}=\scr F(\Perf(\ph),\Qoppa^\t_{\scr L})$ is a Nisnevich sheaf on $\SpSpc_S^{\qcqs}$.
\end{remark}

We conclude with a categorification of Proposition~\ref{prop:etale-descent}:

\begin{proposition}[Étale descent for Poincaré stacks]
	\label{prop:mod-descent}
	The functors
	\begin{align*}
	(\SpSpc^{\qcqs})^\op \to \CAlg(\Cat_2),\quad & X \mapsto \Mod_{(\Perf(X),\Qoppa^{\t})}(\Cat^\p_\idem),\\
	(\SpSpc^{\b,\qcqs})^\op \to \CAlg(\Cat_2),\quad&  X \mapsto \Mod_{(\Perf(X),\Qoppa^\g)}(\Cat^\p_\idem),
	\end{align*}
	are étale sheaves. Here, $\Cat^\p_\idem$ is the category of idempotent complete Poincaré categories.
\end{proposition}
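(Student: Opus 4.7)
The plan is to factor each functor as a composition
\[
X \mapsto (\Perf(X),\Qoppa) \mapsto \Mod_{(\Perf(X),\Qoppa)}(\Cat^\p_\idem),
\]
with $\Qoppa\in\{\Qoppa^\g,\Qoppa^\t\}$, and to deduce the sheaf property by combining the already-established sheaf property of the first arrow with the fact that the second arrow is limit-preserving.

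For the first arrow, the required étale descent statements have essentially been proved already. In the genuine case, Proposition~\ref{prop:etale-descent} shows that $X\mapsto(\Perf(X),\Qoppa^\g)$ is an étale hypersheaf on $\SpSpc^{\b,\qcqs}$ with values in $\CAlg(\Cat^\p_\idem)$, which is in particular an étale sheaf. In the Tate case, Remark~\ref{rmk:Tate-descent} shows that $X\mapsto(\Perf(X),\Qoppa^\t)$ on $\SpSpc^\qcqs$ is both a Nisnevich sheaf and a Galois hypersheaf; I would combine these by the standard observation that the étale topology on qcqs (spectral) algebraic spaces is generated by Nisnevich squares together with finite étale surjections, so that a presheaf which is both a Nisnevich sheaf and a finite-étale (Galois) sheaf is automatically an étale sheaf.

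For the second arrow, I would appeal to the general principle that the assignment $A\mapsto \Mod_A(\mathcal{C})$ preserves limits for any presentably symmetric monoidal category $\mathcal{C}$, a statement that is the 2-categorical Poincaré analogue of \cite[Corollary~4.8.5.13]{HA}. In our setting, one has the presentable symmetric monoidal 2-category $\Cat^\p_\idem$ (developed in \cite{hermitianI,hermitianV}), and the module 2-functor
\[
\Mod_{(-)}(\Cat^\p_\idem)\colon \CAlg(\Cat^\p_\idem)\to\CAlg(\Cat_2)
\]
is a right adjoint to the free algebra functor, hence limit-preserving. Composing with the étale sheaves from the previous paragraph produces the desired étale sheaves.

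The main obstacle is making the second step precise: one must ensure that $\Cat^\p_\idem$ is presentably symmetric monoidal in the 2-categorical sense and that the module construction $A\mapsto \Mod_A(\Cat^\p_\idem)$ really lands in $\CAlg(\Cat_2)$ and commutes with limits of commutative algebras. Both points follow from the hermitian framework of Calmès, Dotto, Harpaz, Hebestreit, Land, Moi, Nardin, Nikolaus, and Steimle, where the relevant existence of limits, colimits, and tensor products in $\Cat^\p_\idem$ is established. Once this is in place, the composition of a limit-preserving functor with an étale sheaf is again an étale sheaf, and both assertions of the proposition follow.
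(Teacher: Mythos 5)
There is a genuine gap in your second step. The assignment $A\mapsto\Mod_A(\scr C)$ is not a right adjoint to the free algebra functor (the right adjoint of the free algebra functor $\scr C\to\CAlg(\scr C)$ is the forgetful functor), and with its base-change functoriality it does not preserve limits of commutative algebras. Already for $\scr C=\Sp$: $\Z_p=\lim_n\Z/p^n$ in $\CAlg(\Sp)$, but the comparison functor $\Mod_{\Z_p}\to\lim_n\Mod_{\Z/p^n}$ sends $\Q_p$ to zero, so it is not conservative, hence not an equivalence. (What is true formally is the contravariant statement: restriction of scalars takes colimits of algebras to limits of module categories, but that is not the functoriality relevant here.) More to the point, the sheaf condition you are trying to prove is exactly an instance of the failed principle: for an étale cover with Čech nerve $X^\bullet\to X$, Proposition~\ref{prop:etale-descent} gives $(\Perf(X),\Qoppa)=\lim_\Delta(\Perf(X^\bullet),\Qoppa)$ in $\CAlg(\Cat^\p)$, and Proposition~\ref{prop:mod-descent} asserts that $\Mod_{(-)}(\Cat^\p_\idem)$ takes this particular limit of algebras to a limit of $2$-categories along base-change functors. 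If $\Mod_{(-)}$ preserved limits in general, then étale descent for quasi-coherent stacks and for $X\mapsto\Mod_{\Perf(X)}(\Cat^\st_\idem)$ would be formal consequences of descent for $\QCoh$ and $\Perf$; these are in fact nontrivial theorems of Lurie \cite[Theorem 10.2.0.2, Theorem D.5.3.1]{SAG}, which is precisely why the paper invokes them. So your second step is essentially a restatement of the proposition rather than a proof of it.

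The paper's actual proof starts from those SAG descent results for $\Mod_{\QCoh(X)}(\Pr^\L)$ and $\Mod_{\Perf(X)}(\Cat^\st_\idem)$ and then glues the hermitian and Poincaré structures on top. The delicate point, invisible in your reduction, is that the functor $\Delta^*(\ph)^{\t\C_2}$ controlling hermitian refinements is only laxly compatible with base change, so descent of hermitian structures does not follow from descent of the underlying categories and their bilinear parts. The paper handles this by splitting the étale topology into its finite étale part, where the lax square becomes strict because $f^*$ admits a two-sided adjoint, and its Nisnevich part, where one fixes the bilinear part and uses that the Tate construction preserves finite limits; it then uses Proposition~\ref{prop:etale-Tate} to identify $(\Perf(X),\Qoppa^\g)\otimes_{(\Perf(X),\Qoppa^\t)}(\Perf(Y),\Qoppa^\t)\simeq(\Perf(Y),\Qoppa^\g)$ for étale $Y\to X$ in the bounded case, and finally observes that being Poincaré (rather than merely hermitian) and being duality-preserving are étale-local conditions. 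Your first step (descent of $(\Perf(X),\Qoppa)$ itself, including the reduction of étale descent to Nisnevich plus finite étale descent on qcqs spaces) is fine, but it is only a small part of what is needed.
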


\begin{proof}
	The functor $X\mapsto \Mod_{\QCoh(X)}(\Pr^\L)$ is an étale sheaf of symmetric monoidal $2$-categories by \cite[Theorem 10.2.0.2, Remark 10.1.2.10]{SAG}, and the subfunctor $X \mapsto \Mod_{\Perf(X)}(\Cat^\st_\idem)$ is also an étale sheaf by \cite[Theorem D.5.3.1, Proposition D.5.2.2]{SAG}. 
	
	We first prove that $X\mapsto \Mod_{(\Perf(X),\Qoppa^{\t})}(\Cat^{\h}_\idem)$ is an étale sheaf.
	Fix a $\Perf(X)$-module $\scr C$ and write $\scr C_U$ for $\scr C\otimes_{\Perf(X)}\Perf(U)$. Denote by $\mathrm{Herm}_\scr C(U)$ the category of $(\Perf(U),\Qoppa^\t)$-linear hermitian structures on $\scr C_U$, which is given by the pullback
	\[
	\begin{tikzcd}[column sep=1.5em]
		\mathrm{Herm}_\scr C(U) \ar{r} \ar{d} & \Ind(\scr C_U\otimes_{\Perf(U)}\scr C_U)^{\h\C_2} \ar{d}{\Delta^*(\ph)^{\t\C_2}} \\
		\mathrm{Ar}(\Ind(\scr C_U)) \ar{r}{\mathrm{target}} & \Ind(\scr C_U)
	\end{tikzcd}
	\]
	\cite[Corollary 1.3.12 and Example 5.4.13]{hermitianI}.
	We have to show that $\mathrm{Herm}_\scr C$ is an étale sheaf. Note that the other three corners are étale sheaves in $U$, but this does not immediately imply the claim since the functor $\Delta^*(\ph)^{\t\C_2}$ is only laxly natural in $U$: for a morphism $f\colon V\to U$ over $X$, we have a lax square
	\[
	\begin{tikzcd}[column sep=2em]
		\Ind(\scr C_U\otimes\scr C_U)^{\h\C_2} \ar{d}[swap]{\Delta^*(\ph)^{\t\C_2}} \ar{r}{f^*} \ar[phantom]{dr}{\Longrightarrow} & \Ind(\scr C_V\otimes\scr C_V)^{\h\C_2} \ar{d}{\Delta^*(\ph)^{\t\C_2}} \\
		\Ind(\scr C_U) \ar{r}[swap]{f^*} & \Ind(\scr C_V)
	\end{tikzcd}
	\]
	\cite[Corollary 1.4.4]{hermitianI}.
	If however $f$ is finite étale, then the left Kan extension along $f^*\colon \scr C_U^\op\to\scr C_V^\op$ is given by precomposition with its two-sided adjoint $f_*$, so that the square commutes strictly. This shows that $\mathrm{Herm}_\scr C$ is a sheaf for the finite étale topology.
	
	It remains to show that $\mathrm{Herm}_\scr C$ is a Nisnevich sheaf.
	To that end, fix $\B\in \Ind(\scr C\otimes_{\Perf(X)}\scr C)^{\h\C_2}$ and denote by $\mathrm{Herm}_{\scr C,\B}(U)$ the category of hermitian refinements of $(\scr C_U,\B_U)$:
	\[
	\mathrm{Herm}_{\scr C,\B}(U)=\Ind(\scr C_U)_{/\Delta^*(\B_U)^{\t\C_2}}.
	\]
	It suffices to show that $\mathrm{Herm}_{\scr C,\B}$ is a Nisnevich sheaf. 
	Suppose given a finite diagram $(V_i)_i$ with colimit $U$ in the étale topos (e.g., a Nisnevich square), and let $f_i\colon V_i\to U$ be the canonical map.
	 Since the Tate construction preserves finite limits, we then have
	\[
	\Delta^*(\B_U)^{\t\C_2} \simto \lim_i (\Delta^*(\B_{V_i})^{\t\C_2}\circ f_i^*) \colon \scr C_U^\op \to \Sp,
	\]
	which implies that $\mathrm{Herm}_{\scr C,\B}$ takes the given diagram to a limit diagram.
	
	Hence, if $X\mapsto (\scr C_X,\Qoppa_X)$ is a section of $X\mapsto \CAlg(\Mod_{(\Perf(X),\Qoppa^{\t})}(\Cat^{\h}_\idem))$ that is cocartesian over étale morphisms, then also $X\mapsto \Mod_{(\scr C_X,\Qoppa_X)}(\Cat^{\h}_\idem)$ is an étale sheaf. 
	This holds for the section $X\mapsto(\Perf(X),\Qoppa^\g)$ by Proposition~\ref{prop:etale-Tate}: for any étale map $Y\to X$ in $\SpSpc^{\b,\qcqs}$, the induced map
	\[
	(\Perf(X),\Qoppa^\g)\otimes_{(\Perf(X),\Qoppa^{\t})} (\Perf(Y),\Qoppa^{\t}) \to (\Perf(Y),\Qoppa^\g)
	\]
	is an isomorphism.
	It remains to show that if a $(\Perf(X),\Qoppa^{\t})$-module is Poincaré étale-locally on $X$, then it is Poincaré, and similarly for duality-preserving functors. But this follows immediately from the fact that $X\mapsto\Mod_{\Perf(X)}(\Cat^\st_\idem)$ is an étale sheaf.
\end{proof}

\begin{remark}\label{rem:PK-local}
	It follows from Proposition~\ref{prop:mod-descent} that the property of being a Poincaré–Karoubi inclusion, projection, sequence, or square in $\Mod_{(\Perf(X),\Qoppa^{\t})}(\Cat^\p_\idem)$ is étale-local on $X$.
\end{remark}

\section{Classical Poincaré structures on derived algebraic spaces}
\label{sec:derived}

In derived algebraic geometry, the symmetric power functors induce natural Poincaré structures:

\begin{construction}[Classical Poincaré structures I]
	\label{cst:Sym^n}
	Recall from \cite[\sect 25.2]{SAG} that there are symmetric, exterior, and divided power functors
	\[
	\Sym^n,\; \Lambda^n,\; \Gamma^n\colon \Mod\to \Mod\colon \CAlg^\an \to \Cat,
	\]
	which are the unique filtered-colimit-preserving $n$-excisive functors extending the usual functors on finite free modules over polynomial $\Z$-algebras (see \cite[Theorem 3.36]{BrantnerMathew} for the extension to nonconnective modules).
	They preserve perfect modules and are related by
	\[
	\Sym^n(M[1])=\Lambda^n(M)[n]\quad\text{and}\quad \Lambda^n(M[1])=\Gamma^n(M)[n].
	\]
	Let $X\colon \CAlg^\an\to\An$ be a functor and let $\scr L\in \Pic^\dagger(X)$. We define nondegenerate quadratic functors $\Perf(X)^\op\to\Sp$ by
	\begin{alignat*}{2}
		&\Qoppa^\sym_\scr L &&= \map(\Sym^2(\ph),\scr L),\\
		&\Qoppa^{\alt}_{\scr L} &&=\map(\Lambda^2(\ph),\scr L),\\
		&\Qoppa^{\qu}_{\scr L} &&=\map(\Gamma^2(\ph),\scr L).
	\end{alignat*}
	Note that $\Qoppa^\sym_\scr L$ and $\Qoppa^\qu_\scr L$ have symmetric bilinear part $\B_{\scr L}$, while $\Qoppa^\alt_\scr L$ has symmetric bilinear part $\B_{-\scr L}$.
Because of the above relations between $\Sym^n$, $\Lambda^n$, and $\Gamma^n$, we have
\[
\Qoppa^\alt_{\scr L} = \Qoppa^\sym_{\scr L[2]}\circ\Sigma\quad\text{and}\quad \Qoppa^\qu_{\scr L} = \Qoppa^\alt_{\scr L[2]}\circ \Sigma.
\]
\end{construction}

\begin{remark}
	If $n\geq 2$, the functors $\Sym^n$, $\Lambda^n$, and $\Gamma^n$ are the only members of the family 
	\[\rm P^n_{i}=\Sigma^{-ni}\Sym^n\Sigma^i,\quad {i\in \Z},\]
	that take static values on finite free modules over static rings, which explains their special status.
	The quadratic functors $\rm P^2_{i}$ define as above Poincaré structures on $\Perf(X)$ (which are denoted by $\c\Qoppa^{\geq\deg\scr L+ i}_{(-1)^i\scr L}$ below for qcqs derived algebraic spaces). These Poincaré categories are however isomorphic via $\Sigma^i$ to $(\Perf(X),\Qoppa^\sym_{\scr L[2i]})$, so that the Poincaré structures $\Qoppa^\sym_{\scr L}$ for $\scr L\in \Pic^\dagger(X)$ subsume the others.
\end{remark}

To better understand these classical Poincaré structures and to compare them to the genuine Poincaré structures of Section~\ref{sec:genuine}, we use another perspective.
If $A$ is a static ring, then the double-speed Postnikov filtrations $\tau_{\geq 2*}A^{\t\C_2}$ and $\tau_{\geq 2*+1}(-A)^{\t\C_2}$ have associated graded $\Sigma^{2*}A/2$ and $\Sigma^{2*+1}A/2$, respectively. This does not hold anymore if $A$ is animated, as then $A/2$ need not be concentrated in degrees $0$ and $1$. We can however consider the animation of these Postnikov filtrations, i.e., their left Kan extension from polynomial rings to animated rings, which will have these associated graded for all animated rings. These filtrations are the linear parts of new Poincaré structures $\c\Qoppa^{\geq 2*}_A$ and $\c\Qoppa^{\geq 2*+1}_{-A}$ for perfect modules over animated rings, 
with comparison maps to $\Qoppa^{\geq 2*}_A$ and $\Qoppa^{\geq 2*+1}_{-A}$. We will see in Proposition~\ref{prop:Sym^n} that they recover the Poincaré structures of Construction~\ref{cst:Sym^n}.

Through this animation process, the ``skew'' Poincaré structures such as $\Qoppa^{\geq 0}_{-A}$ (genuine skew-sym\-met\-ric), $\Qoppa^{\geq 1}_A$ (genuine even), and $\Qoppa^{\geq 2}_{-A}$ (genuine skew-quadratic) disappear, since on polynomial $\Z$-algebras we have $\Qoppa^{\geq 2m}_{-A}=\Qoppa^{\geq 2m+1}_{-A}$ and $\Qoppa^{\geq 2m-1}_A=\Qoppa^{\geq 2m}_A$.
On the other hand, the quadratic Poincaré structures $\Qoppa^\q_{\pm A}$ are already animated, as are the symmetric Poincaré structures $\Qoppa^\s_{\pm A}$ if $A$ is bounded (Proposition~\ref{prop:genuine-vs-derived}).

\begin{construction}[Classical Poincaré structures II]
	\label{cst:dQoppa}
	We define a functor
	\[
	(\dSpc^\qcqs)^\op\to \CAlg(\Cat^\p),\quad X\mapsto (\Perf(X),\c\Qoppa^{\geq 0}).
	\]
	The bilinear part of $\c\Qoppa^{\geq 0}$ is $\B_{\scr O}$, and its linear part is represented by the sheaf 
	\[\c\Tate^{\geq 0}_{\scr O}\in \CAlg(\QCoh(X))_{/\Tate_{\scr O}}\] 
	constructed as follows.
	Let $p\colon\scr C\to(\dSpc^\qcqs)^\op$ be the cocartesian fibration classified by the functor $X\mapsto\CAlg(\QCoh(X))$. We define the section $\c\Tate^{\geq 0}$ of $p$ to be the $p$-right Kan extension from $\CAlg^\an$ of the $p$-left Kan extension from smooth $\Z$-algebras of $\tau_{\geq 0}(\ph)^{\t\C_2}$, where $A^{\t\C_2}$ is an $\E_\infty$-$A$-algebra via the Frobenius; this comes with an $\E_\infty$-map $\c\Tate^{\geq 0}_{\scr O}\to \Tate_{\scr O}$, since $X\mapsto\Tate_{\scr O_X}$ is $p$-right Kan extended from affine derived schemes (by Lemma~\ref{lem:free-Nisnevich}).
	
	More generally, for any $\scr L\in\Pic^\dagger(X)$, $m\in\Z\cup\{\pm\infty\}$, and $\epsilon\in\{\pm 1\}$, we define a Poincaré category
	\[
	(\Perf(X),\c\Qoppa^{\geq m}_{\epsilon\scr L})
	\]
	as follows.
	The bilinear part of $\c\Qoppa^{\geq m}_{\epsilon\scr L}$ is $\B_{\epsilon\scr L}$, and its linear part $\c\Tate^{\geq m}_{\epsilon\scr L}$ is obtained by Kan extending as above the section $\tau_{\geq m}(\epsilon(\ph))^{\t\C_2}$ of $\QCoh(\ph)$ from the category of pairs $(S,I)$ where $S$ is a smooth $\Z$-algebra and $I\in\Pic^\dagger(S)$. We also write $\c\Tate_{\epsilon\scr L}$ for $\c\Tate_{\epsilon\scr L}^{\geq-\infty}$.
\end{construction}

\begin{remark}\label{rmk:Pic-LKE}
	We explain how to explicitly compute the left Kan extension in the above construction.
	Let $R$ be an animated ring, $\scr E_R\to \CAlg_R^\an$ the cocartesian fibration classified by $\Pic^\dagger$ (or any smooth algebraic stack over $R$ with smooth and quasi-affine diagonal), and $\scr E_R^\sm$ its restriction to $\CAlg_R^\sm$.
	By \cite[Proposition A.0.4]{EHKSY3}, the functor $\Pic^\dagger$ on $\CAlg_R^\an$ is left Kan extended from $\CAlg_R^\sm$, as is the functor $\Map(I,I'\otimes_R(\ph))$ for any $I,I'\in\Pic^\dagger(R)$. The former implies that the category $(\scr E_R^\sm)_{/(A,L)}$ is nonempty for any $(A,L)\in\scr E_R$.
	The latter implies, using Quillen's Theorem~A, that for any $(S,I) \in (\scr E_R^\sm)_{/(A,L)}$, the functor
	\[
	(\CAlg_{S}^\sm)_{/A} \to (\scr E_R^\sm)_{/(A,L)},\quad S'\mapsto (S',I\otimes_{S}S'),
	\]
	is cofinal.
	Thus, a functor $F\colon\scr E_R\to\scr C$ is left Kan extended from $\scr E_R^\sm$ if and only if, for any $(S,I)\in\scr E_R^\sm$, the functor $F(\ph,I\otimes_S\ph)\colon \CAlg_S^\an\to \scr C$ is left Kan extended from $\CAlg_S^\sm$.
\end{remark}

\begin{remark}
	\label{rmk:dTate}
	Let $A$ be an animated ring, $L\in\Pic^\dagger(A)$, $m\in\Z\cup\{\pm\infty\}$, and $\epsilon\in\{\pm 1\}$.
	Using Remark~\ref{rmk:Pic-LKE}, we can understand $\c\Tate^{\geq m}_{\epsilon L}$ quite concretely:
	\begin{enumerate}
	\item If $\epsilon=(-1)^{\deg L}$ (resp.\ if $\epsilon=-(-1)^{\deg L}$), then
	\[
	\c\Tate^{\geq 2*-1}_{\epsilon L}=\c\Tate^{\geq 2*}_{\epsilon L}\quad \text{resp.}\quad \c\Tate^{\geq 2*}_{\epsilon L}=\c\Tate^{\geq 2*+1}_{\epsilon L},
	\]
	since the odd (resp.\ even) homotopy groups of $(\epsilon L)^{\t\C_2}$ vanish if $A$ is smooth over $\Z$. 
	Moreover, $\c\Tate^{\geq 2*}_{\epsilon L}$ (resp.\ $\c\Tate^{\geq 2*+1}_{\epsilon L}$) is an exhaustive filtration of $\c\Tate_{\epsilon L}$ with associated graded $\Sigma^{2*-\deg L}L/2$ (resp.\ $\Sigma^{2*+1-\deg L}L/2$), viewed as an $A$-module via the Frobenius. This filtration is also complete, since $\c\Tate^{\geq m}_{\epsilon L}$ is $m$-connective.
	\item The underlying $\Z$-module of the $A$-module $\c\Tate^{\geq m}_{\epsilon L}$ is given by
	\[
	\c\Tate^{\geq m}_{\epsilon L} = L\otimes_\Z\tau_{\geq m-\deg L}(\epsilon \Z)^{\t\C_2},
	\]
	where $(\epsilon \Z)^{\t\C_2}$ is a left $\Z$-module via the canonical map $\Z\to\Z^{\t\C_2}$ and a right $\Z$-module via the Frobenius map $\Z\to\Z^{\t\C_2}$.
	Indeed, the right-hand side preserves colimits in $L$ and agrees with 
	$\tau_{\geq m}(\epsilon L)^{\t\C_2}$ if $A$ is a flat $\Z$-algebra
	(since $L$ is then a filtered colimit of uniformly shifted finite free $\Z$-modules and $\tau_{\geq m}(\ph)^{\t\C_2}$ preserves such colimits).
	\item It follows from either (i) or (ii) that the functor
	\[
	\c\Tate^{\geq m}_{\ph\otimes_A\epsilon L}\colon \CAlg^\an_A\to \Sp
	\]
	preserves sifted colimits, i.e., it is left Kan extended from $\Poly_A$.
	\end{enumerate}
\end{remark}

\begin{definition}[Classical $\scr F$-theory]
	\label{def:derivedGW}
	Let $X$ be a qcqs derived algebraic space, let $\scr L\in \Pic^\dagger(X)$, let $m\in\Z\cup\{\pm\infty\}$, and let $\epsilon\in\{\pm 1\}$.
		Given a functor $\scr F\colon \Cat^\p\to\scr S$, we define
		\[
		\c\scr F^{\geq m}_{\epsilon\scr L}\colon (\dSpc_X^\qcqs)^\op\to \scr S,\quad U\mapsto \scr F(\Perf(U),\c\Qoppa^{\geq m}_{\epsilon\scr L}).
		\]
		Because of the suspension isomorphisms
		\[
		\Sigma\colon (\Perf(X),\c\Qoppa^{\geq m}_{\epsilon\scr L})\simto (\Perf(X),\c\Qoppa^{\geq m+1}_{-\epsilon\scr L[2]})
		\]
		and the isomorphisms of Remark~\ref{rmk:dTate}(i)
		\[
		\c\Qoppa^{\geq 0}_{-\scr L}=\begin{cases}
		\c\Qoppa^{\geq 1}_{-\scr L}, & \text{if $\deg\scr L$ is even,} \\ 
		\c\Qoppa^{\geq -1}_{-\scr L}, & \text{if $\deg\scr L$ is odd,} 
		\end{cases}
		\]
		each such Poincaré category with $m\in \Z$ is canonically isomorphic to one with $m=\deg\scr L$ and $\epsilon=1$ (see Section~\ref{sec:Brauer} for a more detailed discussion of the relevant parameter space). We will write
		\begin{alignat*}{2}
			\Qoppa^\c_{\scr L} &= \c\Qoppa^{\geq\deg\scr L}_{\scr L}, &\qquad \scr F^\c_{\scr L}&=\c\scr F^{\geq \deg\scr L}_{\scr L},\\
			\Qoppa^\varsigma_\scr L&=\c\Qoppa^{\geq-\infty}_\scr L,& \scr F^\varsigma_\scr L&=\c\scr F^{\geq-\infty}_\scr L, \\
			\Qoppa^\qoppa_\scr L &= \c\Qoppa^{\geq\infty}_\scr L, & \scr F^\qoppa_\scr L &= \c\scr F^{\geq\infty}_\scr L.
		\end{alignat*}
		Then, for all $m\in\Z$, if $u=m-\deg\scr L$, we have
		\[
		\c\scr F^{\geq m}_{\epsilon\scr L}
		=\begin{cases}
			\scr F^\c_{\scr L[2u]}, & \text{if $\epsilon = (-1)^{u}$,} \\
			\scr F^\c_{\scr L[2u-2]}, & \text{if $\epsilon=-(-1)^u$ and $\deg\scr L$ is even,} \\
			\scr F^\c_{\scr L[2u+2]}, & \text{if $\epsilon=-(-1)^u$ and $\deg\scr L$ is odd.}
		\end{cases}
		\]
		We refer to $\scr F^\c_{\scr L}$ as the \emph{classical $\scr L$-valued $\scr F$-theory} of derived algebraic spaces.
		By Proposition~\ref{prop:genuine-vs-derived} below, we have $\Qoppa^\qoppa_\scr L=\Qoppa^\q_\scr L$ (but the right-hand side is defined more generally with an involution on $\scr L$), and the canonical map $\Qoppa^\varsigma_\scr L\to\Qoppa^\s_\scr L$ is an isomorphism on bounded qcqs derived algebraic spaces.
\end{definition}

\begin{remark}
	Note that for genuine $\scr F$-theory $\scr F^\g_\scr L$, $\scr L$ can be an invertible sheaf with involution, whereas for classical $\scr F$-theory $\scr F^\c_{\scr L}$, $\scr L$ is simply an invertible sheaf.
\end{remark}

\begin{proposition}\label{prop:Sym^n}
	Let $X$ be a qcqs derived algebraic space and let $\scr L\in \Pic^\dagger(X)$. Then there are isomorphisms of Poincaré structures on $\Perf(X)$, natural in $(X,\scr L)$:
	\[
		\Qoppa^\sym_\scr L=\Qoppa^\c_{\scr L}=\c\Qoppa^{\geq \deg\scr L}_{\scr L},\quad\Qoppa^\alt_\scr L=\c\Qoppa^{\geq \deg\scr L+1}_{-\scr L},\quad\Qoppa^\qu_\scr L=\c\Qoppa^{\geq \deg\scr L+2}_{\scr L}.
	\]
	Hence, for any functor $\scr F\colon \Cat^\p\to\scr S$, there are isomorphisms of functors $(\dSpc_X^\qcqs)^\op\to \scr S$:
	\[
		\scr F^\c_\scr L=\scr F(\Perf(\ph),\Qoppa^\sym_\scr L),\quad\scr F^\c_{\scr L[2]}=\scr F(\Perf(\ph),\Qoppa^\alt_\scr L),\quad\scr F^\c_{\scr L[4]}=\scr F(\Perf(\ph),\Qoppa^\qu_\scr L).
	\]
\end{proposition}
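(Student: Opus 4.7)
The plan is to identify both sides as Poincaré refinements of the same bilinear part via the classification of such refinements by ``module data,'' then reduce the comparison to smooth $\Z$-algebras where everything becomes classical.

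\emph{Bilinear parts.} Combining the cross-effect decompositions $T^2(P\oplus Q)=T^2(P)\oplus T^2(Q)\oplus(P\otimes Q)$ for $T\in\{\Sym,\Lambda,\Gamma\}$ (which extend to all of $\Perf$ by the Brantner--Mathew animation) with an analysis of the induced swap $\C_2$-action on the cross effect, one sees that $\Qoppa^\sym_\scr L$ and $\Qoppa^\qu_\scr L$ have bilinear part $\B_\scr L$, while $\Qoppa^\alt_\scr L$ has bilinear part $\B_{-\scr L}$ (the sign comes from antisymmetrization in $\Lambda^2$). By construction the right-hand sides $\c\Qoppa^{\geq m}_{\epsilon\scr L}$ have bilinear part $\B_{\epsilon\scr L}$, so the bilinear parts match in each of the three claimed equalities.

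\emph{Identification of the Poincaré data.} A Poincaré refinement of a fixed bilinear part $\B_{\epsilon\scr L}$ is classified by a map $M\to\Tate_{\epsilon\scr L}$ in $\QCoh(X)$, with the Poincaré structure reconstructed as the fiber product
\[
\Qoppa(P)=\B_{\epsilon\scr L}(P,P)^{\h\C_2}\times_{\map(P,\Tate_{\epsilon\scr L})}\map(P,M).
\]
For the right-hand side this classifying datum is tautologically $\c\Tate^{\geq m}_{\epsilon\scr L}\to\Tate_{\epsilon\scr L}$. The task is therefore to identify the datum underlying $\Qoppa^\sym_\scr L$ with $\c\Tate^{\geq\deg\scr L}_\scr L\to\Tate_\scr L$. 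I would argue that both data, regarded as functors of pairs $(A,L)$ with $A\in\CAlg^\an$ and $L\in\Pic^\dagger(A)$, are left Kan extended from the full subcategory of pairs with $A$ smooth over $\Z$: for the right-hand side by the construction of $\c\Tate$ (Remark~\ref{rmk:Pic-LKE} and Remark~\ref{rmk:dTate}), and for the left-hand side because $\Sym^2$ is the animation of its classical value on polynomial rings with finite free modules, and because $\map$ and the Tate construction propagate the appropriate sifted-colimit preservation to the implicit datum extracted from the pullback formula.

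\emph{Verification on smooth $\Z$-algebras.} On a pair $(S,I=S[d])$ with $S$ smooth over $\Z$, it suffices to compare the Poincaré structures on the dense subcategory of finite free modules $P=S^n$. The $\C_2$-representation $(S^n)^{\otimes 2}\cong S^{n^2}$ splits equivariantly as the diagonal summand $S^n$ with trivial action and the off-diagonal summand $S^{n^2-n}$ on which $\C_2$ acts freely, whose Tate construction vanishes. A direct homotopy calculation identifies
\[
\fib\bigl(\map(\Sym^2(S^n),S[d])\to\map((S^n)^{\otimes 2},S[d])^{\h\C_2}\bigr)=\bigl(\fib(S\to S^{\h\C_2})\bigr)^n[d]
\]
with $\map(S^n,\tau_{<0}(S^{\t\C_2})[d-1])$, exactly the fiber of $\c\Tate^{\geq d}_{S[d]}\to\Tate_{S[d]}$ evaluated at $S^n$. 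This establishes the $\sym$ case. The $\alt$ and $\qu$ cases then follow formally by combining the suspension isomorphism $(\Perf(X),\c\Qoppa^{\geq m}_{\epsilon\scr L})\simeq(\Perf(X),\c\Qoppa^{\geq m+1}_{-\epsilon\scr L[2]})$ of Construction~\ref{cst:dQoppa} with the relations $\Qoppa^\alt_\scr L=\Qoppa^\sym_{\scr L[2]}\circ\Sigma$ and $\Qoppa^\qu_\scr L=\Qoppa^\alt_{\scr L[2]}\circ\Sigma$ from Construction~\ref{cst:Sym^n}.

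The main obstacle is the left-Kan-extension argument of the second step: since the classifying datum of $\Qoppa^\sym_\scr L$ admits no direct closed form away from the smooth case, one must argue abstractly that the implicit data inherits the sifted-colimit preservation of its constituents $\Sym^2$, $\map$, and $\Tate$. Once this reduction to smooth $\Z$-algebras is in place, the final verification is elementary and essentially reflects the classical identification $\pi_0((P^{\otimes 2})_{\h\C_2})=\Sym^2 P$ together with the Tate-acyclicity of free $\C_2$-representations.
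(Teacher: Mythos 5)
Your outline coincides with the paper's: reduce everything to $\Qoppa^\sym_{\scr L}$ via the suspension isomorphisms, verify the identification over smooth $\Z$-algebras, and then argue that the linear part of $\Qoppa^\sym_{\scr L}$ is produced by the same Kan-extension procedure that defines $\c\Tate^{\geq\deg\scr L}_{\scr L}$. The problem is that you leave exactly this last step --- which you yourself call ``the main obstacle'' --- unproved. The justification you sketch (that $\Sym^2$, $\map$, and the Tate construction ``propagate the appropriate sifted-colimit preservation to the implicit datum'') does not work as stated: $\map(\Sym^2(\ph),\scr L)$ maps \emph{out} of $\Sym^2$, so colimits become limits, and the relevant variable is the animated ring $A$, over which $\Perf(A)$ itself varies; nothing formal guarantees that the linear part of a quadratic functor commutes with sifted colimits in $A$. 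Moreover, your claim that the classifying datum ``admits no direct closed form away from the smooth case'' is false, and the closed form is precisely what closes the gap: evaluating the defining fiber sequence $\map((\ph)^{\otimes 2},\scr L)_{\h\C_2}\to\map(\Sym^2(\ph),\scr L)\to\map(\ph,\Tate^\sym_{\scr L})$ at the unit $\scr P=\scr O$ (using $\Sym^2(\scr O)=\scr O$) shows that $\Tate^\sym_{L}=\cofib(L_{\h\C_2}\to L)$ for \emph{every} animated ring, whence $B\mapsto\Tate^\sym_{L\otimes_AB}$ preserves sifted colimits and is left Kan extended from smooth (indeed polynomial) $A$-algebras. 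This is how the paper argues.

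Two further points. First, you only compare the two data on affine pairs $(A,L)$, but the proposition concerns arbitrary qcqs derived algebraic spaces, and $\c\Tate^{\geq m}$ is by construction also \emph{right} Kan extended from affines; so you still need that $U\mapsto\Tate^\sym_{\scr L_U}$ satisfies Nisnevich descent, which the paper deduces from the fact that $(\ph)_{\h\C_2}$ preserves finite limits via the same fiber sequence. Second, in your smooth verification, computing the fiber of $\Qoppa^\sym_{S[d]}\to\Qoppa^\s_{S[d]}$ and matching its homotopy with that of $\Sigma^{-1}\tau_{<d}\Tate_{S[d]}$ does not yet identify the Poincar\'e structures: you must identify the classifying map $\Tate^\sym_{S[d]}\to\Tate_{S[d]}$ with the truncation $\tau_{\geq d}\Tate_{S[d]}\to\Tate_{S[d]}$, e.g.\ by checking that $\Tate^\sym_{S[d]}$ is $d$-connective and that the map is an isomorphism on $\pi_{\geq d}$, or by invoking \cite[Proposition 4.2.22]{hermitianI} for static rings, as the paper does. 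All of this is fixable, but as written the argument has genuine gaps at its central step.
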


\begin{proof}
	By the suspension isomorphisms, it is enough to prove the claim for $\Qoppa^\sym_{\scr L}$.
	For static $\Z$-algebras, this is exactly \cite[Proposition 4.2.22]{hermitianI}.
	It thus remains to show that the linear part of $\Qoppa^\sym_{\scr L}$ is extended from smooth $\Z$-algebras as in Construction~\ref{cst:dQoppa}.
	Denote by $\Tate^\sym_\scr L \in \QCoh(X)_{/\Tate_\scr L}$ the sheaf representing the linear part of $\Qoppa^\sym_\scr L$, which fits in the fiber sequence
	\[
	\map((\ph)^{\otimes 2},\scr L)_{\h\C_2}\to \map(\Sym^2(\ph),\scr L) \to \map(\ph,\Tate^\sym_\scr L).
	\]
	 Since $(\ph)_{\h\C_2}$ preserves finite limits, the section $U\mapsto\Tate^\sym_{\scr L_U}$ satisfies Nisnevich descent and hence is right Kan extended from affine derived schemes.
	 If $A\in\CAlg^\an$ and $L\in\Pic^\dagger(A)$, then $\Tate^\sym_L$ is the cofiber of a map $L_{\h\C_2}\to L$. It follows that $\Tate^\sym_{\ph\otimes_AL}\colon \CAlg^\an_A\to\Sp$ preserves sifted colimits and hence is left Kan extended from smooth $A$-algebras.
\end{proof}

\begin{remark}
	Under the identifications $\c\Qoppa^{\geq\deg\scr L}_{\scr L}=\Qoppa^\sym_{\scr L}$ and $\c\Qoppa^{\geq\deg\scr L+2}_{\scr L}=\Qoppa^\qu_{\scr L}$ of Proposition~\ref{prop:Sym^n}, the morphism $\c\Qoppa^{\geq\deg\scr L+2}_{\scr L}\to \c\Qoppa^{\geq\deg\scr L}_{\scr L}$ is induced by the norm map $\Sym^2\to\Gamma^2$.
\end{remark}

\begin{proposition}[Descent properties of classical Poincaré structures]
	\label{prop:descent-animated}
	Let $X$ be a qcqs derived algebraic space, $\scr L\in \Pic^\dagger(X)$, $\epsilon\in\{\pm 1\}$, and $m\in \Z\cup\{\pm\infty\}$.
	\begin{enumerate}
		\item If $f\colon Y\to X$ is an étale morphism of qcqs derived algebraic spaces, the canonical map
	\[
	f^*(\c\Tate^{\geq m}_{\epsilon\scr L}) \to \c\Tate^{\geq m}_{\epsilon f^*(\scr L)}
	\]
	in $\QCoh(Y)$ is an isomorphism.
	\item If $j\colon U\into X$ is a quasi-compact open immersion, the Poincaré functor
	\[
	j^*\colon (\Perf(X), \c\Qoppa^{\geq m}_{\epsilon\scr L}) \to (\Perf(U),\c\Qoppa^{\geq m}_{\epsilon\scr L})
	\]
	is a Poincaré–Karoubi projection.
	\item The functor
	\[
	(\dSpc_X^\qcqs)^\op \to \Cat^\p,\quad U\mapsto (\Perf(U),\c\Qoppa^{\geq m}_{\epsilon\scr L}),
	\]
	is an étale hypersheaf, and even an fpqc hypersheaf if $m\neq -\infty$.
	\item The functor
	\[
	(\dSpc^\qcqs)^\op\to\CAlg(\Cat_2),\quad U\mapsto \Mod_{(\Perf(U),\Qoppa^\c)}(\Cat^\p_\idem),
	\]
	is an étale sheaf.
	\end{enumerate}
\end{proposition}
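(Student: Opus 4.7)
The plan is to treat (i) as the central technical input, from which (ii), (iii), and (iv) will follow by arguments paralleling those of Section~\ref{sec:genuine}. The main obstacle is (i) itself: one must lift the étale base change for the Tate construction (Lemma~\ref{lem:etale-Tate}) through the double Kan extension defining $\c\Tate^{\geq m}_{\epsilon\scr L}$.

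For (i), I would first reduce to the affine case: both sides are sections of $\QCoh$ that are right Kan extended from $\CAlg^\an$ by construction. One is thus reduced to showing that for an étale map $A \to B$ of animated rings and $L \in \Pic^\dagger(A)$, the natural map
\[
B \otimes_A \c\Tate^{\geq m}_{\epsilon L} \longrightarrow \c\Tate^{\geq m}_{\epsilon(B \otimes_A L)}
\]
is an isomorphism. The most direct route uses the explicit formula of Remark~\ref{rmk:dTate}(ii): both sides have underlying $\Z$-module $(B \otimes_A L) \otimes_\Z \tau_{\geq m - \deg L}(\epsilon\Z)^{\t\C_2}$ by associativity of $\otimes_\Z$, and matching of the Frobenius $B$-module structures follows from the functoriality of the Kan extension along $A \to B$. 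Alternatively, Remark~\ref{rmk:dTate}(iii) shows that both sides preserve sifted colimits in the ring variable, so one reduces first to polynomial $A$-algebras and then to smooth $\Z$-algebras $A$; in this case $\c\Tate^{\geq m}_{\epsilon L} = \tau_{\geq m}(\epsilon L)^{\t\C_2}$ is given directly, and the claim follows from Lemma~\ref{lem:etale-Tate} (since $\epsilon L$ is concentrated in one degree, hence bounded above) combined with the flatness of étale maps, which lets $B \otimes_A (\ph)$ commute with $\tau_{\geq m}$.

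For (ii), the functor $j^*$ is a Karoubi projection of stable categories preserving the duality $\D_{\epsilon\scr L}$; by the criterion used in the proof of Proposition~\ref{prop:etale-descent}, being a Poincaré–Karoubi projection reduces to the linear-part isomorphism $j^*(\c\Tate^{\geq m}_{\epsilon\scr L}) \simto \c\Tate^{\geq m}_{\epsilon j^*\scr L}$, which is exactly (i). For (iii), coproducts go to products by inspection, and monogenic descent under a hypercover $f_\bullet\colon X_\bullet \to X$ reduces to showing that the comparison $\c\Qoppa^{\geq m}_{\epsilon\scr L} \to \lim_n (\c\Qoppa^{\geq m}_{\epsilon f_n^*\scr L}) \circ f_n^*$ is an isomorphism on $\Perf(X)^\op$. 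The bilinear part $\B_{\epsilon\scr L}$ has fpqc hyperdescent from QCoh. For the linear part under an étale hypercover, (i) combined with QCoh hyperdescent suffices; for the fpqc refinement when $m \neq -\infty$, the description of Remark~\ref{rmk:dTate}(ii) presents $\c\Tate^{\geq m}_{\epsilon\scr L}$ as $\scr L$ tensored over $\Z$ with a fixed bounded-below $\Z$-spectrum, reducing fpqc descent to that of $\scr L$.

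Finally, for (iv), I would adapt the proof of Proposition~\ref{prop:mod-descent} to the animated setting. The sheaves $X \mapsto \Mod_{\QCoh(X)}(\Pr^\L)$ and $X \mapsto \Mod_{\Perf(X)}(\Cat^\st_\idem)$ on $\dSpc^\qcqs$ are étale sheaves by the derived analogues of \cite[Theorem~10.2.0.2, Theorem~D.5.3.1]{SAG}, and the hermitian-refinement bookkeeping of Proposition~\ref{prop:mod-descent} promotes this to an étale sheaf of Poincaré module $2$-categories over $(\Perf(\ph), \Qoppa^\t_{\scr O})$. The key input from (i) is that the section $X \mapsto (\Perf(X), \Qoppa^\c)$ is cocartesian over étale morphisms, which then reduces the étale sheaf property of $X \mapsto \Mod_{(\Perf(X), \Qoppa^\c)}(\Cat^\p_\idem)$ to that of the ambient $2$-category of modules, as in \emph{loc.\ cit.}
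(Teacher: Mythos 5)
Your overall architecture — (i) as the key input, with (ii) and (iv) deduced as in Section~\ref{sec:genuine} — matches the paper, and your ``alternative'' argument for (i) is essentially the paper's proof: after reducing to affines, one uses that $\c\Tate^{\geq m}_{\ph\otimes_A\epsilon L}$ preserves sifted colimits (Remark~\ref{rmk:dTate}(iii)) to reduce to $A$ smooth over $\Z$, and then applies Lemma~\ref{lem:etale-Tate} (with flatness of $B$ over $A$ to commute $\tau_{\geq m}$ past $B\otimes_A(\ph)$). What you leave implicit, and what the paper supplies via \cite[Propositions A.0.1 and A.0.4]{EHKSY3}, is that the functors classifying étale algebras and $\Pic^\dagger$ are themselves left Kan extended from smooth $\Z$-algebras; this is what lets you carry the pair $(A\to B,L)$ along the reduction, and it should be stated. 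By contrast, your ``most direct route'' does not work: the $A$-module structure on $\c\Tate^{\geq m}_{\epsilon L}$ is via the Tate-valued Frobenius, and Remark~\ref{rmk:dTate}(ii) only identifies the \emph{underlying $\Z$-module}; the $A$-action is not through the $L$-factor, so the associativity $B\otimes_A(L\otimes_\Z T)\simeq (B\otimes_A L)\otimes_\Z T$ is not available. Indeed, such a purely formal argument would prove (i) for arbitrary morphisms, which is false: for $A=\Z[x]\to B=\Z[x]/(x)$ with $L=A$, the element $x$ acts on $\tau_{\geq 0}\Z[x]^{\t\C_2}$ through its Frobenius image $x^2\in\F_2[x]$, so $\pi_0(B\otimes_A\c\Tate^{\geq 0}_{A})\cong\F_2[x]/(x^2)$ whereas $\pi_0(\c\Tate^{\geq 0}_{B})\cong\F_2$. Étaleness must enter in an essential way, exactly as in Lemma~\ref{lem:etale-Tate}.

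Parts (ii) and (iv) are fine and agree with the paper (your ``cocartesian over étale morphisms'' is precisely the statement that $(\Perf(X),\Qoppa^\c)\otimes_{(\Perf(X),\Qoppa^{\t})}(\Perf(Y),\Qoppa^{\t})\to(\Perf(Y),\Qoppa^\c)$ is an equivalence, which follows from (i)). Part (iii), however, has genuine gaps. First, descent for the bilinear part plus the linear part does not give descent for the quadratic functor: the relevant fiber sequence is $\B(\scr P,\scr P)_{\h\C_2}\to\Qoppa(\scr P)\to\Lambda(\scr P)$, and homotopy \emph{orbits} need not commute with the limit over a hypercover, so you still owe descent for the quadratic part $\Qoppa^\qoppa_{\epsilon\scr L}$. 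Second, your fpqc argument for finite $m$ via Remark~\ref{rmk:dTate}(ii) runs into the same Frobenius-twist issue as above (the formula is an identification of underlying $\Z$-modules, not a natural $\scr O$-linear one), and even granting a natural spectrum-level identification you would still need $(\ph)\otimes_\Z T$, with $T$ an infinite bounded-below spectrum, to commute with the cosimplicial limit — this requires an argument you do not give. The paper avoids both problems: for $m\in\Z$ it identifies $\c\Qoppa^{\geq m}_{\epsilon\scr L}$ with a shift of $\Qoppa^\sym_{\scr L}=\map(\Sym^2(\ph),\scr L)$ via Proposition~\ref{prop:Sym^n}, which is an fpqc hypersheaf on the nose because $\Sym^2$ is compatible with base change; the case $m=\infty$ (hence descent for $\Qoppa^\qoppa$) follows by passing to the limit over finite $m$, using that $\c\Tate^{\geq m}$ is $m$-connective; and the case $m=-\infty$ is then obtained, for étale hypercovers only, from the fiber sequence $\Qoppa^\qoppa_{\epsilon\scr L}\to\c\Qoppa^{\geq-\infty}_{\epsilon\scr L}\to\map(\ph,\c\Tate_{\epsilon\scr L})$ together with (i). You should restructure (iii) along these lines.
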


\begin{proof}
	We first prove (i) in the affine case, i.e., that if $A\to B$ is étale, then the map
	\[
	B\otimes_A \c\Tate^{\geq m}_{\epsilon L}\to \c\Tate^{\geq m}_{B\otimes_A\epsilon L}
	\]
	is an isomorphism.
	The functor $\CAlg^\an\to \An$ sending $A$ to the anima of étale maps $A\to B$ is left Kan extended from smooth $\Z$-algebras, as is $\Pic^\dagger$, by \cite[Proposition A.0.1]{EHKSY3}.
	Since $\c\Tate^{\geq m}_{\ph\otimes_A\epsilon L}$ preserves sifted colimits by Remark~\ref{rmk:dTate}(iii), we can assume that $A$ is a smooth $\Z$-algebra. Then the claim follows from Lemma~\ref{lem:etale-Tate}. Since the section $U\mapsto \c\Tate^{\geq m}_{\epsilon\scr L_U}$ of $\QCoh$ is right Kan extended from affines by definition, we deduce (i) in general.
	Since $j^*$ is a duality-preserving Karoubi projection, (i) implies (ii).
	
	If $m\in \Z$, we have an isomorphism
	\[(\Perf(U),\c\Qoppa^{\geq m}_{\epsilon\scr L})\simeq (\Perf(U),\Qoppa^\c_{\scr L[s]})\] 
	for some $s\in\Z$ (see Definition~\ref{def:derivedGW}). Since $\Qoppa^\c_{\scr L}=\map(\Sym^2(\ph),\scr L)$ by Proposition~\ref{prop:Sym^n}, it is clear that the functor $U\mapsto (\Perf(U),\Qoppa^\c_{\scr L})$ is an fpqc hypersheaf. Furthermore, since $\c\Tate^{\geq m}_{\epsilon\scr L}$ is $m$-connective, we have $\c\Qoppa^{\geq \infty}_{\epsilon\scr L}=\lim_{m\to\infty}\c\Qoppa^{\geq m}_{\epsilon\scr L}$, so we deduce fpqc hyperdescent for $m=\infty$ as well. 
	Étale hyperdescent for $m=-\infty$ now follows from the fiber sequence
	\[
	\Qoppa^\q_{\epsilon\scr L}=\c\Qoppa^{\geq \infty}_{\epsilon\scr L} \to \c\Qoppa^{\geq -\infty}_{\epsilon\scr L} \to \map(\ph,\c\Tate_{\epsilon\scr L})
	\]
	and the fact that $U\mapsto \c\Tate_{\epsilon\scr L_U}$ satisfies étale hyperdescent by (i).
	This proves (iii). 
	
	To prove (iv), it suffices as in Proposition~\ref{prop:mod-descent} to show that, for any étale map $Y\to X$ in $\dSpc^\qcqs$, the induced map
	\[
	(\Perf(X),\Qoppa^\c)\otimes_{(\Perf(X),\Qoppa^{\t})} (\Perf(Y),\Qoppa^{\t}) \to (\Perf(Y),\Qoppa^\c)
	\]
	is an isomorphism. But this follows from (i).
\end{proof}

\begin{corollary}[Nisnevich descent for classical $\scr F$-theory]
	\label{cor:animated-nis}
	Let $S$ be a qcqs derived algebraic space, $\scr L\in \Pic^\dagger(S)$, $\epsilon\in\{\pm 1\}$, and $m\in \Z\cup\{\pm\infty\}$. Then the functor
	\[
	(\dSpc_S^{\qcqs})^\op \to \Cat^\p,\quad X \mapsto (\Perf(X),\c\Qoppa^{\geq m}_{\epsilon\scr L}),
	\]
	sends Nisnevich squares to Poincaré–Karoubi squares. Hence, for any category $\scr S$ and Karoubi-localizing invariant $\scr F\colon \Mod_{(\Perf(S),\Qoppa^\c)}(\Cat^\p)\to \scr S$, the presheaf $\c\scr F^{\geq m}_{\epsilon\scr L}$ on $\dSpc_S^{\qcqs}$ is a Nisnevich sheaf.
\end{corollary}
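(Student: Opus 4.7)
The plan is to imitate the proof of Corollary~\ref{cor:nis-descent} and reduce everything to Proposition~\ref{prop:descent-animated}. A Nisnevich square consists of a cartesian square of qcqs derived algebraic spaces
\[
\begin{tikzcd}
W \ar{r} \ar{d} & V \ar{d}{p} \\
U \ar{r}{j} & X
\end{tikzcd}
\]
where $j$ is a quasi-compact open immersion, $p$ is étale, and $p$ induces an isomorphism on the reduced complement of $U$. The key observation is that such a square is an effective epimorphism in the small étale topos of $X$, so any étale sheaf sends it to a pullback square.

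First, I would invoke Proposition~\ref{prop:descent-animated}(iii) to conclude that applying the functor $(\Perf(\ph),\c\Qoppa^{\geq m}_{\epsilon\scr L})$ yields a pullback square in $\Cat^\p$. Second, I would apply Proposition~\ref{prop:descent-animated}(ii) to both $j\colon U\into X$ and its étale base change $W\into V$, obtaining that the two horizontal arrows in the resulting square of Poincaré categories are Poincaré–Karoubi projections. A pullback square of idempotent complete Poincaré categories in which one pair of parallel arrows consists of Poincaré–Karoubi projections is, by definition, a Poincaré–Karoubi square (this is the analogue in the Poincaré setting of Karoubi squares of stable categories, cf.\ \cite{hermitianIV}). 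This establishes the first half of the corollary.

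For the second half, any Karoubi-localizing invariant $\scr F\colon \Mod_{(\Perf(S),\Qoppa^\c)}(\Cat^\p)\to\scr S$ sends Poincaré–Karoubi squares to cartesian squares by definition. Combining this with the first half, the presheaf $\c\scr F^{\geq m}_{\epsilon\scr L}$ sends every Nisnevich square to a cartesian square in $\scr S$, and together with the fact that it sends the empty scheme to the terminal object (since $\Perf(\emptyset)=0$), this is precisely the Nisnevich sheaf condition.

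There is no real obstacle here, as all the work has been done in Proposition~\ref{prop:descent-animated}; the only point to verify is that a Nisnevich square is formally an étale-local pullback (so that étale descent applies), which is standard. Thus the proof should be essentially a one-line citation of Proposition~\ref{prop:descent-animated}, in parallel with the proof of Corollary~\ref{cor:nis-descent}.
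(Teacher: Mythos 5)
Your proposal is correct and takes essentially the same route as the paper, whose entire proof is a one-line citation of Proposition~\ref{prop:descent-animated}(ii,iii) with exactly the reasoning you spell out (étale (hyper)descent forces the square of Poincaré categories to be cartesian, part (ii) makes the horizontal legs Poincaré–Karoubi projections, and Karoubi-localizing invariants then give the Nisnevich sheaf condition). The only cosmetic caveat is that ``cartesian with horizontal Poincaré–Karoubi projections'' is a sufficient criterion for a Poincaré–Karoubi square rather than its literal definition, but this is the standard characterization implicitly used in the paper (cf.\ Corollary~\ref{cor:nis-descent}).
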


\begin{proof}
	This follows immediately from Proposition~\ref{prop:descent-animated}(ii,iii).
\end{proof}

\begin{proposition}[Comparison between genuine and classical Poincaré structures]
	\label{prop:genuine-vs-derived}
	Let $X$ be a qcqs derived algebraic space, $\scr L\in \Pic^\dagger(X)$, $m\in\Z\cup\{\pm\infty\}$ and $\epsilon\in\{\pm 1\}$.
	The canonical map
	\[
	\c\Qoppa^{\geq m}_{\epsilon\scr L}\to \Qoppa^{\geq m}_{\epsilon\scr L}\colon \Perf(X)^\op\to\Sp
	\]
	is an isomorphism in the following cases:
	\begin{enumerate}
		\item $X$ is classical and $2$-torsionfree.
		\item $X$ is classical and either $m=\pm\infty$ or $\epsilon=(-1)^{m+\deg\scr L}$.
		\item $X$ is bounded and $m=-\infty$.
		\item $X$ is arbitrary and $m=\infty$.
	\end{enumerate}
\end{proposition}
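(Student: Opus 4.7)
The proof reduces to the affine case $X = \Spec A$ with $\scr L = L \in \Pic^\dagger(A)$: both sides of the comparison define sheaves on qcqs derived algebraic spaces that are right Kan extended from affines --- the left-hand side by Construction~\ref{cst:dQoppa} and the right-hand side by Lemma~\ref{lem:free-Nisnevich} --- so it suffices to compare the induced maps of linear parts $\c\Tate^{\geq m}_{\epsilon L} \to \tau_{\geq m}(\epsilon L)^{\t\C_2}$ on affines. Case (iv) is then immediate since $\tau_{\geq \infty}$ of any spectrum is zero. For the remaining cases the key tool will be the explicit $\Z$-module formula of Remark~\ref{rmk:dTate}(ii), namely $\c\Tate^{\geq m}_{\epsilon L} = L \otimes_\Z \tau_{\geq m-\deg L}(\epsilon\Z)^{\t\C_2}$.

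For case (i), $A$ classical and $2$-torsionfree means $A$, and hence also $L$, is flat over $\Z_{(2)}$. Since $(\epsilon L)^{\t\C_2}$ is $2$-complete (as $L$ is connective up to shift), the projection formula yields $L \otimes_\Z \tau_{\geq m-\deg L}(\epsilon\Z)^{\t\C_2} \simto \tau_{\geq m}(\epsilon L)^{\t\C_2}$, using that flat base change commutes with both the Tate construction and truncation. Combined with Remark~\ref{rmk:dTate}(ii) this identifies the two sides.

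For case (iii), the plan is to show that the functor $(A, L) \mapsto (\epsilon L)^{\t\C_2}$, restricted to pairs with bounded $A$, is already left Kan extended from smooth $\Z$-algebras. By Remark~\ref{rmk:Pic-LKE} this reduces to verifying that for bounded $A$ and a fixed smooth pair $(S, I)$ hitting $(A, L)$, the sifted colimit $\colim_{S' \in \CAlg^\sm_{S/A}}(\epsilon I \otimes_S S')^{\t\C_2}$ computes $(\epsilon L)^{\t\C_2}$. Each $\epsilon I \otimes_S S'$ is static and the colimit $\epsilon L$ is bounded, so the diagram is uniformly bounded above. Corollary~4.2.10 of~\cite{CHN} handles the filtered part; the geometric-realization piece would follow by writing it as a sequential colimit of finite colimits, both of which commute with $(\ph)^{\t\C_2}$ in the uniformly bounded setting (using that the Tate construction is exact and analyzing the skeletal filtration). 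The sub-cases $m = \pm\infty$ of case (ii) then follow from (iii) and (iv), since classical rings are bounded.

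For the remaining sub-case of (ii), where $A$ is classical, $m \in \Z$, and $\epsilon = (-1)^{m+\deg L}$, I would combine the identification $\c\Tate_{\epsilon L} \simto (\epsilon L)^{\t\C_2}$ from (iii) with a direct Postnikov homotopy-group comparison. The $\C_2$-spectrum $(\epsilon\Z)^{\t\C_2}$ has $\pi_i = \Z/2$ concentrated in degrees of $\epsilon$-parity; in the natural case, $\tau_{\geq m - \deg L}(\epsilon\Z)^{\t\C_2}$ begins at such a nonzero homotopy group. Derived-tensoring with $L$ produces $L/2$ contributions in $\epsilon$-parity degrees $\geq m$ together with $\Tor_1^\Z(L, \Z/2) = L[2]$ contributions shifted by $+1$, landing in opposite-parity degrees $\geq m+1$ --- matching exactly the homotopy groups of $\tau_{\geq m}(\epsilon L)^{\t\C_2}$ for classical $A$, whose Tate has $L/2$ in $\epsilon$-parity and $L[2]$ in the opposite parity. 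The hard part will be case (iii), specifically extending Corollary~4.2.10 of~\cite{CHN} from filtered to sifted colimits via the sequential/skeletal analysis sketched above and arranging the uniform bound on the sifted diagram; once this is in place, the other cases follow fairly directly from the explicit formula of Remark~\ref{rmk:dTate}(ii) and the projection formula.
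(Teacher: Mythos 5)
Your cases (i), (iv), and the parity analysis in the integer-$m$ part of (ii) are sound and close in spirit to the paper, which handles them by comparing the filtration/graded pieces of Remark~\ref{rmk:dTate} with the Postnikov filtration of $(\epsilon\scr L)^{\t\C_2}$ on classical rings. The problem is case (iii), on which your $m=\pm\infty$ part of (ii) also depends.

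The gap is in your proposed interchange of the Tate construction with the sifted colimit. You reduce, via Remark~\ref{rmk:Pic-LKE}, to showing $\colim_{S'}(\epsilon I\otimes_S S')^{\t\C_2}\simeq(\epsilon L)^{\t\C_2}$, and you argue that the geometric-realization part of this colimit can be written as a sequential colimit of finite colimits, ``both of which commute with $(\ph)^{\t\C_2}$ in the uniformly bounded setting.'' But the skeletal filtration does not stay uniformly bounded: if the levels are static (up to the fixed shift $\deg I$), then $\mathrm{sk}_n$ is only bounded above by roughly $n$, so the sequential colimit of skeleta is \emph{not} uniformly bounded above and \cite[Corollary 4.2.10]{CHN} does not apply to it. Nor can one argue convergence from the fact that the total colimit $\epsilon L$ is bounded: the fiber of $\mathrm{sk}_n\to\epsilon L$ is only highly connective, and the Tate construction does not send highly connective objects to highly connective objects (Tate cohomology of $\C_2$ is nonzero in arbitrarily negative degrees), so the error terms are not controlled. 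Indeed, $(\ph)^{\t\C_2}$ genuinely fails to commute with such sifted colimits when the colimit is unbounded (Remark~\ref{rmk:bounded}), so any proof must use boundedness in a non-formal way, which your sketch does not.

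The statement is of course true, and the repair is essentially the paper's route: do not try to commute $(\ph)^{\t\C_2}$ with the sifted diagram at all. First identify the left Kan extension with the explicit formula $\c\Tate_{\epsilon L}=L\otimes_\Z(\epsilon\Z)^{\t\C_2}$ of Remark~\ref{rmk:dTate}(ii) — this only requires that the right-hand side preserves sifted colimits in $L$ and agrees with the Tate construction on (flat) smooth $\Z$-algebras, which is a filtered-colimit statement. Then compare $L\otimes_\Z(\epsilon\Z)^{\t\C_2}\to(\epsilon L)^{\t\C_2}$ for $L$ bounded above by writing $L$ as a uniformly bounded above \emph{filtered} colimit of perfect $\Z$-modules and using exactness plus \cite[Corollary 4.2.10]{CHN}; this is exactly where the boundedness hypothesis enters, and no geometric realizations appear. (Your case (i) is unaffected, since there you only invoke the filtered-colimit/Lazard argument for a $2$-locally flat static module.)
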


\begin{proof}
	It suffices to consider the map on linear parts $\c\Tate^{\geq m}_{\epsilon\scr L}\to \Tate^{\geq m}_{\epsilon\scr L}$.
	The claims are Nisnevich-local on $X$ by Propositions \ref{prop:etale-descent} and~\ref{prop:descent-animated}(iii), so we can assume $X=\Spec(A)$. Cases (i) and (ii) follow from Remark~\ref{rmk:dTate}(i) by comparing filtrations. Case (iii) follows from Remark~\ref{rmk:dTate}(ii), since $(\epsilon(\ph))^{\t\C_2}=(\ph)\otimes_\Z(\epsilon \Z)^{\t\C_2}$ on bounded above $\Z$-modules. Case (iv) is clear since $\Tate^{\geq\infty}_{\epsilon\scr L}=\c\Tate^{\geq\infty}_{\epsilon\scr L}=0$ by definition.
\end{proof}

\begin{remark}
	The boundedness assumption in Proposition~\ref{prop:genuine-vs-derived}(iii) cannot be removed. Indeed, by Proposition~\ref{prop:descent-animated}(ii) and Remark~\ref{rmk:bounded}, there exists an animated ring $A$ for which the canonical map $\Qoppa^\varsigma_A\to\Qoppa^\s_A$ is not an isomorphism.
\end{remark}

\section{The projective bundle formula and smooth blowup excision}
\label{sec:PBF}

We recall the isotropic decomposition theorem \cite[Theorem 3.2.10]{hermitianII}. Given a Poincaré category $(\scr C,\Qoppa)$, a full stable subcategory $\scr L\subset\scr C$ is called \emph{isotropic} if $\Qoppa$ vanishes on $\scr L$ and the composite
\[
\scr L^\op\into\scr C^\op \xrightarrow{\D_\Qoppa}\scr C \onto \scr C/\scr L^\bot
\]
is an isomorphism. The \emph{homology} of $\scr L$ is then the Poincaré category
\[
\Hlgy(\scr L)=(\scr L^\bot\cap \mathrm{D}_\Qoppa(\scr L^\bot),\Qoppa),
\]
and the isotropic decomposition theorem states that, for any grouplike additive invariant $\scr F$ on $\Cat^\p$, the Poincaré functors $\Hyp(\scr L)\to (\scr C,\Qoppa)$ and $\Hlgy(\scr L)\into (\scr C,\Qoppa)$ induce an isomorphism
\[
\scr F(\Hyp(\scr L)) \times\scr F(\Hlgy(\scr L))\simto \scr F(\scr C,\Qoppa).
\]
The subcategory $\scr L$ is called a \emph{Lagrangian} if $\Hlgy(\scr L)=0$, in which case $\scr F(\scr C,\Qoppa)=\scr F(\Hyp(\scr L))$.

\begin{proposition}[Decomposing Poincaré categories of projective bundles]
	\label{prop:beilinson}
	Let $X\colon\CAlg^\an\to\An$ be a functor, $\scr E$ a finite locally free sheaf of rank $r+1$ on $X$, and $p\colon \P_X(\scr E)\to X$ the associated projective bundle.
	Let $\scr L$ be an invertible sheaf with involution on $X$, let $n\in \Z$, and let $\Qoppa\colon \Perf(\P_X(\scr E))^\op\to\Sp$ be a Poincaré structure with symmetric bilinear part $\B_{p^*(\scr L)(n)}$. Consider the $\Perf(X)$-submodules
	\begin{align*}
	\scr C&=\langle\scr O(\lceil\tfrac{n-r}2\rceil), \dotsc, \scr O(\lfloor \tfrac{n+r}2\rfloor)\rangle\subset \Perf(\P_X(\scr E)),\\
	\scr C_+&=\langle\scr O(\lceil\tfrac{n+1}2\rceil), \dotsc, \scr O(\lfloor \tfrac{n+r}2\rfloor)\rangle\subset\scr C.
	\end{align*}
	\begin{enumerate}
		\item The hermitian category $(\scr C,\Qoppa)$ is Poincaré. Moreover:
		\begin{enumerate}
			\item[(a)] If $n+r$ is even, then $\scr C=\Perf(\P_X(\scr E))$.
			\item[(b)] If $n+r$ is odd, there is a split Poincaré–Verdier sequence
		\[
		(\scr C,\Qoppa) \into (\Perf(\P_X(\scr E)),\Qoppa) \xrightarrow{p_*(\ph\otimes\scr O(-\frac{n+r+1}2))} (\Perf(X),\bar\Qoppa),
		\]
		where $\bar\Qoppa=\Qoppa\circ(p^*(\ph)\otimes\scr O(\tfrac{n+r+1}2))$.
		\end{enumerate}
		\item $\scr C_+$ is an isotropic subcategory of $(\scr C,\Qoppa^\s_{p^*(\scr L)(n)})$. If $\scr C_+$ is isotropic for $\Qoppa$, i.e., if $\Lambda_\Qoppa(\scr C_+)=0$, then:
		\begin{enumerate}
			\item[(a)] If $n$ is odd, then $\Hlgy(\scr C_+)=0$, i.e., $\scr C_+$ is a Lagrangian in $(\scr C,\Qoppa)$.
			\item[(b)] If $n$ is even, the homology inclusion $\Hlgy(\scr C_+)\into (\scr C,\Qoppa)$ can be identified with
			\[
			p^*(\ph)\otimes\scr O(\tfrac n2)\colon (\Perf(X),\Qoppa') \into (\scr C,\Qoppa),
			\]
			where $\Qoppa'=\Qoppa\circ(p^*(\ph)\otimes\scr O(\tfrac n2))$.
		\end{enumerate}
	\end{enumerate}
\end{proposition}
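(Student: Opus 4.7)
The plan is to analyze both statements via the Beilinson semi-orthogonal decomposition $\Perf(\P_X(\scr E))=\langle\scr O(a),\scr O(a+1),\ldots,\scr O(a+r)\rangle$ (for any $a\in\Z$), tracking how the duality $\D=\iHom(\ph,p^*(\scr L)(n))$ acts on line bundle twists. The key observation is $\D(\scr O(k))=p^*(\scr L)(n-k)$; since $p^*(\scr L)$ is invertible, this restricts to a self-equivalence of $\scr C$ provided the index set $[\lceil(n-r)/2\rceil,\lfloor(n+r)/2\rfloor]$ is invariant under $k\mapsto n-k$, which it is. This immediately makes $(\scr C,\Qoppa)$ Poincaré. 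Counting indices in this range yields $r+1$ when $n+r$ is even (so $\scr C=\Perf(\P_X(\scr E))$, giving (i)(a)) and $r$ when $n+r$ is odd, leaving one missing Beilinson generator.

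For (i)(b), the missing generator can be taken to be $\scr O((n+r+1)/2)$. Using the projection formula together with $p_*\scr O(m)=0$ for $-r\leq m\leq -1$, the functor $p_*(\ph\otimes\scr O(-(n+r+1)/2))\colon\Perf(\P_X(\scr E))\to\Perf(X)$ vanishes on $\scr C$ and has $p^*(\ph)\otimes\scr O((n+r+1)/2)$ as a section, producing a split Verdier sequence. Upgrading to split Poincaré–Verdier is automatic: defining $\bar\Qoppa$ as the pullback along the section makes that section Poincaré by construction, and closure of $\scr C$ under the duality makes the projection Poincaré.

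For (ii), the isotropy of $\scr C_+$ in $(\scr C,\Qoppa^\s_{p^*(\scr L)(n)})$ reduces to vanishing of the bilinear part on $\scr C_+\times\scr C_+$. On generators, $\B_{p^*(\scr L)(n)}(\scr O(k),\scr O(k'))=\map_X(\scr O,\scr L\otimes p_*\scr O(n-k-k'))$, and a parity case analysis shows $n-k-k'\in[-r,-1]$ for all $k,k'\in[\lceil(n+1)/2\rceil,\lfloor(n+r)/2\rfloor]$, forcing $\B|_{\scr C_+\times\scr C_+}=0$ and hence $\Qoppa^\s|_{\scr C_+}=0$. For the duality-composite equivalence, $\scr C_+^\perp$ is the right orthogonal of $\D(\scr C_+)$ in $\scr C$, and by the semi-orthogonality of the Beilinson decomposition it can be computed directly: $\scr C_+^\perp=\scr C_+$ when $n$ is odd, and $\scr C_+^\perp=\langle\scr O(n/2),\scr C_+\rangle$ when $n$ is even. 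In each case the quotient $\scr C/\scr C_+^\perp$ is identified, as a $\Perf(X)$-submodule of $\scr C$ via the right-admissible section, with $\D(\scr C_+)$, so the composite through $\D$ is an equivalence.

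The homology $\Hlgy(\scr C_+)=\scr C_+^\perp\cap\D(\scr C_+^\perp)$ is then a direct bookkeeping computation. For $n$ odd, the generators of $\scr C_+^\perp=\scr C_+$ and of $\D(\scr C_+)$ occupy disjoint ranges of twists, so the intersection vanishes and $\scr C_+$ is Lagrangian, giving (ii)(a). For $n$ even, the generators of $\scr C_+^\perp=\langle\scr O(n/2),\scr C_+\rangle$ and of $\D(\scr C_+^\perp)$ share exactly $\scr O(n/2)$, so $\Hlgy(\scr C_+)$ is the $\Perf(X)$-submodule $\langle\scr O(n/2)\rangle\subset\scr C$, equivalent to $\Perf(X)$ via $p^*(\ph)\otimes\scr O(n/2)$, with induced Poincaré structure $\Qoppa'=\Qoppa\circ(p^*(\ph)\otimes\scr O(n/2))$ by construction, giving (ii)(b). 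The hardest part will be executing the semi-orthogonal bookkeeping carefully across the four parity cases of $n$ and $r$, especially in tracking how twists by $p^*(\scr L)$ interact with the $\Perf(X)$-module identifications of the various orthogonal complements.
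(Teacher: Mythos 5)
Your overall route is the same as the paper's: restrict the duality $\D_\Qoppa(\scr O(k))=p^*(\scr L)(n-k)$ to the symmetric window of twists to see that $(\scr C,\Qoppa)$ is Poincaré, count twists to settle (i)(a), use $p_*\scr O(m)=0$ for $m\in[-r,-1]$ both to produce the split Verdier sequence in the odd case and to compute $\scr C_+^\bot=\langle\scr O(\lfloor\tfrac{n+1}2\rfloor),\dotsc,\scr O(\lfloor\tfrac{n+r}2\rfloor)\rangle$, and then read off $\Hlgy(\scr C_+)=\scr C_+^\bot\cap\D_\Qoppa(\scr C_+^\bot)$ by parity bookkeeping. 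All of these index computations are correct and agree with the paper's proof.

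There is, however, one step whose justification is wrong as stated, in (i)(b): the section $s=p^*(\ph)\otimes\scr O(\tfrac{n+r+1}2)\colon(\Perf(X),\bar\Qoppa)\to(\Perf(\P_X(\scr E)),\Qoppa)$ is tautologically a \emph{hermitian} functor, but it is \emph{not} a Poincaré functor. Indeed, since $p_*\scr O(-r-1)=\det(\scr E)^\vee[-r]$, the bilinear part of $\bar\Qoppa$ is $\B_{\scr L\otimes\det(\scr E)^\vee[-r]}$, so $s(\D_{\bar\Qoppa}\scr M)=p^*\bigl(\D_{\scr L}(\scr M)\otimes\det(\scr E)^\vee[-r]\bigr)(\tfrac{n+r+1}2)$, whereas $\D_\Qoppa(s\scr M)=p^*(\D_{\scr L}\scr M)(\tfrac{n-r-1}2)$; these differ by the nontrivial twist $p^*\det(\scr E)^\vee[-r](r+1)$, and correspondingly the image $\langle\scr O(\tfrac{n+r+1}2)\rangle$ is not stable under $\D_\Qoppa$. (In general, the adjoint sections of a split Poincaré–Verdier sequence are only hermitian.) What you actually need is that the \emph{projection} $p_*(\ph\otimes\scr O(-\tfrac{n+r+1}2))$ is a Poincaré–Verdier projection, and the correct formal reason — the one line in the paper's proof — is that $\bar\Qoppa=\Qoppa\circ s^\op$ is the left Kan extension of $\Qoppa$ along the projection, because $s$ is left adjoint to it; combined with the duality-stability of the kernel $\scr C$ (which you did verify), the Poincaré–Verdier quotient formalism of \cite{hermitianII} then yields the split Poincaré–Verdier sequence. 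With that substitution, your argument coincides with the paper's.
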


\begin{proof}
This is essentially contained in \cite[\sect 6.1]{CHN}.
We first recall some basic facts about projective bundles. 
For each $i\in \Z$, the functor $p^*(\ph)\otimes \scr O(i)\colon \Perf(X)\to \Perf(\P_X(\scr E))$ is fully faithful with right adjoint $p_*(\ph\otimes \scr O(-i))$ (in particular, $p_*$ preserves perfect quasi-coherent sheaves).
Moreover, $\Perf(\P_X(\scr E))$ is generated as a $\Perf(X)$-module by any $r+1$ successive Serre twists $\scr O(i)$, and we have $\map(\scr O(i),\scr O(j))=0$ whenever $i\in[j+1,j+r]$ \cite[Theorem B]{KhanBlow}.

(i) It is clear that $\scr C=\Perf(\P_X(\scr E))$ when $n+r$ is even.
The subcategory $\scr C$ is preserved by the duality $\mathrm{D}_\Qoppa=\Hom(\ph,p^*(\scr L)(n))$, since it sends $\scr O(i)$ to $p^*(\scr L)(n-i)$, so that $(\scr C,\Qoppa)$ is Poincaré.
 If $n+r$ is odd, there is a split Verdier sequence
	\[
	\scr C\into \Perf(\P_X(\scr E)) \xrightarrow{p_*(\ph\otimes\scr O(-\frac{n+r+1}2))} \Perf(X).
	\]
	Noting that $\bar\Qoppa$ is the left Kan extension of $\Qoppa$ along $p_*(\ph\otimes\scr O(-\frac{n+r+1}2))$, we obtain the desired split Poincaré–Verdier sequence.
	
(ii) For any $\scr M\in\scr C_+$ and $\scr N\in\scr C$, we have
 	\[
	\map(\scr M\otimes \scr N,p^*(\scr L)(n))=0
	\]
	if and only if $\scr M\otimes\scr N\in\langle \scr O(n+1),\dotsc, \scr O(n+r)\rangle$. Thus,
	\[
	\scr C_+^\bot =\langle\scr O(\lfloor\tfrac{n+1}2\rfloor), \dotsc, \scr O(\lfloor \tfrac{n+r}2\rfloor)\rangle.
	\]
	It follows that $\scr C_+\subset \scr C_+^\bot$ and that the composite
 \[
 \scr C_+^\op \into \scr C^\op \xrightarrow{\D_\Qoppa} \scr C\onto \scr C/\scr C_+^\bot
 \]
 is an isomorphism, so that $\scr C_+$ is an isotropic subcategory of $(\scr C,\Qoppa^\s_{p^*(\scr L)(n)})$.
 
	Assuming that $\scr C_+$ is isotropic in $(\scr C,\Qoppa)$, we have $\Hlgy(\scr C_+)=(\scr C_+^\bot\cap \mathrm{D}_\Qoppa(\scr C_+^\bot),\Qoppa)$. 
	Thus, if $n$ is odd, the homology vanishes, whereas if $n$ is even, the homology is $(\langle \scr O(\frac n2)\rangle,\Qoppa)$. Finally, the latter is isomorphic to $(\Perf(X),\Qoppa')$ via $p^*(\ph)\otimes\scr O(\tfrac n2)$.
\end{proof}

We will see that Proposition~\ref{prop:beilinson} applies to the genuine Poincaré structures $\Qoppa^{\geq m}$ as well as the classical versions $\c\Qoppa^{\geq m}$.
The following theorem generalizes \cite[Theorem 6.1.5]{CHN}, which is the case $m=-\infty$ and $X$ classical, as well as \cite[Proposition 6.2.2]{CHN}, which is the case $\scr E=\scr O^2$, $n=0$, and $X$ classical.

\begin{theorem}[Genuine projective bundle formula]
	\label{thm:PBF}
	Let $X$ be a qcqs derived algebraic space, $\scr E$ a locally free sheaf of rank $r+1$ on $X$, $\scr L$ an invertible sheaf with involution on $X$, $n\in\Z$, and $m\in\Z\cup\{\pm\infty\}$.
	Then Proposition~\ref{prop:beilinson} applies with
	\[
		\Qoppa=\Qoppa^{\geq m}_{p^*(\scr L)(n)},\quad
		\Qoppa'=\Qoppa^{\geq m}_{\scr L},\quad
		\bar\Qoppa=\Qoppa^{\geq m-r}_{\scr L\otimes\det(\scr E)^\vee[-r]}.
	\]
	Hence, the following holds for any grouplike additive invariant $\scr F$ on $\Mod_{(\Perf(X),\Qoppa^\g)}(\Cat^\p)$:
	\begin{enumerate}
		\item If both $n$ and $r$ are odd, the functors $p^*(\ph)\otimes\scr O(i)$ for $i\in[\frac{n+1}2,\frac{n+r}2]$ induce an isomorphism
		\[
		\scr F^\mathrm{hyp}(X)^{\oplus(r+1)/2}\simto \scr F^{\geq m}_{p^*(\scr L)(n)}(\P_X(\scr E)).
		\]
		\item If both $n$ and $r$ are even, the functors $p^*(\ph)\otimes\scr O(i)$ for $i\in[\frac{n}2,\frac{n+r}2]$ induce an isomorphism
		\[
		\scr F^{\geq m}_{\scr L}(X)\oplus\scr F^\mathrm{hyp}(X)^{\oplus r/2}\simto \scr F^{\geq m}_{p^*(\scr L)(n)}(\P_X(\scr E)).
		\]
		\item If $n$ is odd and $r$ is even, there is a fiber sequence
		\[
		\scr F^\mathrm{hyp}(X)^{\oplus r/2} \to \scr F^{\geq m}_{p^*(\scr L)(n)}(\P_X(\scr E)) \to \scr F^{\geq m-r}_{\scr L\otimes\det(\scr E)^\vee[-r]}(X),
		\]
		where the first map is induced by the functors $p^*(\ph)\otimes\scr O(i)$ for $i\in[\frac{n+1}2,\frac{n+r-1}2]$ and the second map is induced by the functor $p_*(\ph\otimes\scr O(-\tfrac{n+r+1}2))$.
		\item If $n$ is even and $r$ is odd, there is a fiber sequence
		\[
		\scr F^{\geq m}_{\scr L}(X)\oplus\scr F^\mathrm{hyp}(X)^{\oplus (r-1)/2} \to \scr F^{\geq m}_{p^*(\scr L)(n)}(\P_X(\scr E)) \to \scr F^{\geq m-r}_{\scr L\otimes\det(\scr E)^\vee[-r]}(X),
		\]
		where the first map is induced by the functors $p^*(\ph)\otimes\scr O(i)$ for $i\in[\frac{n}2,\frac{n+r-1}2]$ and the second map is induced by the functor $p_*(\ph\otimes\scr O(-\tfrac{n+r+1}2))$.
	\end{enumerate}
\end{theorem}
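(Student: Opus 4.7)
The approach is to verify that Proposition~\ref{prop:beilinson} applies with the specified Poincaré structures, and then to extract (i)--(iv) by applying $\scr F$ to the isotropic decomposition together with the split Poincaré--Verdier sequence of Proposition~\ref{prop:beilinson}(i)(b).

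First I would verify the hypotheses of Proposition~\ref{prop:beilinson}. Part (i) requires $\scr C$ to be stable under the duality $\D_\Qoppa = \iHom(\ph, p^*(\scr L)(n))$, which is immediate since $\D(\scr O(i)) = p^*(\scr L)(n-i)$ and $i \mapsto n-i$ is an involution of the interval $[\lceil\tfrac{n-r}{2}\rceil, \lfloor\tfrac{n+r}{2}\rfloor]$. Part (ii) requires $\scr C_+$ to be isotropic for the full Poincaré structure $\Qoppa = \Qoppa^{\geq m}_{p^*(\scr L)(n)}$. The bilinear vanishing on $\scr C_+\times\scr C_+$ reproduces the argument in the proof of Proposition~\ref{prop:beilinson}(ii); in particular $\Qoppa^\s_{p^*(\scr L)(n)}(\scr O(i)) = \B_{p^*(\scr L)(n)}(\scr O(i),\scr O(i))^{\h\C_2} = 0$ for $\scr O(i)\in\scr C_+$. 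Using the fiber sequence $\Qoppa^{\geq m}_{p^*(\scr L)(n)} \to \Qoppa^\s_{p^*(\scr L)(n)} \to \map(\ph, \tau_{<m}\Tate_{p^*(\scr L)(n)})$, isotropy then reduces to the vanishing of $\map_{\P_X(\scr E)}(\scr O(i), \tau_{<m}\Tate_{p^*(\scr L)(n)})$ for $i$ in the $\scr C_+$ range. The strategy is to reduce Nisnevich-locally on $X$ using Lemma~\ref{lem:free-Nisnevich} to the case $X=\Spec(A)$ with $\scr E$ free, and then to combine the projection formula $p_*(\scr O(k))=0$ for $-r\leq k\leq -1$ with Lemma~\ref{lem:etale-Tate} applied to the standard affine cover of $\P^r_A$ to control the truncated Tate sheaf.

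Next I would identify the homology and the Verdier quotient. For the homology (when $n$ is even), Proposition~\ref{prop:beilinson}(ii)(b) gives $\Hlgy(\scr C_+) = (\Perf(X), \Qoppa\circ(p^*(\ph)\otimes\scr O(n/2)))$; using fully faithfulness of $p^*$ and the projection formula $p_*(\scr O)=\scr O$, both the bilinear part and the representing Tate sheaf (with its $\tau_{\geq m}$) transport to $\B_\scr L$ and $\tau_{\geq m}\Tate_\scr L$, so $\Hlgy(\scr C_+) = (\Perf(X), \Qoppa^{\geq m}_\scr L)$. For the quotient $\bar\Qoppa = \Qoppa\circ(p^*(\ph)\otimes\scr O(\tfrac{n+r+1}{2}))$ (when $n+r$ is odd), Serre duality on projective bundles, $p_*(\scr O(-r-1)) = \det(\scr E)^\vee[-r]$, combined with the projection formula identifies the bilinear part as $\B_{\scr L\otimes\det(\scr E)^\vee[-r]}$; tracking the $[-r]$ shift on the target of Tate through the Postnikov truncation, which converts $\tau_{\geq m}$ on $\P_X(\scr E)$ into $\tau_{\geq m-r}$ on $X$ after pushforward (using that $p$ has cohomological amplitude $r$ on $\scr O(-r-1)$), yields $\bar\Qoppa = \Qoppa^{\geq m-r}_{\scr L\otimes\det(\scr E)^\vee[-r]}$.

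With Proposition~\ref{prop:beilinson} verified, I would deduce the four cases by applying $\scr F$. The isotropic decomposition yields $\scr F(\scr C, \Qoppa) \simeq \scr F^\mathrm{hyp}(X)^{\oplus k} \oplus \scr F(\Hlgy(\scr C_+))$, where $k = \lfloor\tfrac{n+r}{2}\rfloor - \lceil\tfrac{n+1}{2}\rceil + 1$ is the number of generators of $\scr C_+$ and $\scr F(\Hlgy(\scr C_+))$ equals $\scr F^{\geq m}_\scr L(X)$ for $n$ even and $0$ for $n$ odd. When $n+r$ is even, $\scr C = \Perf(\P_X(\scr E))$, directly yielding (i) and (ii); when $n+r$ is odd, combining with the split Poincaré--Verdier sequence produces the fiber sequences of (iii) and (iv) with quotient $\scr F^{\geq m-r}_{\scr L\otimes\det(\scr E)^\vee[-r]}(X)$. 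A routine parity check confirms that $k$ matches the stated hyperbolic exponent in each case. I expect the main obstacle to be the isotropy verification: the symmetric case $m=-\infty$ treated in \cite[Theorem~6.1.5]{CHN} needs only bilinear vanishing, whereas controlling the truncated Tate sheaf on the relative projective bundle requires genuinely new input, which is where the Nisnevich reduction via Lemma~\ref{lem:free-Nisnevich} and the explicit computation in Lemma~\ref{lem:etale-Tate} become essential.
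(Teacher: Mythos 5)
Your overall skeleton is the same as the paper's: verify that Proposition~\ref{prop:beilinson} applies with the stated structures, and extract (i)--(iv) from the isotropic decomposition theorem together with the split Poincaré--Verdier sequence; you also correctly isolate the genuinely new content, namely the behaviour of the truncated linear part (isotropy of $\scr C_+$ for $\Qoppa^{\geq m}$, and the identifications of $\Qoppa'$ and $\bar\Qoppa$ including their $\tau_{\geq m}$-parts). However, your proposed proof of that key step has a genuine gap. Theorem~\ref{thm:PBF} carries \emph{no} boundedness hypothesis on $X$, while Lemma~\ref{lem:etale-Tate} requires the module to be bounded above, and Remark~\ref{rmk:bounded} shows that without boundedness the Tate construction does not commute with restriction along open immersions (equivalently, Proposition~\ref{prop:etale-Tate} fails). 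So both halves of your strategy break down: (1) the Nisnevich-local reduction is not justified, because after pulling back along a cover $f\colon X'\to X$ the object you must control is $\tau_{\geq m}f'^*(\Tate_{p^*(\scr L)(n)})$, and this is \emph{not} the Tate sheaf of the cover unless $X$ is bounded; and (2) restricting to the standard affine charts of $\P^r_A$ does not identify the restriction of $\Tate_{p^*(\scr L)(n)}$ with a chart-wise Tate construction, so Lemma~\ref{lem:etale-Tate} gives you no handle on it. Your treatment of $\bar\Qoppa$ has the same soft spot: "cohomological amplitude $r$ on $\scr O(-r-1)$" does not by itself give the needed equality $p_*(\tau_{\geq m}\Tate_{p^*(\scr L)(n)}\otimes\scr O(-\tfrac{n+r+1}2))=\tau_{\geq m-r}\,p_*(\Tate_{p^*(\scr L)(n)}\otimes\scr O(-\tfrac{n+r+1}2))$, since truncation and pushforward do not commute for a general sheaf on a projective bundle.

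The missing idea, which is how the paper argues, is purely global and t-structure-theoretic: the orthogonality coming from the bilinear vanishing, $\map(\scr O(i),\Tate_{p^*(\scr L)(n)})=0$ for $2i\in[n+1,n+r]$, places $\Tate_{p^*(\scr L)(n)}$ in the $\QCoh(X)$-submodule of $\QCoh(\P_X(\scr E))$ generated by the complementary Serre twists. That submodule is closed under truncation because $p^*$ and twisting are t-exact, which immediately gives the vanishing of $\map(\scr M,\tau_{\geq m}\Tate_{p^*(\scr L)(n)})$ for $\scr M\in\scr C_+$ (your isotropy claim) with no localization at all. The same localization of the Tate sheaf, combined with $p_*\scr O=\scr O$, $p_*\scr O(-j)=0$ for $j\in[1,r]$, and $p_*\scr O(-r-1)=\det(\scr E)^\vee[-r]$, shows that $p_*(\ph\otimes\scr O(-\tfrac n2))$ is t-exact and $p_*(\ph\otimes\scr O(-\tfrac{n+r+1}2))$ is t-exact up to the shift $[-r]$ on the relevant subcategory, which is exactly what converts $\tau_{\geq m}$ into $\tau_{\geq m}$ resp.\ $\tau_{\geq m-r}$ after pushforward and yields $\Qoppa'=\Qoppa^{\geq m}_{\scr L}$ and $\bar\Qoppa=\Qoppa^{\geq m-r}_{\scr L\otimes\det(\scr E)^\vee[-r]}$. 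With that replacement your deduction of (i)--(iv) goes through as written; as it stands, the appeal to Lemma~\ref{lem:etale-Tate} and the chart cover cannot be repaired within the stated generality.
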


\begin{proof}
By Proposition~\ref{prop:beilinson} and the isotropic decomposition theorem, it suffices to prove the following three claims (where $\scr C_+\subset\scr C\subset\Perf(\P_X(\scr E))$ are defined as in Proposition~\ref{prop:beilinson}):
\begin{enumerate}
	\item[(a)] The linear part of $\Qoppa^{\geq m}_{p^*(\scr L)(n)}$ vanishes on $\scr C_+$.
	\item[(b)] If $n+r$ is odd, then
	\[
	\Qoppa^{\geq m-r}_{\scr L\otimes\det(\scr E)^\vee[-r]} = \Qoppa^{\geq m}_{p^*(\scr L)(n)} \circ \left(p^*(\ph)\otimes\scr O(\tfrac{n+r+1}2)\right).
	\]
	\item[(c)] If $n$ is even, then
	\[
	\Qoppa^{\geq m}_{\scr L} = \Qoppa^{\geq m}_{p^*(\scr L)(n)} \circ \left(p^*(\ph)\otimes\scr O(\tfrac n2)\right).
	\]
\end{enumerate}

For (a), we must show that
\[
\map(\scr M,\tau_{\geq m}\Tate_{p^*(\scr L)(n)})=0
\]
for any $\scr M\in\scr C_+$. This holds for $m=-\infty$ since $\scr C_+$ is isotropic for $\Qoppa^\s_{p^*(\scr L)(n)}$.
Hence, it suffices to show that the t-structure on $\QCoh(\P_X(\scr E))$ preserves the kernel of the functors $\map(\scr M,\ph)$ for $\scr M\in\scr C_+$. This kernel is a $\QCoh(X)$-submodule generated by suitable $\scr O(i)$'s, so the claim follows since $p^*$ and Serre twisting are t-exact.

We now prove (b). Since $p_*\scr O(-r-1)=\det(\scr E)^\vee[-r]$, we have $\C_2$-equivariant isomorphisms
\begin{align*}
&\map(p^*(\scr M)\otimes\scr O(\tfrac{n+r+1}2) \otimes p^*(\scr M)\otimes\scr O(\tfrac{n+r+1}2),p^*(\scr L)(n))\\
=\:&\map(p^*(\scr M\otimes\scr M), p^*\scr L\otimes \scr O(-r-1))\\
=\:&\map(\scr M\otimes\scr M,\scr L\otimes p_*\scr O(-r-1))\\
=\:&\map(\scr M\otimes\scr M,\scr L\otimes \det(\scr E)^\vee[-r]).
\end{align*}
(In the first isomorphism, we used that the $\C_2$-action on any square in $\Pic$ of a derived algebraic space is trivial, and in the second isomorphism we used the projection formula.)
Applying the $\C_2$-Tate construction, we get
\[
\map(p^*(\scr M)\otimes\scr O(\tfrac{n+r+1}2),\Tate_{p^*(\scr L)(n)}) = \map(\scr M,\Tate_{\scr L\otimes \det(\scr E)^\vee[-r]}),
\]
that is, 
\[
\Tate_{\scr L\otimes \det(\scr E)^\vee[-r]}=p_*(\Tate_{p^*(\scr L)(n)}\otimes \scr O(-\tfrac{n+r+1}2)).
\] 
We have to show
\[
\map(p^*(\scr M)\otimes\scr O(\tfrac{n+r+1}2),\tau_{\geq m}\Tate_{p^*(\scr L)(n)}) = \map(\scr M,\tau_{\geq m-r}\Tate_{\scr L\otimes \det(\scr E)^\vee[-r]}),
\]
or equivalently
\[
\tau_{\geq m-r}p_*(\Tate_{p^*(\scr L)(n)}\otimes \scr O(-\tfrac{n+r+1}2)) = p_*(\tau_{\geq m}\Tate_{p^*(\scr L)(n)}\otimes \scr O(-\tfrac{n+r+1}2)).
\]
We have $\map(\scr O(i),\Tate_{p^*(\scr L)(n)})=0$ for $2i\in [n+1,n+r]$, i.e., for $i\in[\lceil\tfrac{n+1}2\rceil,\tfrac{n+r-1}2]$, so that 
\[
\Tate_{p^*(\scr L)(n)}\in\langle \scr O(\tfrac{n-r-1}2),\dotsc,\scr O(\lfloor\tfrac n2\rfloor)\rangle
\]
and hence
\[
\Tate_{p^*(\scr L)(n)}\otimes\scr O(-\tfrac{n+r+1}2)\in\langle \scr O(-r-1),\dotsc,\scr O(\lfloor\tfrac {-r-1}2\rfloor)\rangle.
\]
Thus, it suffices to check that $p_*$ is t-exact with a shift of $-r$ on this subcategory, which holds as $p_*\scr O(-r-1)=\det(\scr E)^\vee[-r]$ and $p_*\scr O(-i)=0$ for $i\in [1,r]$.
 
The proof of (c) is similar. Since $p_*\scr O=\scr O$, we have $\C_2$-equivariant isomorphisms
\begin{align*}
	&\map(p^*(\scr M)\otimes\scr O(\tfrac n2)\otimes p^*(\scr M)\otimes\scr O(\tfrac n2),p^*(\scr L)(n))\\
	=\:&\map(p^*(\scr M\otimes\scr M),p^*(\scr L))\\
	=\:&\map(\scr M\otimes\scr M,\scr L\otimes p_*\scr O)\\
	=\:&\map(\scr M\otimes\scr M,\scr L).
\end{align*}
Applying the $\C_2$-Tate construction, we find
\[
\Tate_\scr L = p_*(\Tate_{p^*(\scr L)(n)}\otimes\scr O(-\tfrac n2)).
\]
We thus have to show 
\[
\tau_{\geq m}p_*(\Tate_{p^*(\scr L)(n)}\otimes\scr O(-\tfrac n2)) = p_*(\tau_{\geq m}\Tate_{p^*(\scr L)(n)}\otimes\scr O(-\tfrac n2)).
\]
We have $\map(\scr O(i),\Tate_{p^*(\scr L)(n)})=0$ for $2i\in [n+1,n+r]$, i.e., for $i\in[\tfrac {n+2}2,\lfloor\tfrac{n+r}2\rfloor]$, so that
\[
\Tate_{p^*(\scr L)(n)} \in \langle\scr O(\lfloor \tfrac{n-r}2\rfloor),\dotsc,\scr O(\tfrac n2)\rangle
\]
and hence
\[
\Tate_{p^*(\scr L)(n)}\otimes\scr O(-\tfrac n2) \in \langle\scr O(\lfloor \tfrac{-r}2\rfloor),\dotsc,\scr O(0)\rangle.
\]
Thus, it suffices to check that $p_*$ is t-exact on this subcategory, which holds as $p_*\scr O=\scr O$ and $p_*\scr O(-i)=0$ for $i\in [1,r]$.
\end{proof}

\begin{corollary}\label{cor:ebu0}
	Let $X$ be a qcqs derived algebraic space, $\scr E$ a finite locally free sheaf on $X$, $\scr L$ an invertible sheaf with involution on $X$, and $m\in\Z\cup\{\pm\infty\}$.
	Let $\scr F\colon\Mod_{(\Perf(X),\Qoppa^\g)}(\Cat^\p)\to\scr S$ be an additive invariant valued in a stable category $\scr S$. Then $\scr F^{\geq m}_{\scr L}$ sends the blowup square
	\[
	\begin{tikzcd}
		\P_X(\scr E) \ar{r} \ar{d} & \Bl_X(\P_X(\scr E\oplus\scr O)) \ar{d} \\
		X \ar{r} & \P_X(\scr E\oplus\scr O)
	\end{tikzcd}
	\]
	to a cartesian square.
\end{corollary}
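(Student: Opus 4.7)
The plan is to reduce the blowup cartesianness to two applications of the projective bundle formula (Theorem~\ref{thm:PBF}). First, identify $Y := \Bl_X(\P_X(\scr E \oplus \scr O))$ with a $\P^1$-bundle $p \colon Y \to E$ over $E := \P_X(\scr E)$, realized as the resolution of the rational map $\P_X(\scr E \oplus \scr O) \dashrightarrow \P_X(\scr E)$ that forgets the $\scr O$-summand. Under this identification, the exceptional divisor inclusion $e \colon E \hookrightarrow Y$ is a section of $p$, while the zero section $i \colon X \hookrightarrow \P_X(\scr E \oplus \scr O)$ is a section of $\pi' \colon \P_X(\scr E \oplus \scr O) \to X$, and $q \circ e = i \circ \pi$ where $\pi \colon E \to X$ is the projective bundle projection.

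Applying $\scr F^{\geq m}_{\scr L}$, the identities $i^* \pi'^* = \mathrm{id}$ and $e^* p^* = \mathrm{id}$ split the vertical maps of the square, giving canonical decompositions
\[
	\scr F^{\geq m}_{\scr L}(\P_X(\scr E \oplus \scr O)) \simeq \scr F^{\geq m}_{\scr L}(X) \oplus F_i, \qquad \scr F^{\geq m}_{\scr L}(Y) \simeq \scr F^{\geq m}_{\pi^*\scr L}(E) \oplus F_e,
\]
with $F_i := \fib(i^*)$ and $F_e := \fib(e^*)$. In these coordinates, $q^*$ is block-diagonal with blocks $\pi^*$ (from $q^* \pi'^* = p^* \pi^*$) and a canonical comparison $\phi \colon F_i \to F_e$, well-defined because $e^* q^*|_{F_i} = \pi^* i^*|_{F_i} = 0$. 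Thus the square is cartesian if and only if $\phi$ is an isomorphism.

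Both fibers are computed by Theorem~\ref{thm:PBF}. Applied to $p \colon Y \to E$ with $n = 0$ and $r = 1$ (case (iv)), the resulting fiber sequence splits via $e$, identifying $F_e \simeq \scr F^{\geq m-1}_{\pi^*\scr L \otimes \det(\scr V)^\vee[-1]}(E)$, where $\scr V$ is the rank-two bundle satisfying $Y = \P_E(\scr V)$. Describing $Y$ as a graph closure yields $\scr V \cong \scr O_E \oplus \scr O_E(-1)$, so the twist is $\scr O_E(1)[-1]$. A second application of Theorem~\ref{thm:PBF} to $\pi \colon E \to X$ with $n = 1$ and shifted sheaf $\scr L[-1]$ produces an $X$-based decomposition of $F_e$ which agrees term-by-term with the direct PBF decomposition of $F_i$ from $\pi'$ (with $n = 0$): both yield $\lfloor \rk(\scr E)/2 \rfloor$ copies of $\scr F^\mathrm{hyp}(X)$ together with, when $\rk(\scr E)$ is odd, a cofiber term $\scr F^{\geq m - \rk(\scr E)}_{\scr L \otimes \det(\scr E)^\vee[-\rk(\scr E)]}(X)$.

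The main obstacle is verifying that $\phi$ realizes this numerical matching on the level of generating functors. This reduces to computing $q^* \scr O_{\P_X(\scr E \oplus \scr O)}(1)$ on $Y$ in terms of the tautological twist of $p$ and the exceptional divisor, and then tracing the standard generators $\pi'^*(\ph) \otimes \scr O(j)$ through both PBF decompositions; these are routine manipulations with line bundles on blowups, but require care to match the two descriptions of $F_i$ and $F_e$ under $q^*$ in a way compatible with the isotropic decompositions underlying Theorem~\ref{thm:PBF}.
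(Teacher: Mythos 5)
Your reduction is the same as the paper's: identify $\Bl_X(\P_X(\scr E\oplus\scr O))$ with a $\P^1$-bundle over $\P_X(\scr E)$, split off the sections, and reduce cartesianness to the comparison map $\phi\colon F_i\to F_e$ between the complementary summands, whose sources and targets are computed by Theorem~\ref{thm:PBF}. The numerical bookkeeping you do is also correct (both sides give $\lfloor\rk\scr E/2\rfloor$ hyperbolic copies plus, in the odd-rank case, a term $\scr F^{\geq m-\rk\scr E}_{\scr L\otimes\det(\scr E)^\vee[-\rk\scr E]}(X)$). But the step you defer as ``routine manipulations with line bundles'' is in fact the entire mathematical content of the corollary, and it is missing. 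Since $\scr F$ is an arbitrary additive invariant, knowing that $F_i$ and $F_e$ admit term-by-term matching decompositions proves nothing about $\phi$ itself: you must exhibit the comparison as induced by an explicit $\Perf(X)$-linear, duality-compatible Poincaré functor and show that it is compatible with the isotropic/Lagrangian subcategories producing the two PBF decompositions, and then that it induces an equivalence on the relevant pieces. Note also that your $\phi$ is defined via the pullback $q^*$, whereas the PBF identification of $F_e$ goes through the pushforward $q_*(\ph\otimes\scr O_q(-1))$; so ``tracing generators'' requires genuine pushforward computations, not just twisting by line bundles.

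Concretely, the paper's proof takes $\Phi=q_*(b^*(\ph)\otimes\scr O_q(-1))$, computes $\Phi(\scr O)=0$ and $\Phi(\scr O(i))=\Sym^{i-1}(\scr O(1)\oplus\scr O)=\scr O\oplus\dotsb\oplus\scr O(i-1)$ for $i\geq 1$, checks that $\Phi$ carries the subcategories $\scr C_+\subset\scr C$ of $\Perf(\P_X(\scr E\oplus\scr O))$ into the corresponding $\scr D_+\subset\scr D$ of $\Perf(\P_X(\scr E))$ so as to get a morphism of split Poincaré–Verdier sequences, identifies the induced map on the quotient categories when $\rk\scr E$ is odd (where $\Phi(\scr O(\tfrac{r+1}2))\equiv\scr O(\tfrac{r-1}2)$ modulo $\scr D$), and finally uses the triangularity $\Phi(\scr O(i))\equiv\scr O(i-1)$ modulo lower twists to conclude that $\scr F^\mathrm{hyp}(\Phi)\colon\scr F^\mathrm{hyp}(\scr C_+)\to\scr F^\mathrm{hyp}(\scr D_+)$, and hence $\scr F(\Phi)$, is an isomorphism. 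None of this is present in your proposal; until you carry out (or replace) this verification, the argument establishes only an abstract isomorphism $F_i\simeq F_e$, not that the canonical map $\phi$ is one, so the proof is incomplete at its decisive point.
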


\begin{proof}
	Since $\scr S$ is stable, it suffices to check that $\scr F^{\geq m}_{\scr L}$ sends the square
	\[
	\begin{tikzcd}
		\P_{\P(\scr E)}(\scr O(1)\oplus\scr O) \ar{r}{q} \ar{d}[swap]{b} & \P_X(\scr E) \ar{d} \\
		\P_X(\scr E\oplus\scr O) \ar{r} & X
	\end{tikzcd}
	\]
	to a cartesian square. By Theorem~\ref{thm:PBF}(iv), the cofiber of $q^*$ is
	\[
	\scr F^{\geq m-1}_{\scr L(-1)[-1]}(\P_X(\scr E)),
	\]
	and we have to show that the Poincaré functor $\Phi=q_*(b^*(\ph)\otimes\scr O_q(-1))$ induces an isomorphism
	\[
	\scr F^{\geq m}_{\scr L}(\P_X(\scr E\oplus\scr O))/\scr F^{\geq m}_{\scr L}(X) \simto \scr F^{\geq m-1}_{\scr L(-1)[-1]}(\P_X(\scr E)).
	\]
	If $r$ is even, this follows from Theorem~\ref{thm:PBF}(i,ii), and if $r$ is odd, it follows from Theorem~\ref{thm:PBF}(iii,iv).
	
	In more details, note that $b^*$ sends $\scr O(i)$ to $\scr O_q(i)$, and hence
	\[
	\Phi(\scr O(i))=\begin{cases}
		0, & \text{if $i=0$,} \\
		\Sym^{i-1}(\scr O(1)\oplus\scr O)=\scr O\oplus\dotsb\oplus\scr O(i-1), & \text{if $i\geq 1$.}
	\end{cases}
	\]
	Consider as in Proposition~\ref{prop:beilinson} the $\Perf(X)$-submodules 
	\begin{align*}
		\scr C&=\langle\scr O(\lceil -\tfrac{r}2\rceil),\dotsc,\scr O(\lfloor \tfrac r2\rfloor)\rangle \subset \Perf(\P_X(\scr E\oplus\scr O)), \\
		\scr C_+&=\langle\scr O(1),\dotsc,\scr O(\lfloor \tfrac r2\rfloor)\rangle \subset\scr C, \\
		\scr D&=\langle \scr O(\lceil -\tfrac{r}2\rceil),\dotsc,\scr O(\lfloor \tfrac {r-2}2\rfloor)\rangle \subset\Perf(\P_X(\scr E)), \\
		\scr D_+&=\langle \scr O(0),\dotsc,\scr O(\lfloor \tfrac {r-2}2\rfloor)\rangle\subset\scr D.
	\end{align*}
	Then $\scr C_+$ and $\scr D_+$ are Lagrangians in $(\scr C/\Perf(X),\Qoppa^{\geq m}_{\scr L})$ and $(\scr D,\Qoppa^{\geq m-1}_{\scr L(-1)[-1]})$, respectively. Moreover, by the above computation of $\Phi$ (and the fact that $\Phi$ is $\Perf(X)$-linear and duality-preserving), $\Phi$ sends $\scr C$ to $\scr D$ and $\scr C_+$ to $\scr D_+$. Thus, $\Phi$ induces a morphism of (split) Poincaré–Verdier sequences
	\[
	\begin{tikzcd}
		 \scr C/\Perf(X) \ar{d} \ar[hook]{r} & \Perf(\P_X(\scr E\oplus\scr O))/\Perf(X) \ar{d}{\Phi} \ar{r} & \Perf(\P_X(\scr E\oplus\scr O))/\scr C \ar{d}[sloped]{\sim}  \\
	  \scr D \ar[hook]{r} & \Perf(\P_X(\scr E)) \ar{r} & \Perf(\P_X(\scr E))/\scr D\rlap.
	\end{tikzcd}
	\]
	Note that the right vertical map is an isomorphism: both quotients are zero when $r$ is even; when $r$ is odd, they are copies of $(\Perf(X),\Qoppa^{\geq m-r}_{\scr L\otimes\det(\scr E)^\vee[-r]})$ generated by $\scr O(\tfrac{r+1}2)$ and $\scr O(\tfrac{r-1}2)$, respectively, and we have
	\[
	\Phi(\scr O(\tfrac{r+1}2))\equiv \scr O(\tfrac{r-1}2)\text{ modulo }\scr D.
	\]
	 To conclude, it suffices to show that $\scr F$ sends $\Phi\colon\scr C/\Perf(X)\to\scr D$ to an isomorphism, or equivalently that $\scr F^\mathrm{hyp}$ sends $\Phi\colon \scr C_+\to \scr D_+$ to an isomorphism. This follows from the above formula for $\Phi$, as 
	 \[
	 \Phi(\scr O(i))\equiv\scr O(i-1)\text{ modulo }\langle\scr O(0),\dotsc,\scr O(i-2)\rangle.\qedhere
	 \]
\end{proof}

\begin{corollary}[Smooth blowup excision for genuine $\scr F$-theory]
	\label{cor:ebu}
	Let $\scr L$ be an invertible sheaf with involution on a qcqs derived algebraic space $S$ and let $m\in\Z\cup\{\pm\infty\}$. 
	If $m\neq\infty$, assume that $S$ is bounded.
	Let $\scr F\colon\Mod_{(\Perf(S),\Qoppa^\g)}(\Cat^\p)\to\scr S$ be a Karoubi-localizing invariant valued in a stable category $\scr S$.
	Then the presheaf 
	\[\scr F^{\geq m}_{\scr L}\colon (\Sm_S^\fp)^\op\to \scr E\]
	satisfies smooth blowup excision.
\end{corollary}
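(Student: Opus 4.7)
The plan is to reduce to Corollary~\ref{cor:ebu0} via Nisnevich and Zariski descent. By Corollary~\ref{cor:nis-descent}, the presheaf $\scr F^{\geq m}_{\scr L}$ restricts to a Nisnevich sheaf on $\Sm_S^\fp$ valued in the stable category $\scr S$. For a smooth closed immersion $Z \into X$ in $\Sm_S^\fp$ with normal bundle $\scr N$, blowup $\tilde X$, and exceptional divisor $E = \P_Z(\scr N)$, the blowup square is cartesian under $\scr F^{\geq m}_{\scr L}$ if and only if the induced map
\[
\cofib\bigl(\scr F^{\geq m}_{\scr L}(X) \to \scr F^{\geq m}_{\scr L}(\tilde X)\bigr) \longrightarrow \cofib\bigl(\scr F^{\geq m}_{\scr L}(Z) \to \scr F^{\geq m}_{\scr L}(E)\bigr)
\]
is an isomorphism. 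So it suffices to identify the left-hand cofiber with the analogous cofiber for $Z \into \P_Z(\scr N \oplus \scr O)$, which by Corollary~\ref{cor:ebu0} equals the right-hand side.

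First I would reduce from $X$ to $\V_Z(\scr N)$, invoking a derived version of the Morel--Voevodsky Nisnevich-local structure lemma for smooth closed immersions: Nisnevich-locally on $X$, there is an étale neighborhood $V \to X$ of $Z$ that also admits an étale map $V \to \V_Z(\scr N)$ identifying $Z$ with the zero section. The elementary Nisnevich squares $\{X \setminus Z, V\}$ of $X$ and $\{\V_Z(\scr N) \setminus Z, V\}$ of $\V_Z(\scr N)$ are both preserved by blowing up $Z$, since the blowup is an isomorphism away from $Z$. Applying Nisnevich descent to $\scr F^{\geq m}_{\scr L}$ at every vertex, the terms on the open complements (which are unchanged by the blowup) cancel in the relative cofibers in $\scr S$, yielding an identification of $\cofib\bigl(\scr F^{\geq m}_{\scr L}(X) \to \scr F^{\geq m}_{\scr L}(\tilde X)\bigr)$ with $\cofib\bigl(\scr F^{\geq m}_{\scr L}(\V_Z(\scr N)) \to \scr F^{\geq m}_{\scr L}(\Bl_Z(\V_Z(\scr N)))\bigr)$.

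The second step compactifies: the Zariski cover $\{\V_Z(\scr N), \P_Z(\scr N \oplus \scr O) \setminus Z\}$ of $\P_Z(\scr N \oplus \scr O)$ has intersection $\V_Z(\scr N) \setminus Z$ and is preserved by blowing up $Z$. The same cancellation argument, now using Zariski (hence Nisnevich) descent, identifies the cofiber from the previous paragraph with $\cofib\bigl(\scr F^{\geq m}_{\scr L}(\P_Z(\scr N \oplus \scr O)) \to \scr F^{\geq m}_{\scr L}(\Bl_Z(\P_Z(\scr N \oplus \scr O)))\bigr)$, which by Corollary~\ref{cor:ebu0} equals $\cofib\bigl(\scr F^{\geq m}_{\scr L}(Z) \to \scr F^{\geq m}_{\scr L}(E)\bigr)$. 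The main obstacle is the first step: verifying the Morel--Voevodsky structure lemma at the derived/spectral level of generality required for $\Sm_S^\fp$; the remaining ingredients are formal manipulations of cofibers in a stable category combined with Nisnevich and Zariski descent.
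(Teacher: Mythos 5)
Your proposal is correct and follows essentially the same route as the paper: the paper likewise combines Nisnevich descent (Corollary~\ref{cor:nis-descent}) with the elementary case (Corollary~\ref{cor:ebu0}), and simply cites \cite[Proposition 2.2]{AHI} for exactly the reduction you spell out (Nisnevich-local replacement of the smooth closed pair $(X,Z)$ by the normal-bundle pair, followed by the Zariski comparison with $\P_Z(\scr N\oplus\scr O)$). The derived Morel--Voevodsky structure lemma you flag as the main obstacle is precisely what that citation packages, and it does hold for closed immersions in $\Sm_S$ over a derived base (the conormal sheaf is locally free and the pair is Zariski-locally on $X$ pulled back from a linear immersion $\A^{n-c}_S\subset\A^n_S$ along an \'etale map), so your outline has no genuine gap.
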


\begin{proof}
By Corollary~\ref{cor:nis-descent}, $\scr F^{\geq m}_{\scr L}$ is a Nisnevich sheaf. The statement then follows from Corollary~\ref{cor:ebu0} by virtue of \cite[Proposition 2.2]{AHI}.
\end{proof}

\begin{remark}[Projective bundle formula and smooth blowup excision for Tate $\scr F$-theory]
	\label{rmk:Tate-PBF}
	Let $X$ be a qcqs derived algebraic space, let $\sqrt\scr L\in \Pic^\dagger(S)$, and let $\scr L=(\sqrt\scr L)^{\otimes 2}\in\Pic^\dagger(S)^{\B\C_2}$.
	If $n$ is even, we can consider the Tate Poincaré structure $\Qoppa^\t_{p^*(\scr L)(n)}$ on $\Perf(\P_X(\scr E))$ as in Remark~\ref{rmk:Tate-descent}. 
	An analysis as in the proof of Theorem~\ref{thm:PBF} (but simpler) shows that we can apply Proposition~\ref{prop:beilinson} with 
	\[
	\Qoppa=\Qoppa^\t_{p^*(\scr L)(n)},\quad \Qoppa'=\Qoppa^\t_{\scr L},\quad \bar\Qoppa=\Qoppa^\q_{\scr L\otimes\det(\scr E)^\vee[-r]}.
	\]
	Hence, we get a projective bundle formula for $\scr F^{\t}_{p^*(\scr L)(n)}(\P_X(\scr E))$ similar to cases (ii) and (iv) of Theorem~\ref{thm:PBF}.
	This implies as in Corollary~\ref{cor:ebu0} that, for any stable additive invariant $\scr F$, $\scr F^\t_{\scr L}$ sends the blowup square of the zero section of $\P_X(\scr E\oplus\scr O)$ to a cartesian square.
	If $\scr F$ is moreover Karoubi-localizing, it follows as in Corollary~\ref{cor:ebu} that $\scr F^{\t}_{\scr L}$ satisfies smooth blowup excision on $\Sm_X^\fp$.
	Note that the above projective bundle formula shows that the $\P^1$-loop space of Tate $\scr F$-theory $\scr F^{\t}_{\scr L}$ is quadratic $\scr F$-theory $\scr F^\q_{\scr L[-1]}$.
	We do not know if $\scr F^{\t}_{\scr L}$ admits a $\P^1$-delooping.
\end{remark}

We now turn our attention to the classical Poincaré structures $\c\Qoppa^{\geq m}$.

\begin{theorem}[Classical projective bundle formula]
	\label{thm:derived-PBF}
	Let $X$ be a qcqs derived algebraic space, $\scr E$ a locally free sheaf of rank $r+1$ on $X$, $\scr L\in\Pic^\dagger(X)$, $\epsilon\in\{\pm 1\}$, $n\in\Z$, and $m\in\Z\cup\{\pm\infty\}$.
	Then Proposition~\ref{prop:beilinson} applies with
	\[
		\Qoppa=\c\Qoppa^{\geq m}_{\epsilon p^*(\scr L)(n)},\quad
		\Qoppa'=\c\Qoppa^{\geq m}_{\epsilon \scr L},\quad
		\bar\Qoppa=\c\Qoppa^{\geq m-r}_{\epsilon \scr L\otimes\det(\scr E)^\vee[-r]}.
	\]
	Hence, the following holds for any grouplike additive invariant $\scr F$ on $\Mod_{(\Perf(X),\Qoppa^\c)}(\Cat^\p)$:
	\begin{enumerate}
		\item If both $n$ and $r$ are odd, the functors $p^*(\ph)\otimes\scr O(i)$ for $i\in[\frac{n+1}2,\frac{n+r}2]$ induce an isomorphism
		\[
		\scr F^\mathrm{hyp}(X)^{\oplus(r+1)/2}\simto \c\scr F^{\geq m}_{\epsilon p^*(\scr L)(n)}(\P_X(\scr E)).
		\]
		\item If both $n$ and $r$ are even, the functors $p^*(\ph)\otimes\scr O(i)$ for $i\in[\frac{n}2,\frac{n+r}2]$ induce an isomorphism
		\[
		\c\scr F^{\geq m}_{\epsilon \scr L}(X)\oplus\scr F^\mathrm{hyp}(X)^{\oplus r/2}\simto \c\scr F^{\geq m}_{\epsilon p^*(\scr L)(n)}(\P_X(\scr E)).
		\]
		\item If $n$ is odd and $r$ is even, there is a fiber sequence
		\[
		\scr F^\mathrm{hyp}(X)^{\oplus r/2} \to \c\scr F^{\geq m}_{\epsilon p^*(\scr L)(n)}(\P_X(\scr E)) \to \c\scr F^{\geq m-r}_{\epsilon \scr L\otimes\det(\scr E)^\vee[-r]}(X),
		\]
		where the first map is induced by the functors $p^*(\ph)\otimes\scr O(i)$ for $i\in[\frac{n+1}2,\frac{n+r-1}2]$ and the second map is induced by the functor $p_*(\ph\otimes\scr O(-\tfrac{n+r+1}2))$.
		\item If $n$ is even and $r$ is odd, there is a fiber sequence
		\[
		\c\scr F^{\geq m}_{\epsilon \scr L}(X)\oplus\scr F^\mathrm{hyp}(X)^{\oplus (r-1)/2} \to \c\scr F^{\geq m}_{\epsilon p^*(\scr L)(n)}(\P_X(\scr E)) \to \c\scr F^{\geq m-r}_{\epsilon \scr L\otimes\det(\scr E)^\vee[-r]}(X),
		\]
		where the first map is induced by the functors $p^*(\ph)\otimes\scr O(i)$ for $i\in[\frac{n}2,\frac{n+r-1}2]$ and the second map is induced by the functor $p_*(\ph\otimes\scr O(-\tfrac{n+r+1}2))$.
	\end{enumerate}
\end{theorem}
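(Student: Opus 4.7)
I plan to adapt the proof of Theorem~\ref{thm:PBF} to the classical setting. Since the bilinear parts of $\c\Qoppa^{\geq m}_{\epsilon\scr L}$ and $\Qoppa^{\geq m}_{\epsilon\scr L}$ coincide, namely $\B_{\epsilon\scr L}$, Proposition~\ref{prop:beilinson} applies with the same isotropic subcategory $\scr C_+\subset\scr C$. By the isotropic decomposition theorem, the theorem reduces to three claims: (a) the linear part of $\c\Qoppa^{\geq m}_{\epsilon p^*(\scr L)(n)}$ vanishes on $\scr C_+$; (b) if $n+r$ is odd, $\c\Qoppa^{\geq m-r}_{\epsilon\scr L\otimes\det(\scr E)^\vee[-r]}=\c\Qoppa^{\geq m}_{\epsilon p^*(\scr L)(n)}\circ(p^*(\ph)\otimes\scr O(\tfrac{n+r+1}2))$; and (c) if $n$ is even, $\c\Qoppa^{\geq m}_{\epsilon\scr L}=\c\Qoppa^{\geq m}_{\epsilon p^*(\scr L)(n)}\circ(p^*(\ph)\otimes\scr O(\tfrac n2))$. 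The bilinear parts agree exactly as in Theorem~\ref{thm:PBF}; only the linear parts require new input.

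For $m=\infty$, Proposition~\ref{prop:genuine-vs-derived}(iv) gives $\c\Qoppa^\qoppa_{\epsilon\scr L}=\Qoppa^\q_{\epsilon\scr L}$, so the claims reduce to those already established in the proof of Theorem~\ref{thm:PBF}. For $m\in\Z$, the suspension isomorphisms of Definition~\ref{def:derivedGW} combined with Proposition~\ref{prop:Sym^n} identify each $\c\Qoppa^{\geq m}_{\epsilon\scr L}$ with one of $\Qoppa^\sym_{\scr L'}$, $\Qoppa^\alt_{\scr L'}$, or $\Qoppa^\qu_{\scr L'}$ for an appropriately shifted line bundle $\scr L'$, turning the three claims into identities involving $\Sym^2$, $\Lambda^2$, or $\Gamma^2$. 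These follow by direct computation, using that $p^*$ commutes with these power constructions (being symmetric monoidal and colimit-preserving), the line bundle formulas such as $\Sym^2(\scr M\otimes\scr N)=\Sym^2(\scr M)\otimes\scr N^{\otimes 2}$ for invertible $\scr N$, the projection formula, and the standard identities $p_*\scr O(-r-1)=\det(\scr E)^\vee[-r]$ and $p_*\scr O(-i)=0$ for $i\in[1,r]$. For instance, claim (b) for $\Qoppa^\sym$ reduces to the chain of isomorphisms
\[
\map(\Sym^2(p^*\scr M\otimes\scr O(\tfrac{n+r+1}2)),p^*\scr L(n))=\map(\Sym^2(\scr M),\scr L\otimes p_*\scr O(-r-1))=\map(\Sym^2(\scr M),\scr L\otimes\det(\scr E)^\vee[-r]),
\]
while claim (a) for a generator $\scr O(j)\in\scr C_+$ reduces to the vanishing $\map(\scr O(2j),p^*\scr L(n))=0$, which holds because $2j-n\in[1,r]$.

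For $m=-\infty$, I use that $\c\Qoppa^\varsigma_{\epsilon\scr L}=\colim_{m\to-\infty}\c\Qoppa^{\geq m}_{\epsilon\scr L}$ as quadratic functors, since the linear parts $\c\Tate^{\geq m}_{\epsilon\scr L}\to\c\Tate_{\epsilon\scr L}$ exhibit the filtered colimit (each $\c\Tate^{\geq m}$ being an $m$-connective cover with the colimit filtration exhaustive) while the bilinear parts agree with $\B_{\epsilon\scr L}$. The three claims for $\c\Qoppa^\varsigma$ then follow by passing to the colimit of the corresponding identities at finite $m$, established in the previous paragraph.

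The main obstacle is justifying this colimit interchange in the $m=-\infty$ case: one needs that $p_*$ on the projective bundle $\P_X(\scr E)\to X$ commutes with the filtered colimit $\colim_{m\to-\infty}(\c\Tate^{\geq m}_{\epsilon p^*(\scr L)(n)}\otimes\scr O(-k))$. This should follow from $p$ being qcqs, so that $p_*$ commutes with filtered colimits of quasi-coherent sheaves, together with a uniform bound on the Serre-twist support of the relevant truncated Tate sheaves (implicit in the proof of Theorem~\ref{thm:PBF}, where $\Tate_{p^*(\scr L)(n)}$ was located in a specific finite range $\langle\scr O(\lfloor\tfrac{n-r}2\rfloor),\dotsc,\scr O(\tfrac n2)\rangle$, and the classical version inherits this control through naturality in $m$).
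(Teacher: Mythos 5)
Your route is genuinely different from the paper's: the paper reduces all cases at once to three identities of \emph{linear parts} (e.g.\ $\c\Tate^{\geq m-r}_{\epsilon\scr L\otimes\det(\scr E)^\vee[-r]}=p_*(\c\Tate^{\geq m}_{\epsilon p^*(\scr L)(n)}\otimes\scr O(-\tfrac{n+r+1}2))$), observes via Proposition~\ref{prop:descent-animated}(i) that these are étale-local, and then left Kan extends from smooth $\Z$-algebras, where they follow from Theorem~\ref{thm:PBF} since classical and genuine structures agree there; this treats $m\in\Z\cup\{\pm\infty\}$ uniformly and avoids any direct computation. Your replacement for $m\in\Z$ — normalize by suspension to $\Qoppa^\sym$ via Definition~\ref{def:derivedGW} and Proposition~\ref{prop:Sym^n}, then compute with $\Sym^2$, the projection formula, and $p_*\scr O(-i)$ — is a legitimate and more hands-on alternative, and the $m=\infty$ case via Proposition~\ref{prop:genuine-vs-derived}(iv) is fine. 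One point needs repair in claim (a): a quadratic functor is not exact, and $\scr C_+$ is generated as a $\Perf(X)$-module, so vanishing of $\Qoppa^\sym$ on the objects $\scr O(j)$ alone proves nothing about all of $\scr C_+$. You must split as the paper does: the bilinear part vanishes by Proposition~\ref{prop:beilinson}(ii), and the linear part, being exact and corepresented by $\c\Tate^{\geq m}_{\epsilon p^*(\scr L)(n)}$, vanishes on $\scr C_+$ iff the \emph{sheaf} $p_*(\c\Tate^{\geq m}_{\epsilon p^*(\scr L)(n)}\otimes\scr O(-j))$ vanishes for the relevant $j$ — your computation gives this once you run it with $p_*$ of internal mapping objects (equivalently on all $\scr O(j)\otimes p^*\scr V$) rather than with a single global mapping spectrum.

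The genuine gap is the $m=-\infty$ case. Your equivalences at finite $m$ are obtained through $m$-dependent suspension normalizations, so they do not come packaged as a map of filtered systems in $m$; to pass to the colimit you need either a single comparison map, natural in $m$, which your equivalences verify (compatibility with the transition maps $\c\Qoppa^{\geq m}\to\c\Qoppa^{\geq m'}$, i.e.\ with the norm maps $\Sym^2\to\Gamma^2$ under your normalizations), or a direct argument for $\c\Tate=\colim_m\c\Tate^{\geq m}$ — and you have addressed neither. Moreover the obstacle you do flag is not the real one: for qcqs $p$, the pushforward $p_*$ on quasi-coherent sheaves preserves filtered colimits with no Serre-twist support bound needed, so that interchange is automatic. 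The clean fixes are either to prove the $m=-\infty$ linear-part identities directly (e.g.\ using Remark~\ref{rmk:dTate}(ii,iii), or by the paper's mechanism: étale-localize by Proposition~\ref{prop:descent-animated}(i), reduce to an animated algebra over a smooth base $R$, and left Kan extend from $\Poly_R$ where Theorem~\ref{thm:PBF} applies), or to set up once and for all a natural-in-$m$ comparison of the whole filtered objects before invoking your finite-$m$ computations. As written, the symmetric case $\Qoppa^\varsigma$ is not established.
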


\begin{proof}
	By Proposition~\ref{prop:beilinson} and the isotropic decomposition theorem, it suffices to prove the following three claims (where $\scr C_+\subset\scr C\subset\Perf(\P_X(\scr E))$ are defined as in Proposition~\ref{prop:beilinson}):
	\begin{enumerate}
		\item[(a)] $\c\Qoppa^{\geq m}_{\epsilon p^*(\scr L)(n)}$ vanishes on $\scr C_+$.
		\item[(b)] If $n+r$ is odd, then
		\[
		\c\Qoppa^{\geq m-r}_{\epsilon\scr L\otimes\det(\scr E)^\vee[-r]} = \c\Qoppa^{\geq m}_{\epsilon p^*(\scr L)(n)} \circ \left(p^*(\ph)\otimes\scr O(\tfrac{n+r+1}2)\right).
		\]
		\item[(c)] If $n$ is even, then
		\[
		\c\Qoppa^{\geq m}_{\epsilon\scr L} = \c\Qoppa^{\geq m}_{\epsilon p^*(\scr L)(n)} \circ \left(p^*(\ph)\otimes\scr O(\tfrac n2)\right).
		\]
	\end{enumerate}
	Since these statements are known for the bilinear parts, they reduce to the following statements about the linear parts:
	\begin{enumerate}
		\item[(a)]If $i\in[\lceil \frac{n+1}2\rceil, \lfloor \frac{n+r}2\rfloor]$, then
		\[p_*\bigl(\c\Tate^{\geq m}_{\epsilon p^*(\scr L)(n)} \otimes\scr O(i)\bigr)=0.\]
		\item[(b)] If $n+r$ is odd, then
		\[\c\Tate^{\geq m-r}_{\epsilon\scr L\otimes\det(\scr E)^\vee[-r]}=p_*\bigl(\c\Tate^{\geq m}_{\epsilon p^*(\scr L)(n)}\otimes \scr O(-\tfrac{n+r+1}2)\bigr).\]
		\item[(c)] If $n$ is even, then
		\[\c\Tate^{\geq m}_{\epsilon\scr L}=p_*\bigl(\c\Tate^{\geq m}_{\epsilon p^*(\scr L)(n)}\otimes \scr O(-\tfrac n2)\bigr).\]
	\end{enumerate}
	By Proposition~\ref{prop:descent-animated}(i), these claims are étale-local on $X$, so that we can assume $X=\Spec(A)$. As both $\Vect_{r+1}$ and $\Pic^\dagger$ are left Kan extended from smooth $\Z$-algebras \cite[Proposition A.0.4]{EHKSY3}, we may assume that $A$ is an algebra over some smooth $\Z$-algebra $R$, over which $\scr E$ and $\scr L$ are defined.
	All the $A$-modules above can then be viewed as functors $\CAlg^\an_R\to \Sp$ in the variable $A$. If $A$ is smooth over $R$, then the desired equations are known by Theorem~\ref{thm:PBF}. It thus suffices to observe that all these functors are left Kan extended from $\Poly_R$, since they preserve sifted colimits in $A$ by Remark~\ref{rmk:dTate}(iii) (to see this for the functors involving $p_*$, write $\P_R(\scr E)$ as a finite colimit of affine open subschemes and use Proposition~\ref{prop:descent-animated}(i)).
\end{proof}

\begin{corollary}\label{cor:animated-ebu0}
	Let $X$ be a qcqs derived algebraic space, $\scr E$ a finite locally free sheaf on $X$, $\scr L$ an invertible sheaf on $X$, $\epsilon\in\{\pm 1\}$, and $m\in\Z\cup\{\pm\infty\}$.
	Let $\scr F\colon\Mod_{(\Perf(X),\Qoppa^\c)}(\Cat^\p)\to\scr S$ be an additive invariant valued in a stable category $\scr S$. Then $\c\scr F^{\geq m}_{\epsilon\scr L}$ sends the blowup square
	\[
	\begin{tikzcd}
		\P_X(\scr E) \ar{r} \ar{d} & \Bl_X(\P_X(\scr E\oplus\scr O)) \ar{d} \\
		X \ar{r} & \P_X(\scr E\oplus\scr O)
	\end{tikzcd}
	\]
	to a cartesian square.
\end{corollary}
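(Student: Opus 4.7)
The plan is to follow exactly the strategy of Corollary~\ref{cor:ebu0}, replacing the genuine projective bundle formula (Theorem~\ref{thm:PBF}) with its classical counterpart (Theorem~\ref{thm:derived-PBF}). Since $\scr S$ is stable, it suffices to check that $\c\scr F^{\geq m}_{\epsilon\scr L}$ sends the square
\[
\begin{tikzcd}
	\P_{\P(\scr E)}(\scr O(1)\oplus\scr O) \ar{r}{q} \ar{d}[swap]{b} & \P_X(\scr E) \ar{d} \\
	\P_X(\scr E\oplus\scr O) \ar{r} & X
\end{tikzcd}
\]
to a cartesian square. By Theorem~\ref{thm:derived-PBF}(iv) applied to the projection $q\colon \P_{\P(\scr E)}(\scr O(1)\oplus\scr O)\to\P_X(\scr E)$, the cofiber of $q^*$ on $\c\scr F^{\geq m}_{\epsilon\scr L}$ is identified with $\c\scr F^{\geq m-1}_{\epsilon\scr L(-1)[-1]}(\P_X(\scr E))$. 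The task is then to show that the Poincaré functor $\Phi=q_*(b^*(\ph)\otimes\scr O_q(-1))$ induces an isomorphism
\[
\c\scr F^{\geq m}_{\epsilon\scr L}(\P_X(\scr E\oplus\scr O))/\c\scr F^{\geq m}_{\epsilon\scr L}(X) \simto \c\scr F^{\geq m-1}_{\epsilon\scr L(-1)[-1]}(\P_X(\scr E)).
\]

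Next I would introduce exactly the same $\Perf(X)$-submodules $\scr C_+\subset\scr C\subset\Perf(\P_X(\scr E\oplus\scr O))$ and $\scr D_+\subset\scr D\subset\Perf(\P_X(\scr E))$ as in the proof of Corollary~\ref{cor:ebu0}. By Theorem~\ref{thm:derived-PBF} (cases (i)--(iv) depending on the parity of $r$), $\scr C_+$ is a Lagrangian in $(\scr C/\Perf(X),\c\Qoppa^{\geq m}_{\epsilon\scr L})$ and $\scr D_+$ is a Lagrangian in $(\scr D,\c\Qoppa^{\geq m-1}_{\epsilon\scr L(-1)[-1]})$. The computation $\Phi(\scr O(i))=\scr O\oplus\dotsb\oplus\scr O(i-1)$ for $i\geq 1$ and $\Phi(\scr O)=0$ is purely geometric and hence unchanged, so $\Phi$ sends $\scr C$ to $\scr D$ and $\scr C_+$ to $\scr D_+$. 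Thus $\Phi$ induces a morphism of split Poincaré–Verdier sequences
\[
\begin{tikzcd}
	\scr C/\Perf(X) \ar{d} \ar[hook]{r} & \Perf(\P_X(\scr E\oplus\scr O))/\Perf(X) \ar{d}{\Phi} \ar{r} & \Perf(\P_X(\scr E\oplus\scr O))/\scr C \ar{d}[sloped]{\sim}  \\
	\scr D \ar[hook]{r} & \Perf(\P_X(\scr E)) \ar{r} & \Perf(\P_X(\scr E))/\scr D\rlap,
\end{tikzcd}
\]
where the rightmost vertical map is an isomorphism by the same analysis as before (the two quotients are either zero, or copies of $(\Perf(X),\c\Qoppa^{\geq m-r}_{\epsilon\scr L\otimes\det(\scr E)^\vee[-r]})$ on which $\Phi$ acts by the identity modulo $\scr D$).

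To conclude, it suffices to show $\scr F$ inverts $\Phi\colon\scr C/\Perf(X)\to\scr D$, equivalently that $\scr F^\mathrm{hyp}$ inverts $\Phi\colon\scr C_+\to\scr D_+$. This follows verbatim from the triangular structure $\Phi(\scr O(i))\equiv \scr O(i-1)\bmod\langle\scr O(0),\dotsc,\scr O(i-2)\rangle$. I do not anticipate a genuine obstacle: the key input—that $\scr C_+$ is isotropic for the linear part of $\c\Qoppa^{\geq m}_{\epsilon p^*(\scr L)(n)}$ and that the relevant Kan-extension computations of $\c\Tate^{\geq m}$ behave correctly under $p^*$ and $p_*$—has already been absorbed into Theorem~\ref{thm:derived-PBF}. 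The only mild subtlety worth double-checking is the sign/twist bookkeeping ensuring that the restriction of $\c\Qoppa^{\geq m}_{\epsilon\scr L}$ to $\scr C/\Perf(X)$ matches the Poincaré structure for which $\scr C_+$ is shown to be Lagrangian, but this is exactly what Theorem~\ref{thm:derived-PBF}(iv) (applied with $n=0$) delivers.
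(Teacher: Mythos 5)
Your proposal is correct and is exactly the paper's argument: the paper's proof of this corollary literally reads ``same as the proof of Corollary~\ref{cor:ebu0}, using Theorem~\ref{thm:derived-PBF}'', and your write-up carries out that substitution faithfully, including the correct identification of the relevant quotient Poincaré structures and the reduction to $\scr F^{\mathrm{hyp}}$ on $\Phi\colon\scr C_+\to\scr D_+$.
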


\begin{proof}
	Same as the proof of Corollary~\ref{cor:ebu0}, using Theorem~\ref{thm:derived-PBF}.
\end{proof}

\begin{corollary}[Smooth blowup excision for classical $\scr F$-theory]
	\label{cor:animated-ebu}
	Let $S$ be a qcqs derived algebraic space, $\scr L\in \Pic^\dagger(S)$, $\epsilon\in\{\pm 1\}$, and $m\in \Z\cup\{\pm\infty\}$.
	Let $\scr F\colon\Mod_{(\Perf(S),\Qoppa^\c)}(\Cat^\p)\to\scr S$ be a Karoubi-localizing invariant valued in a stable category $\scr S$.
	Then the presheaf 
	\[\c\scr F^{\geq m}_{\epsilon\scr L}\colon (\Sm_S^\fp)^\op\to \scr S\]
	satisfies smooth blowup excision.
\end{corollary}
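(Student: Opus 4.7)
The plan is to mirror exactly the proof of Corollary~\ref{cor:ebu}, substituting the classical counterparts of the two ingredients that appear there. The result is ultimately a formal consequence of two facts that have already been established in the classical setting: Nisnevich descent and the vanishing of a single distinguished blowup square.

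First, Corollary~\ref{cor:animated-nis} provides Nisnevich descent: the presheaf $\c\scr F^{\geq m}_{\epsilon\scr L}$ is a Nisnevich sheaf on $\dSpc_S^\qcqs$, and in particular its restriction to $\Sm_S^\fp$ is a Nisnevich sheaf. Second, Corollary~\ref{cor:animated-ebu0} shows that for every $X\in\Sm_S^\fp$ and every finite locally free sheaf $\scr E$ on $X$, the presheaf $\c\scr F^{\geq m}_{\epsilon\scr L}$ sends the blowup square of the zero section of $\P_X(\scr E\oplus\scr O)$ to a cartesian square in $\scr S$.

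Together these two properties feed into \cite[Proposition~2.2]{AHI}, which asserts precisely that a Nisnevich sheaf on $\Sm_S^\fp$ valued in a stable category satisfies smooth blowup excision whenever it sends each such elementary projective blowup square to a cartesian square. There is no genuine obstacle left: the two substantive inputs — descent (which rests on Proposition~\ref{prop:descent-animated}) and the elementary blowup excision for projective bundles (which rests on the classical projective bundle formula of Theorem~\ref{thm:derived-PBF} applied via the isotropic decomposition as in Corollary~\ref{cor:ebu0}) — have already been done, so the proof of Corollary~\ref{cor:animated-ebu} reduces to this invocation.
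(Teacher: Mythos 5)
Your proof is correct and is essentially identical to the paper's: Nisnevich descent from Corollary~\ref{cor:animated-nis} plus the cartesian blowup square of Corollary~\ref{cor:animated-ebu0}, combined via \cite[Proposition 2.2]{AHI}. No issues.
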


\begin{proof}
	By Corollary~\ref{cor:animated-nis}, $\c\scr F^{\geq m}_{\epsilon\scr L}$ is a Nisnevich sheaf. The statement then follows from Corollary~\ref{cor:animated-ebu0} by virtue of \cite[Proposition 2.2]{AHI}.
\end{proof}

\section{Poincaré–Azumaya algebras}
\label{sec:Brauer}

The algebraic K-theory of schemes can naturally be twisted by Azumaya algebras.
We investigate the analogous theory of twists for GW-theory.
To that end, recall that the (extended) Picard and Brauer stacks are defined by
\begin{alignat*}{2}
	\Pic&=\Pic(\QCoh_{\geq 0}), &\qquad \Br&=\Pic(\QStk^\t), \\
	\Pic^\dagger&=\Pic(\QCoh),& \Br^\dagger&=\Pic(\QStk).
\end{alignat*}
Here, $\QStk$ and $\QStk^\t$ are the symmetric monoidal $2$-categories of (compactly generated) stable and t-structured quasi-coherent stacks, respectively \cite[\sect 11.5]{SAG}.
All four stacks are étale sheaves of $\E_\infty$-groups on spectral algebraic spaces, and they fit in the Picard–Brauer exact sequence
\[
\Pic\to \Pic^\dagger\xrightarrow{\deg} \Z \to \Br\to \Br^\dagger\xrightarrow{\deg} \B\Z.
\]
In particular, the Brauer stacks are the étale-local deloopings of the corresponding Picard stacks.
We will abusively refer to elements of $\Br^\dagger(X)$ as \emph{Azumaya algebras} over $X$.
The trivial Azumaya algebra over $X$ (that is, the unit of $\QStk(X)$) will be denoted by $\scr Q_X$.

\begin{construction}
	We can enlarge the $\E_\infty$-group $\Br$ of invertible t-structured quasi-coherent stacks $(\scr A,\scr A_{\geq 0})$ to a symmetric monoidal category $\Br_{\pm\infty}\subset\QStk^\t$ by allowing the degenerate t-structures $(\scr A,0)$ and $(\scr A,\scr A)$ as objects and right t-exact isomorphisms as morphisms. There is still a forgetful functor $\Br_{\pm\infty}\to\Br^\dagger$, whose kernel is now the poset $\Z_{\pm\infty}=(\Z\cup\{\pm\infty\},\geq)$ with the symmetric monoidal structure $+$ such that $(-\infty)+\infty=\infty$. We then have a categorical enhancement of the Picard–Brauer exact sequence
	\[
	\Pic^\dagger\xrightarrow{\deg} \Z_{\pm\infty} \to \Br_{\pm\infty}\to \Br^\dagger\xrightarrow{\deg} \B\Z_{\pm\infty},
	\]
	which exhibits $\Br_{\pm\infty}$ as the lax kernel of $\deg\colon \Br^\dagger\to \B\Z_{\pm\infty}$, or equivalently as the étale-local cokernel of $\deg\colon \Pic^\dagger\to \Z_{\pm\infty}$.
\end{construction}

\begin{definition}[The Poincaré–Brauer stack]
	 We define the \emph{Poincaré–Brauer stack} as the pullback
	\[\Br^\g\simeq \Br_{\pm\infty}\times_{\Br^\dagger}(\Br^\dagger)^{\h\C_2},\]
	where $\C_2$ acts on $\Br^\dagger$ by inversion; it is an étale sheaf of symmetric monoidal categories on spectral algebraic spaces.
	Objects of $\Br^\g(X)$ will be called \emph{Poincaré–Azumaya algebras} over $X$: they are triples $(\scr A,\scr A_{\geq 0},\phi)$, where $\scr A$ is an invertible stable quasi-coherent stack on $X$, $\scr A_{\geq 0}\subset\scr A$ is (locally) either $0$, $\scr A$, or an invertible prestable refinement of $\scr A$, and $\phi$ is a $\C_2$-equivariant isomorphism $(\scr A^\vee)^{\otimes 2}\simeq \scr Q_X$. A morphism $(\scr A,\scr A_{\geq 0},\phi)\to(\scr B,\scr B_{\geq 0},\psi)$ in $\Br^\g(X)$ is an isomorphism $(\scr A,\phi)\simeq(\scr B,\psi)$ in $(\Br^\dagger)^{\h\C_2}$ sending $\scr A_{\geq 0}$ to $\scr B_{\geq 0}$.
\end{definition}

\begin{remark}[The Poincaré–Brauer stack as a quotient]
	\label{rmk:Brp-quotient}
	Since $\Br^\g$ is the lax kernel of
	\[
	\Br^\dagger \to \B\Z_{\pm\infty}\times (\Br^\dagger)^{\B\C_2} ,\quad \scr A\mapsto(\deg(\scr A),\scr A^{\otimes 2}),
	\]
	it is also the étale-local cokernel of
	\[
	\Pic^\dagger \to \Z_{\pm\infty}\times (\Pic^\dagger)^{\B\C_2},\quad \scr L\mapsto (\deg(\scr L),\scr L^{\otimes 2}).
	\]
	The cokernel functor is explicitly given by
	\begin{equation}\label{eqn:Br-coker}
	\Z_{\pm\infty}\times (\Pic^\dagger)^{\B\C_2}\to \Br^\g,\quad (m,\scr L)\mapsto (\scr Q,\scr Q_{\geq m},\phi_{\scr L}),
	\end{equation}
	where $\phi_{\scr L}\colon (\scr Q^\vee)^{\otimes 2}\simto\scr Q$ is the symmetric bilinear functor $\B_{\scr L}$.
\end{remark}

\begin{construction}[Twists of genuine Poincaré categories]
	\label{cst:Br-to-Cat^p}
	Let $X$ be a qcqs spectral algebraic space. An element $(\scr A,\scr A_{\geq 0},\phi)\in \Br^\g(X)$ defines a Poincaré category $(\scr A(X)^\omega, \Qoppa^{\geq 0}_\phi)$, where the symmetric bilinear part of $\Qoppa^{\geq 0}_\phi$ is induced by $\phi$ and its linear part is induced by the t-structure (as in \cite[\sect 3.4]{CHN}).
	These assemble into a lax natural transformation
	\[
	\Br^\g\to \Mod_{(\Perf(\ph),\Qoppa^\g)}(\Cat^\p_\idem)\colon (\SpSpc^\qcqs)^\op\to \CAlg^\lax(\Cat),\quad (\scr A,\scr A_{\geq 0},\phi)\mapsto (\scr A(\ph)^\omega,\Qoppa^{\geq 0}_\phi),
	\]
	where $\CAlg^\lax(\Cat)$ is the $2$-category of symmetric monoidal categories and lax symmetric monoidal functors.
	Precomposing with~\eqref{eqn:Br-coker} gives the lax natural transformation 
	\[
	\Z_{\pm\infty}\times (\Pic^\dagger)^{\B\C_2}\to \Mod_{(\Perf(\ph),\Qoppa^\g)}(\Cat^\p_\idem),\quad (m,\scr L)\mapsto (\Perf(\ph),\Qoppa^{\geq m}_{\scr L}).
	\]
\end{construction}

\begin{remark}
	On bounded spaces, it follows from Lemma~\ref{lem:etale-Tate} and Nisnevich descent that the lax natural transformation $\Br^\g\to \Mod_{(\Perf(\ph),\Qoppa^\g)}(\Cat^\p_\idem)$ of Construction~\ref{cst:Br-to-Cat^p} is strict on étale morphisms.
	It is therefore uniquely determined by the $\Pic^\dagger$-invariant natural transformation $(m,\scr L)\mapsto (\Perf(\ph),\Qoppa^{\geq m}_{\scr L})$.
\end{remark}

\begin{variant}[The classical Poincaré–Brauer stack]
	\label{var:dBr^p}
	Let $\L\Br^\g\colon \dSpc^{\op}\to \CAlg(\Cat)$ be the étale-local left Kan extension of $\Br^\g$ from smooth $\Z$-schemes. To describe it more explicitly, recall that the functor $\Pic^\dagger\colon \CAlg^\an\to\An$ is left Kan extended from smooth $\Z$-algebras. Note further that $(\Pic^\dagger)^{\B\C_2}=\mu_2\times\Pic^\dagger$ on static rings. Since $\mu_2=\C_2$ on smooth $\Z$-algebras, the left Kan extension of $(\Pic^\dagger)^{\B\C_2}$ from smooth $\Z$-algebras is $\C_2\times\Pic^\dagger$.
	By Remark~\ref{rmk:Brp-quotient}, $\L\Br^\g$ is then the étale-local cokernel of
	\[
	\Pic^\dagger \to \Z_{\pm\infty}\times\C_2\times \Pic^\dagger,\quad \scr L\mapsto(\deg(\scr L),\mathrm{sgn}(\scr L),\scr L^{\otimes 2}),
	\]
	where $\mathrm{sgn}(\scr L)=(-1)^{\deg\scr L}$.
	This is equivalently the kernel of
	\[
	\Br_{\pm\infty} \to \B\C_2\times \Br^\dagger,\quad (\scr A,\scr A_{\geq 0})\mapsto (\mathrm{sgn}(\scr A),\scr A^{\otimes 2}).
	\]
	An element of $\L\Br^\g(X)$ is thus a triple $(\scr A,\scr A_{\geq 0},\phi)$, where $(\scr A,\scr A_{\geq 0})\in \Br_{\pm\infty}(X)$ and $\phi$ is an isomorphism $(\scr A^\vee)^{\otimes 2}\simeq\scr Q_X$ made $\C_2$-equivariant by a trivialization of the $\C_2$-torsor $\mathrm{sgn}(\scr A)$ (such a trivialization is a choice of prestable refinement of $\scr A$ up to even shifts). The cokernel functor is given by
	\begin{equation}\label{eqn:dBr-coker}
	\Z_{\pm\infty}\times\C_2\times \Pic^\dagger \to \L\Br^\g,\quad (m,\epsilon,\scr L)\mapsto (\scr Q,\scr Q_{\geq m},\phi_{\epsilon\scr L}).
	\end{equation}
	We denote by $\Br^\c\subset \L\Br^\g$ the symmetric monoidal full subcategory where the trivialization of $\mathrm{sgn}(\scr A)$ is induced by the t-structure $\scr A_{\geq 0}$, whenever the latter is a prestable refinement of $\scr A$.
	Equivalently, this is the quotient by $\Pic^\dagger$ of the full subcategory of $\Z_{\pm\infty}\times\C_2\times \Pic^\dagger$ consisting of triples $(m,\epsilon,\scr L)$ where either $m=\pm\infty$ or $\epsilon=(-1)^{m+\deg\scr L}$. The inclusion of this subcategory has a right adjoint sending any other triple $(m,\epsilon,\scr L)$ to $(m+1,\epsilon,\scr L)$, which is lax symmetric monoidal but strictly $\Pic^\dagger$-linear. It therefore induces, upon taking the quotient by $\Pic^\dagger$, a right Bousfield localization
	\[
	\begin{tikzcd}[column sep=2em]
		\Br^\c \ar[shift left=2pt,hook]{r} \ar[<-,shift right=2pt]{r} & \L\Br^\g\rlap.
	\end{tikzcd}
	\]
\end{variant}

\begin{variant}[Twists of classical Poincaré categories]
	\label{var:dBr-to-Cat^p}
	Consider the lax natural transformation
	\begin{equation*}\label{eqn:Br-to-Cat^p}
	\Br^\g\to \Mod_{(\Perf(\ph),\Qoppa^\g)}(\Cat^\p_\idem)\colon (\SpSpc^\qcqs)^\op\to \CAlg^\lax(\Cat),\quad (\scr A,\scr A_{\geq 0},\phi)\mapsto (\scr A(\ph)^\omega,\Qoppa^{\geq 0}_\phi),
	\end{equation*}
	from Construction~\ref{cst:Br-to-Cat^p}. We note that it is in fact strict on $2$-torsionfree reduced rings. Since the target is an étale sheaf by Proposition~\ref{prop:mod-descent}, one can check this after precomposing with~\eqref{eqn:Br-coker}; the claim is then that for any map of $2$-torsionfree reduced rings $A\to B$, any $L\in \Pic^\dagger(A)^{\B\C_2}$, and any $m\in \Z\cup\{\pm\infty\}$, the map 
	\[
	\tau_{\geq m}L^{\t\C_2}\otimes_{\tau_{\geq 0}A^{\t\C_2}}\tau_{\geq 0}B^{\t\C_2}\to \tau_{\geq m}(L\otimes_AB)^{\t\C_2}
	\]
	is an isomorphism, which is clear.
	Since the functor $X\mapsto \Mod_{(\Perf(X),\Qoppa^\c)}(\Cat^\p_\idem)$ is an étale sheaf of symmetric monoidal categories on qcqs derived algebraic spaces (Proposition~\ref{prop:descent-animated}(iv)), and since $\L\Br^\g$ is étale-locally left Kan extended from smooth $\Z$-algebras, where $\Qoppa^\g=\Qoppa^\c$, there is a unique natural transformation
	\begin{equation*}\label{eqn:dBr-to-Cat^p}
	\L\Br^\g\to \Mod_{(\Perf(\ph),\Qoppa^\c)}(\Cat^\p_\idem)\colon (\dSpc^\qcqs)^\op\to \CAlg^\lax(\Cat), \quad (\scr A,\scr A_{\geq 0},\phi)\mapsto (\scr A(\ph)^\omega,\c\Qoppa^{\geq 0}_\phi),
	\end{equation*}
	which agrees with the previous one on smooth $\Z$-algebras.
	By Remark~\ref{rmk:dTate}(i), this natural transformation factors uniquely through the (lax symmetric monoidal) right Bousfield localization $\L\Br^\g\to \Br^\c$, so that one does not lose any information by restricting it to $\Br^\c\subset\L\Br^\g$.
	Precomposing it with~\eqref{eqn:dBr-coker} gives the natural transformation
	\[
	\Z_{\pm\infty}\times\C_2\times \Pic^\dagger \to \Mod_{(\Perf(\ph),\Qoppa^\c)}(\Cat^\p_\idem),\quad (m,\epsilon,\scr L)\mapsto (\Perf(\ph),\c\Qoppa^{\geq m}_{\epsilon\scr L}).
	\] 
\end{variant}

\begin{summary}[Parameter spaces for genuine and classical Poincaré categories]
	The Poincaré–Brauer stack $\Br^\g$ and its classical variant $\Br^\c$ are natural parameter spaces for the Poincaré categories of spectral and derived algebraic spaces considered in Sections \ref{sec:genuine} and~\ref{sec:derived}:
	\[
	\begin{tikzcd}[column sep={4em}]
		\Br^\g \ar{dr}{\Qoppa^{\geq 0}} & {} \\
		\L\Br^\g \ar{r}[description]{\c\Qoppa^{\geq 0}} \ar{u} \ar[phantom]{ur}[near start,left,sloped]{\Rightarrow} & \Cat^\p\rlap. \\
		\Br^\c \ar[hook,shift left]{u} \ar[<-,shift right]{u} \ar[dashed]{ur} &
	\end{tikzcd}
	\]
	We spell out more precisely how this parametrization relates to Definitions \ref{def:genuineGW} and~\ref{def:derivedGW}.
	In the following two diagrams of symmetric monoidal categories, the third column is the étale-local cokernel of the first two. 
	We first consider the genuine case:
	\[
	\begin{tikzcd}
		\Pic^\dagger \ar{r}{(\deg,2)} & \Z_{\pm\infty}\times(\Pic^\dagger)^{\B\C_2} \ar{r} & \Br^\g \ar{r}{\Qoppa^{\geq 0}} & \Cat^\p \\
		\Pic \ar[hook]{u} \ar{r}{(0,2)} & \{0,\pm\infty\} \times (\Pic^\dagger)^{\B\C_2} \ar[hook]{u}[swap]{(\deg,\id)} \ar{r} & \mathrm{Core}(\Br^\g) \ar[hook]{u} & \\
		\Pic \ar[equal]{u} \ar{r}{2} & (\Pic^\dagger)^{\B\C_2} \ar[hook]{u}[swap]{0} \ar{r} & \Pic(\Br^\g)\rlap. \ar[hook]{u} &
	\end{tikzcd}
	\]
	Here, the composite functor $\{0,\pm\infty\} \times (\Pic^\dagger)^{\B\C_2}\to\Cat^\p$ sends $(0,\scr L)$, $(-\infty,\scr L)$, and $(\infty,\scr L)$ to the Poincaré structures $\Qoppa^\g_{\scr L}$, $\Qoppa^\s_{\scr L}$, and $\Qoppa^\q_{\scr L}$ of Definition~\ref{def:genuineGW}.
	The fact that $\{0,\pm\infty\} \times (\Pic^\dagger)^{\B\C_2}\to \Br^\g$ is surjective means that these capture all the Poincaré categories $(\Perf(X),\Qoppa^{\geq m}_{\scr L})$. The category structure on $\Br^\g$ further encodes the Poincaré functors
	\[
	(\Perf(X),\Qoppa^\q_{\scr L}) \to (\Perf(X),\Qoppa^\g_{-\scr L[2]}) \to (\Perf(X),\Qoppa^\g_{\scr L}) \to (\Perf(X),\Qoppa^\s_{\scr L}).
	\]
	The classical case is a bit more complicated:
	\[
	\begin{tikzcd}
		\Pic^\dagger \ar{r}{(\deg,\mathrm{sgn},2)} & \Z_{\pm\infty}\times\C_2\times\Pic^\dagger \ar{r} & \L\Br^\g \ar{r}{\c\Qoppa^{\geq 0}} & \Cat^\p \\
		\Pic^\dagger \ar[equal]{u} \ar{r} & m=\pm\infty\text{ or }\epsilon=(-1)^{m+\deg\scr L} \ar[hook,shift left]{u} \ar[<-,shift right]{u} \ar{r} & \Br^\c \ar[hook,shift left]{u} \ar[<-,shift right]{u} \ar[dashed]{ur} & \\
		\deg^{-1}(2\Z) \ar[hook]{u} \ar{r}{(\deg,2)} & \epsilon=1 \ar[hook]{u} \ar{r} & \Br^\c \ar[equal]{u} \\
		\Pic \ar[hook]{u} \ar{r}{(0,2)} & \{0,\pm\infty\} \times \Pic^\dagger \ar[hook]{u}[swap]{(\deg,\id)} \ar{r} & \mathrm{Core}(\Br^\c) \ar[hook]{u} & \\
		\Pic \ar[equal]{u} \ar{r}{2} & \Pic^\dagger \ar[hook]{u}[swap]{0} \ar{r} & \Pic(\Br^\c)\rlap. \ar[hook]{u} &
	\end{tikzcd}
	\]
	Here, the composite functor $\{0,\pm\infty\} \times \Pic^\dagger\to\Cat^\p$ sends $(0,\scr L)$, $(-\infty,\scr L)$, and $(\infty,\scr L)$ to the Poincaré structures $\Qoppa^\c_{\scr L}$, $\Qoppa^\varsigma_{\scr L}$, and $\Qoppa^\qoppa_{\scr L}$ of Definition~\ref{def:derivedGW}.
	The surjectivity of $\{0,\pm\infty\} \times \Pic^\dagger\to \Br^\c$ and the fact that $\L\Br^\g\to \Cat^\p$ factors through $\Br^\c$ means that these capture all the Poincaré categories $(\Perf(X),\c\Qoppa^{\geq m}_{\epsilon\scr L})$. The category structure on $\Br^\c$ further encodes the Poincaré functors
	\[
	(\Perf(X),\Qoppa^\qoppa_{\scr L}) \to (\Perf(X),\Qoppa^\c_{\scr L[4]}) \to (\Perf(X),\Qoppa^\c_{\scr L}) \to (\Perf(X),\Qoppa^\varsigma_{\scr L}).
	\]
\end{summary}

\section{Motivic spectra from Poincaré localizing invariants}
\label{sec:KO}

In this section, we construct motivic spectra representing Grothendieck–Witt theory and other localizing invariants of Poincaré categories of derived algebraic spaces.

We will see in particular that all variants of classical Grothendieck–Witt theory considered in Section~\ref{sec:derived} are encoded by a single motivic $\E_\infty$-ring spectrum $\KO$ with a Bott element $\beta^4\colon (\P^1)^{\otimes 4}\to \KO$, which is stable under arbitrary base change. Letting $\KO^\varsigma=\KO[\beta^{-1}]$ and $\KO^\qoppa=\fib(\KO\to \KO^\wedge_\beta)$ denote the left and right Bott periodizations of $\KO$, respectively, we have
\begin{align*}
	\Omega^{\infty}_{\P^1}\Sigma^\xi\KO &= \GW^\c_{\det^\dagger(\xi)}, \\
	\Omega^{\infty}_{\P^1}\Sigma^\xi\KO^\varsigma &= \GW^\varsigma_{\det^\dagger(\xi)},\\
	\Omega^{\infty}_{\P^1}\Sigma^\xi\KO^\qoppa &= \GW^\qoppa_{\det^\dagger(\xi)},
\end{align*}
for any K-theory element $\xi$ with shifted determinant $\det^\dagger(\xi)$.
In particular, on affine derived schemes, the motivic spectra $\KO$, $\Sigma_{\P^1}^2\KO$, and $\Sigma_{\P^1}^4\KO$ represent the group completions of the monoids of nondegenerate symmetric, alternating, and quadratic forms, respectively.

Furthermore, on bounded derived algebraic spaces, we will see that the genuine Grothendieck–Witt theory $\GW^\g_{\scr L}$ considered in Section~\ref{sec:genuine} is refined by a $(\Pic^\dagger)^{\B\C_2}$-graded $\KO$-$\E_\infty$-algebra $\KO^\g_?$, whose $\Pic^\dagger$-graded part agrees with $\Sigma^?\KO$ on classical algebraic spaces (but not in general).
Letting $\KO^\s_?=\KO^\g_?[\beta^{-1}]$ and $\KO^\q_?=\fib(\KO^\g_?\to (\KO^\g_?)^\wedge_\beta)$, we then have
\begin{align*}
	\Omega^{\infty-n}_{\P^1}\KO^\g_{\scr L} &= \GW^{\g}_{\scr L[n]}, \\
	\Omega^{\infty-n}_{\P^1}\KO^\s_{\scr L} &= \GW^\s_{\scr L[n]},\\
	\Omega^{\infty-n}_{\P^1}\KO^\q_{\scr L} &= \GW^\q_{\scr L[n]}.
\end{align*}
Unlike $\KO$, however, the motivic spectra $\KO^\g_?$ are not stable under arbitrary base change.

\begin{construction}[Motivic spectra representing genuine $\scr F$-theory]
\label{cst:MS}
Let $S$ be a qcqs derived algebraic space.
If $\dSpc^\g\to \dSpc$ denotes the cartesian fibration classified by $(\Br^\g)^\op$, Construction~\ref{cst:Br-to-Cat^p} provides an oplax symmetric monoidal functor
\[
\dSpc^{\g,\qcqs}_S\to \Mod_{(S,\Qoppa^\g)}(\Cat^\p)^\op,\quad (X,\scr A,\scr A_{\geq 0},\phi)\mapsto (\scr A(X)^\omega,\Qoppa^{\geq 0}_\phi),
\]
where $(S,\Qoppa^\g)$ is short for $(\Perf(S),\Qoppa^\g_{\scr O})$.
On bounded spaces, it sends Nisnevich squares to Poincaré–\allowbreak Karoubi squares by Corollary~\ref{cor:nis-descent} (note that this is an étale-local statement by Proposition~\ref{prop:mod-descent}), so that we get a colimit-preserving oplax symmetric monoidal functor
\[
\scr P_\Nis(\dSpc^{\g,\b}_S)=\scr P_\Nis(\dSpc^{\g,\b,\qcqs}_S) \to \Fun^\loc(\Mod_{(S,\Qoppa^\g)}(\Cat^\p),\An)
\]
from the category of Nisnevich sheaves (where we allow large values). By Corollary~\ref{cor:ebu}, its stabilization factors through the category $\scr P_{\Nis,\sbu}$ of Nisnevich sheaves satisfying smooth blowup excision, and we get
\begin{equation}\label{eqn:nc-realization}
\scr P_{\Nis,\sbu}(\dSpc^{\g,\b}_S,\Sp) \to \Fun^\loc(\Mod_{(S,\Qoppa^\g)}(\Cat^\p),\Sp).
\end{equation}
Let $p\colon \P^1_X\to X$ be the structure map and $\infty\colon X\to \P^1_X$ its section at infinity.
By Theorem~\ref{thm:PBF}(iv), for any $(X,\scr A,\scr A_{\geq 0},\phi)\in \dSpc^{\g,\b,\qcqs}_S$, the sequence
\[
(\scr A(X)^\omega,\Qoppa^{\geq 0}_\phi) \xrightarrow{p^*} ((p^*\scr A)(\P^1_X)^\omega,\Qoppa^{\geq 0}_\phi) \xrightarrow{p_*(\ph\otimes\scr O(-1))} (\scr A(X)^\omega,\Omega\Qoppa^{\geq 0}_\phi)
\]
is a split Poincaré–Verdier sequence, as can be checked étale-locally. Since the cofiber of $p^*$ is stably identified with the fiber of $\infty^*$, we deduce that the functor~\eqref{eqn:nc-realization} sends 
\[(\P^1,\infty)\otimes \Sigma^\infty_+ j (X,\scr A,\scr A_{\geq 0},\phi)\quad\text{to}\quad\Sigma^\infty j(\scr A(X)^\omega,\Omega\Qoppa^{\geq 0}_\phi),\]
where $j$ is the Yoneda embedding.
In particular, the oplax $\Fin^\simeq$-linear structure on the functor~\eqref{eqn:nc-realization} defined by $(\P^1,\infty)$ is strict, and its target is a $(\Fin^\simeq)^\gp$-module. By the universal property of $\P^1$-spectra, there is a unique colimit-preserving oplax symmetric monoidal functor
\[
\Sp_{\P^1}(\scr P_{\Nis,\sbu}(\dSpc^{\g,\b}_S,\Sp)) \to \Fun^\loc(\Mod_{(S,\Qoppa^\g)}(\Cat^\p),\Sp)
\]
extending~\eqref{eqn:nc-realization}.
Its right adjoint is a lax symmetric monoidal functor
\[
\Fun^\loc(\Mod_{(S,\Qoppa^\g)}(\Cat^\p),\Sp) \to \Sp_{\P^1}(\scr P_{\Nis,\sbu}(\dSpc^{\g,\b}_S,\Sp)),
\]
which preserves smallness, limits, and colimits (since these are computed objectwise on $\dSpc^{\g,\b,\qcqs}_S$).
On the other hand, restriction to the categories $\Sm_X$ gives a (fully faithful) symmetric monoidal functor
\[
\Sp_{\P^1}(\scr P_{\Nis,\sbu}(\dSpc^{\g,\b}_S,\Sp)) \to \Fun_S^\b(\Br^\g,\MS),
\]
where the right-hand side is the category of sections of the cocartesian fibration over $(\dSpc_S^\b)^\op$ classified by $X\mapsto \Fun(\Br^\g(X),\MS_X)$. 
Hence, we get a lax symmetric monoidal functor
\[
\scr R^\g\colon \Fun^\loc(\Mod_{(S,\Qoppa^\g)}(\Cat^\p),\Sp) \to\Fun_S^\b(\Br^\g,\MS),
\]
which amounts to a collection of lax symmetric monoidal functors
\[
\scr R^\g_X\colon \Fun^\loc(\Mod_{(S,\Qoppa^\g)}(\Cat^\p),\Sp) \to \Fun(\Br^\g(X),\MS_X),
\]
laxly natural in $X\in \dSpc_S^\b$. Note that $\scr R^\g$ and $\scr R_X^\g$ preserve limits and colimits.

Given $\scr L\in \Pic^\dagger(X)^{\B\C_2}$, we will write 
\[
\scr R^\g_X(\scr F)_{\scr L},\,\scr R^\s_X(\scr F)_{\scr L},\,\scr R^\q_X(\scr F)_{\scr L}\in \MS_X
\]
for the values of $\scr R^\g_X(\scr F)_?$ on the Poincaré–Azumaya algebras $(\scr Q,\scr Q_{\geq 0},\phi_{\scr L})$, $(\scr Q,\scr Q,\phi_{\scr L})$, $(\scr Q,0,\phi_{\scr L})$, and we omit the subscript $\scr L$ when $\scr L=\scr O$.
\end{construction}

\begin{remark}
	The boundedness restriction in Construction~\ref{cst:MS} is not necessary if we only consider the quadratic Poincaré structures $\Qoppa^\q$. Thus, 
	we get a nonunital lax symmetric monoidal functor
	\[
	\scr R^\q\colon \Fun^\loc(\Mod_{(S,\Qoppa^\g)}(\Cat^\p),\Sp) \to\Fun_S((\Br^\dagger)^{\h\C_2},\MS).
	\]
\end{remark}

\begin{remark}
	If $S$ is a classical qcqs scheme and $(\scr A,\phi)\in \Br^\dagger(S)^{\h\C_2}$, the motivic spectrum $\scr R_S^\g(\scr F)_{(\scr A,\scr A,\phi)}$ in $\MS_S$ coincides with the $\P^1$-spectrum $\scr R_S^\s(\scr F;(\scr A(S)^\omega,\phi))$ of \cite[Construction 7.4.11]{CHN}.
\end{remark}

\begin{variant}[Motivic spectra representing classical $\scr F$-theory]
	\label{var:MS}
	Let $S$ be a qcqs derived algebraic space.
	By Variant~\ref{var:dBr-to-Cat^p}, there is an oplax symmetric monoidal functor
	\[
	\dSpc^{\c,\qcqs}_S\to \Mod_{(S,\Qoppa^\c)}(\Cat^{\p})^\op,\quad (X,\scr A,\scr A_{\geq 0},\phi)\mapsto (\scr A(X)^\omega,\c\Qoppa^{\geq 0}_\phi),
	\]
	where $\dSpc^\c\to\dSpc$ is now the cartesian fibration classified by $(\Br^\c)^\op$.
	If we repeat Construction~\ref{cst:MS} using Proposition~\ref{prop:descent-animated}(iv), Corollary~\ref{cor:animated-nis}, Corollary~\ref{cor:animated-ebu}, and Theorem~\ref{thm:derived-PBF},
	we obtain a lax symmetric monoidal functor
	\[
	\scr R^\c\colon \Fun^\loc(\Mod_{(S,\Qoppa^\c)}(\Cat^\p),\Sp) \to\Fun_S(\Br^\c,\MS),
	\]
	where the right-hand side is the category of sections of the cocartesian fibration over $\dSpc_S^\op$ classified by $X\mapsto \Fun(\Br^\c(X),\MS_X)$.
	This amounts to a collection of lax symmetric monoidal functors
	\[
	\scr R^\c_X\colon \Fun^\loc(\Mod_{(S,\Qoppa^\c)}(\Cat^\p),\Sp) \to \Fun(\Br^\c(X),\MS_X),
	\]
	laxly natural in $X\in \dSpc_S$. Note that $\scr R^\c$ and $\scr R^\c_X$ preserve limits and colimits.
	
	Given $\scr L\in \Pic^\dagger(X)$, we will write 
	\[
	\scr R^\c(\scr F)_{\scr L},\, \scr R^\varsigma(\scr F)_{\scr L},\, \scr R^\qoppa(\scr F)_{\scr L} \in \MS_X
	\]
	for the values of $\scr R^\c(\scr F)_?$ on the classical Poincaré–Azumaya algebras $(\scr Q,\scr Q_{\geq 0},\phi_{\scr L})$, $(\scr Q,\scr Q,\phi_{\scr L})$, $(\scr Q,0,\phi_{\scr L})$, and we omit the subscript $\scr L$ when $\scr L=\scr O$.
\end{variant}

\begin{remark}\label{rmk:c-to-g}
	Since there is an $\E_\infty$-map $\Qoppa^\c\to \Qoppa^\g$, the functor $\scr R^\g$ (precomposed with $\Br^\c\to\Br^\g$) is an $\E_\infty$-algebra over $\scr R^\c$ (restricted to bounded derived algebraic spaces). On classical $2$-torsionfree algebraic spaces, there is no difference between $\scr R^\g$ and $\scr R^\c$. On classical algebraic spaces with $2$-torsion, $\scr R^\g$ and $\scr R^\c$ agree on triples $(m,\epsilon,\scr L)$ with $m=\pm\infty$ or $\epsilon=(-1)^{m+\deg\scr L}$, but $\scr R^\g$ also contains the ``skew'' variants with $\epsilon=-(-1)^{m+\deg\scr L}$ as well as $\C_2$-actions on $\scr L$ given by arbitrary square roots of $1$. Furthermore, the maps $\scr R^\qoppa\to\scr R^\q$ and $\scr R^\varsigma\to\scr R^\s$ are isomorphisms whenever both sides are defined (see Proposition~\ref{prop:genuine-vs-derived}).
\end{remark}

Let us unpack the functor $\scr R^\c$ from Variant~\ref{var:MS}.
For any Karoubi-localizing invariant 
\[\scr F\colon \Mod_{(S,\Qoppa^\c)}(\Cat^\p)\to\Sp\]
and any $X\in\dSpc_S$, we get a $\Br^\c(X)$-graded motivic spectrum $\scr R^\c_X(\scr F)_?$ in $\MS_X$.
Since this assignment is lax symmetric monoidal, it preserves $\E_\infty$-algebras, modules, etc. 
For any morphism $f\colon Y\to X$ in $\dSpc_S$, we get a morphism
\[
f^*\scr R^\c_X(\scr F)_{?} \to \scr R^\c_Y(\scr F)_{f^*(?)}
\]
in $\Fun(\Br^\c(Y),\MS_Y)$, which is an isomorphism when $f$ is smooth.

By construction, the components of the $\P^1$-spectrum $\scr R^\c_X(\scr F)_?$ are as follows: for any $n\in \Z$, we have
\[
\Omega^{\infty-n}_{\P^1}\scr R^\c_X(\scr F)_{(\scr A,\scr A_{\geq 0},\phi)}=\scr F(\scr A(\ph)^\omega,\Sigma^n\c\Qoppa_{\phi}^{\geq 0})=\scr F(\scr A(\ph)^\omega,\c\Qoppa_{\phi[n]}^{\geq n})
\]
as presheaves of spectra on qcqs smooth $X$-spaces. In particular, the weight $n$ cohomology theory represented by the motivic spectrum $\scr R^\c_X(\scr F)_{\scr L}\in \MS_X$ is classical $\scr L[n]$-valued $\scr F$-theory:
\[
\Omega^{\infty-n}_{\P^1}\scr R^\c_X(\scr F)_{\scr L}=\scr F^\c_{\scr L[n]}.
\]
Similarly, if $X$ is bounded, the weight $n$ cohomology theory represented by the motivic spectrum $\scr R^\g_X(\scr F)_{\scr L}\in \MS_X$ is genuine $\scr L[n]$-valued $\scr F$-theory:
\[
\Omega^{\infty-n}_{\P^1}\scr R^\g_X(\scr F)_{\scr L}=\scr F^\g_{\scr L[n]}.
\]

\begin{definition}[$\KO$ and $\KW$]
	\leavevmode
\begin{enumerate}
	\item We define the $\E_\infty$-algebras $\KO_?$ and $\KW_?$ in $\Fun_\Z(\Br^\c,\MS)$ by
	\[
		\KO_? = \scr R^\c(\GW)_?\quad\text{and}\quad	\KW_? = \scr R^\c(\L)_?.
	\]
	\item We define the $\E_\infty$-algebras $\KO^\g_?$ and $\KW^\g_?$ in $\Fun_\Z^\b(\Br^\g,\MS)$ by
	\[
		\KO^\g_? = \scr R^\g(\GW)_?\quad\text{and}\quad	\KW^\g_? = \scr R^\g(\L)_?.
	\]
\end{enumerate}
\end{definition}

Thus, the motivic spectra $\KO_?$ and $\KW_?$ represent $\GW^\c$ and $\L^\c$ (more precisely, their right Kan extensions from qcqs derived algebraic spaces), and their twists by classical Poincaré–Azumaya algebras. 
Since $\GW$ is the unit in Karoubi-localizing invariants, the functor $\scr R^\c$ lands in $\KO_?$-modules:
\[
\scr R^\c\colon \Fun^\loc(\Mod_{(S,\Qoppa^\c)}(\Cat^\p),\Sp) \to \Mod_{\KO_?}(\Fun_S(\Br^\c,\MS)).
\]
In particular, each motivic spectrum $\scr R^\c_X(\scr F)_{(\scr A,\scr A_{\geq 0},\phi)}$ is a module over $\KO$ in $\MS_X$.
Similarly, the motivic spectra $\KO^\g_?$ and $\KW^\g_?$ represent $\GW^\g$ and $\L^\g$, and each motivic spectrum $\scr R^\g_X(\scr F)_{(\scr A,\scr A_{\geq 0},\phi)}$ is a module over $\KO^\g$ in $\MS_X$, which is in turn an $\E_\infty$-$\KO$-algebra (by Remark~\ref{rmk:c-to-g}).

\begin{remark}
	Over classical schemes, the $\A^1$-localizations of $\KO^\s_{\scr L}$ and $\KW^\s_{\scr L}$ (which coincide with $\KO^\varsigma_{\scr L}$ and $\KW^\varsigma_{\scr L}$ if $\scr L$ has trivial involution) recover the $\A^1$-invariant motivic $\E_\infty$-ring spectra defined by Calmès–Harpaz–Nardin in \cite[Definition 8.1.1]{CHN}.
	We will see in Corollary~\ref{cor:A1-localization} that this is also true of $\KO^{*}_{\scr L}$ and $\KW^{*}_{\scr L}$ for any $*\in\{\g,\s,\q\}$, as $\L_{\A^1}$ identifies all these variants.
\end{remark}

\begin{remark}
	Evaluating $\KO_?$ on $(\scr Q, 0,\phi_{\scr O})\to (\scr Q, \scr Q_{\geq 0},\phi_{\scr O})\to (\scr Q, \scr Q,\phi_{\scr O})$ yields the sequence
	\[
	\KO^\qoppa\to\KO\to\KO^\varsigma
	\]
	in $\MS_X$. Since $\KO_?\colon \Br^\c(X)\to \MS_X$ is a lax symmetric monoidal functor, $\KO\to \KO^\varsigma$ is an $\E_\infty$-map and both $\KO^\qoppa\to \KO$ and $\KO^\qoppa\to\KO^\varsigma$ are $\E_\infty$-ideals. Similar remarks apply to $\KW_?$, $\KO^\g_?$, and $\KW^\g_?$.
\end{remark}

\begin{remark}[Graded enhancements]
	\label{rmk:KSp}
	Let $\Gamma$ be a symmetric monoidal category with a lax symmetric monoidal functor $\gamma\colon\Gamma\to \Z_{\pm\infty}\times\C_2$.
	Restricting along $\gamma$ defines a lax symmetric monoidal lift of $\scr R^\c$ to $\Fun_S(\Br^\c,\MS)^{\Gamma}$ and of $\scr R^\g$ to $\Fun_S^\b(\Br^\g,\MS)^{\Gamma}$.
	For example, using the functor $\Z\to \Z_{\pm\infty}\times\C_2$ sending $n$ to $(n,(-1)^n)$, we obtain a $\Z$-graded $\E_\infty$-$\KO_?$-algebra structure on $(\KO_{?[2n]})_{n\in \Z}$.
	
	Consider the lax symmetric monoidal functor $\C_2\to \Z_{\pm\infty}\times\C_2$ sending $1$ to $(0,1)$ and $-1$ to $(1,-1)$.
	Writing $\KSp_?=\Sigma^2_{\P^1}\KO_?$, we obtain a $\C_2$-graded $\E_\infty$-$\KO_?$-algebra structure on $(\KO_?,\KSp_?)$, hence an $\E_\infty$-$\KO_?$-algebra structure on $\KO_?\oplus\KSp_?$. Note that $\KSp$ represents the Grothendieck–Witt theory of alternating forms, also known as symplectic K-theory. The $\C_2$-graded algebra structure on $(\KO,\KSp)$ reflects the fact that the tensor product of two alternating forms is a quadratic form, which induces a symmetric form.
\end{remark}

\begin{remark}[Comparison with localizing invariants of stable categories]
	A similar construction to Construction~\ref{cst:MS} yields a lax symmetric monoidal functor
	\[
	\scr R\colon \Fun^\loc(\Mod_{S}(\Cat^\st),\Sp) \to\Fun_S(\Br^\dagger,\MS),
	\]
	which is equivariant for the actions of $\C_2$ on $\Cat^\st$ and $\Br^\dagger$ and sends the unit $\K$ to $\KGL$ (and its twists by Azumaya algebras).
	For a classical Azumaya algebra $\scr A$, the motivic spectrum $\KGL_{\scr A}$ was also defined in \cite[\sect 4]{ElNaYa}.
	There is then a diagram of lax symmetric monoidal functors
	\[
	\begin{tikzcd}
		\Fun^\mathrm{bord}(\Mod_{(S,\Qoppa^\c)}(\Cat^\p),\Sp) \ar{r}{\scr R^\c} \ar[<-]{d}[swap]{(\ph)^\mathrm{bord}} & \Mod_{\KW}(\Fun_S(\Br^\c,\MS)) \ar{d}{\mathrm{forget}} \\
		\Fun^\loc(\Mod_{(S,\Qoppa^\c)}(\Cat^\p),\Sp) \ar{r}{\scr R^\c} \ar{d}[swap]{(\ph)^\mathrm{hyp}} \ar[phantom,start anchor=center,end anchor=center]{ur}[description]{\textstyle\Uparrow} \ar[phantom,start anchor=center,end anchor=center]{dr}[description]{\textstyle\Downarrow} & \Mod_{\KO}(\Fun_S(\Br^\c,\MS)) \ar[<-]{d}{\mathrm{forget}} \\
		\Fun^\loc(\Mod_{S}(\Cat^\st),\Sp)^{\h\C_2} \ar{r}{\scr R} & \Mod_\KGL(\Fun_S(\Br^\dagger,\MS)^{\h\C_2})\rlap,
	\end{tikzcd}
	\]
	as well as a related diagram with $\scr R^\g$ instead of $\scr R^\c$.
\end{remark}

\section{The Hopf and Bott fracture squares}
\label{sec:fracture}

In this section, we recast the following two cartesian squares of $\E_\infty$-algebras in $\Fun^\loc(\Cat^\p,\Sp)$ as fracture squares of the motivic spectrum $\KO$:
\[
\begin{tikzcd}
	\GW \ar{r} \ar{d} & \K^{\h\C_2} \ar{d} &[4em] \GW \ar{r} \ar{d} & \GW/\GW^\q \ar{d} \\
	\L \ar{r} & \K^{\t\C_2}\rlap, & \GW^\s \ar{r} & \L^\mathrm{n}\rlap.
\end{tikzcd}
\]
The first square is the so-called \emph{fundamental cartesian square}, which expresses GW-theory as a gluing of K-theory with L-theory, and we will see that it is the fracture square of $\KO$ with respect to the Hopf element $\eta$ of the motivic sphere spectrum \cite[\sect 3]{motsphere}. The second square is the definition of \emph{normal L-theory} $\L^\mathrm{n}=\L^\s/\L^\q$, and we will see that it is the fracture square of $\KO$ with respect to the Bott element $\beta^4$ of $\KO$, which is a lift to $\KO$ of the fourth power of the usual Bott element of $\KGL$.

Let $R$ be an $\E_\infty$-algebra in $\MS_S$, $L$ an invertible $R$-module, and $\alpha\colon L\to R$ an $R$-linear map. An $R$-module $M$ is called \emph{$\alpha$-periodic} if $\alpha$ acts invertibly on $M$. The inclusion of $\alpha$-periodic $R$-modules into $\Mod_R(\MS_S)$ preserves limits and colimits and hence admits adjoints on both sides. The left adjoint $M\mapsto M[\alpha^{-1}]$ is called \emph{left $\alpha$-periodization} and is computed as the colimit of the sequence
\[
\dotsb \to L\otimes_R M \xrightarrow{\alpha} M \xrightarrow{\alpha} L^{-1}\otimes_R M \to \dotsb
\]
(the claim is local on $S$, so we can assume $\MS_S$ compactly generated\footnote{$\MS_S$ is in fact compactly generated even if $S$ is not qcqs, but this fails in similar cases, e.g., for $\QCoh(S)$.} and apply \cite[Lemma 12.1]{norms}). The right adjoint $M\mapsto \fib(M\to M^\wedge_\alpha)$ is called \emph{right $\alpha$-periodization} and is computed as the the limit of the above sequence (by \cite[Lemma 12.13]{norms}). 
The fracture square of $M$ with respect to $\alpha$ is then the following cartesian square in $\Mod_R(\MS_S)$, which is lax symmetric monoidal in $M$:
\[
\begin{tikzcd}
	M \ar{r} \ar{d} & M^\wedge_\alpha \ar{d} \\
	M[\alpha^{-1}] \ar{r} & M^\wedge_\alpha[\alpha^{-1}]\rlap.
\end{tikzcd}
\]

\begin{remark}[The fundamental cartesian square]
	\label{rmk:fund-square}
	Since $\scr R^\c$ preserves limits and colimits, we get from \cite[Corollary 3.6.7]{hermitianII} a cartesian square
	\[
	\begin{tikzcd}
		\scr R^\c(\scr F)_? \ar{r} \ar{d} & \scr R(\scr F^\mathrm{hyp})^{\h\C_2}_? \ar{d} \\
		\scr R^\c(\scr F^\mathrm{bord})_? \ar{r} & \scr R(\scr F^\mathrm{hyp})^{\t\C_2}_?
	\end{tikzcd}
	\]
	in $\Fun_S(\Br^\c,\MS)$, which is natural and symmetric monoidal in $\scr F$.
	For $\scr F=\GW$, this is a cartesian square of $\E_\infty$-algebras
	\[
	\begin{tikzcd}
		\KO_{?} \ar{r} \ar{d} & \KGL_{?}^{\h\C_2} \ar{d} \\
		\KW_{?} \ar{r} & \KGL_{?}^{\t\C_2},
	\end{tikzcd}
	\]
	where $\KGL_{(\scr A,\phi)}$ is the motivic spectrum representing $\scr A$-twisted K-theory, with $\C_2$-action induced by $\phi$.
\end{remark}

\begin{remark}[The Bott--Genauer fiber sequence]
	\label{rmk:bott-genauer}
	The Bott--Genauer sequence \cite[\sect 4.3]{hermitianII} induces a fiber sequence
	\[
	\Omega_{\P^1}\scr R^\c(\scr F)_? \to \scr R(\scr F^\mathrm{hyp})_? \xrightarrow{\mathrm{hyp}} \scr R^\c(\scr F)_?
	\]
	in $\Fun_S(\Br^\c,\MS)$, which is natural and symmetric monoidal in $\scr F$.
	For $\scr F=\GW$, this is the \emph{Wood fiber sequence}
	\[
	\Omega_{\P^1}\KO_? \to \KGL_? \xrightarrow{\mathrm{hyp}} \KO_?.
	\]
\end{remark}

\begin{definition}\label{def:hopf}
	The \emph{Hopf element} $\eta\colon \Sigma^{-1}\P^1\to\1$ in $\MS_\Z$ is the connecting map of the fiber sequence
	\[
	\P^1\to \P^2\to \P^2/\P^1=(\P^1)^{\otimes 2}
	\]
	(see \cite[Proposition 3.6]{motsphere}).
\end{definition}

\begin{lemma}\label{lem:wood}
	The connecting map of the Wood fiber sequence
	\[
	\Omega \KO\to \Omega_{\P^1}\KO
	\]
	is multiplication by the Hopf element $\eta\colon \Sigma_{\P^1}\1\to \Sigma\1$.
\end{lemma}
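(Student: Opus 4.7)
The approach is to produce an alternative fiber sequence of the form $\Omega_{\P^1}\KO \to \KGL \to \KO$ whose connecting map is manifestly multiplication by $\eta$, and then to identify it with the Wood fiber sequence.

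I would first apply $\KO$-cohomology to the motivic Hopf cofiber sequence $\Sigma_{\P^1}\1 \to \Sigma^\infty\P^2 \to \Sigma_{\P^1}^2\1$ in $\MS_\Z$, whose connecting map is $\Sigma_{\P^1}\eta$ by Definition~\ref{def:hopf}. The outer terms give $\Omega_{\P^1}\KO$ and $\Omega_{\P^1}^2\KO$. For the middle term, the classical projective bundle formula (Theorem~\ref{thm:derived-PBF}(ii)) applied to $\P^2$ with $r=2$, $n=0$, $\scr L=\scr O$ yields $\KO\oplus\KGL\simto\KO(\P^2)$, and hence a canonical identification $\map(\Sigma^\infty\P^2,\KO)\simeq\KGL$. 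This produces a fiber sequence of $\KO$-modules
\[
\Omega_{\P^1}^2\KO \to \KGL \to \Omega_{\P^1}\KO
\]
whose connecting map $\Omega_{\P^1}\KO \to \Sigma\Omega_{\P^1}^2\KO$ is multiplication by $\eta$.

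Smashing this sequence with $\Sigma_{\P^1}$ and using the Bott periodicity isomorphism $\Sigma_{\P^1}\KGL\simto\KGL$, I obtain a fiber sequence
\[
\Omega_{\P^1}\KO \to \KGL \to \KO
\]
whose connecting map $\Omega\KO \to \Omega_{\P^1}\KO$ is still multiplication by $\eta$ (naturality of $\eta$-multiplication under $\Sigma_{\P^1}$).

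To conclude, I would identify this fiber sequence with the Wood fiber sequence of Remark~\ref{rmk:bott-genauer}. Both are fiber sequences of $\KO$-modules with the same outer terms, and in both cases the middle map $\KGL \to \KO$ should coincide with the hyperbolic map: on the Hopf-induced side it arises from the restriction $\Perf(\P^2) \to \Perf(\P^1)$ applied to the Beilinson summand $\scr O(1)$, while on the Wood side it is the hyperbolic map coming from the Bott--Genauer splitting $\Perf(\P^1)=\langle\scr O,\scr O(-1)\rangle$. Realized uniformly on $\P^2$, using the compatibility of Beilinson decompositions with the embedding $\P^1\subset\P^2$, both reduce to the same hyperbolic construction. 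The main obstacle is precisely this final identification, requiring careful Poincaré-categorical bookkeeping to match the two middle maps; once settled, the connecting maps coincide, establishing $\partial = \eta\cdot$.
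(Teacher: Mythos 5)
Your overall route is the same as the paper's: the paper likewise compares the restriction sequence for $\P^1\subset\P^2$ (equivalently, the Hopf cofiber sequence mapped into $\KO$) with the Bott--Genauer/metabolic sequence, using the projective bundle formula for $\P^2$ to identify the middle term with K-theory. So this is not a genuinely different method; the problem is that the two steps you defer are the entire content of the proof, and one of them is absent from your plan altogether.

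First, the ``Poincaré-categorical bookkeeping'' matching the middle maps is precisely what the paper's commutative diagram of Poincaré categories supplies: a map from the sequence $(\Perf(X),\Qoppa^\c_{\scr O})\xrightarrow{0_*}(\Perf(\P^2_X),\Qoppa^\c_{\scr O[2]})\xrightarrow{\infty^*}(\Perf(\P^1_X),\Qoppa^\c_{\scr O[2]})$ to the metabolic sequence, with vertical Poincaré functors inducing the projective-bundle identifications $\Omega_{\P^2}\GW^\c_{\scr O[2]}\simeq\K$ and $\Omega_{\P^1}\GW^\c_{\scr O[2]}\simeq\GW^\c_{\scr O[1]}$; making $0_*$ a Poincaré functor uses Grothendieck duality. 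This is a concrete construction, not routine verification, and your proposal does not provide it.

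Second---and this is the genuine gap---matching the two maps $\KGL\to\KO$ only identifies the fibers of the two sequences up to an induced self-equivalence, so it determines the Wood connecting map only up to the corresponding automorphism of $\Omega_{\P^1}\KO$, i.e.\ up to a unit of $\GW^\c(\Z)$ (the paper notes in a footnote that this weaker statement would suffice for its later applications, but the lemma asserts the exact identification). To pin down the unit you must additionally check that the identification $\map(\P^2/\P^1,\KO)\simeq\Omega^2_{\P^1}\KO$ you use---via $\P^2/\P^1\simeq(\P^1)^{\otimes 2}$, which in $\MS$, absent $\A^1$-invariance, is itself mediated by blowup excision through the blowup $B$ of $\P^1\times\P^1$---is compatible with the identification built into the $\P^1$-spectrum structure of $\KO$, namely the iterated $\P^1$-projective-bundle formula. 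This is exactly the second half of the paper's proof: it shows that both identifications are induced by the same Poincaré functor $0_*\colon(\Perf(X),\Qoppa^\c_{\scr O})\to(\Perf(\A^2_X\;\mathrm{on}\;0_X),\Qoppa^\c_{\scr O[2]})$, using the retract diagram characterizing the $\P^1$-bundle isomorphism and the comparison over $B/\del B$. Without this compatibility, your final claim ``once settled, the connecting maps coincide'' does not follow on the nose.
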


\begin{proof}
	Let $\infty\colon \P^1\to \P^2$ be the pointed map $[x:y]\mapsto [x:y:0]$, where $\P^1$ and $\P^2$ have base points $[1:0]$ and $[1:0:0]$, let $0\colon *\to \P^2$ be the point $[0:0:1]$, and let $p\colon \P^2\to *$ be the structure map.
	There is a commutative diagram of Poincaré categories
	\[
	\begin{tikzcd}
		(\Perf(X),\Qoppa^\c_{\scr O}) \ar{r}{0_*} \ar[equal]{d} & (\Perf(\P^2_X),\Qoppa^\c_{\scr O[2]}) \ar{d}{q\to q\infty_*\infty^*} \ar{r}{\infty^*} & (\Perf(\P^1_X),\Qoppa^\c_{\scr O[2]}) \ar{d}{q\infty_*} \\
		(\Perf(X),\Qoppa^\c_{\scr O}) \ar{r}{\id\to 0} & (\mathrm{Ar}(\Perf(X)),{\cofib}\circ {\Qoppa^\c_{\scr O}}) \ar{r}{\mathrm{target}} & (\Perf(X),\Qoppa^\c_{\scr O[1]})\rlap,
	\end{tikzcd}
	\]
	where $0_*$ is a Poincaré functor via Grothendieck duality (see Definition~\ref{def:Thom}) and $q=p_*(\ph\otimes\scr O(-1))$.
	The lower row is the split Poincaré–Verdier sequence inducing the Bott–Genauer fiber sequence, and the vertical functors induce the isomorphisms
	\[
	\Omega_{\P^2}\GW^\c_{\scr O[2]} \simeq \K\quad \text{and}\quad \Omega_{\P^1}\GW^\c_{\scr O[2]} \simeq \GW^\c_{\scr O[1]}
	\]
	from the projective bundle formula (Theorem~\ref{thm:derived-PBF}).\footnote{This already implies the statement of the lemma up to a unit in $\GW^\c(\Z)$, which is all that is used in the sequel.}
	In particular, the composite isomorphism
	\begin{equation}\label{eqn:P^1-Thom-0}
	\GW^\c\simeq \fib\bigl(\K\xrightarrow{\hyp}\GW^\c_{\scr O[1]}\bigr) \overset{\mathrm{pbf}}{\simeq} \Omega_{\P^2/\P^1}\GW^\c_{\scr O[2]}
	\end{equation}
	is induced by the Poincaré functor
	\[
	0_*\colon (\Perf(X),\Qoppa^\c_{\scr O}) \to (\Perf(\A^2_X\;\mathrm{on}\; 0_X), \Qoppa^\c_{\scr O[2]})
	\]
	(cf.\ the even rank case of Theorem~\ref{thm:Thom}).
	It remains to show that the isomorphism~\eqref{eqn:P^1-Thom-0} coincides with the isomorphism induced by the $\P^1$-spectrum structure of $\KO$, which is the following composition:
	\begin{equation}\label{eqn:P^1-Thom}
	\GW^\c \overset{\mathrm{pbf}}{\simeq} \Omega_{\P^1}\GW^\c_{\scr O[1]} \overset{\mathrm{pbf}}{\simeq} \Omega_{\P^1}^2\GW^\c_{\scr O[2]} \simto \Omega_{B/\partial B} \GW^\c_{\scr O[2]}\simfrom \Omega_{\P^2/\P^1}\GW^\c_{\scr O[2]}.
	\end{equation}
	Here, $B\to \P^1\times\P^1$ is the blowup at $([1:0],[1:0])$ and $B\to \P^2$ is the blowup at $[1:0:0]\sqcup [0:1:0]$.
	The retract diagram of Poincaré categories
	\[
	(\Perf(X), \Qoppa^\c_{\scr L}) \xrightarrow{0_*} (\Perf(\P^1_X), \Qoppa^\c_{\scr L[1]}) \xrightarrow{q\infty_*} (\Perf(X),\Qoppa^\c_{\scr L})
	\]
	shows that the isomorphism $\GW^\c_{\scr L}\simeq \Omega_{\P^1}\GW^\c_{\scr L[1]}$ from the projective bundle formula is similarly induced by the Poincaré functor
	\[
	0_*\colon (\Perf(X),\Qoppa^\c_{\scr L}) \to (\Perf(\A^1_X\;\mathrm{on}\; 0_X), \Qoppa^\c_{\scr L[1]})
	\]
	(cf.\ the odd rank case of Theorem~\ref{thm:Thom}). Applying this twice shows that~\eqref{eqn:P^1-Thom} is induced by the same Poincaré functor as~\eqref{eqn:P^1-Thom-0}.
\end{proof}

\begin{proposition}[The Hopf fracture square]
	\label{prop:eta-localization}
	There are canonical isomorphisms of lax symmetric monoidal functors 
\[
	\scr R^\c(\ph)[\eta^{-1}]=\scr R^\c((\ph)^\mathrm{bord}),\quad \scr R^\c(\ph)^\wedge_\eta=\scr R((\ph)^\mathrm{hyp})^{\h\C_2},\quad \scr R^\c(\ph)^\wedge_\eta[\eta^{-1}]=\scr R((\ph)^\mathrm{hyp})^{\t\C_2}.
\]
In particular, we have the following isomorphisms in $\CAlg(\Fun_\Z(\Br^\c,\MS))$:
\[
\KO_?[\eta^{-1}]=\KW_?,\quad (\KO_?)^\wedge_\eta=\KGL_?^{\h\C_2},\quad (\KO_?)^\wedge_\eta[\eta^{-1}]=\KGL_?^{\t\C_2}.
\]
\end{proposition}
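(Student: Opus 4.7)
The plan is to match the fundamental cartesian square of Remark~\ref{rmk:fund-square} with the Hopf fracture square of $M := \scr R^\c(\scr F)$. Write the fundamental square as
\[
\begin{tikzcd}
M \ar{r}{u} \ar{d}[swap]{v} & R \ar{d} \\
M' \ar{r} & R'
\end{tikzcd}
\]
with $R = \scr R(\scr F^\mathrm{hyp})^{\h\C_2}$, $M' = \scr R^\c(\scr F^\mathrm{bord})$, and $R' = \scr R(\scr F^\mathrm{hyp})^{\t\C_2}$. It will suffice to verify: (a) $R$ is $\eta$-complete; (b) $\fib(v)$ is $\eta$-acyclic; (c) $M'$ is $\eta$-periodic; (d) $\fib(u)$ is $\eta$-periodic. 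Given (a)--(d), the fundamental square matches the $\eta$-fracture square of $M$ termwise (the third identification $R' = M^\wedge_\eta[\eta^{-1}]$ then follows from cartesianness), and the lax symmetric monoidal structure of the resulting isomorphisms is inherited from that of $\scr R^\c$ and $\scr R$.

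For (a) and (b), observe that $\scr R(\scr G)$ is a module over $\scr R(\K) = \KGL$ for every stable localizing invariant $\scr G$, and since $\KGL$ is complex orientable, $\eta$ acts as zero on it. Hence $\scr R(\scr F^\mathrm{hyp})$ is $\eta$-null, therefore both $\eta$-complete and $\eta$-acyclic. Then $R$ is $\eta$-complete because limits preserve $\eta$-completeness, and $\fib(v) = \scr R(\scr F^\mathrm{hyp})_{\h\C_2}$ (from the norm cofiber sequence for $\scr R(\scr F^\mathrm{hyp})$) is $\eta$-acyclic as a colimit of $\eta$-acyclic spectra.

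For (c), I first upgrade Lemma~\ref{lem:wood} to show that for every $\scr F$ the Bott--Genauer connecting map of Remark~\ref{rmk:bott-genauer} is multiplication by $\eta$; this follows from the $\KO$-module structure on $\scr R^\c(\scr F)$ induced by the lax symmetric monoidality of $\scr R^\c$ (using that $\GW$ is the unit of $\Cat^\p$). Applying Bott--Genauer to $\scr F^\mathrm{bord}$, the middle term $\scr R((\scr F^\mathrm{bord})^\mathrm{hyp})$ vanishes because bordification annihilates hyperbolic Poincaré categories: the $\C_2$-spectrum $\scr F^\mathrm{hyp}(\mathrm{Hyp}(\scr C))$ is free (the swap on $\mathrm{Hyp}(\mathrm{Hyp}(\scr C))$), so its Tate construction vanishes, and the resulting splitting of the fundamental cartesian square applied to $\mathrm{Hyp}(\scr C)$ forces $\scr F^\mathrm{bord}(\mathrm{Hyp}(\scr C)) = 0$. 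Hence $\eta\colon M' \to \Sigma^{-1,-1}M'$ is an isomorphism, so $M'$ is $\eta$-periodic.

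For (d), specialize (c) to $\scr F = \GW$ to obtain that $\KW$ is $\eta$-periodic; the symmetric monoidal fundamental square then makes $\KGL^{\t\C_2}$ a $\KW$-algebra, hence $\eta$-periodic. For general $\scr F$, the lax symmetric monoidal structure of $\scr R$ makes $R'$ a module over $\KGL^{\t\C_2}$, so $R'$ is also $\eta$-periodic. Finally, by cartesianness $\fib(u)$ equals the fiber of the bottom horizontal map $M' \to R'$, which is a fiber of $\eta$-periodic spectra and hence $\eta$-periodic. The main technical obstacle I foresee is the vanishing $(\scr F^\mathrm{bord})^\mathrm{hyp} = 0$; once this is in hand, the rest is a formal manipulation of fracture and cartesian squares.
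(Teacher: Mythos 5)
Your proposal is correct and follows the same basic strategy as the paper: match the fundamental cartesian square with the $\eta$-fracture square, using the Bott--Genauer connecting map $=\eta$ (Lemma~\ref{lem:wood}) to make the bordified corner $\eta$-periodic, and orientability of $\KGL$ to control the hyperbolic corners. The main difference is in how much of the formal comparison is made explicit. The paper reduces everything to three assertions ($\KW$ is $\eta$-periodic, $\scr R((\ph)^\mathrm{hyp})_{\h\C_2}$ is $\eta$-nilpotent, $\scr R((\ph)^\mathrm{hyp})^{\h\C_2}$ is $\eta$-complete), leaving implicit the module-theoretic bookkeeping that makes these suffice; you instead isolate the exact four conditions (a)--(d) needed, and in particular you verify (d) -- that $\fib(u)\simeq\fib(M'\to R')$ is $\eta$-periodic, via the $\KW$-algebra structure on $\KGL^{\t\C_2}$ and the $\KGL^{\t\C_2}$-module structure on $R'$. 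This condition is genuinely needed (conditions (a)--(c) alone do not force $M^\wedge_\eta\simeq R$: one needs the Tate corner to be $\eta$-periodic, equivalently that the $\eta$-completed norm map is an equivalence), so making it explicit is a real improvement in completeness over the paper's terse reduction, even though the underlying mechanism (everything in sight is a module over the corresponding corner of the $\GW$-square) is the same one the paper relies on. You also treat the bordified corner differently: rather than proving the $\scr F=\GW$ case ($\KW$ is $\eta$-periodic, using $\L^\mathrm{hyp}=0$) and then invoking that $\scr R^\c(\scr F^\mathrm{bord})$ is a $\KW$-module, you apply Bott--Genauer directly to $\scr F^\mathrm{bord}$, which requires the vanishing $(\scr F^\mathrm{bord})^\mathrm{hyp}=0$ for all $\scr F$.

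Two points to tighten. First, your justification of $(\scr F^\mathrm{bord})^\mathrm{hyp}=0$ is slightly incomplete as written: knowing that $\scr F^\mathrm{hyp}(\mathrm{Hyp}(\scr C))$ is induced, so its Tate construction vanishes, gives only a splitting $\scr F(\mathrm{Hyp}\,\scr C)\simeq \scr F^\mathrm{hyp}(\mathrm{Hyp}\,\scr C)^{\h\C_2}\times \scr F^\mathrm{bord}(\mathrm{Hyp}\,\scr C)$; to conclude the second factor vanishes you must also identify the top horizontal map of the square as an equivalence (the unit of the hyperbolic adjunction on hyperbolic categories), or simply cite the characterization in \cite{hermitianII} of bordism-invariant functors as those vanishing on hyperbolic (metabolic) Poincaré categories. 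Alternatively, you can avoid this entirely by following the paper: bordification is smashing, so $\scr R^\c(\scr F^\mathrm{bord})$ is a $\KW$-module and inherits $\eta$-periodicity from the $\scr F=\GW$ case. Second, the ``upgrade'' of Lemma~\ref{lem:wood} to general $\scr F$ should be phrased via naturality and the $\KO$-module (or $\KW$-module) structure on the whole Bott--Genauer sequence, which, as the paper's footnote notes, is only needed up to a unit; your appeal to lax symmetric monoidality of $\scr R^\c$ is the right mechanism, just be explicit that the connecting map is a map of $\KO$-modules compatible with the $\GW$ case.
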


\begin{proof}
	Given Remark~\ref{rmk:fund-square}, it suffices to show the following:
	\begin{enumerate}
		\item $\KW$ is $\eta$-periodic.
		\item $\scr R((\ph)^\mathrm{hyp})_{\h\C_2}$ is $\eta$-nilpotent.
		\item $\scr R((\ph)^\mathrm{hyp})^{\h\C_2}$ is $\eta$-complete.
	\end{enumerate}
	By Lemma~\ref{lem:wood}, the connecting map of the Bott–Genauer fiber sequence of Remark~\ref{rmk:bott-genauer} is multiplication by $\eta$.
	The first assertion follows from the Bott–Genauer sequence for $\L$, as $\L^\hyp=0$. Since $\eta$-nilpotent spectra are closed under colimits and $\eta$-complete spectra under limits, the last two assertions follow from the fact that $\eta$ acts trivially on $\KGL$-modules, since they are orientable.
\end{proof}

\begin{remark}
	Proposition~\ref{prop:eta-localization} recovers known results after $\A^1$-localization over classical $\Z[\tfrac 12]$-schemes. The identification of the $\eta$-completion of $\L_{\A^1}\KO$ in this generality is due to Heard \cite[Corollary 3.11]{Heard} (and was previously obtained by Röndigs–Spitzweck–Østvær over fields of finite virtual $2$-cohomological dimension \cite[Theorem 1.2]{RSO-Thomason}). The identification of the left $\eta$-periodization of $\L_{\A^1}\KO$ is formulated as a conjecture of Morel in \cite[Section 6]{Hornbostel}, which to our knowledge was first proved by Ananyevskiy in \cite[Theorem 6.5]{AnanyevskiyMSL}.
\end{remark}

\begin{definition}
	The \emph{Bott element} $\beta^4\colon (\P^1)^{\otimes 4}\to \KO$ in $\MS_\Z$ is the image of $1$ by the canonical map
	 \[\GW^\c(\Z)=\GW^{\geq 0}(\Z)\to \GW^{\geq -2}(\Z)=\GW^\c_{\scr O[-4]}(\Z)=(\Omega_{\P^1}^4\GW^\c)(\Z). \]
	 We call this element $\beta^4$ because it is a lift to $\KO$ of the fourth power of the usual Bott element $\beta\colon \P^1\to \KGL$. However, $\beta^4$ is not a square in $\KO$.
\end{definition}

\begin{proposition}[The Bott fracture square]
	\label{prop:bott}
 Let $S$ be a derived algebraic space and let $\scr L\in \Pic^\dagger(S)$. Then:
 \begin{enumerate}
 	\item $\KO^\varsigma_{\scr L}$ and $\KW^\varsigma_{\scr L}$ are the left $\beta$-periodizations of $\KO_{\scr L}$ and $\KW_{\scr L}$ in $\MS_S$.
 	\item $\KO^\qoppa_{\scr L}$ and $\KW^\qoppa_{\scr L}$ are the right $\beta$-periodizations of $\KO_{\scr L}$ and $\KW_{\scr L}$ in $\MS_S$.
 \end{enumerate}
\end{proposition}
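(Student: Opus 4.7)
\emph{Identification of the $\beta^4$-action.} The plan is to identify multiplication by $\beta^4$ on $\KO_\scr L$ at the Poincaré-categorical level with the canonical genuine transition $\c\Qoppa^{\geq m}_\scr L \to \c\Qoppa^{\geq m-2}_\scr L$, and then to compute the left and right Bott periodizations as the induced colimit and limit along this tower. First I would verify that multiplication by $\beta^4$ on $\KO_\scr L$, read off at weight $n$ and section $X$, becomes, under the identifications of Definition~\ref{def:derivedGW} and the suspension isomorphism, the canonical comparison map
\[
\c\GW^{\geq \deg\scr L+n}_{\scr L[n]}(X) \to \c\GW^{\geq \deg\scr L+n-2}_{\scr L[n]}(X).
\]
This follows by inspection of the defining formula for $\beta^4$ (which is precisely this map at $(\scr L,n,X)=(\scr O,0,\Spec\Z)$ applied to $1$) together with the naturality of the $\KO$-module structure on $\scr R^\c(\GW)_\scr L$ in base and twist.

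\emph{Part (i).} Iterating, the left $\beta$-periodization $\KO_\scr L[\beta^{-1}]$ is the sequential colimit of $\beta^4$-multiplications, so at weight $n$ and section $X$ it equals $\colim_{k\to\infty}\c\GW^{\geq \deg\scr L+n-2k}_{\scr L[n]}(X)$. By Remark~\ref{rmk:dTate}(i), the $\c\Tate^{\geq m}_{\scr L[n]}$ form an exhaustive filtration of $\c\Tate_{\scr L[n]}$, so the classical Poincaré structures $\c\Qoppa^{\geq m}_{\scr L[n]}$ form a filtered diagram on the fixed stable category $\Perf(X)$ with colimit $\Qoppa^\varsigma_{\scr L[n]}$. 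As a Karoubi-localizing invariant, $\GW$ preserves filtered colimits of Poincaré categories, so the colimit computes $\GW^\varsigma_{\scr L[n]}(X)$, which is the weight-$n$ section of $\KO^\varsigma_\scr L$. The identical argument proves the claim for $\KW$.

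\emph{Part (ii).} The fracture square together with the fiber sequences $\Sigma_{\P^1}^{4r}\KO_\scr L \xrightarrow{\beta^{4r}} \KO_\scr L \to \KO_\scr L/\beta^{4r}$ identify the right $\beta$-periodization $\fib(\KO_\scr L\to(\KO_\scr L)^\wedge_\beta)$ with $\lim_{r\to\infty}\Sigma_{\P^1}^{4r}\KO_\scr L$. At weight $n$ and section $X$ this becomes $\lim_{r\to\infty}\c\GW^{\geq \deg\scr L+n+2r}_{\scr L[n]}(X)$, whose transitions are the genuine structure maps from Step 1. By Remark~\ref{rmk:dTate}(i) again, the associated-graded layers of the filtration $\c\Tate^{\geq m}_{\scr L[n]}$ are the shifts $\Sigma^{m-\deg\scr L-n}\scr L[n]/2$, whose connectivity grows without bound with $m$. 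Hence the tower of spectra is strongly convergent (the Milnor $\lim^1$-term vanishes and each $\pi_k$ stabilizes for $m\gg k$), and the limit computes $\c\GW^{\geq\infty}_{\scr L[n]}(X)=\GW^\qoppa_{\scr L[n]}(X)$, which is the weight-$n$ section of $\KO^\qoppa_\scr L$. The same argument handles $\KW$.

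\emph{Main obstacle.} The delicate point is the strong-convergence step in (ii): one must carefully control the connectivity of the fibers of the transitions $\c\GW^{\geq m+2}_{\scr L[n]}(X)\to\c\GW^{\geq m}_{\scr L[n]}(X)$ (using the associated-graded description) in order to exchange $\GW$ with the cofiltered limit. In (i) no such care is needed, since $\GW$ automatically preserves all filtered colimits.
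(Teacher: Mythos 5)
Your overall strategy --- identify multiplication by $\beta^4$ with the transition maps of the tower of classical Poincaré structures $\c\Qoppa^{\geq m}_{\scr L[n]}$, then compute the left/right periodizations as the colimit/limit along this tower in each weight --- is the same as the paper's, except that the paper first reduces from $\KO$ to $\KW$ via the fundamental cartesian square (Remark~\ref{rmk:fund-square}) and the Bott periodicity of $\KGL$, and then argues entirely in L-theory. Your part (i) is essentially correct, but the justification ``as a Karoubi-localizing invariant, $\GW$ preserves filtered colimits'' is wrong as stated: being Karoubi-localizing does not imply being finitary. The finitariness of $\GW$ is a separate fact (e.g.\ via the fiber sequence $\K_{\h\C_2}\to\GW\to\L$ with both outer terms finitary), or one can argue as the paper does: the bilinear part, hence the underlying K-theory with its $\C_2$-action, is constant along the tower, so the fundamental square together with filtered colimits commuting with finite limits reduces (i) to the finitary invariant $\L$.

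The genuine gap is in part (ii). You assert that because the associated graded of the filtration $\c\Tate^{\geq m}_{\scr L[n]}$ has connectivity growing with $m$, the tower $\GW(\Perf(X),\c\Qoppa^{\geq m}_{\scr L[n]})$ is strongly convergent with limit $\GW(\Perf(X),\Qoppa^\qoppa_{\scr L[n]})$. But high connectivity of the fibers of $\c\Qoppa^{\geq m+2}\to\c\Qoppa^{\geq m}$ at the level of quadratic functors does not formally yield connectivity estimates for the fibers of the induced maps on $\GW$ (or $\L$); this passage is exactly where a theorem is needed, and it is where the paper invokes \cite[Corollary 1.2.30]{hermitianIII}: since $\c\Tate^{\geq m}$ is $m$-connective, the map $\L(\Perf(X),\Qoppa^\q)\to\L(\Perf(X),\c\Qoppa^{\geq m})$ is $(2m-2)$-connective, which gives the desired limit. (The K-theoretic contribution is handled by the fundamental square, since the bilinear part is constant along the tower and $\KGL^{\h\C_2}$, $\KGL^{\t\C_2}$ are $\beta$-periodic; alternatively one can use the relative L-theory formula of \cite{HNS}.) Moreover this estimate requires $X$ affine (and a suspension/parity normalization so that the linear part is genuinely $m$-connective), which is why the paper reduces to affines --- legitimate by Nisnevich descent --- and to a cofinal set of weights $n$; your argument omits both reductions. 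As written, the claim that ``each $\pi_k$ stabilizes for $m\gg k$'' is unsubstantiated, so your proof of (ii) is incomplete, precisely at the point you flagged as the main obstacle.
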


\begin{proof}
	By the fundamental cartesian square and the Bott periodicity of $\KGL$, it suffices to consider $\KW$.
	Let $X\in \Sm_S$ be qcqs. Applying $\Omega^{\infty-n}_{\P^1}(\ph)(X)$ to the sequence of multiplication by $\beta^4$ on $\KW_{\scr L}$ gives the sequence 
	\[
	\dotsb\to \L(\Perf(X),\c\Qoppa^{\geq n+2}_{\scr L[n]})\to \L(\Perf(X),\c\Qoppa^{\geq n}_{\scr L[n]})\to \L(\Perf(X),\c\Qoppa^{\geq n-2}_{\scr L[n]})\to\dotsb.
	\]
	
	(i) Since $\L\colon \Cat^\p\to \Sp$ preserves filtered colimits and $\colim_{m\to-\infty} \c\Qoppa^{\geq m}=\Qoppa^\varsigma$, the colimit of the above sequence is 
	\[
	\L(\Perf(X),\Qoppa^\varsigma_{\scr L[n]})=\Omega^{\infty-n}_{\P^1}(\KW^\varsigma_{\scr L})(X).
	\]
	
	(ii) We need to show that the limit of the above sequence is $\L(\Perf(X),\Qoppa^\qoppa_{\scr L[n]})$.
	We may assume $X$ affine and $\deg(\scr L)=d$ constant, and it suffices to prove the claim for a cofinal set of integers $n$. If $d$ and $n$ have the same parity, then
	\[
	\L(\Perf(X),\c\Qoppa^{\geq m+n}_{\scr L[n]}) = \L(\Perf(X),\c\Qoppa^{\geq m+ (n-d)/2}_{(-1)^{(n+d)/2}\scr L[-d]}).
	\]
	Since $\c\Tate^{\geq m}$ is $m$-connective, it follows from \cite[Corollary 1.2.30]{hermitianIII} that
	\[
	\L(\Perf(X),\Qoppa^\q_{\pm\scr L[-d]}) \to \L(\Perf(X),\c\Qoppa^{\geq m}_{\pm\scr L[-d]})
	\]
	is $(2m-2)$-connective, which implies the desired limit statement. 
\end{proof}

\begin{remark}
	Proposition~\ref{prop:bott} admits the following generalization for a Karoubi-localizing invariant $\scr F$ and a classical Poincaré–Azumaya algebra $(\scr A,\scr A_{\geq 0},\phi)\in\Br^\c(S)$ with $(\scr A,\scr A_{\geq 0})\in\Br(S)$:
	\begin{enumerate}
		\item If $\scr F$ is finitary, $\scr R^\c_S(\scr F)_{(\scr A,\scr A,\phi)}$ is the left $\beta$-periodization of $\scr R^\c_S(\scr F)_{(\scr A,\scr A_{\geq 0},\phi)}$.
		\item If $\scr F=\GW$ or $\scr F=\L$, $\scr R^\c_S(\scr F)_{(\scr A,0,\phi)}$ is the right $\beta$-periodization of $\scr R^\c_S(\scr F)_{(\scr A,\scr A_{\geq 0},\phi)}$.
	\end{enumerate}
	To justify (ii), we use the relative L-theory formula from \cite{HNS}: the cofiber of
	\begin{equation}\label{eqn:relative-L}
	\L(\scr A(X)^\omega,\Qoppa^\q_\phi)\to\L(\scr A(X)^\omega,\c\Qoppa^{\geq m}_\phi)
	\end{equation}
	is the equalizer of two maps
	\[
	\cofib\bigl(\Sigma^{-1}\B_\phi(\D_\phi\lambda,\D_\phi\lambda)\to \B_\phi(\D_\phi\lambda,\D_\phi\lambda)_{\h\C_2}\bigr) \rightrightarrows \B_\phi(\D_\phi\lambda,\D_\phi\lambda),
	\]
	where $\lambda$ is the linear part of $\c\Qoppa^{\geq m}_\phi$ viewed as an ind-object of $\scr A(X)^\omega$. If $X$ is affine, then $\B_\phi(\D_\phi\lambda,\D_\phi\lambda)$ is $(2m-\deg\phi)$-connective, so that~\eqref{eqn:relative-L} is $(2m-\deg\phi-2)$-connective.
\end{remark}

\begin{remark}[Normal L-theory]
	\label{rmk:normal}
	Let $S$ be a derived algebraic space and let $\scr L\in \Pic^\dagger(S)$.
	By Proposition \ref{prop:bott}(ii), we have $\KO^\qoppa_{\scr L}=\fib(\KO_{\scr L}\to (\KO_{\scr L})^\wedge_\beta)$ in $\MS_S$.
	Hence, the $\beta$-completion $(\KO_{\scr L})^\wedge_\beta$ represents classical normal L-theory:
	\[
	\Omega^{\infty-n}_{\P^1} (\KO_{\scr L})^\wedge_\beta = \cofib(\L^\qoppa_{\scr L[n]}\to \L^\c_{\scr L[n]}).
	\]
	It follows that $(\KO_{\scr L})^\wedge_\beta[\beta^{-1}]$ represents $\L^\nu_{\scr L}=\cofib(\L^\qoppa_{\scr L}\to\L^\varsigma_{\scr L})$.
\end{remark}

\begin{variant}[Genuine fracture squares]
	The results of this section have obvious analogues with $\scr R^\g$ instead of $\scr R^\c$. In particular, the following results hold for any bounded derived algebraic space $S$ and any $\scr L\in \Pic^\dagger(S)^{\B\C_2}$:
	\begin{enumerate}
		\item for each $*\in\{\g,\s,\q\}$, $\KO^*_{\scr L}[\eta^{-1}]=\KW^*_{\scr L}$, $(\KO^*_{\scr L})^\wedge_\eta=\KGL^{\h\C_2}_{\scr L}$, and $(\KO^*_{\scr L})^\wedge_\eta[\eta^{-1}]=\KGL^{\t\C_2}_{\scr L}$. 
		\item $\KO^\g_{\scr L}[\beta^{-1}]=\KO^\s_{\scr L}$, $\fib(\KO^\g_{\scr L}\to(\KO^\g_{\scr L})^\wedge_\beta)=\KO^\q_{\scr L}$, and similarly for $\KW$. The motivic spectrum $(\KO^\g_{\scr L})^\wedge_\beta[\beta^{-1}]$ represents normal L-theory $\L^\mathrm{n}_{\scr L}=\cofib(\L^\q_{\scr L}\to\L^\s_{\scr L})$.\footnote{Recall that, according to our conventions, L-theory always means Karoubi-localizing L-theory rather than additive L-theory. However, normal L-theory is the same in both settings.}
	\end{enumerate}
\end{variant}

\section{Stability under base change}
\label{sec:BC}

If $\Qoppa$ is a Poincaré structure on $\Perf(X)$, we denote by $\Vect(X,\Qoppa)$ the anima of finite locally free Poincaré objects in $(\Perf(X),\Qoppa)$.
By the group completion theorem of Hebestreit and Steimle \cite[Corollary 8.1.2]{HebestreitSteimle}, if $X$ is an affine spectral scheme, $\Qoppa$ is connective on $\Vect(X)$, and $\D_\Qoppa$ preserves $\Vect(X)$, then the $\E_\infty$-group $\Omega^\infty\GW(\Perf(X),\Qoppa)$ is the group completion of the $\E_\infty$-monoid $\Vect(X,\Qoppa)$:
\[
\Omega^\infty\GW(\Perf(X),\Qoppa)=\Vect(X,\Qoppa)^\gp.
\]
This applies to $\Qoppa^{\geq m}_{L}$ for any $L\in \Pic(X)^{\B\C_2}$ and $m\in\N\cup\{\infty\}$. If $X$ is an affine derived scheme, it also applies to $\c\Qoppa^{\geq m}_{\epsilon L}$ for any $L\in\Pic(X)$, $\epsilon\in\{\pm 1\}$, and $m\in\N\cup\{\infty\}$.

\begin{example}
	Let $R\to S$ be a $1$-connective map (resp.\ a henselian surjection) of animated rings and let $L\in\Pic(R)$. Then the induced maps $\Vect(R)\to \Vect(S)$ and $\Map_R(M,N)\to \Map_{S}(M\otimes_RS,N\otimes_RS)$ for any $M,N\in \Vect(R)$ are $1$-connective (resp.\ $0$-connective). 
	Applying the group completion theorem to the Poincaré structures $\Qoppa^\sym_L$, $\Qoppa^\alt_L$, and $\Qoppa^\qu_L$ of Construction~\ref{cst:Sym^n}, we deduce that the map
	\[
	\Omega^\infty\GW^\c_{L[n]}(R)\to \Omega^\infty\GW^\c_{L[n]}(S)
	\]
	is $1$-connective (resp.\ $0$-connective) for $n=0,2,4$. This applies in particular to the map $R\to\pi_0R$.
\end{example}

\begin{lemma}\label{lem:LKE}
	Let $\scr C\subset\scr D$ be a full subcategory and let $F\to G\leftarrow H$ be a diagram of presheaves on $\scr D$.
	If $H$ is left Kan extended from $\scr C$ and if, for every $U\in\scr C_{/G}$, the pullback $F\times_GU$ is left Kan extended from $\scr C$, then $F\times_GH$ is left Kan extended from $\scr C$.
\end{lemma}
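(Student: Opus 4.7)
The plan is a short formal argument: express $H$ as a colimit of representables coming from $\scr C$, distribute the pullback over this colimit, and conclude via closure of the essential image of left Kan extension under colimits.

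First I would invoke that, since $H$ is left Kan extended from $\scr C$, the canonical comparison
\[
\colim_{(U,\sigma)\in\scr C_{/H}} h_U \simto H
\]
is an equivalence in $\PSh(\scr D)$, where $h_U$ denotes the representable on $U$ and $\scr C_{/H}$ is the category of pairs $(U,\sigma)$ with $U\in\scr C$ and $\sigma\in H(U)$. This is just the density (co-Yoneda) expression restricted from $\scr D_{/H}$ to $\scr C_{/H}$, which is an equivalence precisely when $H$ is left Kan extended from $\scr C$.

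Next, by universality of colimits in the presheaf category (e.g., $\PSh(\scr D,\An)$ is an $\infty$-topos; the stable analogue is immediate since pullbacks agree with cofibers and cofibers commute with colimits), pulling back along $F\to G$ distributes:
\[
F\times_G H \;\simeq\; \colim_{(U,\sigma)\in\scr C_{/H}} (F\times_G h_U),
\]
where each $h_U$ is equipped with the composite map $h_U\xrightarrow{\sigma} H\to G$. Under the identification of $(U,h_U\to G)$ with the corresponding object $U'\in\scr C_{/G}$, each summand $F\times_G h_U$ is the pullback $F\times_G U'$ appearing in the hypothesis, hence is left Kan extended from $\scr C$.

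Finally, the left Kan extension functor $\mathrm{LKE}_{\scr C^\op\hookrightarrow\scr D^\op}\colon\PSh(\scr C)\to\PSh(\scr D)$ is fully faithful and left adjoint to restriction, so its essential image — the presheaves left Kan extended from $\scr C$ — is closed under arbitrary colimits. Applying this to the colimit presentation above yields that $F\times_G H$ is left Kan extended from $\scr C$. The only step that needs any care is the distributivity of pullback over the colimit, but in the target categories relevant to the paper this presents no real obstacle.
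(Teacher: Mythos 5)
Your proof is correct and follows essentially the same route as the paper's: write $H$ as the canonical colimit of representables from $\scr C$, use universality of colimits in the presheaf category to distribute $F\times_G(\ph)$ over that colimit, and conclude because each $F\times_G U$ is left Kan extended (equivalently, a colimit of representables from $\scr C$) and the class of left Kan extended presheaves is closed under colimits. No gaps.
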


\begin{proof}
	Write $H=\colim_i U_i$ with $U_i\in\scr C$. Then $F\times_GH= \colim_i (F\times_G U_i)$, and each $F\times_G U_i$ is a colimit of objects of $\scr C$.
\end{proof}

\begin{proposition}\label{prop:Vect-LKE}
	Let $R$ be an animated ring, $L\in\Pic(R)$, $\epsilon\in\{\pm 1\}$, and $m\in \N\cup\{\infty\}$. Then the functor $\Vect(\ph,\c\Qoppa^{\geq m}_{\epsilon L})\colon \CAlg_R^\an\to \An$ is left Kan extended from smooth $R$-algebras.
\end{proposition}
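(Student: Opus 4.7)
The plan is to apply Lemma~\ref{lem:LKE} twice. I would introduce the presheaves
\[
\mathrm{Herm}(A) = \coprod_{M \in \Vect(A)} \Omega^\infty \c\Qoppa^{\geq m}_{\epsilon L_A}(M), \qquad \mathrm{BilHom}(A) = \coprod_{M \in \Vect(A)} \Omega^\infty \map_A(M, M^\vee \otimes \epsilon L_A),
\]
where $L_A = L\otimes_R A$, and the open subpresheaf $\mathrm{Iso}\subset \mathrm{BilHom}$ consisting of pairs $(M,b)$ with $b\colon M \simto M^\vee \otimes \epsilon L_A$. The polarization induces a natural map $\mathrm{Herm}\to \mathrm{BilHom}$, and by definition of ``Poincaré''
\[
\Vect(\ph, \c\Qoppa^{\geq m}_{\epsilon L}) = \mathrm{Herm} \times_{\mathrm{BilHom}} \mathrm{Iso}.
\]

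First I would show that $\mathrm{Herm}$ and $\mathrm{BilHom}$ are LKE from smooth $R$-algebras by applying Lemma~\ref{lem:LKE} to their forgetful maps to $\Vect$ (which is LKE from smooth by \cite[Proposition~A.0.4]{EHKSY3}). For each $U = y(S,M_S)$ with $S\in\CAlg_R^\sm$ and $M_S\in\Vect(S)$, the pullback over $U$ restricts on $\CAlg_S^\an$ to $A\mapsto \Omega^\infty \c\Qoppa^{\geq m}_{\epsilon L_A}(M_A)$ (resp.\ $\Omega^\infty \map_A(M_A,M_A^\vee\otimes \epsilon L_A)$) where $M_A=M_S\otimes_SA$. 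The latter is $\Omega^\infty$ of an $A$-module base-changed from the finite locally free $S$-module $M_S^\vee\otimes_SM_S^\vee\otimes_S\epsilon L_S$, and therefore preserves all colimits in $A$. The former sits in the fiber sequence
\[
\map_A(M_A^{\otimes 2},\epsilon L_A)_{\h\C_2} \to \c\Qoppa^{\geq m}_{\epsilon L_A}(M_A)\to \map_A(M_A, \c\Tate^{\geq m}_{\epsilon L_A}),
\]
whose left end is likewise base-changed from $S$, and whose right end preserves sifted colimits in $A$ by Remark~\ref{rmk:dTate}(iii). Since $m\ge 0$, both ends are connective, so the middle spectrum is connective, and hence $\Omega^\infty$ of it preserves sifted colimits.

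Next I would apply Lemma~\ref{lem:LKE} with $F=\mathrm{Herm}$, $G=\mathrm{BilHom}$, and $H=\mathrm{Iso}$. The presheaf $\mathrm{Iso}$ is an open substack of the vector bundle $\mathrm{BilHom}$ over $\Vect$, cut out rank by rank by the invertibility of $b$; it is therefore a smooth algebraic stack over $\Z$ with smooth affine diagonal and, by \cite[Proposition~A.0.4]{EHKSY3}, LKE from smooth $R$-algebras. For each $U=y(S,M_S,b_S)\in \scr C_{/\mathrm{BilHom}}$, the pullback $\mathrm{Herm}\times_{\mathrm{BilHom}}U$ parametrizes triples $(\beta\colon S\to A,q)$ where $q$ is a hermitian refinement of $\beta_*b_S$; its fiber over each $\beta$ is controlled by $\Omega^\infty \map_A(\beta_*M_S,\c\Tate^{\geq m}_{\epsilon L_A})$ and preserves sifted colimits by Remark~\ref{rmk:dTate}(iii). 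Since $\Map_R(S,\ph)$ also preserves sifted colimits (as smooth $S$ is compact projective in $\CAlg_R^\an$), the total pullback is LKE from smooth, and Lemma~\ref{lem:LKE} yields the result.

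The main obstacle will be the second step: the Poincaré condition carves out only a sub-anima of $\mathrm{Herm}$ rather than a fiber of a map to an LKE target, so one must separately exhibit the Poincaré locus as a pullback along polarization of the open ``iso'' subpresheaf of $\mathrm{BilHom}$, and then verify that the hermitian-refinement fibers themselves preserve sifted colimits—this is where Remark~\ref{rmk:dTate}(iii) and the connectivity hypothesis $m\ge 0$ enter crucially.
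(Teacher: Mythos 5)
Your reduction $\Vect(\ph,\c\Qoppa^{\geq m}_{\epsilon L})=\mathrm{Herm}\times_{\mathrm{BilHom}}\mathrm{Iso}$ and your first step (that $\mathrm{Herm}$ and $\mathrm{BilHom}$ are left Kan extended from smooth algebras, via the fiber sequence, connectivity for $m\geq 0$, and Remark~\ref{rmk:dTate}(iii)) are fine and parallel to the paper. The genuine gap is in your second application of Lemma~\ref{lem:LKE}: with your assignment $F=\mathrm{Herm}$, $H=\mathrm{Iso}$, the hypothesis you must check is that for \emph{every} smooth chart $(S,M_S,b_S)$ of $\mathrm{BilHom}$ the pullback $\mathrm{Herm}\times_{\mathrm{BilHom}}y(S,M_S,b_S)$ --- equivalently, the functor of hermitian refinements of the \emph{fixed} bilinear form $b_S$ --- is left Kan extended from smooth algebras. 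Your justification is that this fiber ``is controlled by $\Omega^\infty\map(M,\c\Tate^{\geq m}_{\epsilon L})$ and preserves sifted colimits.'' Both claims are false. The fiber of $\Omega^\infty\c\Qoppa^{\geq m}_{\epsilon L}(M)\to\Omega^\infty\B_{\epsilon\scr L}(M,M)$ over $b$ is not the linear-part mapping space: it is glued from $\Omega^\infty\map(M,\c\Tate^{\geq m}_{\epsilon L})$ and the space of homotopy-coherent symmetrizations of $b$ over the Tate term $\B(M,M)^{\t\C_2}$, which is exactly what Remark~\ref{rmk:dTate}(iii) does not control. And it does not preserve sifted colimits: take $m=2$, $\epsilon=1$, $L=\scr O$ (so $\c\Qoppa^{\geq 2}_{\scr O}=\Qoppa^{\qu}_{\scr O}$ by Proposition~\ref{prop:Sym^n}), $S=\Z$, $M_S=\scr O$, $b_S=\langle 1\rangle$; the space of quadratic refinements of $b_S$ over $A$ is empty unless $2\in\pi_0(A)^\times$ and contractible otherwise, hence it is empty on every polynomial $\Z$-algebra but nonempty on $\Z[\tfrac12]$, which is a sifted colimit of polynomial rings. (The same example refutes the parenthetical claim that smooth algebras are compact projective in $\CAlg^\an_R$: $\Map_\Z(\Z[\tfrac12],\ph)$ does not preserve geometric realizations.) Whether the refinement functor is nevertheless LKE from smooth is a question of essentially the same difficulty as the proposition itself; your argument does not settle it.

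The repair is to swap the roles of the two legs in Lemma~\ref{lem:LKE}, which is exactly how the paper's proof is organized. Put in the ``global'' slot the functor you can actually show is LKE from smooth (your $\mathrm{Herm}$, or in the paper the functor of all $\c\Qoppa^{\geq m}$-forms on a fixed bundle over a smooth chart of $\Vect$), and put the nondegeneracy condition in the ``cell-wise'' slot: for a smooth chart $(S,M_S,b_S)$ of $\mathrm{BilHom}$, the pullback $\mathrm{Iso}\times_{\mathrm{BilHom}}y(S,M_S,b_S)$ is the open subscheme of $\Spec(S)$ where $b_S$ is nondegenerate, a quasi-affine smooth scheme, hence LKE from smooth for trivial reasons. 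With that swap your outline becomes essentially the paper's argument (the paper uses the affine space of forms on a fixed bundle in place of $\mathrm{BilHom}$ and its nondegeneracy locus in place of $\mathrm{Iso}$), and you no longer need any statement about the stack $\mathrm{Iso}$ at all --- which is just as well, since your assertion that it has smooth diagonal is incorrect (the isometry group scheme of a symmetric bilinear form is not smooth, indeed not flat, at $2$; compare the remark following Lemma~\ref{lem:stacks}); it is a smooth algebraic stack with affine diagonal, which is all that \cite[Proposition A.0.4]{EHKSY3} requires as the paper uses it.
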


\begin{proof}
	Given $A\in \CAlg_R^\an$ and $V\in \Vect(A)$, let $F^{m}_{\epsilon L,V}\colon \CAlg_A^\an\to \An$ be the functor defined by
	\[
	F^{m}_{\epsilon L,V}(B) = \fib_{V\otimes_AB}(\Vect(B,\c\Qoppa^{\geq m}_{\epsilon L}) \to \Vect(B)).
	\]
	The left Kan extension of $F^{m}_{\epsilon L,V}$ along the forgetful functor $\CAlg_A^\an\to\CAlg_R^\an$ is the fiber product
	\[
	\Spec(A)\times_{\Vect}\Vect(\ph,\c\Qoppa^{\geq m}_{\epsilon L})\colon \CAlg_R^\an\to \An.
	\]
	Thus, if $A$ is a smooth $R$-algebra and if $F^{m}_{\epsilon L,V}$ is left Kan extended from smooth $A$-algebras, then this fiber product is left Kan extended from smooth $R$-algebras.
	By Lemma~\ref{lem:LKE} and the fact that $\Vect$ is left Kan extended from smooth $\Z$-algebra, it will suffice to show that $F^{m}_{\epsilon L,V}$ is left Kan extended from smooth $A$-algebras.
	
	There is an isomorphism
	\[
	F^{m}_{\epsilon L,V} = \Omega^\infty\c\Qoppa^{\geq m}_{\epsilon L}(V\otimes_A(\ph)) \times_{X} U,
	\]
	where $X$ is the affine space over $A$ of bilinear maps $V^{\otimes 2}\to L_A$ and $U\subset X$ is the nondegeneracy locus.
	By Lemma~\ref{lem:LKE} again, it suffices to show that $\Omega^\infty\c\Qoppa^{\geq m}_{\epsilon L}(V\otimes_A(\ph))$ is left Kan extended from smooth $A$-algebras.
	By definition,
	\[
	\Omega^\infty\c\Qoppa^{\geq m}_{\epsilon L}(V\otimes_A B) = \Omega^\infty\cofib\bigl(\map_A(V,\Sigma^{-1}\c\Tate^{\geq m}_{\epsilon L\otimes_R B})\to \map_A(V^{\otimes 2}, \epsilon L\otimes_R B)_{\h\C_2}\bigr).
	\]
	The right-hand side preserves sifted colimits in $B$ by Remark~\ref{rmk:dTate}(iii) and the fact that $\Omega^\infty$ preserves sifted colimits of connective spectra, so that it is even left Kan extended from polynomial $A$-algebras.
\end{proof}

\begin{corollary}\label{cor:GW-LKE}
	Let $R$ be an animated ring, $L\in\Pic(R)$, $\epsilon\in\{\pm 1\}$, and $m\in \N\cup\{\infty\}$. Then the functor $\Omega^\infty\c\GW^{\geq m}_{\epsilon L}\colon \CAlg_R^\an\to \An$ is left Kan extended from smooth $R$-algebras.
\end{corollary}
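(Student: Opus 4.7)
The plan is to combine Proposition~\ref{prop:Vect-LKE} with the Hebestreit--Steimle group completion theorem. The hypotheses of the latter for the Poincaré structure $\c\Qoppa^{\geq m}_{\epsilon L}$ (with $L \in \Pic(R)$, $\epsilon \in \{\pm 1\}$, and $m \in \N \cup \{\infty\}$) are verified at the beginning of this section, so for every $A \in \CAlg_R^\an$ there is a natural isomorphism
\[
\Omega^\infty \c\GW^{\geq m}_{\epsilon L}(A) = \Vect(A, \c\Qoppa^{\geq m}_{\epsilon L})^\gp
\]
of $\E_\infty$-groups. It therefore suffices to show that pointwise group completion preserves the property of being left Kan extended from smooth $R$-algebras.

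The key point is that group completion $(\ph)^\gp$ from $\E_\infty$-monoids to grouplike $\E_\infty$-monoids is a left adjoint, while the forgetful functor to $\An$ preserves sifted colimits; hence the composite $\CMon(\An) \to \An$ sending $M$ to the underlying anima of $M^\gp$ preserves sifted colimits. Now, as in Remark~\ref{rmk:Pic-LKE} and the proof of Proposition~\ref{prop:Vect-LKE}, for any $A \in \CAlg_R^\an$ the colimit defining the left Kan extension from $\CAlg_R^\sm$ at $A$ can be reduced via cofinality of the chain $(\Poly_{S_0})_{/A} \to (\CAlg_{S_0}^\sm)_{/A} \to (\CAlg_R^\sm)_{/A}$, for any auxiliary choice of $S_0 \in (\CAlg_R^\sm)_{/A}$, to a sifted colimit over $(\Poly_{S_0})_{/A}$.

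Combining these facts with Proposition~\ref{prop:Vect-LKE} yields the identity
\[
\Omega^\infty \c\GW^{\geq m}_{\epsilon L}(A) = \colim_{(S,\alpha) \in (\CAlg_R^\sm)_{/A}} \Omega^\infty \c\GW^{\geq m}_{\epsilon L}(S),
\]
as required. The main obstacle is the cofinality bookkeeping in the last step, but this is precisely the argument already appearing in the proof of Proposition~\ref{prop:Vect-LKE} via \cite[Proposition A.0.4]{EHKSY3}, so no new ideas beyond the group completion theorem itself are needed.
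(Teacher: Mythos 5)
Your overall strategy is the paper's: invoke the Hebestreit–Steimle group completion theorem (whose hypotheses are indeed checked at the start of the section for $\c\Qoppa^{\geq m}_{\epsilon L}$ with $L\in\Pic(R)$ and $m\in\N\cup\{\infty\}$), quote Proposition~\ref{prop:Vect-LKE}, and commute group completion past the Kan extension. The gap is in your justification of that last commutation. The cofinality chain $(\Poly_{S_0})_{/A}\to(\CAlg_{S_0}^\sm)_{/A}\to(\CAlg_R^\sm)_{/A}$ is not cofinal, and is not what Remark~\ref{rmk:Pic-LKE} or the proof of Proposition~\ref{prop:Vect-LKE} establishes: the cofinality in Remark~\ref{rmk:Pic-LKE} compares \emph{smooth} comma categories over the fibration $\scr E_R$ and relies on special properties of $\Pic^\dagger$, never on polynomial algebras. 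Concretely, the relevant comma categories for your first arrow can be empty: for $S_0=\Z$, $T=\Z[x^{\pm 1}]$, $A=\Q$ with $x\mapsto 2$, there is no factorization $T\to \Z[t_1,\dotsc,t_n]\to \Q$, since $x$ must go to a unit $\pm 1$ of the polynomial ring. More structurally, if the left Kan extension from $\CAlg_R^\sm$ could be computed by a colimit over polynomial algebras, the functor would preserve sifted colimits, i.e.\ be left Kan extended from $\Poly$ — exactly what fails for $\Vect$-type functors and the reason Proposition~\ref{prop:Vect-LKE} has to fiber over $\Vect$ rather than argue directly.

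The fix is much simpler than what you attempt, and it is the point the paper leaves implicit: the comma category $(\CAlg_R^\sm)_{/A}$ is itself sifted, because it is nonempty (it contains $R\to A$) and admits finite coproducts — $(S_1,\alpha_1)\sqcup(S_2,\alpha_2)=(S_1\otimes_R S_2,\,\alpha_1\cdot\alpha_2)$, with $S_1\otimes_R S_2$ again smooth over $R$. Hence the colimit computing the left Kan extension at $A$ is sifted. Since $\CMon(\An)\to\An$ preserves (and detects) sifted colimits, Proposition~\ref{prop:Vect-LKE} upgrades to a statement in $\E_\infty$-monoids; group completion is a left adjoint, grouplike monoids are closed under sifted colimits, and the forgetful functor back to $\An$ preserves sifted colimits, so $\Vect(\ph,\c\Qoppa^{\geq m}_{\epsilon L})^\gp=\Omega^\infty\c\GW^{\geq m}_{\epsilon L}$ is left Kan extended from $\CAlg_R^\sm$. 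With the siftedness argument corrected in this way, your proof is the paper's proof spelled out.
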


\begin{proof}
	As recalled above, $\Omega^\infty\c\GW^{\geq m}_{\epsilon L}$ is the group completion of $\Vect(\ph,\c\Qoppa^{\geq m}_{\epsilon L})$, so the result follows from Proposition~\ref{prop:Vect-LKE}.
\end{proof}

\begin{corollary}\label{cor:GW-LKE2}
	Let $R$ be a static ring, $L\in\Pic(R)$, $\epsilon\in\{\pm 1\}$, and $m\in \N\cup\{\infty\}$. If $m\neq\infty$, assume $\epsilon=(-1)^m$. Then the functor $\Omega^\infty\GW^{\geq m}_{\epsilon L}\colon \CAlg_R^\heart\to \An$ is left Kan extended from smooth $R$-algebras.
\end{corollary}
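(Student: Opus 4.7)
The plan is to deduce this corollary by combining Corollary~\ref{cor:GW-LKE}, which gives the analogous statement for the classical variant $\c\GW^{\geq m}_{\epsilon L}$ over animated $R$-algebras, with Proposition~\ref{prop:genuine-vs-derived}, which identifies the classical and genuine Poincaré structures on static rings under precisely the sign hypothesis appearing here.

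First I would match up the hypotheses. Since $L\in\Pic(R)$ (with $R$ static), $L$ is a classical line bundle, so $\deg L=0$, and the assumption $\epsilon=(-1)^m$ (when $m\neq\infty$) coincides with the condition $\epsilon=(-1)^{m+\deg L}$ appearing in Proposition~\ref{prop:genuine-vs-derived}(ii); the case $m=\infty$ is covered by Proposition~\ref{prop:genuine-vs-derived}(iv). Consequently, for every classical $R$-algebra $A$, the canonical map
\[
\c\Qoppa^{\geq m}_{\epsilon L}\to\Qoppa^{\geq m}_{\epsilon L}
\]
on $\Perf(A)$ is an isomorphism, and hence $\Omega^\infty\GW^{\geq m}_{\epsilon L}$ agrees with the restriction of $\Omega^\infty\c\GW^{\geq m}_{\epsilon L}$ to $\CAlg_R^\heart$.

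Second, I would invoke Corollary~\ref{cor:GW-LKE} to see that $\Omega^\infty\c\GW^{\geq m}_{\epsilon L}\colon \CAlg_R^\an\to\An$ is left Kan extended from $\CAlg_R^\sm$. Since smooth $R$-algebras are static, we have a chain of full subcategory inclusions $\CAlg_R^\sm\subset\CAlg_R^\heart\subset\CAlg_R^\an$, and for any $A\in\CAlg_R^\heart$ the slice $(\CAlg_R^\sm)_{/A}$ is computed identically whether $A$ is viewed as static or animated. It follows that the restriction to $\CAlg_R^\heart$ of a functor on $\CAlg_R^\an$ which is left Kan extended from $\CAlg_R^\sm$ is itself left Kan extended from $\CAlg_R^\sm$. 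Applying this to $\Omega^\infty\c\GW^{\geq m}_{\epsilon L}$ and using the first step yields the claim. There is no serious obstacle; the only subtle point is verifying that the sign conventions in Corollary~\ref{cor:GW-LKE} and Proposition~\ref{prop:genuine-vs-derived}(ii) line up as above, so that classical and genuine GW-theory are genuinely equal as functors on $\CAlg_R^\heart$ in the range of indices allowed by the hypothesis.
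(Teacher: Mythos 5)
Your proposal is correct and follows essentially the same route as the paper: the sign hypothesis together with Proposition~\ref{prop:genuine-vs-derived}(ii) (which already covers $m=\infty$ for classical rings, though citing (iv) there is equally fine) identifies $\Omega^\infty\GW^{\geq m}_{\epsilon L}$ with $\Omega^\infty\c\GW^{\geq m}_{\epsilon L}$ on $\CAlg_R^\heart$, and the result then follows from Corollary~\ref{cor:GW-LKE}. Your explicit remark that left Kan extendedness from $\CAlg_R^\sm$ passes to the restriction along the full subcategory $\CAlg_R^\heart\subset\CAlg_R^\an$ is exactly the (implicit) step in the paper's argument, correctly justified.
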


\begin{proof}
	The assumption and Proposition~\ref{prop:genuine-vs-derived}(ii) imply that $\GW^{\geq m}_{\epsilon L}=\c\GW^{\geq m}_{\epsilon L}$ on static rings, so the result follows from Corollary~\ref{cor:GW-LKE}.
\end{proof}

\begin{remark}\label{rmk:failure-GW}
	If $R$ is $2$-torsionfree but $2$ is not invertible in $R$, Corollary~\ref{cor:GW-LKE2} fails for $\epsilon=-(-1)^m$, since then $\Omega^\infty\GW^{\geq m+1}_{\epsilon L}\to \Omega^\infty\GW^{\geq m}_{\epsilon L}$ is an isomorphism on smooth $R$-algebras but not on all $R$-algebras.
\end{remark}

\begin{remark}[Moduli stacks of finite locally free Poincaré objects]
	Let $S$ be a derived algebraic space and let $\scr L\in \Pic(S)$. Define the algebraic stacks $\Vect^\sym_\scr L$, $\Vect^\alt_\scr L$, and $\Vect^\qu_\scr L$ over $S$ as the base change along $\scr L\colon S\to\Pic$ of the classical algebraic stacks classifying triples $(\scr E,\scr L,\phi)$, where $\scr E$ is a finite locally free sheaf, $\scr L$ is a locally free sheaf of rank $1$, and $\phi$ is a nondegenerate $\scr L$-valued symmetric, alternating, or quadratic form on $\scr E$, respectively. We then have
	\[
		\Vect(\ph,\Qoppa^{\sym}_{\scr L})=\Vect^\sym_{\scr L},\quad \Vect(\ph,\Qoppa^{\alt}_{\scr L})=\Vect^\alt_{\scr L},\quad \Vect(\ph,\Qoppa^{\qu}_{\scr L})=\Vect^\qu_{\scr L},
	\]
	as presheaves on $\dSpc_S$. 
	Indeed, the left-hand sides are clearly étale sheaves, and on $\CAlg_R^\an$ they are left Kan extended from $\CAlg_R^\sm$ by Proposition~\ref{prop:Vect-LKE}.
	The same holds for the right-hand sides by Lemma~\ref{lem:stacks} below and \cite[Proposition A.0.4]{EHKSY3}.
	Since $\Pic$ is also left Kan extended from $\CAlg_\Z^\sm$, we are reduced to the case of classical schemes, where these isomorphisms hold by definition.
\end{remark}

\begin{lemma}\label{lem:stacks}
	Let $S$ be a derived algebraic space and let $\scr L\in \Pic(S)$.
	\begin{enumerate}
		\item $\Vect^\sym_\scr L$ is a smooth algebraic stack over $S$ with affine diagonal.
		\item $\Vect^\alt_\scr L$ is a smooth algebraic stack over $S$ with smooth and affine diagonal.
		\item $\Vect^\qu_\scr L$ is a smooth algebraic stack over $S$ with smooth and affine diagonal.
	\end{enumerate}
\end{lemma}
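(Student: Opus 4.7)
The plan is to reduce to the universal situation and present each stack as a smooth quotient. Since the stacks $\Vect^*_\scr L$ for $*\in\{\sym,\alt,\qu\}$ are obtained by base change along $\scr L\colon S \to \Pic$ from classical algebraic stacks over $\Pic$, and since algebraicity, smoothness, and the relevant properties of the diagonal are preserved under base change, it suffices to verify the claims for the universal classical stacks $\Vect^\sym$, $\Vect^\alt$, $\Vect^\qu$ over $\Spec(\Z)$ with $\scr L=\scr O$ (one may further reduce from $\B\G_m$ to $\Spec(\Z)$ since $\G_m$ is smooth and affine).

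Stratifying by rank, each such stack decomposes as $\coprod_{n\geq 0}[U_n/\GL_n]$, where $U_n$ is the open subscheme of the affine space of $\scr O$-valued forms on $\scr O^n$ (of the relevant type) cut out by the nondegeneracy condition. Since $U_n$ is open in an affine space over $\Z$ and $\GL_n$ is a smooth affine group scheme, the quotient is a smooth algebraic stack of finite type over $\Z$. The diagonal of $[U_n/\GL_n]$ is represented by the transporter scheme
\[
T_n = \{(\phi_1,\phi_2,g)\in U_n\times U_n\times\GL_n : g\cdot\phi_1=\phi_2\},
\]
which is cut out in $U_n\times U_n\times\GL_n$ by polynomial equations, hence affine over $U_n\times U_n$. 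This establishes (i) in all three cases.

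For (ii) and (iii), smoothness of the diagonal reduces, via the torsor structure of $T_n\to U_n\times U_n$ under the stabilizer, to smoothness over $\Z$ of the automorphism group scheme $\mathrm{Aut}(\phi)$ of a nondegenerate form $\phi$. For alternating forms this is the symplectic group $\Sp(\phi)$, a smooth affine group scheme over $\Z$; for quadratic forms it is the orthogonal group of a nondegenerate quadratic form, which is also smooth affine over $\Z$ (this is the variant of the orthogonal group that is well-behaved at $2$). The main subtlety — and the reason for the weaker statement in case (i) — is that the orthogonal group of a nondegenerate symmetric bilinear form is not smooth at $2$ in general; already the stabilizer of the rank-$1$ form $[1]$ is $\mu_2$, which fails to be smooth over $\F_2$. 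This obstructs any corresponding smoothness statement for the diagonal of $\Vect^\sym_\scr L$ and explains the asymmetry between the three cases, mirroring the distinction between the classical Poincaré structures $\Qoppa^\sym$ and $\Qoppa^\qu$ that pervades the paper.
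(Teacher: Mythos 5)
Your argument is correct, and for part (i) it is the paper's proof verbatim in different notation: the presentation $[U_n/\GL_n]$ of the rank-$n$ stratum is exactly the paper's $\GL_n^\sym/\GL_n$, with affineness of the diagonal coming from affineness of the transporter. For (ii) and (iii) the packaging differs: you keep the quotient presentation and prove smoothness of the diagonal by viewing the transporter $T_n\to U_n\times U_n$ (equivalently the action map $U_n\times\GL_n\to U_n\times U_n$) as a torsor under the smooth stabilizer, whereas the paper identifies the stacks outright as $\B\Sp_{2n}$ (with the odd alternating stratum empty), respectively $\B\mathrm{O}_{2n}$ and $\Spec(\Z[\tfrac12])\times\B\mathrm{O}_{2n+1}$. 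These are two faces of the same argument, and your torsor claim silently uses exactly the inputs the paper makes explicit: that a nondegenerate alternating form over a local ring is hyperbolic, and that a nondegenerate quadratic form of rank $d$ is étale-locally isomorphic to the standard form $q_d$ (Conrad, Lemma C.2.1). Without this local transitivity the transporter is only a pseudo-torsor, and its flatness, hence smoothness, over $U_n\times U_n$ is not automatic, so you should state and cite these facts. You should also be a bit more careful in the odd-rank quadratic case: over $\F_2$ there are no nondegenerate quadratic forms of odd rank (the associated bilinear form is alternating), so $U_{2n+1}$ is supported over $\Z[\tfrac12]$, and the correct smoothness input is that $\mathrm{O}_{2n+1}$ is smooth over $\Z[\tfrac12]$ (over $\Z$ it is not smooth at $2$); your parenthetical ``smooth affine over $\Z$'' is harmless only because the stack is empty in odd rank at $2$. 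Your closing observation about the $\mu_2$ stabilizer obstructing smoothness in the symmetric case is the same phenomenon the paper records in the remark following the lemma (non-flatness of the diagonal of $\Vect^\sym$). What each approach buys is mainly expository: the paper's classifying-stack descriptions are reusable statements in their own right, while your version avoids naming the quotients at the cost of the torsor bookkeeping.
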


\begin{proof}
	We can assume $\scr L=\scr O$ and hence $S=\Spec(\Z)$.
	
	(i) We have
	\[
	\Vect^\sym_n\simeq \GL_n^\sym/\GL_n,
	\]
	where $\GL_n^\sym\subset \GL_n$ is the scheme of symmetric matrices and $\GL_n$ acts on $\GL_n^\sym$ by $(S,A)\mapsto S^t\! AS$. The diagonal is affine as both $\GL_n^\sym$ and $\GL_n$ are affine.
	
	(ii) A nondegenerate alternating form over a local ring is isomorphic to a hyperbolic form. Hence,
	\[
	\Vect^\alt_{2n}\simeq \B\Sp_{2n}\quad\text{and}\quad \Vect^\alt_{2n+1}=\emptyset,
	\]
	which are indeed smooth algebraic stacks with smooth and affine diagonal.
	
	(iii) Let $q_d$ be the quadratic form in $d$ variables given by
	\[
	q_{2n}=\sum_{i=1}^nx_{2i-1}x_{2i}\quad\text{and}\quad q_{2n+1}=x_0^2+\sum_{i=1}^nx_{2i-1}x_{2i}.
	\]
	Then $q_{2n}$ is nondegenerate over any ring, and $q_{2n+1}$ is nondegenerate over $R$ if and only if $2\in R^\times$.
	Let $\mathrm O_d\subset \GL_d$ be the automorphism group scheme of $(\Z^d,q_d)$.
	Then $\mathrm O_{2n}$ is smooth over $\Z$ and $\mathrm O_{2n+1}$ becomes smooth over $\Z[\tfrac 12]$ \cite[Theorem C.1.5]{Conrad}.
	Moreover, every nondegenerate quadratic form of rank $d$ is étale-locally isomorphic to $q_d$ \cite[Lemma C.2.1]{Conrad}.
	Hence, we have 
	\[
	\Vect^\qu_{2n}\simeq \B\rm O_{2n}\quad\text{and}\quad \Vect^\qu_{2n+1}\simeq \Spec(\Z[\tfrac 12])\times \B\rm O_{2n+1},
	\] 
	both of which are smooth algebraic stacks with smooth and affine diagonal.
\end{proof}

\begin{remark}
	\leavevmode
	\begin{enumerate}
		\item The diagonal of $\Vect^\sym$ is not flat. For example, the automorphism group scheme of the hyperbolic symmetric form is not flat over $\Z_{(2)}$, since it is the $3$-dimensional group $\SL_2$ over $\F_2$ and the $1$-dimensional group $\G_\m\rtimes \C_2$ over $\Z[\tfrac 12]$.
		\item As observed in \cite[Proposition 5.6]{HJNY}, the presheaf $\Vect^\mathrm{ev}$ classifying nondegenerate even forms on classical affine schemes is not an algebraic stack, as it does not satisfy closed gluing.
	\end{enumerate}
\end{remark}

\begin{theorem}[Stability under base change]
	\label{thm:BC}
	The motivic spectra $\KO_{\scr L}$, $\KO^\varsigma_{\scr L}$, and $\KO^\qoppa_{\scr L}$ are stable under base change.
	More precisely, for any morphism of derived algebraic spaces $f\colon Y\to X$ and any $\scr L\in \Pic^\dagger(X)$, the canonical map
	\[
	f^*(\KO_{\scr L}) \to \KO_{f^*(\scr L)}
	\]
	is an isomorphism in $\MS_Y$, and similarly for $\KO^\varsigma_{\scr L}$ and $\KO^\qoppa_{\scr L}$.
\end{theorem}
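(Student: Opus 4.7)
The plan is to reduce base change stability to the affine case via Nisnevich descent, exploit the left Kan extension of Grothendieck--Witt theory from smooth $\Z$-algebras (Corollary~\ref{cor:GW-LKE}) to handle $\KO_{\scr L}$ and $\KO^\qoppa_{\scr L}$, and then apply Proposition~\ref{prop:bott} to derive the case of $\KO^\varsigma_{\scr L}$.

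First I would reduce to $X = \Spec(A)$ and $Y = \Spec(B)$ affine: both $f^*(\KO_{\scr L})$ and $\KO_{f^*\scr L}$ live in the Nisnevich sheaf of categories $S \mapsto \MS_S$, which by our conventions is right Kan extended from affines. By Yoneda in $\MS_B$, the comparison reduces to showing that for every smooth qcqs $B$-algebra $B'$ and every $n \in \Z$, the canonical map identifies $\Omega^{\infty-n}_{\P^1}(f^*\KO_{\scr L})(\Spec B')$ with $\GW^\c_{f^*\scr L[n]}(\Spec B')$ (and similarly for the $\qoppa$ variant). The substance then comes from Corollary~\ref{cor:GW-LKE}: for every $m \in \N \cup \{\infty\}$, every $\epsilon \in \{\pm 1\}$, and every $L \in \Pic$, the connective presheaf $\Omega^\infty\c\GW^{\geq m}_{\epsilon L}$ is left Kan extended from smooth $\Z$-algebras. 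Combined with the suspension isomorphisms of Definition~\ref{def:derivedGW}, this controls all weights $n \in \Z$ simultaneously and reduces the comparison to the case of smooth $\Z$-algebras. But on smooth $\Z$-algebras the section $A \mapsto \KO_A$ is manifestly compatible with base change: every ingredient of the construction $\scr R^\c$ in Variant~\ref{var:MS}---Nisnevich descent (Corollary~\ref{cor:animated-nis}), smooth blowup excision (Corollary~\ref{cor:animated-ebu}), and the projective bundle formula (Theorem~\ref{thm:derived-PBF})---is functorial in smooth morphisms. Left Kan extending back to arbitrary animated rings then yields the isomorphisms $f^*\KO_{\scr L} \simto \KO_{f^*\scr L}$ and $f^*\KO^\qoppa_{\scr L} \simto \KO^\qoppa_{f^*\scr L}$.

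Finally, $\KO^\varsigma_{\scr L}$ is realized by Proposition~\ref{prop:bott}(i) as the left Bott-periodization $\KO_{\scr L}[\beta^{-1}]$. Since $f^*\colon \MS_X \to \MS_Y$ is a symmetric monoidal colimit-preserving functor and the Bott element $\beta^4 \in \KO_\Z$ is absolute, $f^*$ commutes with the filtered colimit computing left Bott-periodization, giving $f^*\KO^\varsigma_{\scr L} \simto \KO^\varsigma_{f^*\scr L}$. The main obstacle is the left Kan extension argument at the motivic spectrum level: Corollary~\ref{cor:GW-LKE} concerns only the \emph{connective} presheaf of Grothendieck--Witt anima, whereas $\KO$ encodes all $\P^1$-weights $n \in \Z$. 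One must apply the corollary at each weight (using the suspension isomorphisms to vary $m$ and $\epsilon$), and verify that the motivic Bousfield localization and $\P^1$-stabilization commute with the filtered colimits defining the left Kan extension from smooth $\Z$-algebras.
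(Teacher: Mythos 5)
Your outline agrees with the paper's proof on the easy parts (reduction to the affine/absolute case, use of Corollary~\ref{cor:GW-LKE}, and Bott periodization via Proposition~\ref{prop:bott}(i) for $\KO^\varsigma$), but it has a genuine gap precisely at the point you yourself flag as ``the main obstacle'', and the fix you propose does not work. Corollary~\ref{cor:GW-LKE} is only available for $m\in\N\cup\{\infty\}$ and honest line bundles $L\in\Pic$; this restriction is essential, since its proof rests on the Hebestreit--Steimle group completion theorem, which needs the Poincaré structure to be connective on $\Vect$. The suspension isomorphisms of Definition~\ref{def:derivedGW} identify the weight-$2j$ space of $\KO$ with $\Omega^\infty\c\GW^{\geq j}_{(-1)^j\scr O}$; for $j<0$, i.e.\ for the negative $\P^1$-weights of $\KO$, this lands outside the range of the corollary, and no further shifting brings it back into the range $m\geq 0$ while keeping the twist in $\Pic$. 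So ``applying the corollary at each weight, using the suspension isomorphisms to vary $m$ and $\epsilon$'' cannot control the negative weights; these theories interpolate towards $\GW^\varsigma$ and carry L-theoretic information that is not accessible by group-completion/left Kan extension arguments.

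The paper resolves this with an ingredient you never invoke: the Bass delooping theorem \cite[Corollary 4.13(i)]{AHI}, which reduces the base-change statement for the motivic spectrum $\KO$ (and $\KO^\qoppa$) to the corresponding statement for $\Omega^\infty\KO$ in $\MS^{\mathrm{un}}$, i.e.\ to the non-negative weights only, where $\Omega^{\infty-4n}_{\P^1}\Omega^\infty\KO=\Omega^\infty\c\GW^{\geq 2n}$ and $\Omega^{\infty-4n}_{\P^1}\Omega^\infty\KO^\qoppa=\Omega^\infty\c\GW^{\geq\infty}$, so that Corollary~\ref{cor:GW-LKE} applies. This also repairs a second soft spot in your reduction: for a non-smooth morphism $f$, the functor $f^*$ does not commute with $\Omega^{\infty-n}_{\P^1}$, so ``identifying $\Omega^{\infty-n}_{\P^1}(f^*\KO_{\scr L})(\Spec B')$ with $\GW^\c_{f^*\scr L[n]}(\Spec B')$'' is not a well-posed weightwise check --- the delooping theorem is exactly what legitimizes verifying the comparison at the level of the connective data. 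Your treatment of $\KO^\varsigma$ as the left $\beta$-periodization, preserved by the symmetric monoidal colimit-preserving functor $f^*$, is correct and coincides with the paper's argument.
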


\begin{proof}
	The assertion is Nisnevich-local on $X$, so we can assume $X=\Spec(\Z)$. Since $f^*$ commutes with $\Sigma_{\P^1}$, we can also assume $\scr L=\scr O$.
	By the Bass delooping theorem \cite[Corollary 4.13(i)]{AHI}, it further suffices to prove the analogous statement for $\Omega^\infty$ of these motivic spectra in $\MS^\mathrm{un}$.
	Since
	\[
	\Omega^{\infty-4n}_{\P^1}\Omega^\infty\KO = \Omega^\infty\c\GW^{\geq 2n}\quad\text{and}\quad \Omega^{\infty-4n}_{\P^1}\Omega^\infty\KO^\qoppa = \Omega^\infty\c\GW^{\geq\infty},
	\]
	the result for $\KO$ and $\KO^\qoppa$ follows from Corollary~\ref{cor:GW-LKE}.
	The result for $\KO^\varsigma$ follows since $\KO^\varsigma=\KO[\beta^{-1}]$, by Proposition~\ref{prop:bott}(i).
\end{proof}

\begin{corollary}
	The motivic spectra $\KW_{\scr L}$, $\KW^\qoppa_{\scr L}$, and $\KW^\varsigma_{\scr L}$ are stable under base change.
\end{corollary}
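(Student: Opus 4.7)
The plan is to reduce the stability of the $\KW$-variants to the stability of the corresponding $\KO$-variants already established in Theorem~\ref{thm:BC}, using the Hopf fracture square of Proposition~\ref{prop:eta-localization}. Concretely, for each $* \in \{\emptyset, \varsigma, \qoppa\}$, that proposition gives an identification $\KW^*_{\scr L} = \KO^*_{\scr L}[\eta^{-1}]$ as $\Br^\c$-graded motivic spectra: indeed, $\scr R^\c(\ph)[\eta^{-1}] = \scr R^\c((\ph)^{\mathrm{bord}})$ holds laxly symmetric monoidally in $\scr F$ and at every point of $\Br^\c$, and $\L = \GW^{\mathrm{bord}}$.

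Next I would observe that left $\eta$-periodization commutes with arbitrary base change $f^*\colon \MS_X \to \MS_Y$. This is because $\eta$ is pulled back from $\MS_\Z$ (Definition~\ref{def:hopf}), so $f^*$ intertwines multiplication by $\eta_X$ with multiplication by $\eta_Y$; since $f^*$ is a left adjoint, it preserves the filtered colimit computing left $\eta$-periodization. Therefore, for any $f\colon Y\to X$ and any $\scr L\in\Pic^\dagger(X)$,
\[
f^*\KW^*_{\scr L} = f^*\bigl(\KO^*_{\scr L}[\eta^{-1}]\bigr) = \bigl(f^*\KO^*_{\scr L}\bigr)[\eta^{-1}] = \KO^*_{f^*\scr L}[\eta^{-1}] = \KW^*_{f^*\scr L},
\]
where the middle isomorphism is Theorem~\ref{thm:BC}.

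There is no serious obstacle here, as everything is arranged to reduce formally to the $\KO$-case: the only ingredient beyond Theorem~\ref{thm:BC} is the stability of the Hopf element under pullback, which is immediate from its definition in $\MS_\Z$. As a consistency check, one may note that $\KW^\varsigma_{\scr L}$ can alternatively be recovered as $\KW_{\scr L}[\beta^{-1}]$ by Proposition~\ref{prop:bott}(i), which yields the same conclusion via the same colimit-preservation argument applied to $\beta$ (which is likewise pulled back from $\MS_\Z$); this gives an independent route to the $\varsigma$-variant that bypasses the $\KO^\varsigma$-case.
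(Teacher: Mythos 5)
Your argument is exactly the paper's: the paper deduces the corollary from Theorem~\ref{thm:BC} together with Proposition~\ref{prop:eta-localization}, i.e.\ from the identification $\KW^*_{\scr L}=\KO^*_{\scr L}[\eta^{-1}]$ and the fact that left $\eta$-periodization (a colimit along multiplication by the pulled-back Hopf element) commutes with $f^*$. Your write-up just makes this commutation explicit, so it is correct and essentially identical to the paper's proof.
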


\begin{proof}
	This follows from Theorem~\ref{thm:BC} and Proposition~\ref{prop:eta-localization}.
\end{proof}

\begin{corollary}
	The motivic spectrum $(\KO_{\scr L})^\wedge_\beta$ is stable under base change.
\end{corollary}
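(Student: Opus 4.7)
The plan is to deduce this from Theorem~\ref{thm:BC} together with the identification of $(\KO_{\scr L})^\wedge_\beta$ provided by Remark~\ref{rmk:normal}. By Proposition~\ref{prop:bott}(ii), there is a natural fiber sequence
\[
\KO^\qoppa_{\scr L} \to \KO_{\scr L} \to (\KO_{\scr L})^\wedge_\beta
\]
in $\MS_S$, so that $(\KO_{\scr L})^\wedge_\beta$ is the cofiber of the left map (we are in a stable category).

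Given a morphism $f\colon Y\to X$ of derived algebraic spaces and $\scr L\in \Pic^\dagger(X)$, the base change functor $f^*\colon \MS_X\to\MS_Y$ is symmetric monoidal and preserves colimits, hence preserves cofiber sequences. Therefore
\[
f^*(\KO_{\scr L})^\wedge_\beta \simeq \cofib\bigl(f^*\KO^\qoppa_{\scr L} \to f^*\KO_{\scr L}\bigr).
\]
By Theorem~\ref{thm:BC}, the canonical maps $f^*\KO^\qoppa_{\scr L}\to \KO^\qoppa_{f^*\scr L}$ and $f^*\KO_{\scr L}\to \KO_{f^*\scr L}$ are isomorphisms, so the right-hand side identifies with $\cofib(\KO^\qoppa_{f^*\scr L}\to \KO_{f^*\scr L})\simeq (\KO_{f^*\scr L})^\wedge_\beta$, again by Remark~\ref{rmk:normal}.

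There is no real obstacle here: the entire content has already been packaged into the previous results, and it only remains to observe that $f^*$ commutes with the cofiber presentation of $(\KO_{\scr L})^\wedge_\beta$ supplied by the Bott fracture square. The only point to double-check is the naturality of the comparison map $f^*\KO^\qoppa_{\scr L}\to \KO^\qoppa_{f^*\scr L}$ with the corresponding map for $\KO_{\scr L}$, which is automatic from the construction of $\scr R^\c$ in Variant~\ref{var:MS}.
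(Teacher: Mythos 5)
Your proof is correct and is essentially the paper's own argument: the paper's proof cites Theorem~\ref{thm:BC} and Remark~\ref{rmk:normal}, which amounts exactly to writing $(\KO_{\scr L})^\wedge_\beta$ as the cofiber of $\KO^\qoppa_{\scr L}\to\KO_{\scr L}$ (via Proposition~\ref{prop:bott}(ii)) and using that $f^*$ is exact and both terms are stable under base change. Your remark on the compatibility of the comparison maps, which follows from the naturality of $\scr R^\c$, is the only implicit point and you handle it correctly.
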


\begin{proof}
	This follows from Theorem~\ref{thm:BC} and Remark~\ref{rmk:normal}.
\end{proof}

\begin{remark}
	By Theorem~\ref{thm:BC} and Proposition~\ref{prop:genuine-vs-derived}, the motivic spectrum $\KO^\g_{\epsilon\scr L}$ is stable under base change between classical algebraic spaces if $\scr L\in \Pic^\dagger$ and $\epsilon=(-1)^{\deg\scr L}$. If however $\epsilon=-(-1)^{\deg\scr L}$, this only holds on $2$-torsionfree algebraic spaces (or over $\F_2$). 
	For example, the motivic spectrum $\KO^\g_{-\scr O}$, whose underlying sheaf of spectra is genuine skew-symmetric Grothendieck–Witt theory, is not stable under base change from $\Z$ to $\F_2$, since it is isomorphic to $\Sigma^2_{\P^1}\KO$ over $\Z$ and to $\KO$ over $\F_2$.
\end{remark}

\begin{remark}
	After $\A^1$-localization and over classical schemes, Theorem~\ref{thm:BC} for $\KO^\varsigma$ recovers \cite[Proposition 8.2.2]{CHN}.
\end{remark}

\section{Thom isomorphisms and the metalinear orientation}
\label{sec:Thom}

Consider the $\E_\infty$-map
\[
\br\colon \K\xrightarrow{(\rk,\det^\dagger)} \Z \times \Pic^\dagger \to \Br^\c.
\]
The goal of this section is to prove the Thom isomorphism
\[
\Sigma^{\xi} \scr R^\c_X(\ph)_{?} \simeq \scr R^\c_X(\ph)_{?\otimes \br(\xi)}
\]
for any $X\in\dSpc$ and $\xi\in \K(X)$, and similarly for $\scr R^\g_X$ when $X$ is bounded. If $\xi$ comes from a finite locally free sheaf $\scr E\in\Vect(X)$, this essentially follows from the projective bundle formula. We then show that the Thom isomorphism is symmetric monoidal in $\scr E$, so that it extends to K-theory; this step is rather technical due to the intricate definition of the symmetric monoidal structure on $\scr E\mapsto\Sigma^{\scr E}$.

\begin{definition}[Thom class]
	\label{def:Thom}
	Let $X$ be a derived algebraic space and let $\scr E\in\Vect(X)$ be a finite locally free sheaf with shifted determinant $\omega\in \Pic^\dagger(X)$. 
	Let $p\colon \V(\scr E)\to X$ be the associated vector bundle with zero section $e\colon X\to \V(\scr E)$.
	By Grothendieck duality, the perfect sheaf $e_*(\scr O_X)$ with the symmetric form
	\[
	\Sym^2(e_*(\scr O_X))\to e_*(\Sym^2(\scr O_X)) = e_*(\scr O_X) = e_*e^!p^!(\scr O_X) \to p^!(\scr O_X)=p^*(\omega)
	\]
	is a Poincaré object of $(\Perf(\V(\scr E)\;\mathrm{on}\; X),\Qoppa^\sym_{\omega})$. It defines an element
	\[
	\th(\scr E)\in \GW(\Perf(\V(\scr E)\;\mathrm{on}\; X),\Qoppa^\sym_{\omega}),
	\]
	called the \emph{Thom class} of $\scr E$.
\end{definition}

\begin{theorem}[Thom isomorphism for additive invariants]
	\label{thm:Thom}
	Let $X$ be a qcqs derived algebraic space and let $\scr E\in\Vect(X)$ be a finite locally free sheaf with shifted determinant $\omega\in \Pic^\dagger(X)$. Let $\scr C$ be a stable category and suppose one of the following holds:
	\begin{enumerate}
		\item $\scr L\in \Pic^\dagger(X)$, $*\in\{\c,\varsigma,\qoppa\}$, and $\scr F\colon \Mod_{(X,\Qoppa^{\c})}(\Cat^\p)\to\scr C$ is an additive invariant;
		\item $\scr L\in \Pic^\dagger(X)^{\B\C_2}$, $*\in\{\g,\s,\q\}$, and $\scr F\colon \Mod_{(X,\Qoppa^{\g})}(\Cat^\p)\to\scr C$ is an additive invariant.
	\end{enumerate}
	Then multiplication with the Thom class $\th(\scr E)\in \GW^\c_{\omega}(\V(\scr E) \;\mathrm{on}\; X)$ induces a fiber sequence
	\[
	\scr F^*_{\scr L}(X) \to \scr F^*_{\scr L\otimes\omega}(\P(\scr E\oplus\scr O)) \to \scr F^*_{\scr L\otimes\omega}(\P(\scr E)).
	\]
	If the rank of $\scr E$ is odd, it is canonically split.
\end{theorem}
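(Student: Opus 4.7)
The plan is to realize the desired fiber sequence as the image under the additive invariant $\scr F$ of a Poincaré-Verdier sequence
\[
(\Perf(X),\Qoppa^*_{\scr L}) \xrightarrow{e_*} (\Perf(\P(\scr E\oplus\scr O)),\Qoppa^*_{\scr L\otimes\omega}) \xrightarrow{i^*} (\Perf(\P(\scr E)),\Qoppa^*_{\scr L\otimes\omega}),
\]
where $e\colon X\to\V(\scr E)\hookrightarrow\P(\scr E\oplus\scr O)$ is the zero section composed with the open immersion, and $i\colon\P(\scr E)\hookrightarrow\P(\scr E\oplus\scr O)$ is its closed complement. Grothendieck duality for the regular closed immersion $e$ supplies $e^!\scr O\simeq\omega$, which upgrades $e_*$ to a Poincaré functor as displayed; by Definition~\ref{def:Thom}, $e_*$ sends the tautological Poincaré object $\scr O_X$ to $\th(\scr E)$, so by the projection formula the induced map on $\scr F$-theory is multiplication with the Thom class.

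To establish that the displayed sequence is Poincaré-Verdier, I would exploit Beilinson's semiorthogonal decomposition of Proposition~\ref{prop:beilinson} applied to $\P(\scr E\oplus\scr O)$ (of rank $\rk\scr E+1$) and to $\P(\scr E)$ (of rank $\rk\scr E$), choosing the Serre twist in each case so that the subcategory $\scr C$ fills out all of $\Perf$. Under restriction along $i$, the Beilinson generators $\scr O(k)$ of $\P(\scr E\oplus\scr O)$ map to those of $\P(\scr E)$, except for one extra generator that is annihilated by $i^*$ because of the Koszul relation $i_*\scr O_{\P(\scr E)}=\cofib(\scr O(-1)\to\scr O)$. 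This extra generator is identified with $e_*(\scr O_X)$ up to Serre twist via the projection formula $e_*(\scr O_X)\otimes\scr O(k)\simeq e_*(\scr O_X)$, and the Poincaré structures on either side are forced to match $\Qoppa^*_{\scr L}$ by the PBF computations of Theorems~\ref{thm:PBF}(iv) and~\ref{thm:derived-PBF}(iv), once the shift $\omega=\det(\scr E)[\rk\scr E]$ is unwound against the Beilinson twists. Applying $\scr F$ to the resulting Poincaré-Verdier sequence then yields the desired fiber sequence.

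For the splitting when $\rk\scr E$ is odd, Theorems~\ref{thm:PBF}(ii) and~\ref{thm:derived-PBF}(ii) (with the appropriate parities of $n$ and $r$) give direct sum decompositions rather than fiber sequences, providing an explicit retract of $i^*$ at the level of $\scr F$-theory and hence a splitting of the Thom sequence. The main obstacle will be the careful bookkeeping of twists and parities in the Beilinson identification, so as to ensure that the Poincaré structure on the kernel of $i^*$ matches $\Qoppa^*_{\scr L}$ exactly rather than an unwanted shift; once the Grothendieck duality isomorphism is reconciled with the Beilinson twists, both families $*\in\{\c,\varsigma,\qoppa\}$ and $*\in\{\g,\s,\q\}$ are handled uniformly, since the relevant PBF has been established in each case.
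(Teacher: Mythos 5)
There is a genuine gap at the very first step: the displayed sequence
\[
(\Perf(X),\Qoppa^*_{\scr L}) \xrightarrow{e_*} (\Perf(\P(\scr E\oplus\scr O)),\Qoppa^*_{\scr L\otimes\omega}) \xrightarrow{i^*} (\Perf(\P(\scr E)),\Qoppa^*_{\scr L\otimes\omega})
\]
is not a Poincaré–Verdier sequence, and no amount of Beilinson bookkeeping will make it one, because it already fails at the level of the underlying stable categories. First, $e_*$ is not fully faithful: $\map(e_*\scr O_X,e_*\scr O_X)$ contains the Koszul-type higher endomorphisms of the zero section (an exterior algebra on $\scr E^\vee[1]$ locally), so $e_*$ cannot be a Verdier inclusion. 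Second, the kernel of $i^*$ consists of all perfect complexes supported on closed subsets of $\V(\scr E)$ that are closed in $\P(\scr E\oplus\scr O)$, which is strictly larger than the thick subcategory generated by the image of $e_*$: already for $X=\Spec(k)$ and $\scr E=\scr O$, a skyscraper at a point of $\A^1$ other than the origin lies in $\ker(i^*)$ but not in $\langle e_*\Perf(k)\rangle$; correspondingly $i^*$ is not a Verdier (or Karoubi) projection onto $\Perf(\P(\scr E))$. Your supporting claim that ``one extra Beilinson generator is annihilated by $i^*$'' is also false: $i^*\scr O(k)=\scr O(k)|_{\P(\scr E)}\neq 0$ for every $k$, and $e_*(\scr O_X)$ is not a semiorthogonal summand of $\Perf(\P(\scr E\oplus\scr O))$ complementary to $i^*$-preimages — it is a ``diagonal'' object built out of all the twists via the Koszul resolution. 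Finally, even if some Poincaré–Verdier sequence were available, the theorem is stated for \emph{additive} invariants, which only send \emph{split} Poincaré–Verdier sequences (and isotropic decompositions) to fiber sequences, so the localization-style reduction would not suffice as stated.

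The fiber sequence is genuinely a projective-bundle-formula statement rather than a localization statement, and this is how the paper proves it: one applies Proposition~\ref{prop:beilinson} together with Theorems~\ref{thm:PBF} and~\ref{thm:derived-PBF} to \emph{both} $\P(\scr E\oplus\scr O)$ and $\P(\scr E)$, and then compares the two decompositions after applying $\scr F$ by exhibiting explicit diagrams of Poincaré functors: for $\rk\scr E$ odd, a commuting triangle showing that $p_*(\ph\otimes\scr O(-\tfrac{r+1}2))\circ e_*\simeq\id$, which splits the sequence; for $\rk\scr E$ even, a map from the sequence $e_*,\,i^*$ to the metabolic Poincaré–Verdier sequence $(\Perf(X),\Qoppa^*_{\scr L})\to(\mathrm{Ar}(\Perf(X)),\cofib\circ\Qoppa^*_{\scr L})\to(\Perf(X),\Qoppa^*_{\scr L[1]})$, using $p^!(\scr O)=\omega(-r-1)$ and $\fib(\scr M(-1)\to\scr M)\to i_*i^*\scr M$. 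If you want to salvage your outline, you should abandon the claim that $e_*,\,i^*$ form a Verdier sequence and instead organize the argument around these comparisons of the isotropic/Beilinson decompositions of the two projective bundles.
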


\begin{proof}
	We can assume that the rank $r$ of $\scr E$ is constant and we apply the projective bundle formula, more precisely the cases (ii) and (iv) of Theorems \ref{thm:PBF} and~\ref{thm:derived-PBF}.
	Suppose first that $r$ is odd, and consider the diagram
	\[
	\begin{tikzcd}
		& \scr F^*_{\scr L\otimes\omega}(X)\oplus \scr F^\mathrm{hyp}(X)^{\oplus (r-1)/2} \ar{d} \ar[equal]{r} & \scr F^*_{\scr L\otimes\omega}(X)\oplus \scr F^\mathrm{hyp}(X)^{\oplus (r-1)/2} \ar{d}[sloped]{\sim} \\
		\scr F^*_{\scr L}(X) \ar{r}{e_*} \ar[dashed]{dr}[swap]{\id} & \scr F^*_{\scr L\otimes\omega}(\P(\scr E\oplus\scr O)) \ar{d}{p_*(\ph\otimes\scr O(-\frac{r+1}{2}))} \ar{r} & \scr F^*_{\scr L\otimes\omega}(\P(\scr E)) \\
		& \scr F^*_{\scr L}(X)\rlap, & 
	\end{tikzcd}
	\]
	where the middle column is a fiber sequence and the right vertical map is an isomorphism. 
	The lower left triangle commutes since it is the image of a commuting triangle in $\Cat^\p$:
	\[
	\begin{tikzcd}
		& (\Perf(\P(\scr E\oplus\scr O),\Qoppa^*_{\scr L\otimes \omega}) \ar{d}{(-\frac{r+1}2)}[swap,sloped]{\sim} \\
		(\Perf(X),\Qoppa^*_{\scr L}) \ar{ur}{e_*} \ar{r}{e_*} \ar{dr}[swap]{\id} & (\Perf(\P(\scr E\oplus\scr O),\Qoppa^*_{\scr L\otimes \omega(-r-1)}) \ar{d}{p_*} \\
		& (\Perf(X),\Qoppa^*_{\scr L})\rlap.
	\end{tikzcd}
	\]
	This shows that the given nullsequence is a split fiber sequence.
	
	Suppose now $r$ even. If $r=0$ there is nothing to prove, so assume $r\geq 2$. We consider the diagram
	\[
	\begin{tikzcd}
		& \scr F^*_{\scr L\otimes\omega}(X)\oplus \scr F^\mathrm{hyp}(X)^{\oplus (r-2)/2} \ar{d} \ar[equal]{r} & \scr F^*_{\scr L\otimes\omega}(X)\oplus \scr F^\mathrm{hyp}(X)^{\oplus (r-2)/2} \ar{d} \\
		\scr F^*_{\scr L}(X) \ar{r}{e_*} \ar[dashed]{d}[swap]{\id} & \scr F^*_{\scr L\otimes\omega}(\P(\scr E\oplus\scr O)) \ar[shift right]{d} \ar[<-,shift left]{d}{p^*(\ph)\otimes\scr O(\frac r2)} \ar{r} & \scr F^*_{\scr L\otimes\omega}(\P(\scr E)) \ar{d}{p_*(\ph\otimes\scr O(-\frac{r}{2}))} \\
		\scr F^*_{\scr L}(X) \ar{r}{\mathrm{forget}} & \scr F^\mathrm{hyp}(X) \ar{r}{\mathrm{hyp}} & \scr F^*_{\scr L[1]}(X)\rlap,
	\end{tikzcd}
	\]
	where the middle column is a split fiber sequence and the last column is a fiber sequence. 
	We claim that the identity of $\scr F^*_{\scr L}(X)$ participates in a morphism of nullsequences between the last two rows. Since the third row is a fiber sequence, this will complete the proof. The last two rows can in fact be promoted to a morphism of nullsequences in $\Cat^\p$:
	\[
	\begin{tikzcd}
		(\Perf(X),\Qoppa^*_\scr L) \ar{r}{e_*} \ar[equal]{d} & (\Perf(\P(\scr E\oplus\scr O)),\Qoppa^*_{\scr L\otimes\omega}) \ar{d}{q\to qi_*i^*} \ar{r}{i^*} & (\Perf(\P(\scr E)),\Qoppa^*_{\scr L\otimes\omega}) \ar{d}{qi_*} \\
		(\Perf(X),\Qoppa^*_\scr L) \ar{r}{\id\to 0} & (\mathrm{Ar}(\Perf(X)),{\cofib}\circ {\Qoppa^{*}_{\scr L}}) \ar{r}{\mathrm{target}} & (\Perf(X),\Qoppa^*_{\scr L[1]})\rlap,
	\end{tikzcd}
	\]
	Here, $i\colon \P(\scr E)\to \P(\scr E\oplus\scr O)$ is the canonical closed immersion, $q$ is the functor $p_*(\ph\otimes\scr O(-\tfrac r2))$, and the lower row is the metabolic sequence. Using $p^!(\scr O)=\omega(-r-1)$ and the fiber sequence $\scr F(-1)\to \scr F\to i_*i^*(\scr F)$, one can easily check that the middle vertical map is a Poincaré functor making both squares commute.
\end{proof}

\begin{corollary}\label{cor:Thom}
	Let $X$ be a derived algebraic space and let $\scr E\in\Vect(X)$ be a finite locally free sheaf with shifted determinant $\omega\in \Pic^\dagger(X)$. 
	\begin{enumerate}
		\item For every $\scr L\in \Pic^\dagger(X)$, the Thom class $\th(\scr E)$ induces an isomorphism
	\[
	\Sigma^{\scr E}\scr R_X^\c(\ph)_{\scr L} \simeq \scr R_X^\c(\ph)_{\scr L\otimes\omega}
	\]
	in $\MS_X$, and similarly for $\scr R_X^\qoppa$ and $\scr R_X^\varsigma$.
	\item Suppose $X$ bounded. For every $\scr L\in \Pic^\dagger(X)^{\B\C_2}$, the Thom class $\th(\scr E)$ induces an isomorphism
	\[
	\Sigma^{\scr E}\scr R_X^\g(\ph)_{\scr L} \simeq \scr R_X^\g(\ph)_{\scr L\otimes\omega}
	\]
	in $\MS_X$, and similarly for $\scr R_X^\q$ and $\scr R_X^\s$.
	\end{enumerate}
\end{corollary}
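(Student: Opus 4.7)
The plan is to deduce both (i) and (ii) from Theorem~\ref{thm:Thom} by Yoneda-representing its conclusion in $\MS_X$. Write $M^*_{\scr L}$ uniformly for $\scr R_X^\c(\ph)_{\scr L}$ in case (i) (with $*\in\{\c,\varsigma,\qoppa\}$) or $\scr R_X^\g(\ph)_{\scr L}$ in case (ii) (with $*\in\{\g,\s,\q\}$, and with $X$ bounded), and let $\scr F$ denote the underlying Karoubi-localizing invariant of Poincaré categories. First, I would apply Theorem~\ref{thm:Thom} to every smooth $X$-scheme $U$ with the pullback of $(\scr E,\omega,\scr L)$ along $U\to X$, obtaining a fiber sequence of spectra
\[
M^*_{\scr L}(U) \to M^*_{\scr L\otimes\omega}(\P_U(\scr E\oplus\scr O)) \to M^*_{\scr L\otimes\omega}(\P_U(\scr E))
\]
whose first map is multiplication with the pullback of $\th(\scr E)$. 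To promote this pointwise statement to one about motivic spectra, the key naturality check is that these sequences assemble into a fiber sequence of presheaves of spectra on $\Sm_X$; this follows from the fact that $\th(\scr E)$ is defined via the Grothendieck-duality pushforward along the zero section $e\colon X\into\V(\scr E)$, so both $\th$ and its multiplication map commute with pullback along smooth morphisms by smooth base change for $e_*$.

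Next, I would reinterpret the last two terms in $\MS_X$. Because $\P_U(\scr E\oplus\scr O)=U\times_X\P_X(\scr E\oplus\scr O)$ and similarly for $\P_U(\scr E)$, and because $M^*_{\scr L\otimes\omega}$ is a motivic spectrum on $X$, these presheaves on $\Sm_X$ are represented respectively by the mapping spectra $\Map_{\MS_X}(\Sigma^\infty_+\P_X(\scr E\oplus\scr O),M^*_{\scr L\otimes\omega})$ and $\Map_{\MS_X}(\Sigma^\infty_+\P_X(\scr E),M^*_{\scr L\otimes\omega})$. Their fiber is $\Map_{\MS_X}(\Th(\scr E),M^*_{\scr L\otimes\omega})$, where $\Th(\scr E)=\P_X(\scr E\oplus\scr O)/\P_X(\scr E)$ is the motivic Thom space of $\scr E$. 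In the Annala--Hoyois--Iwasa framework, $\Th(\scr E)\simeq\Sigma^{\scr E}\1_X$ in $\MS_X$, so by Yoneda the above fiber sequence identifies the Thom-class-multiplication map with an isomorphism $M^*_{\scr L}\simeq\Omega^{\scr E}M^*_{\scr L\otimes\omega}$ in $\MS_X$. Passing to left adjoints, equivalently using invertibility of $\Sigma^{\scr E}$ on $\MS_X$, yields the desired Thom isomorphism $\Sigma^{\scr E}M^*_{\scr L}\simeq M^*_{\scr L\otimes\omega}$.

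The main technical obstacle should be the naturality statement in the first step: verifying that $\th(\scr E)$ and its multiplication action depend functorially on $U\in\Sm_X$ in the right sense, so that the pointwise fiber sequences from Theorem~\ref{thm:Thom} genuinely assemble into a morphism in $\MS_X$ after Yoneda-representation. Once this compatibility is in place, the remainder is formal, combining the cofiber sequence $\Sigma^\infty_+\P_X(\scr E)\to\Sigma^\infty_+\P_X(\scr E\oplus\scr O)\to\Th(\scr E)$ of motivic spectra with the Thom-space identification $\Th(\scr E)\simeq\Sigma^{\scr E}\1_X$ built into the framework of $\MS_X$.
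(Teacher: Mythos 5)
Your proposal is correct and follows essentially the same route as the paper: the paper's proof likewise reduces to checking, for every qcqs $U\in\Sm_X$ and every weight $n\in\Z$, that multiplication by $\th(\scr E)$ is an isomorphism onto $\fib\bigl(\scr F^*_{\scr L[n]\otimes\omega}(\P_U(\scr E\oplus\scr O))\to\scr F^*_{\scr L[n]\otimes\omega}(\P_U(\scr E))\bigr)$, which is exactly Theorem~\ref{thm:Thom}, with the identification of this fiber as the $\scr E$-loops of the target coming from the cofiber sequence $\Sigma^\infty_+\P_X(\scr E)\to\Sigma^\infty_+\P_X(\scr E\oplus\scr O)\to\Th_X(\scr E)=\Sigma^{\scr E}\1_X$ built into $\MS_X$. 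The naturality issue you flag is not a genuine obstacle, since the comparison map is defined once and for all by multiplication with the single global class $\th(\scr E)$ via the $\KO$-module structure on $\scr R_X^*(\ph)_?$, and then only needs to be checked to be an isomorphism objectwise.
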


\begin{proof}
We have to show that, for every qcqs $U\in \Sm_X$ and every $n\in \Z$, the map 
\[
\th(\scr E)\colon \scr F^*_{\scr L[n]}(U)\to \fib\bigl(\scr F^*_{\scr L[n]\otimes\omega}(\P(\scr E\oplus\scr O))\to \scr F^*_{\scr L[n]\otimes\omega}(\P(\scr E))\bigr)
\]
is an isomorphism. This is a special case of Theorem~\ref{thm:Thom}.
\end{proof}

\begin{remark}[Twisted Thom isomorphism]
	\label{rmk:twisted-Thom}
	The split Poincaré–Verdier sequences and isotropic subcategories appearing in the proof of Theorem~\ref{thm:Thom} can be twisted by classical Poincaré–Azumaya algebras using étale descent (Proposition~\ref{prop:descent-animated}(iv)). This yields the following stronger form of the Thom isomorphism: 
	for any $\alpha\in\Br^\c(X)$, multiplication with the Thom class $\th(\scr E)$ induces a fiber sequence
	\[
	\scr F^\c_{\alpha}(X) \to \scr F^\c_{\alpha\otimes \br(\scr E)}(\P(\scr E\oplus\scr O)) \to \scr F^\c_{\alpha\otimes \br(\scr E)}(\P(\scr E)),
	\]
	where $\scr F^\c_{\smash[b]{(\scr A,\scr A_{\geq 0},\phi)}}(X)=\scr F(\scr A(X)^\omega,\c\Qoppa_\phi^{\geq 0})$.
	If $X$ is bounded, the same holds for $\scr F^\g_\alpha$ for any $\alpha\in\Br^\g(X)$, using Proposition~\ref{prop:mod-descent}.
	We deduce as in Corollary~\ref{cor:Thom} an isomorphism $\Sigma^{\scr E}\scr R_X^\c(\ph)_{?} \simeq \scr R_X^\c(\ph)_{?\otimes\br(\scr E)}$ in $\MS_X$, and similarly for $\scr R^\g_X$ when $X$ is bounded.
\end{remark}

\begin{remark}
	On classical schemes and after applying $\A^1$-localization, Corollary~\ref{cor:Thom}(ii) for $\KO^\s_{\scr L}$ recovers the Thom isomorphism from \cite[Proposition 8.3.1]{CHN}.
\end{remark}

\begin{remark}[Gorenstein transfers]
	\label{cst:transfers}
The Thom class of Definition~\ref{def:Thom} is an instance of a more general covariant functoriality of classical Poincaré categories.
	Given a derived algebraic space $X$ and a quasi-coherent sheaf $\scr L$ on $X$, consider the quadratic functor
	\[
	\Qoppa^\sym_\scr L=\map(\Sym^2(\ph),\scr L)\colon \QCoh(X)^\op\to\Sp.
	\]
	Since $\Sym^2\colon \QCoh\to\QCoh$ is oplax symmetric monoidal, the assignment $(X,\scr L)\mapsto (\QCoh(X),\Qoppa^\sym_{\scr L})$ unfurls to a lax symmetric monoidal functor
	\[
	(\QCoh,\Qoppa^\sym)\colon \Span^{\QCoh,\op} \to \Cat^\h,
	\]
	where $\Span^\QCoh\to \Span(\dSpc)$ is the cartesian fibration classified by $\QCoh^\op$.
	Explicitly, a morphism in $\Span^\QCoh$ is a span
	\[
	\begin{tikzcd}[column sep=1.8em]
	   & Z \ar[swap]{ld}{p}\ar{rd}{f} & \\
	  (X,\scr L) &   & (Y,\scr M)
	\end{tikzcd}
	\]
	together with a map $\alpha\colon p_*f^*(\scr M)\to\scr L$. This induces the morphism
	\[
	(p_*f^*, \alpha_*)\colon (\QCoh(Y), \Qoppa^\sym_{\scr M}) \to (\QCoh(X),\Qoppa^\sym_{\scr L}),
	\]
	where $\alpha_*\colon \Qoppa^\sym_{\scr M} \to \Qoppa^\sym_{\scr L}\circ p_*f^*$ is the composite
	\[
	\begin{tikzcd}
		\map(\Sym^2(\ph),\scr M) \ar{r}{p_*f^*} & \map(p_*f^*\Sym^2(\ph),p_*f^*(\scr M)) \ar{d} \\
		& \map(\Sym^2(p_*f^*(\ph)),p_*f^*(\scr M)) \ar{r}{\alpha} & \map(\Sym^2(p_*f^*(\ph)),\scr L)\rlap.
	\end{tikzcd}
	\]
	The span $(p,f,\alpha)$ above is called \emph{Gorenstein} if:
	\begin{itemize}
		\item $p$ is proper, almost of finite presentation, and of finite Tor-amplitude,
		\item $\scr L$ and $\scr M$ are invertible,
		\item $\alpha$ induces an isomorphism $f^*(\scr M)\simeq p^!(\scr L)$.
	\end{itemize}
	We denote by $\Span^\mathrm{Gor}\subset \Span^\QCoh$ the subcategory of Gorenstein spans. The assumptions on $p$ imply that $p_*$ preserves perfect sheaves \cite[Theorem 6.1.3.2]{SAG}, and the assumptions on $\scr L$, $\scr M$, and $\alpha$ imply that the hermitian functor $(p_*f^*, \alpha_*)\colon (\Perf(Y), \Qoppa^\sym_{\scr M}) \to (\Perf(X),\Qoppa^\sym_{\scr L})$ is a Poincaré functor, so that we obtain a lax symmetric monoidal functor
	\[
	(\Perf,\Qoppa^\sym)\colon \Span^{\mathrm{Gor},\op} \to \Cat^\p.
	\]
Moreover, the Poincaré functor $\Sigma^{-2}\colon (\Perf(X),\Qoppa^\sym_{\scr L[4]})\to (\Perf(X),\Qoppa^\sym_{\scr L})$ induced by the norm $\Sym^2\to\Gamma^2$ is natural with respect to this functoriality. Passing to the limit and to the colimit, we obtain functors
\[
(\Perf,\Qoppa^\qoppa),\, (\Perf,\Qoppa^\varsigma)\colon \Span^{\mathrm{Gor},\op} \to \Cat^\p.
\]

A similar construction with $(\ph)^{\otimes 2}$ instead of $\Sym^2$ yields functors
	\[
	(\Perf,\Qoppa^\q),\, (\Perf,\Qoppa^\s)\colon \Span^{\C_2\mathrm{Gor},\op} \to \Cat^\p,
	\]
	where the objects of $\Span^{\C_2\mathrm{Gor}}$ are pairs $(X,\scr L)$ with $X\in \SpSpc$ and $\scr L\in \Pic^\dagger(X)^{\B\C_2}$ (this covariant functoriality of $(\Perf,\Qoppa^\s)$ is a special case of \cite[Construction 5.1.3]{CHN}).
	However, the functoriality of $(\Perf,\Qoppa^\g)$ is more subtle: it is a priori only a functor on the subcategory of $\Span^{\C_2\mathrm{Gor}}$ where $p$ is flat, so that $p_*\colon\QCoh(Z)\to \QCoh(X)$ decreases connectivity by at most $\deg(\omega_p)$.
\end{remark}

We now discuss the multiplicativity of the Thom isomorphism.
Note that the Thom class $\th(\scr E)$ is symmetric monoidal in $(X,\scr E)$, and in fact $(X,\scr E)\mapsto e_*(\scr O)$ is a symmetric monoidal cartesian section of the cartesian fibration classified by the lax symmetric monoidal functor $(X,\scr E)\mapsto \mathrm{Pn}(\Perf(\V(\scr E)\;\mathrm{on}\; X),\Qoppa^\sym_{\omega})$. 
The Thom isomorphism of Theorem~\ref{thm:Thom} factors as
\[
\scr F^\c_{\scr L}(X) \xrightarrow{\th(\scr E)} \scr F^\c_{\scr L\otimes\omega}(\V(\scr E)\;\mathrm{on}\; X)
=\scr F^\c_{\scr L\otimes\omega}(\P(\scr E\oplus\scr O)\;\mathrm{on}\; X) \to \fib\bigl(\scr F^\c_{\scr L\otimes\omega}(\P(\scr E\oplus\scr O)) \to \scr F^\c_{\scr L\otimes\omega}(\P(\scr E))\bigr),
\]
so it remains to consider the multiplicativity of the second map.
At least if $\scr F$ is a Karoubi-localizing invariant, so that $\scr F^\c_{\scr L}$ satisfies smooth blowup excision (Corollary~\ref{cor:animated-ebu}), then the target of the Thom isomorphism is also a lax symmetric monoidal functor of $(X,\scr E,\scr L,\scr F)$ by \cite[\sect 3]{AHI}. Construction~\ref{cst:fake-Thom} below then shows that the second map is a symmetric monoidal transformation.

\begin{definition}
	Let $\scr E$ be a finite locally free sheaf on $X\in\scr P(\Sm_S)$. The \emph{fake Thom space} of $\scr E$ is
	\[
	\fTh_X(\scr E)=\frac{\V(\scr E)}{\V(\scr E)-0} \in \scr P(\Sm_S)_*.
	\]
\end{definition}

The fake Thom space is related to the actual Thom space $\Th_X(\scr E)=\P(\scr E\oplus\scr O)/\P(\scr E)$ by a zigzag
\[
\Th_X(\scr E) \longrightarrow \frac{\P(\scr E\oplus\scr O)}{\P(\scr E\oplus\scr O)-0} \longleftarrow \fTh_X(\scr E),
\]
where the first map is an $\A^1$-equivalence and the second map is a Zariski-local equivalence. 
In particular, there is a canonical map
\[
\Th_X(\scr E) \to\fTh_X(\scr E)
\]
in $\scr P_\Zar(\Sm_S)_*$ and hence in $\MS_S$.

\begin{construction}[Symmetric monoidal comparison of Thom spaces and fake Thom spaces]
	\label{cst:fake-Thom}
	Fake Thom spaces trivially define a lax symmetric monoidal functor
	\[
	\fTh\colon \Vect^{\epi,\op} \to \scr P(\Sm)_*,
	\]
	which becomes strict after Zariski sheafification.
	On the other hand, actual Thom spaces form a symmetric monoidal functor
	\[
	\Th\colon \Vect^{\epi,\op} \to \scr P_{\ebu}(\Sm)_*,
	\]
	which was constructed in \cite[\sect 3]{AHI}. We will extend the latter construction to a symmetric monoidal transformation
	\[
	\Th \to \fTh\colon \Vect^{\epi,\op}\to \scr P_{\Zar,\ebu}(\Sm)_*.
	\]
	This is essentially trivial, but we have to contend with the complicated construction of the symmetric monoidal structure on $\Th$.
	Using the notation from \emph{loc.\ cit.}, we will construct a simplicial map
	\begin{equation}\label{eqn:Thom-monoidal}
	\mathrm N(\Vect^{\epi,\op,\otimes}) \to \Lambda((\mathrm{Cube}^{s}(\Sm^{0\to 1\from 2})^{\otimes})^\vee,\mathrm{all},\mathrm{vert}),
	\end{equation}
	whose restriction to $0$ is the map \cite[(3.11)]{AHI} encoding the symmetric monoidal structure of $\Th$, and whose restriction to $2$ encodes the (lax) symmetric monoidal structure of $\fTh$. 
	The map~\eqref{eqn:Thom-monoidal} will have the following features: $0\to 1$ induces isomorphisms on the final vertices of the cubes, $1\from 2$ consists of open immersions, and all the cubes over $1$ and $2$ themselves consist of open immersions. Since open immersions are monomorphisms, the map~\eqref{eqn:Thom-monoidal} will be uniquely determined once these open immersions are suitably specified, so that we will be able to define it with minimal knowledge of \cite[(3.11)]{AHI}.
	
	For a finite family of finite locally free sheaves $\scr E=(\scr E_i)_{i\in I}$, recall that the smooth scheme $\bb B(\scr E)$ para\-met\-rizes $I$-cubes of invertible sheaves $(\scr L_J)_{J\subset I}$ with a surjective map from the $I$-cube $(\bigoplus_{j\in J}\scr E_i\oplus \scr O)_{J\subset I}$, and there is a zigzag of maps
	\[
	\prod_{i\in I}\P(\scr E_i\oplus\scr O) \xleftarrow{b_\Pi} \bb B(\scr E) \xrightarrow{b_\P} \P\bigl({\textstyle \bigoplus_{i\in I}\scr E_i\oplus\scr O}\bigr),
	\]
	which are sequences of smooth blowups \cite[Proposition 3.6]{AHI}. Given a subset $K\subset I$, let $U_K\subset \bb B(\scr E)$ denote the preimage under $\pi_K\circ b_\Pi$ of the open subset $\prod_{i\in K}(\P(\scr E_i\oplus \scr O)-0)\subset \prod_{i\in K}\P(\scr E_i\oplus \scr O)$.
	
	The map \cite[(3.11)]{AHI} sends each $n$-simplex to some $n$-span of families of cubes that are relative strict normal crossings divisors on smooth schemes of the form $\prod_{j\in J}\bb B(\scr E|_{I_j})$, for some finite family of finite locally free sheaves $(\scr E_i)_{i\in I}$ and some map of finite sets $I\to J$. The smooth components of this divisor are indexed by pairs $(j,K)$ with $j\in J$ and $K\subset I_j$ nonempty, and the component indexed by $(j,K)$ is contained in the open subset $\pi_j^{-1}(U_K)$. The cube given by the divisor thus maps to the strongly cartesian cube of open subsets of $\prod_{j\in J}\bb B(\scr E|_{I_j})$ extending $(j,K)\mapsto \pi_j^{-1}(U_K)$, which defines~\eqref{eqn:Thom-monoidal} over $0\to 1$. We extend it over $1\from 2$ by simply intersecting the latter cube with the open subset
	\[
	\prod_{j\in J}\bb B(\scr E|_{I_j})\supset\prod_{j\in J}\V\bigl({\textstyle\bigoplus_{i\in I_j}\scr E_i}\bigr).
	\]
	
	Since the maps $b_\Pi$ and $b_\P$ above are mutually inverse isomorphisms over $\prod_{i\in I}\V(\scr E_i)=\V(\bigoplus_{i\in I}\scr E_i)$, the restriction of~\eqref{eqn:Thom-monoidal} to $2$ lands in
	\[
	\Lambda((\mathrm{Cube}^{s}(\Sm)^{\otimes})^\vee,\mathrm{cart},\mathrm{vert}) = \rm N(\mathrm{Cube}^{s}(\Sm)^{\otimes})
	\]
	and induces the canonical lax symmetric monoidal structure of $\fTh$ after composing with the symmetric monoidal functor $\mathrm{tcofib}\colon \mathrm{Cube}^{s}(\Sm)\to \scr P(\Sm)_*$.
	Moreover, the open immersions of cubes over $1\from 2$ become Zariski-local equivalences after applying $\mathrm{tcofib}$, which completes the desired construction.
\end{construction}

\begin{remark}
	Construction~\ref{cst:fake-Thom} shows that the $\E_\infty$-ring structure on $\MGL$ constructed in \cite[\sect 7]{AHI} induces the one on $\L_{\A^1}\MGL$ constructed in \cite[\sect 16]{norms}.
\end{remark}

\begin{construction}[Virtual Thom classes]
	\label{cst:virtual-Thom}
	Consider the lax symmetric monoidal functors
	\[
	\Sigma^{(\ph)}\KO,\; \KO_{\det^\dagger(\ph)}\colon \K(S) \to \MS_S.
	\]
By Construction~\ref{cst:fake-Thom}, the Thom isomorphism of Corollary~\ref{cor:Thom}(i) for $\GW$ identifies their restrictions to $\Vect(S)$, naturally in $S$.
Since $\Sigma^{(\ph)}\KO$ is strict symmetric monoidal
and since the lax $\Z$-linear structure of $\KO_{\det^\dagger(\ph)}$ is strict, we deduce that $\KO_{\det^\dagger(\ph)}$ is also strict symmetric monoidal. Hence, the Thom isomorphism extends uniquely from $\Vect$ to $\K$: for every $\xi\in\K(S)$, there is an isomorphism of $\KO$-modules
\[
\th(\xi)\colon \Sigma^{\xi}\KO\simeq \KO_{\det^\dagger(\xi)}
\]
in $\MS_S$, which is symmetric monoidal in $\xi\in\K(S)$ and natural in $S\in\dSpc$. In other words, we have the following triangle of presheaves of symmetric monoidal categories:
\[
\begin{tikzcd}[column sep={3em, between origins}]
	\K \ar{dr}[swap]{\Th} \ar{rr}{\br} & {} \ar[phantom]{d}[near start]{\overset{\th}\Longrightarrow} & \Br^\c \ar{dl}{\KO_?} \\
	& \MS\rlap. &
\end{tikzcd}
\]
\end{construction}

\begin{proposition}[Motivic Thom isomorphism]
	\label{prop:virtual-Thom}
	Let $X$ be a derived algebraic space and let $\xi\in\K(X)$ have shifted determinant $\omega\in \Pic^\dagger(X)$.
	\begin{enumerate}
	\item For every $\scr L\in \Pic^\dagger(X)$, the Thom class $\th(\xi)$ induces an isomorphism
	\[
	\Sigma^{\xi}\scr R_X^\c(\ph)_{\scr L} \simeq \scr R_X^\c(\ph)_{\scr L\otimes\omega}
	\]
	in $\MS_X$, and similarly for $\scr R_X^\qoppa$ and $\scr R_X^\varsigma$.
	\item Suppose $X$ bounded. For every $\scr L\in \Pic^\dagger(X)^{\B\C_2}$, the Thom class $\th(\xi)$ induces an isomorphism
	\[
	\Sigma^{\xi}\scr R_X^\g(\ph)_{\scr L} \simeq \scr R_X^\g(\ph)_{\scr L\otimes\omega}
	\]
	in $\MS_X$, and similarly for $\scr R_X^\q$ and $\scr R_X^\s$.
	\end{enumerate}
\end{proposition}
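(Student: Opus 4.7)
The plan is to deduce Proposition~\ref{prop:virtual-Thom} from the Thom isomorphism for $\KO$ already established in Construction~\ref{cst:virtual-Thom}, exploiting the canonical $\KO_?$-module structure on $\scr R_X^\c(\scr F)_?$ furnished by Variant~\ref{var:MS}. Since $\scr R^\c_X(\scr F)_\scr L$ is a $\KO$-module, I would first rewrite
\[
\Sigma^\xi \scr R^\c_X(\scr F)_\scr L \simeq \scr R^\c_X(\scr F)_\scr L \otimes_\KO \Sigma^\xi\KO \simeq \scr R^\c_X(\scr F)_\scr L \otimes_\KO \KO_{\det^\dagger(\xi)},
\]
where the second isomorphism comes from Construction~\ref{cst:virtual-Thom}. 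It then suffices to show that the natural multiplication map
\[
\mu_\xi\colon \scr R^\c_X(\scr F)_\scr L \otimes_\KO \KO_{\det^\dagger(\xi)} \to \scr R^\c_X(\scr F)_{\scr L \otimes \det^\dagger(\xi)}
\]
supplied by the lax symmetric monoidal structure of $\scr R^\c_X$ is an isomorphism, symmetric monoidally in $\xi\in \K(X)$.

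For $\xi = [\scr E]$ with $\scr E\in \Vect(X)$, the map $\mu_\xi$ coincides with the twisted Thom isomorphism of Remark~\ref{rmk:twisted-Thom}, hence is an iso. To upgrade to all of $\K(X)$, I would repeat the group-completion argument of Construction~\ref{cst:virtual-Thom}: both $\xi\mapsto \Sigma^\xi \scr R^\c_X(\scr F)_\scr L$ and $\xi\mapsto \scr R^\c_X(\scr F)_{\scr L\otimes \det^\dagger(\xi)}$ assemble into lax symmetric monoidal functors $\K(X)\to\MS_X$, both of which are strict along $\br(\K(X))\subset\Br^\c(X)$ thanks to the analogous fact for $\KO_?$; since $\mu$ is a symmetric monoidal transformation that is an isomorphism on the submonoid $\Vect(X)$, the universal property of $\K(X)$ as the group completion forces $\mu$ to be an iso on all of $\K(X)$. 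Equivalently, because the claim is Zariski-local on $X$, I may reduce to the case $\xi = [\scr E_1] - [\scr E_2]$ and combine Remark~\ref{rmk:twisted-Thom} for $\scr E_1$ with its inverse for $\scr E_2$, using the invertibility of $\KO_{\br(\scr E_i)}$ as a $\KO$-module.

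The main obstacle is the symmetric monoidal bookkeeping: $\scr R^\c_X$ is only \emph{lax} symmetric monoidal, so one must carefully check that $\mu$ is compatible with the symmetric monoidal structures and strict enough along $\br(\K(X))$ to run the group-completion argument. This reduces to the technical point already addressed for $\KO_?$ via Construction~\ref{cst:fake-Thom}, and the $\KO$-module nature of $\scr R^\c_X(\scr F)_?$ transfers it to the present setting. Part~(ii) is proved by the same argument applied to $\scr R^\g_X$ and $\KO^\g$, using the genuine analog of Remark~\ref{rmk:twisted-Thom}.
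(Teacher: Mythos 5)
Your proposal is correct and is in essence the paper's own argument: the paper's proof simply notes that Nisnevich-locally $\xi+\scr O^n$ lies in the image of $\Vect(X)$ and invokes Corollary~\ref{cor:Thom}, which is exactly your local reduction to differences of bundle classes, while your factorization through the multiplication map $\scr R^\c_X(\scr F)_{\scr L}\otimes_{\KO}\KO_{\omega}\to\scr R^\c_X(\scr F)_{\scr L\otimes\omega}$ is the content of the remark immediately following the proposition. One caution: the ``universal property of $\K(X)$ as the group completion of $\Vect(X)$'' is only valid after localizing (nonconnective $\K$ of a general derived algebraic space need not be generated by vector bundles), so the Zariski/Nisnevich-local reduction you offer as the alternative route is the step that actually carries the argument, exactly as in the paper.
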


\begin{proof}
	Nisnevich-locally on $X$, there exists $n\geq 0$ such that $\xi+\scr O^n$ is in the image of $\Vect(X)$. The result thus follows from Corollary~\ref{cor:Thom}.
\end{proof}

\begin{remark}
More generally, the Thom class $\th(\xi)$ and the symmetric monoidal structure of $\scr R^\c_X$ induce an $\scr R_X^\c(\ph)_?$-linear map in $\Fun(\Br^\c(X),\MS_X)$
\[
\Sigma^\xi\scr R_X^\c(\ph)_? \xrightarrow{\th(\xi)} \KO_{\br(\xi)}\otimes\scr R_X^\c(\ph)_? \to\scr R_X^\c(\ph)_{?\otimes \br(\xi)},
\]
which is an isomorphism by Remark~\ref{rmk:twisted-Thom}. A similar statement holds for $\scr R^\g_X$ when $X$ is bounded.
\end{remark}

\begin{corollary}
	Let $X$ be a derived algebraic space and let $\scr L\in \Pic^\dagger(X)$. Then $\KO_{\scr L}$ is an invertible $\KO$-module in $\MS_X$. The same holds for $\KO^\varsigma$ and, if $X$ is bounded, for $\KO^\g$ and $\KO^\s$.
\end{corollary}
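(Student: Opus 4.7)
The plan is to reduce invertibility of $\KO_{\scr L}$ to the motivic Thom isomorphism, by observing that $\scr L$ is Nisnevich-locally in the image of the shifted determinant map $\det^\dagger\colon \K\to \Pic^\dagger$.

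The first step is to exploit Construction~\ref{cst:virtual-Thom}, which provides a symmetric monoidal $\KO$-linear isomorphism $\Sigma^\xi \KO \simeq \KO_{\det^\dagger(\xi)}$ in $\MS_X$ for every $\xi\in\K(X)$. Since $\Sigma^\xi\1_X$ lies in the Picard groupoid of $\MS_X$ (it is the Thom spectrum of a virtual vector bundle), the $\KO$-module $\Sigma^\xi\KO \simeq \Sigma^\xi\1_X\otimes\KO$ is automatically invertible. Consequently $\KO_{\det^\dagger(\xi)}$ is invertible for every $\xi\in\K(X)$.

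The second step is to reduce an arbitrary $\scr L\in\Pic^\dagger(X)$ to a shifted determinant. Invertibility of a $\KO$-module is a Nisnevich-local property on $X$ (since $\MS$ is a Nisnevich sheaf of symmetric monoidal categories), and by Theorem~\ref{thm:BC} the formation of $\KO_{\scr L}$ commutes with Nisnevich base change. Since $\Pic^\dagger(X)$ is Nisnevich-locally trivial in the sense that any $\scr L$ becomes isomorphic to $\scr O[n]$ for $n=\deg\scr L$, I may assume $\scr L=\scr O[n]$. Then the class $\xi=n\cdot[\scr O[1]]\in\K(X)$ satisfies $\det^\dagger(\xi)=\scr O[n]$, so Step~1 gives invertibility of $\KO_{\scr O[n]}$.

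For $\KO^\varsigma$, use that $\KO^\varsigma_{\scr L}=\KO_{\scr L}[\beta^{-1}]$ by Proposition~\ref{prop:bott}(i); invertibility is preserved by base change along the $\E_\infty$-map $\KO\to\KO^\varsigma$. For $\KO^\g_{\scr L}$ and $\KO^\s_{\scr L}$ on bounded $X$, repeat the Nisnevich-local reduction, invoking Proposition~\ref{prop:virtual-Thom}(ii) instead of (i); once $\scr L$ together with its involution is trivialized Nisnevich-locally, the Thom isomorphism applies directly.

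The one point deserving extra care is the handling of the $\C_2$-equivariance in the genuine case: one must check that the involution on $\scr L\in\Pic^\dagger(X)^{\B\C_2}$ trivializes Nisnevich-locally compatibly with a lift to $\K(X)$. I expect this to follow routinely from the fact that $\mu_2$-torsors trivialize Nisnevich-locally and that $\det^\dagger$ of a K-theory class carries a canonical involution matching the trivial one, but it is the most delicate bookkeeping in an otherwise formal argument.
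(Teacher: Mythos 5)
Your core step is exactly the paper's: invertibility comes from the motivic Thom isomorphism $\Sigma^{\xi}\KO\simeq\KO_{\det^\dagger(\xi)}$ of Construction~\ref{cst:virtual-Thom}/Proposition~\ref{prop:virtual-Thom}, together with the invertibility of $\Sigma^{\xi}\1_X$ in $\MS_X$. Where you diverge is your second step: the Nisnevich-local reduction is unnecessary, because $\det^\dagger\colon\K(X)\to\Pic^\dagger(X)$ is surjective on components \emph{globally}. Indeed, writing $n=\deg\scr L$ (a locally constant integer), the class $\xi=[\scr L[-n]]+(n-1)[\scr O]$ (or with $\scr L[-n]$ replaced by its dual, depending on the sign convention for $\det^\dagger$ on line bundles) satisfies $\det^\dagger(\xi)=\scr L$, since $\det^\dagger(\scr O^{k})=\scr O[k]$ and $\det^\dagger$ of a rank-one sheaf is that sheaf (or its dual) shifted by one. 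This is precisely the paper's one-line proof: it simply picks $\xi\in\K(X)$ with $\det^\dagger(\xi)=\scr L$ and applies Proposition~\ref{prop:virtual-Thom}. Your detour is repairable but has three rough spots worth noting: (1) the assertion that invertibility of a $\KO$-module is Nisnevich-local is true but not free — one should check it, e.g.\ by testing that $M\otimes_{\KO}(\ph)$ is an equivalence on the Nisnevich sheaf of module categories, and for this the smooth base-change compatibility built into Variant~\ref{var:MS} suffices, so Theorem~\ref{thm:BC} is not actually needed; (2) your explicit class has the wrong sign: since $\deg\circ\det^\dagger=\rk$ and $\rk(\scr O[1])=-1$, one has $\det^\dagger(n\cdot[\scr O[1]])=\scr O[-n]$ rather than $\scr O[n]$ (harmless in effect, but $n\cdot[\scr O]$ is the intended class); (3) the $\C_2$-bookkeeping you flag in the genuine case dissolves once you use a global $\xi$: the corollary only concerns $\scr L\in\Pic^\dagger(X)$ with its trivial involution, and $\det^\dagger(\xi)$ enters the genuine theory through $\br$ with its canonical involution, so Proposition~\ref{prop:virtual-Thom}(ii) applies directly; for genuinely nontrivial involutions the paper's subsequent remark records that invertibility of $\KO^\g_{\scr L}$ is unknown, so no local-trivialization argument should be expected to cover that case. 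Your treatment of $\KO^\varsigma$ via $\KO^\varsigma_{\scr L}=\KO_{\scr L}[\beta^{-1}]=\KO^\varsigma\otimes_{\KO}\KO_{\scr L}$ is fine, and is also subsumed by the case of $\scr R^\varsigma$ in Proposition~\ref{prop:virtual-Thom}(i).
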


\begin{proof}
	This follows from Proposition~\ref{prop:virtual-Thom}, since there is some $\xi\in\K(X)$ with $\det^\dagger(\xi)=\scr L$.
\end{proof}

\begin{remark}
	If $\scr L\in\Pic^\dagger(X)^{\B\C_2}$ has a nontrivial $\C_2$-action, we do not know if $\KO^\g_{\scr L}$ is an invertible $\KO^\g$-module in $\MS_X$.
	We also do not know if $\KO_\alpha$ is an invertible $\KO$-module for all $\alpha\in \Br^\c(X)$.
\end{remark}

\begin{corollary}
	Let $X$ be an affine derived scheme, let $\xi\in\K(X)$ have rank $0$, and let $n\geq 0$. Then
	\[
	\Map_{\MS_X}(\1_X,\Sigma^{\xi}\Sigma_{\P^1}^{2n} \KO) = \Vect(X,\c\Qoppa^{\geq n}_{(-1)^n\det(\xi)})^\gp.
	\]
	For $n=0,1,2$, we have more explicitly
	\begin{align*}
		\Map(\1_X,\Sigma^\xi \KO) &= \Vect(X,\Qoppa^\sym_{\det(\xi)})^\gp,\\
		\Map(\1_X,\Sigma^{\xi}\Sigma_{\P^1}^{2} \KO) &= \Vect(X,\Qoppa^\alt_{\det(\xi)})^\gp,\\
		\Map(\1_X,\Sigma^{\xi}\Sigma_{\P^1}^{4} \KO) &= \Vect(X,\Qoppa^\qu_{\det(\xi)})^\gp. 
	\end{align*}
\end{corollary}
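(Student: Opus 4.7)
The plan is to chain together the motivic Thom isomorphism, the suspension isomorphism between classical Poincaré structures of different shifts, and the Hebestreit--Steimle group completion theorem, and then invoke Proposition~\ref{prop:Sym^n} to get the concrete descriptions for small $n$.

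First I would apply Proposition~\ref{prop:virtual-Thom}(i). Since $\xi$ has rank $0$, its shifted determinant is $\det^\dagger(\xi) = \det(\xi) \in \Pic(X)$, and viewing the integer $2n$ as an element of $\K(X)$ with $\det^\dagger(2n) = \scr O[2n]$, the Thom isomorphism gives
\[
\Sigma^{\xi}\Sigma_{\P^1}^{2n}\KO \;=\; \Sigma^{\xi+2n}\KO \;\simeq\; \KO_{\det(\xi)[2n]}
\]
in $\MS_X$. By the defining property of $\KO$ established in Section~\ref{sec:KO}, namely
$\Omega^{\infty}_{\P^1}\KO_{\scr L} = \GW^\c_{\scr L}$ for $\scr L \in \Pic^\dagger(X)$, taking global sections over $X$ yields
\[
\Map_{\MS_X}(\1_X,\Sigma^{\xi}\Sigma_{\P^1}^{2n}\KO) \;=\; \Omega^\infty\GW^\c_{\det(\xi)[2n]}(X) \;=\; \Omega^\infty\GW(\Perf(X),\Qoppa^\c_{\det(\xi)[2n]}).
\]

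Next I would match the target Poincaré structure with $\c\Qoppa^{\geq n}_{(-1)^n\det(\xi)}$. Unwinding Definition~\ref{def:derivedGW}, $\Qoppa^\c_{\det(\xi)[2n]} = \c\Qoppa^{\geq 2n}_{\det(\xi)[2n]}$ since $\deg(\det(\xi)[2n])=2n$. Iterating the suspension isomorphism
\[
\Sigma\colon (\Perf(X),\c\Qoppa^{\geq m}_{\epsilon\scr L}) \simto (\Perf(X),\c\Qoppa^{\geq m+1}_{-\epsilon\scr L[2]})
\]
a total of $n$ times, starting from $(m,\epsilon,\scr L) = (n,(-1)^n,\det(\xi))$, produces $(2n,1,\det(\xi)[2n])$. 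Hence there is a canonical Poincaré isomorphism
\[
(\Perf(X),\c\Qoppa^{\geq n}_{(-1)^n\det(\xi)}) \;\simeq\; (\Perf(X),\Qoppa^\c_{\det(\xi)[2n]}).
\]

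Now I would invoke the Hebestreit--Steimle group completion theorem recalled at the beginning of the section: since $X$ is affine, $\det(\xi)\in\Pic(X)$, $(-1)^n\in\{\pm 1\}$, and $n\in\N$, the connective cover of $\GW(\Perf(X),\c\Qoppa^{\geq n}_{(-1)^n\det(\xi)})$ is the group completion of the $\E_\infty$-monoid $\Vect(X,\c\Qoppa^{\geq n}_{(-1)^n\det(\xi)})$, and this group completion is already grouplike and hence agrees with $\Omega^\infty$ of the GW-spectrum. Combining the three steps gives the asserted identification.

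Finally, the explicit cases $n=0,1,2$ follow immediately from Proposition~\ref{prop:Sym^n}, which identifies $\c\Qoppa^{\geq 0}_{\det(\xi)} = \Qoppa^\sym_{\det(\xi)}$, $\c\Qoppa^{\geq 1}_{-\det(\xi)} = \Qoppa^\alt_{\det(\xi)}$, and $\c\Qoppa^{\geq 2}_{\det(\xi)} = \Qoppa^\qu_{\det(\xi)}$ on $\Perf(X)$. The main bookkeeping obstacle is keeping track of the parity of signs and degrees through the $n$-fold suspension; everything else is an assembly of results already in the paper.
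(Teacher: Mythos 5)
Your proposal is correct and follows essentially the same route as the paper's proof: the virtual Thom class identification $\Sigma^{\xi}\Sigma_{\P^1}^{2n}\KO\simeq\KO_{\det(\xi)[2n]}$, the $n$-fold suspension isomorphism matching $\Qoppa^\c_{\det(\xi)[2n]}$ with $\c\Qoppa^{\geq n}_{(-1)^n\det(\xi)}$, the Hebestreit--Steimle group completion theorem (applicable since $X$ is affine and $n\in\N$), and Proposition~\ref{prop:Sym^n} for the cases $n=0,1,2$. The only cosmetic difference is that you cite Proposition~\ref{prop:virtual-Thom}(i) where the paper points to Construction~\ref{cst:virtual-Thom}, which carries the same content.
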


\begin{proof}
	By Construction~\ref{cst:virtual-Thom}, we have $\Sigma^{\xi}\Sigma_{\P^1}^{2n} \KO\simeq \KO_{\det(\xi)[2n]}$, which by definition represents classical $\det(\xi)[2n]$-valued GW-theory.
	The claim follows from the suspension isomorphism 
	\[
	\Sigma^{-n}\colon (\Perf(X),\Qoppa^\c_{\scr L[2n]})\simeq (\Perf(X),\c\Qoppa_{(-1)^n\scr L}^{\geq n})
	\]
	for $\scr L\in\Pic(X)$ and the group completion theorem.
\end{proof}

\begin{construction}[The metalinear $\E_\infty$-orientation of $\KO$]
	We explain how the virtual Thom classes of Construction~\ref{cst:virtual-Thom} induce an $\E_\infty$-map $\MML\to\KO$.
	Define the presheaf of $\E_\infty$-groups $\K^\ML$ on derived algebraic spaces by the pullback
	\[
	\begin{tikzcd}
		\K^\ML \ar{r} \ar{d} & \Pic^\dagger \ar{d}{2} \\
		\K \ar{r}{\det^\dagger} & \Pic^\dagger\rlap. & 
	\end{tikzcd}
	\]
	A square root of $\det^\dagger(\xi)$ induces an isomorphism $\br(\xi) \simeq (\scr Q,\scr Q_{\geq r},\phi_{(-1)^{r}\scr O})$ in $\Br^\c$, where $r=\rk(\xi)/2$, so that the restriction of $\br$ to $\K^\ML$ factors through $\Z_{\pm\infty}\times\C_2$. Consider the following diagram of presheaves of symmetric monoidal categories (where $\KSp=\Sigma^2_{\P^1}\KO$, see Remark~\ref{rmk:KSp}):
	\[
	\begin{tikzcd}[column sep={4.5em, between origins}]
		\K^\ML \ar{d} \ar{rr}{{\rk}/2} &  & \{(r,(-1)^r)\in \Z_{\pm\infty}\times\C_2\} \ar{d}{(\scr Q,\scr Q_{\geq r},\phi_{(-1)^r\scr O})} \ar[shift left=2pt]{rr}{\pi_2} & & \C_2 \ar[shift left=2pt,hook]{ll}{\iota} \ar[bend left=25]{ddlll}[description,pos=0.35]{(\KO,\KSp)} \ar{r} \ar[phantom]{dd}[pos=0.35]{\Longrightarrow} & *\rlap. \ar[bend left=30,to=MS.south east]{ddllll}[pos=0.3]{\KO\oplus\KSp} \\
		\K \ar{dr}[swap]{\Th} \ar{rr}{\br} & {} \ar[phantom]{d}[near start]{\overset{\th}\Longrightarrow} & \Br^\c \ar{dl}{\KO_?} & & {} \\
		& |[alias=MS]| \MS & & & {}
	\end{tikzcd}
	\]
	Here, $\iota$ is the lax symmetric monoidal functor that picks out $(0,1)$ and $(1,-1)$, which is right adjoint to the restriction of $\pi_2$ to nonnegative ranks.
	 The above diagram together with the unit of this adjunction defines a symmetric monoidal functor $\Th\colon \K^\ML_{\geq 0}\to \MS_{/\KO\oplus\KSp}$,
	whose restriction to $\rk^{-1}(4\N)$ factors through $\MS_{/\KO}$.
	Using the further diagram
	\[
	\begin{tikzcd}
		\{(r,(-1)^r)\in \Z_{\pm\infty}\times\C_2\} \ar{r}{-\infty} \ar{d} & \{-\infty\}\times\C_2 \ar{d} \ar{r} & * \ar{d} \\
		\Br^\c \ar{r}{-\infty} \ar{d}[swap]{\KO_?} \ar[phantom]{dr}{\Longrightarrow} & \Br^\c \ar{d}[swap]{\KO_?} \ar{r}{\mathrm{forget}} \ar[phantom]{dr}{\Longrightarrow} & \Br^\dagger \ar{d}{\KGL_?} \\
		\MS \ar[equal]{r} & \MS \ar[equal]{r} & \MS\rlap,
	\end{tikzcd}
	\]
	we obtain symmetric monoidal functors
		\[
		\begin{tikzcd}
			\rk^{-1}(4\N) \ar{r}{\Th} \ar{d} & \MS_{/\KO} \ar{d} \ar[phantom]{dr}{\longrightarrow} & \K^\ML_{\geq 0} \ar{r}{\Th} \ar{d} & \MS_{/\KO\oplus\KSp} \ar{d} \\
			\rk^{-1}(4\Z) \ar{r}{\Th} & \MS_{/\KO^\varsigma} & \K^\ML \ar{r}{\Th} \ar{d} & \MS_{/\KO^\varsigma\oplus\KSp^\varsigma} \ar{d} \\
			& & \K \ar{r}{\Th} & \MS_{/\KGL}\rlap.
		\end{tikzcd}
		\]
	Taking their $\Sm$-parametrized colimits yields the following commutative diagram of absolute motivic $\E_\infty$-ring spectra:
	\[
	\begin{tikzcd}
		\bigoplus_{n\geq 0}\Sigma_{\P^1}^{4n}\MML \ar{r} \ar{d} & \KO \ar{d} \ar[phantom]{drr}{\longrightarrow} & & \bigoplus_{n\geq 0}\Sigma_{\P^1}^{2n}\MML \ar{r} \ar{d} & \KO\oplus\KSp \ar{d} \\
		\bigoplus_{n\in \Z}\Sigma_{\P^1}^{4n}\MML \ar{r} & \KO^\varsigma & {} & \bigoplus_{n\in \Z}\Sigma_{\P^1}^{2n}\MML \ar{r} \ar{d} & \KO^\varsigma\oplus\KSp^\varsigma \ar{d} \\
		 & & & \llap{$\PMGL={}$} \bigoplus_{n\in \Z}\Sigma_{\P^1}^{n}\MGL \ar{r} & \KGL\rlap.
	\end{tikzcd}
	\]
\end{construction}

\begin{remark}
	In $\A^1$-homotopy theory, $\E_\infty$-orientations $\PMGL\to \L_{\A^1}\KGL$ and $\MML\to \L_{\A^1}\KO$ were constructed in \cite[\sect 6]{motive-hilb} and \cite[Remark 7.11]{HJNY} using the formalism of framed correspondences (the latter assuming $2\in\scr O^\times$, but one can now show $\Sigma^\infty_\mathrm{fr}\Vect^\sym[\beta^{-1}]=\L_{\A^1}\KO$ over any derived algebraic space using Theorem~\ref{thm:A1-localization}, so this assumption can be removed). The above constructions are more elementary, and it is not difficult to show that they recover the $\E_\infty$-maps from \emph{loc.\ cit.}, but this would be too much of a digression here. The incoherent metalinear orientation of $\KO$ over regular $\Z[\tfrac 12]$-schemes was previously constructed by Panin and Walter \cite[Theorem 5.1]{PaninWalterBO}.
\end{remark}

\section{\texorpdfstring{$\A^1$}{𝔸¹}-localization}
\label{sec:A1}

In this section, we show that all variants of Grothendieck–Witt theory $\GW^*_{\scr L}$ considered in Sections \ref{sec:genuine} and \ref{sec:derived} have the same $\A^1$-localization. On regular noetherian algebraic spaces of finite Krull dimension, this $\A^1$-localization is symmetric Grothendieck–Witt theory $\GW^\s_{\scr L}$, since the latter was shown to be $\A^1$-invariant by Calmès–\allowbreak Harpaz–\allowbreak Nardin \cite[\sect 6.3]{CHN}. This confirms an expectation from \emph{loc.\ cit}.

Recall that there are two versions of the affine line over the sphere spectrum: the toric affine line $\A^1=\Spec(\bb S[t])$ and the spectral affine line $\A^1_\mathrm{s}=\Spec(\bb S\{t\})$. These are related by a canonical map $\A^1\to \A^1_\mathrm{s}$, which is a morphism of bipointed objects \cite[\sect 3]{CisinskiKhan}. It follows that every $\A^1$-invariant presheaf on spectral algebraic spaces is $\A^1_\mathrm{s}$-invariant, or equivalently that every $\A^1_\mathrm{s}$-equivalence between such presheaves is an $\A^1$-equivalence \cite[Lemma 3.3.3]{CisinskiKhan}.

Let $\scr C$ be a stable category and let $\Qoppa'\to \Qoppa$ be a map of Poincaré structures on $\scr C$ inducing an isomorphism of bilinear parts. The relative L-theory formula of Harpaz–Nikolaus–Shah \cite{HNS} states that
\[
\cofib\bigl(\L(\scr C,\Qoppa')\to \L(\scr C,\Qoppa)\bigr)=\lim\bigl(\Qoppa(\D \lambda)\rightrightarrows \B(\D \lambda,\D \lambda)\bigr),
\]
where $\lambda\in\Ind(\scr C)$ represents the exact functor $\cofib(\Qoppa'\to\Qoppa)\colon \scr C^\op\to\Sp$ and $\D$, $\B$, and $\Qoppa$ are extended to ind-objects in the obvious way.
The claim about the $\A^1$-localization of Grothendieck–Witt theory is a straightforward consequence of this formula:\footnote{This was independently observed by Harpaz and Nikolaus.}

\begin{theorem}[$\A^1$-localization of additive invariants]
	\label{thm:A1-localization}
	\leavevmode
	\begin{enumerate}[beginpenalty=10000]
		\item Let $S$ be qcqs spectral algebraic space, let $\sqrt{\scr L}\in \Pic^\dagger(S)$, and let $\scr L=(\sqrt{\scr L})^{\otimes 2}\in \Pic^\dagger(S)^{\B\C_2}$. For any additive invariant $\scr F\colon \Mod_{(S,\Qoppa^\t)}(\Cat^\p)\to\scr C$ valued in a cocomplete stable category $\scr C$, the canonical map
		\[
		\scr F^\t_{\scr L}\to \scr F^\s_{\scr L}\colon (\SpSpc_S^\qcqs)^\op\to \scr C
		\]
		is an $\A^1_\mathrm{s}$-equivalence and hence an $\A^1$-equivalence.
		\item Let $S$ be qcqs spectral algebraic space, let $\scr L\in \Pic^\dagger(S)^{\B\C_2}$, and let $*\in\{\g,\s,\q\}$. For any additive invariant $\scr F\colon \Mod_{(S,\Qoppa^\g)}(\Cat^\p)\to\scr C$ valued in a cocomplete stable category $\scr C$, the canonical map
		\[
		\scr F^*_{\scr L}\to \scr F^\s_{\scr L}\colon (\SpSpc_S^\qcqs)^\op\to \scr C
		\]
		is an $\A^1_\mathrm{s}$-equivalence and hence an $\A^1$-equivalence.
		\item Let $S$ be qcqs derived algebraic space, let $\scr L\in \Pic^\dagger(S)$, and let $*\in\{\c,\varsigma,\qoppa\}$. For any additive invariant $\scr F\colon \Mod_{(S,\Qoppa^\c)}(\Cat^\p)\to\scr C$ valued in a cocomplete stable category $\scr C$, the canonical map
		\[
		\scr F^*_{\scr L}\to \scr F^\s_{\scr L}\colon (\dSpc_S^\qcqs)^\op\to \scr C
		\]
		is an $\A^1$-equivalence.
	\end{enumerate}
\end{theorem}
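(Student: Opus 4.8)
The plan is to reduce everything to the relative $\L$-theory formula of Harpaz--Nikolaus--Shah and a direct computation on the spectral (resp.\ toric) affine line. First I would observe that, since every additive invariant $\scr F$ is built from $\K$-theory and $\L$-theory via the fundamental fiber sequence, and since the $\K$-theoretic part is insensitive to the choice of Poincaré structure (it only sees the underlying stable category with duality), it suffices to treat $\scr F=\L$. Moreover, by the fracture and comparison maps one reduces to showing that the cofiber of $\L^*_{\scr L}\to\L^\s_{\scr L}$ (equivalently $\L^\c_{\scr L}\to\L^\varsigma_{\scr L}$, $\L^\qoppa_{\scr L}\to\L^\varsigma_{\scr L}$, etc.) is $\A^1$-locally trivial. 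In cases (i) and (ii) the target presheaves are defined on spectral algebraic spaces, so it is enough to prove $\A^1_\mathrm{s}$-invariance of the cofiber, which by \cite[Lemma 3.3.3]{CisinskiKhan} implies $\A^1$-invariance; in case (iii) the classical Poincaré structures are only defined on derived schemes, so one must argue directly with the toric $\A^1$.

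The key computation is as follows. By the relative $\L$-theory formula, for a map $\Qoppa'\to\Qoppa$ of Poincaré structures on $\Perf(X)$ with the same bilinear part $\B_{\scr L}$, the cofiber of $\L(\Perf(X),\Qoppa')\to\L(\Perf(X),\Qoppa)$ is $\lim\bigl(\Qoppa(\D\lambda)\rightrightarrows\B(\D\lambda,\D\lambda)\bigr)$, where $\lambda\in\Ind(\Perf(X))$ represents $\cofib(\Qoppa'\to\Qoppa)$. For the maps in question this $\lambda$ is built from the linear-part difference, which is a shift of (a filtration quotient of) a module of the form $\scr L\otimes_\Z(\epsilon\Z)^{\t\C_2}$ by Remark~\ref{rmk:dTate}(ii) (resp.\ $\tau_{<m}\Tate$ in the genuine case). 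The crucial point is that $\B(\D\lambda,\D\lambda)=\map(\D\lambda\otimes\D\lambda,\scr L)$ is a quasi-coherent sheaf, so the whole expression is representable by a quasi-coherent complex on $X$; and $X\mapsto\map(\scr M_X,\scr N_X)$ for fixed $\scr M,\scr N$ is $\A^1$-invariant, since $\QCoh$ is an fpqc sheaf and $\Gamma(X\times\A^1,\scr O)=\Gamma(X,\scr O)[t]$ is a polynomial algebra — more precisely, one checks that the pullback along $p\colon X\times\A^1\to X$ induces an isomorphism on these mapping spectra because $p_*\scr O_{X\times\A^1}=\scr O_X[t]$ and the relevant sheaves are perfect. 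Hence each term in the limit diagram defining the cofiber is $\A^1$-invariant (as a presheaf), and therefore so is the limit.

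First I would make precise the identification of $\lambda$: in case (iii), $\cofib(\c\Qoppa^{\geq m}_{\epsilon\scr L}\to\c\Qoppa^{\geq m-2}_{\epsilon\scr L})$ is represented by $\Sigma^{-1}$ of a sum of shifts of $L/2$ (Frobenius-twisted), by Remark~\ref{rmk:dTate}(i); for $\Qoppa^\qoppa\to\Qoppa^\varsigma$ one takes the colimit over $m\to-\infty$, and for $\Qoppa^\c\to\Qoppa^\varsigma$ one uses the fiber sequence relating them to $\c\Tate_{\epsilon\scr L}$. In all cases $\D\lambda$ is a uniformly shifted filtered colimit of perfect complexes whose associated graded pieces are Frobenius-twists of $\scr L$-twisted copies of $\scr O/2$, and $\B_{\scr L}(\D\lambda,\D\lambda)$ is then a quasi-coherent complex on $X$ functorial in $X$. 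Then I would invoke $\A^1$-invariance of $X\mapsto\Gamma(X,\scr N)$ for $\scr N\in\QCoh(\Spec\Z)$ (or more generally $\scr N$ pulled back from $S$), which follows from the projection formula $p_*p^*\scr N=\scr N\otimes_{\scr O_X}\scr O_X[t]$ together with the fact that the limit diagram in the HNS formula is a \emph{finite} limit (an equalizer), hence commutes with filtered colimits and with the relevant cohomology. Finally, since $\A^1_\mathrm{s}$-invariant presheaves on spectral algebraic spaces are $\A^1$-invariant, cases (i) and (ii) follow; case (iii) follows by the same argument carried out over the toric $\A^1$ directly, where $\Gamma(X\times\A^1,\scr O)=\Gamma(X,\scr O)[t]$ still holds for derived schemes.

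The main obstacle I anticipate is bookkeeping rather than conceptual: one must verify that the HNS relative $\L$-theory formula applies in the needed generality (Karoubi-localizing vs.\ additive $\L$-theory — but the paper's conventions already identify these, and in any case normal $\L$-theory agrees in both settings), and that the ind-object $\lambda$ and the bilinear pairing $\B_{\scr L}(\D\lambda,\D\lambda)$ genuinely assemble into a presheaf of spectra on $(\dSpc^\qcqs_S)^\op$ for which pullback along $\A^1\to\mathrm{pt}$ is an equivalence. The one subtlety worth care is that $\D\lambda$ is generally not perfect but an ind-object; however $\B_{\scr L}(\D\lambda,\D\lambda)$ lands in $\QCoh$ (not just $\Ind(\Perf)$), and $\A^1$-invariance of $\Gamma(-,\scr N)$ holds for arbitrary $\scr N\in\QCoh$ pulled back from $S$, so this causes no trouble. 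Everything else — reducing to $\L$, reducing to the cofiber, passing between the six Poincaré structures — is formal given Proposition~\ref{prop:genuine-vs-derived} and the fiber sequences already recorded in Sections~\ref{sec:genuine} and~\ref{sec:derived}.
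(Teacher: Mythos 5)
Your overall strategy — reduce to the relative L-theory formula of Harpaz–Nikolaus–Shah and then do a computation involving $\Gamma(X\times\A^1,\scr O)=\Gamma(X,\scr O)[t]$ — is on the right track, and you are using exactly the right ingredients. But there are two genuine gaps that prevent the argument from going through.

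\emph{First, the reduction ``it suffices to treat $\scr F=\L$'' is not justified.} The fundamental cartesian square relating $\GW$, $\K^{\h\C_2}$, $\L$, and $\K^{\t\C_2}$ is a structural feature of $\GW$, not of an arbitrary additive invariant; a general additive $\scr F\colon\Cat^\p\to\scr C$ is not ``built from $\K$-theory and $\L$-theory.'' The paper's proof sidesteps this by observing that $\Qoppa^*_{\scr L}$ is a $\Qoppa^\t$-module, hence the cofiber $\scr F^{*\to\s}_{\scr L}$ is a \emph{module} over $\GW^{\q\to\t}=\L^{\q\to\t}$ (the identification $\GW^{\q\to\t}=\L^{\q\to\t}$ is where the fundamental square is actually used). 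It then suffices to show $\L^{\q\to\t}$ is $\A^1_\mathrm{s}$-contractible, because the $\A^1_\mathrm{s}$-localization is lax symmetric monoidal, so every module over an $\A^1_\mathrm{s}$-contractible presheaf of ring spectra is $\A^1_\mathrm{s}$-contractible. Your phrasing suggests a decomposition of $\scr F$ that doesn't exist; the module-theoretic reduction is what actually works.

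\emph{Second, the core computational claim is backwards.} You assert that $X\mapsto\map(\scr M_X,\scr N_X)$ is $\A^1$-invariant, citing $\Gamma(X\times\A^1,\scr O)=\Gamma(X,\scr O)[t]$. But this identity shows the \emph{opposite}: for $\scr M=\scr N=\scr O$ one gets $\Gamma(X\times\A^1,\scr O)=\Gamma(X,\scr O)[t]\neq\Gamma(X,\scr O)$, so the presheaf is \emph{not} $\A^1$-invariant. Moreover, even if the terms in the HNS equalizer were $\A^1$-invariant, that would only tell you the cofiber is $\A^1$-invariant — it would not tell you it vanishes, which is what's needed to conclude that $\scr F^*_{\scr L}\to\scr F^\s_{\scr L}$ is an $\A^1$-equivalence. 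The paper uses a different and much stronger fact: the presheaf of ring spectra $\Gamma(\ph,\scr O)$ is $\A^1_\mathrm{s}$-\emph{contractible}, because any $\A^1_\mathrm{s}$-invariant presheaf identifies the zero and one sections of $\A^1_\mathrm{s}$, so the $\A^1_\mathrm{s}$-localization of $\Gamma(\ph,\scr O)$ is a ring spectrum in which $0=1$, i.e., the zero ring. Since each term in the HNS equalizer for $\L^{\q\to\t}$ — namely $\Gamma(X,\scr O)^{\h\C_2}\times_{\Gamma(X,\scr O)^{\t\C_2}}\Gamma(X,\scr O)$ and $\Gamma(X,\scr O)$ — is a module over $\Gamma(\ph,\scr O)$, and $\A^1_\mathrm{s}$-localization preserves finite limits, the whole equalizer becomes zero after localization. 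Your argument never gets to this vanishing; it stops at (an incorrect claim of) invariance.

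If you replace the first step with the $\Qoppa^\t$-module observation and replace ``$\A^1$-invariant'' with ``$\A^1_\mathrm{s}$-contractible as a module over $\Gamma(\ph,\scr O)$'' in the second step, you recover the paper's proof. The remark at the end that case (iii) follows by running the argument directly over the toric $\A^1$ is correct as stated.
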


\begin{proof}
	Let $\scr F^{*\to\star}_{\scr L}$ denote the cofiber of $\scr F^*_{\scr L}\to \scr F^\star_{\scr L}$. Since $\Qoppa^*_{\scr L}$ is a module over $\Qoppa^\t$ in all cases, $\scr F^{*\to\s}_{\scr L}$ is a module over $\GW^{\q\to\t}=\L^{\q\to\t}$, so it suffices to show that the latter is $\A^1_\mathrm{s}$-contractible. The relative L-theory formula gives
	\begin{align*}
	\L^{\q\to\t}(X) &= \lim\bigl(\Qoppa^\t(\D(\scr O_X))\rightrightarrows \B(\D(\scr O_X),\D(\scr O_X))\bigr) \\
	& = \lim\bigl((\Gamma(X,\scr O_X)^{\h\C_2} \times_{\Gamma(X,\scr O_X)^{\t\C_2}} \Gamma(X,\scr O_X))\rightrightarrows \Gamma(X,\scr O_X)\bigr).
	\end{align*}
	Since $\A^1_\mathrm{s}$-localization preserves colimits and finite limits, and since $\Gamma(\ph,\scr O)^{\h\C_2}$ is a module over $\Gamma(\ph,\scr O)$, it remains to observe that the presheaf of ring spectra $\Gamma(\ph,\scr O)$ on spectral algebraic spaces is $\A^1_\mathrm{s}$-contractible.
	Indeed, since the zero and one sections of $\A^1_\mathrm{s}\to\Spec(\bb S)$ are identified by any $\A^1_\mathrm{s}$-invariant presheaf, the $\A^1_\mathrm{s}$-localization of $\Gamma(\ph,\scr O)$ is a ring in which $0=1$.
\end{proof}

\begin{corollary}
	\label{cor:A1-localization}
	Let $S$ be a derived algebraic space. 
	\begin{enumerate}[beginpenalty=10000]
		\item Suppose $S$ bounded. For any $\scr L\in\Pic^\dagger(S)^{\B\C_2}$, the maps
		\[
		\scr R^\q(\ph)_{\scr L} \to \scr R^\g(\ph)_{\scr L} \to \scr R^\s(\ph)_{\scr L}
		\]
		in $\MS_S$ are $\A^1$-equivalences.
		\item For any $\scr L\in\Pic^\dagger(S)$, the maps
		\[
		\scr R^\qoppa(\ph)_{\scr L} \to \scr R^\c(\ph)_{\scr L} \to \scr R^\varsigma(\ph)_{\scr L}
		\]
		in $\MS_S$ are $\A^1$-equivalences.
	\end{enumerate}
\end{corollary}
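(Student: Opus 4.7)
The plan is to deduce this from Theorem~\ref{thm:A1-localization} by detecting $\A^1$-equivalences in $\MS_S$ weight-by-weight.

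First I would unpack the weight-$n$ cohomology theories of the motivic spectra involved. By the construction of $\scr R^\g$ and $\scr R^\c$ in Section~\ref{sec:KO}, one has
\[
\Omega^{\infty-n}_{\P^1}\scr R^\g(\scr F)_{\scr L} = \scr F^\g_{\scr L[n]}
\]
as presheaves of spectra on $\Sm_S^\qcqs$, with analogous identifications for $\scr R^\s$, $\scr R^\q$ and the three classical variants. Since $\Omega^{\infty-n}_{\P^1}$ preserves cofibers, the cofiber in $\MS_S$ of $\scr R^\g(\scr F)_{\scr L} \to \scr R^\s(\scr F)_{\scr L}$ has weight-$n$ presheaf $\cofib(\scr F^\g_{\scr L[n]} \to \scr F^\s_{\scr L[n]})$, which is $\A^1$-locally zero by Theorem~\ref{thm:A1-localization}(ii). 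The other three maps in (i) and (ii) are handled identically using the remaining parts of Theorem~\ref{thm:A1-localization}.

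It therefore remains to argue that a motivic spectrum $C \in \MS_S$ is $\A^1$-locally zero whenever each weight-$n$ presheaf $\Omega^{\infty-n}_{\P^1}C$ on $\Sm_S^\qcqs$ is $\A^1$-locally zero. I would prove this by noting that $\A^1$-localization on $\scr P_{\Nis,\sbu}(\Sm_S^\qcqs,\Sp)$ is a left Bousfield localization of a stable presentable category, hence exact, and in particular commutes with the finite limit $\Omega_{\P^1}$ at the presheaf level. Consequently, applying presheaf-level $\A^1$-localization to the weight components of a motivic spectrum preserves the bonding equivalences and produces an $\A^1$-local object of $\MS_S$; by its universal property this construction computes the $\A^1$-localization in $\MS_S$, so $\A^1$-local triviality is indeed detected weight-wise.

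The main (and rather mild) obstacle is this weight-wise detection of $\A^1$-localization in $\MS_S$; once it is in place, all cases of the corollary follow uniformly from Theorem~\ref{thm:A1-localization}.
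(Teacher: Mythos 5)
Your overall strategy is the same as the paper's: the paper deduces the corollary in one line from Theorem~\ref{thm:A1-localization}(ii,iii), exactly by the weight-wise reduction you describe (the weight-$n$ components of the maps are $\scr F^*_{\scr L[n]}\to\scr F^\s_{\scr L[n]}$, which are $\A^1$-equivalences of presheaves, plus two-out-of-three for the first map in each case). However, your justification of the weight-wise detection step contains a real error. You argue that $\A^1$-localization on $\scr P_{\Nis,\sbu}(\Sm_S,\Sp)$ is exact and ``in particular commutes with the finite limit $\Omega_{\P^1}$''. But $\Omega_{\P^1}$ is not a finite limit of copies of the identity: it is the $(\P^1,\infty)$-cotensor, $(\Omega_{\P^1}F)(X)=\fib(F(\P^1\times X)\to F(X))$, i.e.\ an internal hom out of $\Sigma^\infty(\P^1,\infty)$. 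Exactness of a left Bousfield localization gives no control over such cotensors, and in general one cannot assert that applying $\L_{\A^1}$ levelwise to a $\P^1$-spectrum preserves the bonding equivalences; this is precisely why levelwise localization of $T$-spectra does not in general compute the localization and a spectrification/iteration is needed. So the sentence ``applying presheaf-level $\A^1$-localization to the weight components \dots produces an $\A^1$-local object of $\MS_S$'' is unjustified as stated.

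The good news is that the implication you actually need is much weaker and is easy to prove without any commutation claim: you only need that a motivic spectrum $C$ whose weight components $C_n=\Omega^{\infty-n}_{\P^1}C$ are $\A^1$-locally trivial presheaves is itself $\A^1$-locally trivial. This follows either from the standard decomposition $C=\colim_n\Sigma^{-n}_{\P^1}\Sigma^\infty_{\P^1}C_n$, since $\Sigma^\infty_{\P^1}$, $\P^1$-desuspension, and colimits preserve $\A^1$-equivalences, or directly by observing that for any $\A^1$-local $Z\in\MS_S$ (i.e.\ one whose weight components are $\A^1$-invariant presheaves) one has $\map_{\MS_S}(C,Z)=\lim_n\map(C_n,\Omega^{\infty-n}_{\P^1}Z)=0$. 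If you replace your commutation argument by one of these, the proof is correct and coincides with the paper's intended (one-line) argument.
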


\begin{proof}
	This follows immediately from Theorem~\ref{thm:A1-localization}(ii,iii) applied to Karoubi-localizing invariants.
\end{proof}

\begin{corollary}\label{cor:A1-localization2}
	Let $S$ be a regular algebraic space locally of finite Krull dimension. For any $*\in\{\g,\s,\q\}$ and any $\scr L\in\Pic^\dagger(S)^{\B\C_2}$, we have the following isomorphisms in $\MS_S$:
	\[
	\L_{\A^1} \KO^*_{\scr L} = \KO^\s_{\scr L}\quad\text{and}\quad \L_{\A^1} \KW^*_{\scr L} = \KW^\s_{\scr L}.
	\]
\end{corollary}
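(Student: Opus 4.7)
My plan is to reduce the entire statement to the single input that symmetric Grothendieck--Witt theory $\GW^\s$ is $\A^1$-invariant on finite-dimensional regular noetherian schemes, due to Calmès--Harpaz--Nardin \cite[Theorem~6.3.1]{CHN}. The first step is to apply Corollary~\ref{cor:A1-localization}(i) with $\scr F = \GW$ and with $\scr F = \L$: this yields $\A^1$-equivalences
\[
\KO^\q_\scr L \to \KO^\g_\scr L \to \KO^\s_\scr L \quad\text{and}\quad \KW^\q_\scr L \to \KW^\g_\scr L \to \KW^\s_\scr L
\]
in $\MS_S$, so that $\L_{\A^1}\KO^*_\scr L \simeq \L_{\A^1}\KO^\s_\scr L$ and $\L_{\A^1}\KW^*_\scr L \simeq \L_{\A^1}\KW^\s_\scr L$ for every $* \in \{\g,\s,\q\}$. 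Thus it suffices to show that $\KO^\s_\scr L$ and $\KW^\s_\scr L$ are already $\A^1$-invariant over $S$.

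Next, the $\Qoppa^\s$-variant of Proposition~\ref{prop:eta-localization}, recorded in the \emph{Genuine fracture squares} variant at the end of Section~\ref{sec:fracture}, gives $\KW^\s_\scr L = \KO^\s_\scr L[\eta^{-1}]$; since $\L_{\A^1}$ preserves the filtered colimit inverting $\eta$, the $\A^1$-invariance of $\KW^\s_\scr L$ is automatic from that of $\KO^\s_\scr L$. The proof therefore reduces to showing that $\KO^\s_\scr L$ is an $\A^1$-invariant motivic spectrum.

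By the representability noted in Section~\ref{sec:KO}, namely $\Omega^{\infty-n}_{\P^1}\KO^\s_\scr L = \GW^\s_{\scr L[n]}$, this amounts to $\A^1$-invariance of the presheaves $\GW^\s_{\scr L[n]}$ on $\Sm_S$ for each $n \in \Z$. Since these are Nisnevich sheaves by Corollary~\ref{cor:nis-descent}, the question is Nisnevich-local on smooth $S$-algebraic spaces $X$, so one reduces to the case where $X$ is a scheme. Being smooth over the regular finite-dimensional base $S$, such an $X$ is itself a regular noetherian scheme of finite Krull dimension, so \cite[Theorem~6.3.1]{CHN} applies.

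The main obstacle I foresee is clerical rather than conceptual: one must check that the twists $\scr L[n]$ (pulled back to smooth $X$) fall under the scope of \cite[Theorem~6.3.1]{CHN}, which is stated for line-bundle-with-involution twists on each individual smooth scheme. Once that compatibility of scope is pinned down, the remaining reductions are formal consequences of the Nisnevich descent, Hopf fracture, and comparison results established earlier in this paper.
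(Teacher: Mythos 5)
Your proposal is correct and follows essentially the same route as the paper: reduce via Corollary~\ref{cor:A1-localization} to the $\A^1$-invariance of $\KO^\s_{\scr L}$ and $\KW^\s_{\scr L}$, which holds Nisnevich-locally on $\Sm_S$ by \cite[Theorem~6.3.1]{CHN} since smooth spaces over a regular finite-dimensional base are (Nisnevich-locally) regular noetherian schemes of finite Krull dimension. The only cosmetic difference is that you handle $\KW^\s_{\scr L}$ by inverting $\eta$ on $\KO^\s_{\scr L}$, whereas the paper simply notes that $\A^1$-invariance of $\GW^\s_{\scr L}$ implies that of $\L^\s_{\scr L}$ — which is the same observation.
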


\begin{proof}
	This follows from Corollary~\ref{cor:A1-localization} and the fact that $\GW^\s_{\scr L}$ and hence $\L^\s_{\scr L}$ are $\A^1$-invariant on $\Sm_U^\fp$ for any quasi-compact open $U\subset S$, by \cite[Theorem 6.3.1]{CHN} and Nisnevich descent.
\end{proof}

\bibliographystyle{alphamod}
\bibliography{references}
    
\end{document}